\newcommand{\F}{\mathcal F}
\renewcommand{\H}{\mathcal{H}}
\renewcommand{\L}{\mathcal{L}}
\newcommand{\M}{\mathcal{M}}
\renewcommand{\P}{\mathcal{P}}
\newcommand{\X}{\mathcal{X}}
\newcommand{\V}{\mathcal{V}}
\newcommand{\mres}{\mathbin{\vrule height 1.6ex depth 0pt width 
0.13ex\vrule height 0.13ex depth 0pt width 1.3ex}}
\newcommand{\hd}{\mathrm{hd}\,}
\newcommand{\N}{\mathbb{N}}
\newcommand{\R}{\mathbb{R}}
\renewcommand{\SS}{\mathbb{S}}
\newcommand{\Om}{\Omega}
\renewcommand{\S}{\Sigma}
\renewcommand{\a}{\alpha}
\renewcommand{\b}{\beta}
\newcommand{\g}{\gamma}
\newcommand{\de}{\delta}
\newcommand{\e}{\varepsilon}
\renewcommand{\l}{\lambda}
\newcommand{\s}{\sigma}
\newcommand{\om}{\omega}
\newcommand{\vphi}{\varphi}
\newcommand{\Lip}{{\rm Lip}}
\newcommand{\Div}{{\rm div}\,}
\newcommand{\Id}{{\rm Id}\,}
\newcommand{\dist}{{\rm dist}}
\newcommand{\diam}{{\rm diam}\,}
\newcommand{\spt}{{\rm spt}}
\newcommand{\bd}{{\rm bdry}\,}
\newcommand{\weak}{\rightharpoonup}
\newcommand{\toloc}{\stackrel{{\rm loc}}{\to}}
\newcommand{\ov}{\overline}
\newcommand{\pa}{\partial}
\newcommand{\cc}{\subset\subset}
\newcommand{\cl}{\mathrm{cl}\,}
\newcommand{\p}{\mathbf{p}}
\newcommand{\q}{\mathbf{q}}
\newcommand{\pp}{\mathbf{p}}
\newcommand{\var}{\mathbf{var}\,}
\newcommand{\Rr}{\mathcal{R}}
\newcommand{\Ss}{\mathcal{S}}
\newcommand{\CC}{\textbf{\textup{C}}}
\newcommand{\DD}{\textbf{\textup{D}}}
\newcommand{\hn}{\mathcal{H}^n}
\newcommand{\atan}{{\rm arctn}}
\theoremstyle{plain}
\newtheorem{theorem}{Theorem}[section]
\newtheorem{lemma}[theorem]{Lemma}
\newtheorem{proposition}[theorem]{Proposition}
\newtheorem*{theorem*}{Theorem}
\newtheorem*{corollary*}{Corollary}
\theoremstyle{definition}
\newtheorem{remark}[theorem]{Remark}
\newtheorem*{notation*}{Notation}
\numberwithin{equation}{section}
\numberwithin{figure}{section}
\newcommand{\id}{{\rm id}\,}
\title[Isoperimetric residues]{Isoperimetric residues and a mesoscale flatness criterion for hypersurfaces with bounded mean curvature}
\author{Francesco Maggi}
\address{Department of Mathematics, The University of Texas at Austin, 2515 Speedway, Stop C1200, Austin TX 78712-1202, United States of America}
\email{maggi@math.utexas.edu}
\author{Michael Novack}
\email{michael.novack@austin.texas.edu}
\begin{document}

\begin{abstract} We obtain a full resolution result for minimizers in the exterior isoperimetric problem with respect to a compact obstacle in the large volume regime $v\to\infty$. This is achieved by the study of a Plateau-type problem with free boundary (both on the compact obstacle and at infinity) which is used to identify the first obstacle-dependent term (called {\it isoperimetric residue}) in the energy expansion, as $v\to\infty$, of the exterior isoperimetric problem. A crucial tool in the analysis of isoperimetric residues is a new ``mesoscale flatness criterion'' for hypersurfaces with bounded mean curvature, which we obtain as a development of ideas originating in the theory of minimal surfaces with isolated singularities.
\end{abstract}


\maketitle

\tableofcontents

\section{Introduction} \subsection{Overview} Given a compact set $W\subset\R^{n+1}$ ($n\ge 1$), we consider the classical {\bf exterior isoperimetric problem} associated to $W$, namely
\begin{equation}
  \label{psiv}
  \psi_W(v)=\inf\big\{P(E;\Om):E\subset\Om=\R^{n+1}\setminus W\,,|E|=v\big\}\,,\qquad v>0\,,
\end{equation}
in the large volume regime $v\to\infty$. Here $|E|$ denotes the volume (Lebesgue measure) of $E$, and $P(E;\Om)$ the (distributional) perimeter of $E$ relative to $\Om$, so that $P(E;\Om)=\H^n(\Om\cap\pa E)$ whenever $\pa E$ is locally Lipschitz. Relative isoperimetric problems are well-known for their analytical \cite[Sections 6.4-6.6]{mazyaBOOKSobolevPDE} and geometric \cite[Chapter V]{ChavelBOOK} relevance.
They are also important in physical applications: beyond the obvious example of capillarity theory \cite{FinnBOOK}, exterior isoperimetry at large volumes provides an elegant approach to the Huisken-Yau theorem in general relativity, see \cite{eichmair_metzgerINV}.

When $v\to\infty$, we expect minimizers $E_v$ in \eqref{psiv} to closely resemble balls of volume $v$. Indeed, by minimality and isoperimetry, denoting by $B^{(v)}(x)$ the ball of center $x$ and volume $v$, and with $B^{(v)}=B^{(v)}(0)$, we find that
\begin{equation}
  \label{basic energy estimate}
\lim_{v\to\infty}\frac{\psi_W(v)}{P(B^{(v)})}=1\,.
\end{equation}
Additional information can be obtained by combining \eqref{basic energy estimate} with quantitative isoperimetry \cite{fuscomaggipratelli,FigalliMaggiPratelliINVENTIONES}: if $0<|E|<\infty$, then
\begin{equation}
  \label{quantitative euclidean isop}
  P(E)\ge P(B^{(|E|)})\Big\{1+ c(n)\,\inf_{x\in\R^{n+1}}\,\Big(\frac{|E\Delta B^{(|E|)}(x)|}{|E|}\Big)^2\Big\}\,.
\end{equation}
The combination of \eqref{basic energy estimate} and \eqref{quantitative euclidean isop} shows that minimizers $E_v$ in $\psi_W(v)$ are close in $L^1$-distance to balls. Based on that, a somehow classical argument exploiting the local regularity theory of perimeter minimizers shows the existence of $v_0>0$ and of a function $R_0(v)\to 0^+$, $R_0(v)\,v^{1/(n+1)}\to\infty$ as $v\to\infty$, both depending on $W$, such that,
\begin{figure}
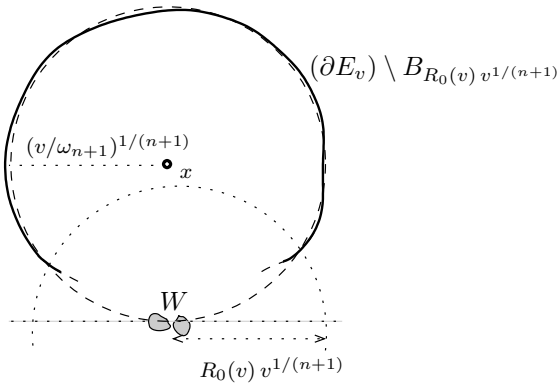
\caption{\small{Quantitative isoperimetry gives no information on how $W$ affects $\psi_W(v)$ for $v$ large.
}}
\label{fig noinfo}
\end{figure}
if $E_v$ is a minimizer of \eqref{psiv} with $v>v_0$, then (see Figure \ref{fig noinfo})
\begin{eqnarray}
\label{basic C1 estimate}
  &&\mbox{$(\pa E_v)\setminus B_{R_0\,v^{1/(n+1)}}\subset$ a $C^1$-small normal graph over $\pa B^{(v)}(x)$},
  \\\nonumber
  &&\mbox{for some $x\in\R^{n+1}$ with $|x|=(v/\om_{n+1})^{1/(n+1)}+{\rm o}(v^{1/(n+1)})$ as $v\to\infty$}\,;
\end{eqnarray}
here $\om_m$ stands for the volume of the unit ball in $\R^m$, $B_r(x)$ is the ball of center $x$ and radius $r$ in $\R^{n+1}$, and $B_r=B_r(0)$. The picture of the situation offered by \eqref{basic energy estimate} and \eqref{basic C1 estimate} is thus incomplete under one important aspect: it offers no information related to the specific ``obstacle'' $W$ under consideration -- in other words, {\it two different obstacles are completely unrecognizable from \eqref{basic energy estimate} and \eqref{basic C1 estimate} alone}.

The first step to obtain obstacle-dependent information on $\psi_W$ is studying $L^1_{\rm loc}$-subsequential limits $F$ of exterior isoperimetric sets $E_v$ as $v\to\infty$. Since the mean curvature of $\pa E_v$ has order $v^{-1/(n+1)}$ as $v\to\infty$ in $\Om$, each $\pa F$ is easily seen to be a minimal surface in $\Om$. A finer analysis leads to establish a more useful characterization of such limits $F$ as minimizers in a ``Plateau's problem with free boundary on the obstacle and at infinity'', whose negative is precisely defined in \eqref{def RW} below and denoted by $\Rr(W)$. We call $\Rr(W)$ the {\bf isoperimetric residue of $W$} because it captures the ``residual effect'' of $W$ in \eqref{basic energy estimate}, as expressed by the limit identity
\begin{equation}
  \label{main energy estimate}
\lim_{v\to\infty}\,\psi_W(v)-P(B^{(v)})=-\Rr(W)\,.
\end{equation}
The study of the geometric information about $W$ stored in $\Rr(W)$ is particularly interesting: roughly, $\Rr(W)$ is close to an $n$-dimensional sectional area of $W$, although its precise value is elusively determined by the behavior of certain ``plane-like'' minimal surfaces with free boundary on $W$. The proof of \eqref{main energy estimate} itself requires proving a blowdown result for such exterior minimal surfaces, and then extracting sharp decay information towards hyperplane blowdown limits. In particular, in the process of proving \eqref{main energy estimate}, we shall prove the existence of a positive $R_2$ (depending on $n$ and $W$ only) such that for every maximizer $F$ of $\Rr(W)$, $(\pa F)\setminus B_{R_2}$ is the graph of a smooth solution to the minimal surfaces equation. An application of Allard's regularity theorem \cite{Allard} leads then to complement \eqref{basic C1 estimate} with the following ``local'' resolution formula: for every $S>R_2$, for $v$ is large in terms of $n$, $W$ and $S$,
\begin{eqnarray}
\label{local C1 estimate}
  &&\mbox{$(\pa E_v)\cap \big(B_S\setminus B_{R_2}\big)\subset$ a $C^1$-small normal graph over $\pa F$},
  \\
  \nonumber
  &&\mbox{where $F$ is optimal for  the isoperimetric residue $\Rr(W)$ of $W$}\,.
\end{eqnarray}
Interestingly, this already fine analysis gives no information on $\pa E_v$ in the {\it mesoscale} region $B_{R_0(v)\,v^{1/(n+1)}}\setminus B_S$ between the  resolution formulas \eqref{basic C1 estimate} and \eqref{local C1 estimate}. To address this issue, we are compelled to develop what we have called a {\bf mesoscale flatness criterion} for hypersurfaces with bounded mean curvature. This kind of statement is qualitatively novel with respect to the flatness criteria typically used in the study of blowups and blowdowns of minimal surfaces -- although it is clearly related to those tools at the mere technical level -- and holds promise for applications to other geometric variational problems. In the study of the exterior isoperimetric problem, it allows us to prove the existence of positive constants $v_0$ and $R_1$, depending on $n$ and $W$ only, such that if $v>v_0$ and $E_v$ is a minimizer of $\psi_W(v)$, then
\begin{eqnarray}  \nonumber
  &&\mbox{$(\pa E_v)\cap \big(B_{R_1\,v^{1/(n+1)}}\setminus B_{R_2}\big)\subset$ a $C^1$-small normal graph over $\pa F$},
  \\\label{main C1 estimate}
  &&\mbox{where $F$ is optimal for the isoperimetric residue $\Rr(W)$ of $W$}\,.
\end{eqnarray}
The key difference between \eqref{local C1 estimate} and \eqref{main C1 estimate} is that the domain of resolution given in \eqref{main C1 estimate} {\it overlaps} with that of \eqref{basic C1 estimate}: indeed, $R_0(v)\to0^+$ as $v\to\infty$ implies that $R_0\,v^{1/(n+1)}< R_1\,v^{1/(n+1)}$ for $v>v_0$. As a by-product of this overlapping and of the graphicality of $\pa F$ outside of $B_{R_2}$, we deduce that {\it boundaries of exterior isoperimetric sets, outside of $B_{R_2}$, are diffeomorphic to $n$-dimensional disks}. Finally, when $n\le 6$, and maximizers $F$ of $\Rr(W)$ have locally smooth boundaries in $\Om$, \eqref{main C1 estimate} can be propagated up to the obstacle itself; see Remark \ref{remark up to the obstacle} below.

Concerning the rest of this introduction: In section \ref{subsection isoperimetric residues} we present our analysis of isoperimetric residues, see Theorem \ref{thm main of residue}. In section \ref{subsection resolution of isop sets} we gather all our results concerning exterior isoperimetric sets with large volumes, see Theorem \ref{thm main psi}. Finally, we present the mesoscale flatness criterion in section \ref{subsection mesoscale flatness criterion intro} and the organization of the paper in section \ref{subsection organization}.

\subsection{Isoperimetric residues}\label{subsection isoperimetric residues} To define $\Rr(W)$ we introduce the class
\[
\F
\]
of those pairs $(F,\nu)$ with $\nu\in\SS^n$ ($=$ the unit sphere of $\R^{n+1}$) and $F\subset\R^{n+1}$ a set of locally finite perimeter in $\Om$ (i.e., $P(F;\Om')<\infty$ for every $\Om'\cc\Om$), contained in slab around $\nu^\perp=\{x:x\cdot\nu=0\}$, and whose boundary (see Remark \ref{remark boundaries in RW} below) has full projection over $\nu^\perp$ itself: i.e., for some $\a,\b\in\R$,
\begin{eqnarray}\label{def Sigma nu 1}
  &&\pa F\subset \big\{x:\a< x\cdot\nu<\b\big\}\,,
  \\\label{def Sigma nu 2}
  &&\pp_{\nu^\perp}(\pa F)=\nu^\perp:=\big\{x:x\cdot\nu=0\big\}\,,
\end{eqnarray}
where $\pp_{\nu^\perp}(x)=x-(x\cdot\nu)\,\nu$, $x\in\R^{n+1}$. In correspondence to $W$ compact, we define the {\bf residual perimeter functional}, ${\rm res}_W:\F\to\R\cup\{\pm\infty\}$, by
\[
{\rm res}_W(F,\nu)=\varlimsup_{R\to\infty}\om_n\,R^n-P(F;\CC_R^\nu\setminus W)\,,\qquad (F,\nu)\in\F\,,
\]
where $\CC_R^\nu=\{x\in\R^{n+1}:|\pp_{\nu^\perp}(x)|<R\}$ denotes the (unbounded) cylinder of radius $R$ with axis along $\nu$ -- and where the limsup is actually a monotone decreasing limit thanks to \eqref{def Sigma nu 1} and \eqref{def Sigma nu 2} (see \eqref{perimeter is decreasing} below for a proof). For a reasonably ``well-behaved'' $F$, e.g. if $\pa F$ is the graph of a Lipschitz function over $\nu^\perp$, $\om_n\,R^n$ is the (obstacle-independent) leading order term of the expansion of $P(F;\CC_R^\nu\setminus W)$ as $R\to\infty$, while ${\rm res}_W(F,\nu)$ is expected to capture the first obstacle-dependent ``residual perimeter'' contribution of $P(F;\CC_R^\nu\setminus W)$ as $R\to\infty$.  The {\bf isoperimetric residue} of $W$ is then defined by maximizing ${\rm res}_W$ over $\F$, so that
\begin{equation}
  \label{def RW}
  \Rr(W)=\sup_{(F,\nu)\in\F}\,{\rm res}_W(F,\nu)\,;
\end{equation}
see
\begin{figure}
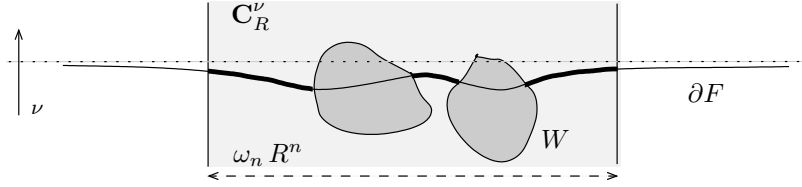\caption{\small{If $(F,\nu)\in\F$ then $F$ is contained in a slab around $\nu^\perp$ and is such that $\pa F$ has full projection over $\nu^\perp$. Only the behavior of $\pa F$ outside $W$ matters in computing ${\rm res}_W(F,\nu)$. The perimeter of $F$ in $\mathbf{C}_R^\nu\setminus W$ (depicted as a bold line) is compared to $\om_n\,R^n(=$perimeter of a half-space orthogonal to $\nu$ in $\mathbf{C}_R^\nu$); the corresponding ``residual'' perimeter as $R\to\infty$, is ${\rm res}_W(F,\nu)$.}}\label{fig fnu}
\end{figure}
Figure \ref{fig fnu}. Clearly $\Rr(\l\,W)=\l^n\,\Rr(W)$ if $\l>0$, and $\Rr(W)$ is trapped between the areas of the largest hyperplane section and directional projection of $W$, see \eqref{R larger than S} below. In the simple case when $n=1$ and $W$ is connected, $\Rr(W)=\diam(W)$ by \eqref{characterization 1} and \eqref{characterization 3} below, although, in general, $\Rr(W)$ does not seem to admit a simple characterization, and it is finely tuned to the near-to-the-obstacle behavior of ``plane-like'' minimal surfaces with free boundary on $W$. Our first main result collects these (and other) properties of isoperimetric residues and of their maximizers.

\begin{theorem}[Isoperimetric residues]\label{thm main of residue} If $W\subset\R^{n+1}$ is compact, then there are $R_2$ and $C_0$ positive and depending on $W$ with the following property.

\noindent {\bf (i):} If $\Ss(W)=\sup\{\H^n(W\cap\Pi):\mbox{$\Pi$ is a hyperplane in $\R^{n+1}$}\}$ and $\P(W)=\sup\{\H^n(\pp_{\nu^{\perp}}(W)):\nu\in\SS^n\}$, then we have
\begin{eqnarray}
\label{R larger than S}
\Ss(W)\le \Rr(W)\le \P(W)\,.
\end{eqnarray}

\noindent {\bf (ii):} The family ${\rm Max}[\Rr(W)]$ of maximizers of $\Rr(W)$ is non-empty. If $(F,\nu)\in{\rm Max}[\Rr(W)]$, then $F$ is a {\bf perimeter minimizer with free boundary in $\Om=\R^{n+1}\setminus W$}, i.e.
\begin{equation}\label{local perimeter minimizer}
P(F;\Omega \cap B) \leq P(G;\Omega \cap B)\,,\qquad\mbox{$\forall F\Delta G\cc B$, $B$ a ball}\,;
\end{equation}
and if $\Rr(W)>0$, then $\pa F$ is contained in the smallest slab $\{x:\a\le x\cdot\nu\le\b\}$ containing $W$, and there are $a,b\in\R$, $c\in\nu^\perp$ with $\max\{|a|,|b|,|c|\}\le C_0$ and $f\in C^\infty(\nu^\perp)$ such that
\begin{equation}
  \label{main residue graphicality of F}
  \,\,\,\,\,\,\,\,\,\,\,\,\,\,\,\,\,(\partial F) \setminus \CC^\nu_{R_2}=\big\{x+f(x)\,\nu:x\in\nu^\perp\,,|x|>R_2\big\}\,,
\end{equation}
\begin{eqnarray}\nonumber
  &&f(x)=a\,,\hspace{5.4cm} (n=1)
  \\\label{asymptotics of F}
  &&\Big|f(x)-\Big(a+\frac{b}{|x|^{n-2}}+\frac{c\cdot x}{|x|^n}\Big)\Big|\le\frac{C_0}{|x|^n}\,,\qquad (n\ge 2)
  \\\nonumber
  &&\max\big\{|x|^{n-1}\,|\nabla f(x)|,|x|^n\,|\nabla^2f(x)|\big\}\le C_0\,,\qquad\forall x\in\nu^\perp\,,|x|>R_2\,.
\end{eqnarray}

\noindent {\bf (iii):} At fixed diameter, isoperimetric residues are maximized by balls, i.e.
\begin{equation}\label{optimal RW}
\Rr(W) \leq \omega_n \big(\diam W/2\big)^n\,= \Rr\big({\rm cl}\big(B_{\diam W /2}\big)\big)\,,
\end{equation}
where $\cl(X)$ denotes topological closure of $X\subset\R^{n+1}$. Moreover, if equality holds in \eqref{optimal RW} and $(F,\nu)\in{\rm Max}[\Rr(W)]$, then \eqref{asymptotics of F} holds with $b=0$ and $c=0$, and setting $\Pi=\big\{y:y\cdot\nu=a\big\}$, we have
\begin{equation}
  \label{characterization 2}
  (\partial F)\setminus W=\Pi \setminus\cl\big(B_{\diam W/2}(x)\big)\,,
\end{equation}
for some $x\in\Pi$. Finally, equality holds in \eqref{optimal RW} if and only if there are a hyperplane $\Pi$ and a point $x\in\Pi$ such that
\begin{eqnarray}\label{characterization 1}
  \partial B_{\diam W/2}(x) \cap \Pi\subset W\,,
\end{eqnarray}
i.e., $W$ contains an $(n-1)$-dimensional sphere of diameter $\diam(W)$, and
\begin{eqnarray}
\label{characterization 3}
&&\mbox{$\Om\setminus \big(\Pi \setminus \cl\big(B_{\diam W/2}(x)\big)\big)$}
\\\nonumber
&&\mbox{has exactly two unbounded connected components}.
\end{eqnarray}
\end{theorem}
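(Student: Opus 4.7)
The plan is to establish the three parts in order, with (i) giving the basic two-sided bounds, (ii) producing the maximizer via the direct method and its regularity via minimal surface theory, and (iii) pulling out rigidity from the equality cases in (i)--(ii).

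For \textbf{(i)}, to prove $\Rr(W)\le\P(W)$ I would fix $(F,\nu)\in\F$ and use that $\pp_{\nu^\perp}$ is $1$-Lipschitz: $P(F;\CC_R^\nu\setminus W)\ge\H^n(\pp_{\nu^\perp}(\pa^*F\cap\CC_R^\nu\setminus W))$, and \eqref{def Sigma nu 2} implies that every $y\in\nu^\perp\cap B_R^n\setminus\pp_{\nu^\perp}(W)$ lies in this projected set (its fiber avoids $W$ yet meets $\pa^*F$). Hence, for $R$ large, $P(F;\CC_R^\nu\setminus W)\ge\om_n R^n-\H^n(\pp_{\nu^\perp}(W))$, giving ${\rm res}_W(F,\nu)\le\P(W)$. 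For $\Ss(W)\le\Rr(W)$ I would take a hyperplane $\Pi=\{y\cdot\nu=a\}$ nearly achieving $\Ss(W)$ and test with the half-space $F=\{y\cdot\nu<a\}$: it lies in $\F$, and direct computation gives ${\rm res}_W(F,\nu)=\H^n(\Pi\cap W)$.

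For \textbf{(ii)}, existence is by the direct method. Part (i) yields $\Rr(W)<\infty$. Given a maximizing sequence $(F_k,\nu_k)$, I would first replace each $F_k$ by its intersection with the smallest slab containing $W$ (which only increases the residue, via a perimeter-decreasing slicing), so that each $\pa F_k$ lies in a fixed slab. Combined with the upper bound from (i), this gives uniform local perimeter bounds in $\Om$; after extracting $\nu_k\to\nu$ in $\SS^n$ and $F_k\to F$ in $L^1_\loc(\Om)$, upper semicontinuity of ${\rm res}_W$ (from the monotone nature of the limsup and lower semicontinuity of perimeter) produces a maximizer. Property \eqref{local perimeter minimizer} then follows directly: for any $G$ with $F\Delta G\cc B$, $(G,\nu)\in\F$ and ${\rm res}_W(G,\nu)-{\rm res}_W(F,\nu)=P(F;\Om\cap B)-P(G;\Om\cap B)$, so maximality forces local minimality. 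For \eqref{main residue graphicality of F}--\eqref{asymptotics of F}, I would exploit that $F$ is a local perimeter minimizer in $\Om$ trapped in a finite slab: any tangent cone at infinity is an area-minimizing cone inside a slab, hence (since it must also project onto $\nu^\perp$) the translate of $\nu^\perp$ itself. Allard-type $\eps$-regularity and standard decay then upgrade this to $C^\infty$-graphicality of $\pa F$ over $\nu^\perp\setminus B_{R_2}$. The pointwise expansion \eqref{asymptotics of F} comes from regarding the graphing function $f$ as a bounded solution of the minimal surface equation on an exterior domain: linearization at infinity is the Laplacian, whose bounded solutions on $\R^n\setminus B_{R_2}$ admit the harmonic expansion $a+b|x|^{2-n}+c\cdot x\,|x|^{-n}+O(|x|^{-n})$, with elliptic iteration controlling the nonlinear remainder at matching scale. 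For $n=1$, this collapses to the fact that bounded minimal graphs over $\R$ are constant.

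For \textbf{(iii)}, the inequality $\Rr(W)\le\om_n(\diam W/2)^n$ follows from (i) and the isodiametric inequality applied to $\pp_{\nu^\perp}(W)$ (whose diameter is $\le\diam W$); equality for $\cl(B_{\diam W/2})$ is realized by the equatorial-section construction from (i). For rigidity, assuming equality in \eqref{optimal RW} and fixing $(F,\nu)\in\mathrm{Max}[\Rr(W)]$, I would saturate the chain $\Rr(W)\le\P(W)\le\om_n(\diam W/2)^n$: isodiametric rigidity forces $\pp_{\nu^\perp}(W)$ to be a closed $n$-disk of radius $\diam W/2$, and equality in the projection bound from (i) forces $\pa^*F$ outside the cylinder over that disk to have tangent space parallel to $\nu^\perp$ a.e.\ (the rigidity case of the $1$-Lipschitz projection), hence to coincide with a single hyperplane $\Pi=\{y\cdot\nu=a\}$ by slab confinement and the asymptotic flatness from (ii). This gives \eqref{characterization 2} and $b=c=0$ in \eqref{asymptotics of F}. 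The boundary $(n-1)$-sphere of the missing disk must lie in $W$ (otherwise $F$ would fail local minimality near that sphere), yielding \eqref{characterization 1}, while \eqref{characterization 3} is immediate since $\Pi$ separates $\R^{n+1}$ into two half-spaces. Conversely, under \eqref{characterization 1}--\eqref{characterization 3}, one of the two unbounded components of $\Om\setminus(\Pi\setminus\cl(B_{\diam W/2}(x)))$ is an admissible $(F,\nu)\in\F$ with ${\rm res}_W(F,\nu)=\om_n(\diam W/2)^n$, forcing equality in \eqref{optimal RW}.

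The hard part will be the sharp asymptotic expansion \eqref{asymptotics of F}. Qualitative $C^1$-graphicality outside $B_{R_2}$ is the standard output of blowdown uniqueness plus $\eps$-regularity, but pinning down the dipole-order decay demands quantitative exterior elliptic estimates for the minimal surface equation and explicit identification of the two leading non-constant harmonic modes. Those same sharper estimates are what enforce $b=c=0$ in the rigidity step of (iii).
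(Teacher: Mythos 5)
Your plan for parts (i) and (ii), and the bulk of the rigidity argument in (iii), tracks the paper's proof closely: the projection/slicing upper bound $\Rr(W)\le\P(W)$, the half-space lower bound $\Ss(W)\le\Rr(W)$, the direct method with slab truncation, the derivation of \eqref{local perimeter minimizer} from comparison competitors, and the strategy of deducing the sharp exterior expansion from blowdown flatness plus exterior elliptic theory all appear in the paper in essentially the same form (the paper packages the exterior estimates via \cite{Scho83} as its Proposition \ref{prop schoen}). You also correctly flag the asymptotic expansion as the delicate step, though note that for $n=2$ one must also rule out the logarithmic mode (the paper does this via slab confinement plus a comparison argument), and for $n=1$ the ``bounded graph is constant'' claim needs the reconnection argument in the paper, since a priori the two ends could sit at different heights.

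The genuine gap is in the last sentence of your treatment of (iii): you assert that \eqref{characterization 3} ``is immediate since $\Pi$ separates $\R^{n+1}$ into two half-spaces.'' This is false. The set $\Om\setminus(\Pi\setminus\cl(B_{\diam W/2}(x)))$ is not $\Om\setminus\Pi$: it leaves the disk $\Pi\cap \cl(B_{\diam W/2}(x))$ \emph{inside} the set, and if that disk is not entirely contained in $W$, a path can pass from one side of $\Pi$ to the other through the hole, collapsing the two unbounded components into one. The paper's Figure~\ref{fig second} is exactly such a counterexample: $W$ is a ball of radius $d/2$ with a thin cylinder removed, so \eqref{characterization 1} holds (the equatorial sphere sits in $W$), yet $\Om\setminus(\Pi\setminus B_{d/2})$ has a single unbounded component and equality in \eqref{optimal RW} fails. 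The point of \eqref{characterization 3} is precisely to exclude such obstacles, and the paper proves its necessity by a separation argument: if there were only one unbounded component, one could join two points on opposite sides of a smooth point of $\Om\cap\pa F=\Pi\setminus\cl(B_{d/2}(x))$ by a path in $\Om\setminus(\Pi\setminus B_{d/2}(x))$, and that path must cross $\pa F$ and hence meet $\Pi\setminus\cl(B_{d/2}(x))$, a contradiction. Without some such argument, your proposed proof of the forward implication of the equality case does not close.

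A secondary, smaller point in the same step: your argument shows $\pa F$ agrees with $\Pi$ \emph{outside} the cylinder $\CC_{d/2}^\nu(x)$, but \eqref{characterization 2} asserts $(\pa F)\setminus W=\Pi\setminus\cl(B_{d/2}(x))$, which also requires showing $\pa F$ has no piece in $\CC_{d/2}^\nu(x)\setminus W$. The paper deduces $P(F;\CC_{d/2}^\nu(x)\setminus W)=0$ from saturation of the projection inequality and then uses the lower density bound for stationary varifolds to conclude $((\pa F)\setminus W)\cap\CC_{d/2}^\nu(x)=\emptyset$; you should make this explicit.
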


\begin{remark}
  {\rm The assumption $\Rr(W)>0$ is quite weak: indeed, {\bf if $\Rr(W)=0$, then $W$ is purely $\H^n$-unrectifiable}; see Proposition \ref{prop RW zero} in the appendix. For the role of the topological condition \eqref{characterization 3}, see Figure
  \begin{figure}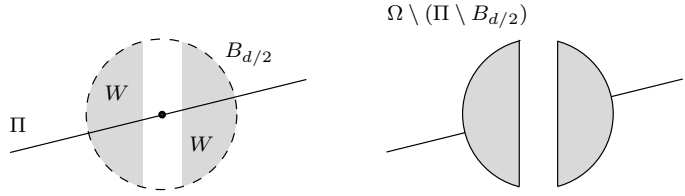\caption{\small{The obstacle $W$ (depicted in grey) is obtained by removing a cylinder $\CC_r^{e_{n+1}}$ from a ball $B_{d/2}$ with $d/2>r$. In this way $d=\diam(W)$ and $B_{d/2}$ is the only ball such that \eqref{characterization 1} can hold. Hyperplanes $\Pi$ satisfying \eqref{characterization 1} are exactly those passing through the center of $B_{d/2}$, and intersecting $W$ on a $(n-1)$-dimensional sphere of radius $d/2$. For every such $\Pi$, $\Om\setminus(\Pi\setminus B_{d/2})$ has exactly one unbounded connected component, and  \eqref{characterization 3} does not hold.}}\label{fig second}\end{figure}
  \ref{fig second}.}
\end{remark}

\begin{remark}[Regularity of isoperimetric residues]\label{remark regularity}
  {\rm In the physical dimension $n=2$, and provided $\Om$ has boundary of class $C^{1,1}$, maximizers of $\Rr(W)$ are $C^{1,1/2}$-regular up to the obstacle, and smooth away from it. More generally, condition \eqref{local perimeter minimizer} implies that $M=\cl(\Om\cap\pa F)$ is a smooth hypersurface with boundary in $\Omega\setminus\Sigma$, where $\Sigma$ is a closed set such that $\Sigma\cap\Om$ is empty if $1\le n\le 6$, is locally discrete in $\Om$ if $n=7$, and is locally $\H^{n-7}$-rectifiable in $\Om$ if $n\ge 8$; see, e.g. \cite[Part III]{maggiBOOK}, \cite{nabervaltortaJEMS}. Of course, by \eqref{main residue graphicality of F}, $\Sigma\setminus B_{R_2}=\emptyset$ in every dimension.  Moreover, justifying the initial claim concerning the case $n=2$, if we assume that $\Om$ is an open set with $C^{1,1}$-boundary, then $M$ is a $C^{1,1/2}$-hypersurface with boundary in $\R^{n+1}\setminus\Sigma$, with boundary contained in $\pa\Om$, $\Sigma\cap\pa\Om$ is $\H^{n-3+\e}$-negligible for every $\e>0$, and Young's law $\nu_F\cdot\nu_\Om=0$ holds on $(M\cap\pa\Om)\setminus\Sigma$; see, e.g. \cite{gruter,gruterjost,dephilippismaggiCAP-ARMA,dephilippismaggiCAP-CRELLE}.}
\end{remark}

\begin{remark}
  {\rm An interesting open direction is finding additional geometric information on $\Rr(W)$, e.g. in the class of convex obstacles.}
\end{remark}

\begin{remark}[Normalization of competitors]\label{remark boundaries in RW}
  {\rm We adopt the convention that any set of locally finite perimeter $F$ in $\Om$ open is tacitly modified on and by a set of zero Lebesgue measure so to entail $\Om\cap\pa F=\Om\cap\cl(\pa^*F)$, where $\pa^*F$ is the reduced boundary of $F$ in $\Om$; see \cite[Proposition 12.19]{maggiBOOK}. Under this normalization, local perimeter minimality conditions like \eqref{local perimeter minimizer} (or \eqref{uniform lambda minimality} below) imply that $F\cap\Om$ is open in $\R^{n+1}$; see, e.g. \cite[Lemma 2.16]{dephilippismaggiCAP-ARMA}.}
\end{remark}

\subsection{Resolution of exterior isoperimetric sets}\label{subsection resolution of isop sets} Denoting the family of minimizers of $\psi_W(v)$ by ${\rm Min}[\psi_W(v)]$, our second main result is as follows:

\begin{theorem}[Resolution of exterior isoperimetric sets]\label{thm main psi}
If $W\subset\R^{n+1}$ is compact, then ${\rm Min}[\psi_W(v)]\ne\emptyset\,\,\forall v>0$. Moreover, if $\Rr(W)>0$, then
\begin{equation}
  \label{main asymptotic expansion}
  \lim_{v\to\infty}\psi_W(v)-P(B^{(v)})=-\Rr(W)\,,
\end{equation}
and, depending on $n$ and $W$ only, there are $v_0$, $C_0$, $R_1$, and $R_2$ positive, and $R_0(v)$ with $R_0(v)\to 0^+$, $R_0(v)\,v^{1/(n+1)}\to\infty$ as $v\to\infty$, such that, if $E_v\in {\rm Min}[\psi_W(v)]$ and $v>v_0$, then:

\noindent {\bf (i)}: $E_v$ determines $x\in\R^{n+1}$ and $u\in C^\infty(\pa B^{(1)})$ such that
\begin{eqnarray}
  \label{nonsharp rates}
  &&\frac{|E_v\Delta B^{(v)}(x)|}v \le \frac{C_0}{v^{1/[2(n+1)]}}\,,
\\\label{x and u of Ev}
 &&(\pa E_v)\setminus B_{R_0(v)\,v^{1/(n+1)}}
 \\\nonumber
 &&=\Big\{y+v^{1/(n+1)}\,u\Big(\frac{y-x}{v^{1/(n+1)}}\Big)\,\nu_{B^{(v)}(x)}(y):y\in\pa B^{(v)}(x)\Big\}\setminus B_{R_0(v)\,v^{1/(n+1)}}\,,
\end{eqnarray}
where, for any $G\subset\R^{n+1}$ with locally finite perimeter, $\nu_G$ is the outer unit normal to $G$;

\noindent {\bf (ii):} $E_v$ determines $(F,\nu)\in{\rm Max}[\Rr(W)]$ and $f\in C^\infty((\pa F)\setminus B_{R_2})$ with
\begin{equation}
  \label{f of Ev}
  (\pa E_v)\cap A_{R_2}^{R_1\,v^{1/(n+1)}}=\big\{y+f(y)\,\nu_F(y):y\in\pa F\big\}\cap A_{R_2}^{R_1\,v^{1/(n+1)}}\,;
\end{equation}

\noindent {\bf (iii):} $(\pa E_v)\setminus B_{R_2}$ is diffeomorphic to an $n$-dimensional disk;

\noindent {\bf (iv):} Finally, with $(x,u)$ as in \eqref{x and u of Ev} and $(F,\nu,f)$ as in \eqref{f of Ev},
\begin{eqnarray*}
  &&\lim_{v\to\infty}\sup_{E_v\in
  {\rm Min}[\psi_W(v)]}\Big\{\Big|\frac{|x|}{v^{1/(n+1)}}-\frac{1}{\omega_{n+1}^{1/(n+1)}}\Big|\,,\Big|\nu-\frac{x}{|x|}\Big|\,,
  \|u\|_{C^1(\pa B^{(1)})}\Big\}=0\,,
  \\
  &&\lim_{v\to\infty}\sup_{E_v\in{\rm Min}[\psi_W(v)]}\,\|f\|_{C^1(B_M\cap\pa F)}=0\,,\hspace{2cm}\forall M>R_2\,.
\end{eqnarray*}
\end{theorem}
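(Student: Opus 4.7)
Plan: The statement naturally splits into (a) existence together with the identity \eqref{main asymptotic expansion}, and (b) the geometric resolution (i)--(iv) of minimizers. Existence is by the direct method applied to \eqref{psiv}: the only nontrivial point is to exclude escape of mass to infinity, which is easy once one observes $\psi_W(v)\le P(B^{(v)})+C$ by comparison with a ball placed far from $W$ (or any other standard truncation/concentration-compactness argument).

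For \eqref{main asymptotic expansion}, the upper bound $\le$ is proved by turning a maximizer $(F,\nu)\in\mathrm{Max}[\Rr(W)]$ -- which exists and is asymptotically flat by Theorem \ref{thm main of residue} -- into a competitor of volume $v$. Concretely, take a ball $B^{(v)}(x_v)$ with $x_v=t_v\,\nu$ and $t_v\to+\infty$, cut it along $\pa F$ inside $\CC^\nu_R$ for $R$ large, fill in with $F$ itself, and restore volume $v$ by a small normal perturbation of the large sphere; the sharp decay \eqref{asymptotics of F} controls the cost and yields $P(E;\Om)\le P(B^{(v)})-\mathrm{res}_W(F,\nu)+\mathrm{o}(1)$ as $v\to\infty$ (then $R\to\infty$). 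The matching lower bound $\ge$ is proved by extracting from a minimizing sequence $E_{v_k}$ (without recentering near $x_{v_k}$, i.e.\ looking near $W$) an $L^1_\mathrm{loc}$ subsequential limit $F_\infty$. The basic resolution \eqref{basic C1 estimate} restricts the location of the perimeter defect $P(B^{(v_k)})-P(E_{v_k};\Om)$ inside $B_{R_0(v_k)\,v_k^{1/(n+1)}}$; rearranging $P(E_{v_k};\CC^\nu_R\setminus W)-\om_n R^n$ as $k\to\infty$ then $R\to\infty$ identifies this deficit with $\mathrm{res}_W(F_\infty,\nu_\infty)\le\Rr(W)$, and shows $F_\infty\in\mathrm{Max}[\Rr(W)]$.

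Part (i) follows by combining \eqref{main asymptotic expansion} with the quantitative isoperimetric inequality \eqref{quantitative euclidean isop}, which yields the $L^1$-bound \eqref{nonsharp rates}; Allard's theorem applied to the rescaled boundary $v^{-1/(n+1)}\pa E_v$ (whose mean curvature is of order $1$) converts this $L^1$-closeness into the normal-graph representation \eqref{x and u of Ev}, with the scale $R_0(v)\,v^{1/(n+1)}$ arising from quantifying when the $L^1$-deficit is small enough for Allard to trigger. Part (ii) is the main new content: the limit $F\in\mathrm{Max}[\Rr(W)]$ produced in the lower bound, together with \eqref{main residue graphicality of F}--\eqref{asymptotics of F} and Allard applied near $\pa F$, yields \eqref{f of Ev} on any fixed annulus $A_{R_2}^S$; the hard step is to propagate this graph representation from the compact scale $S$ out to $R_1\,v^{1/(n+1)}$ with $R_1$ independent of $v$. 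This is exactly where the mesoscale flatness criterion is invoked: along $(\pa E_v)\cap A_S^{R_1 v^{1/(n+1)}}$ the mean curvature is $\le C v^{-1/(n+1)}$ and the tangent planes are close to the fixed hyperplane $\nu^\perp$ (thanks to \eqref{asymptotics of F} on $\pa F$ and the closeness to $\pa F$ already obtained on the compact scale), so the criterion forces the graph representation to persist across the whole mesoscale.

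Part (iii) is a topological consequence of the overlap region $A_{R_0(v)v^{1/(n+1)}}^{R_1 v^{1/(n+1)}}$ (nonempty for $v>v_0$, since $R_0(v)\to 0$): on this annulus $(\pa E_v)$ is simultaneously described as a $C^1$-small graph over a large piece of the sphere $\pa B^{(v)}(x)$ and as a $C^1$-small graph over $\pa F\setminus B_{R_2}$, hence the two parametrizations can be glued into a single diffeomorphism between $(\pa E_v)\setminus B_{R_2}$ and the $n$-disk obtained by gluing $\pa B^{(v)}(x)\setminus B_{R_0 v^{1/(n+1)}}$ (a spherical cap, hence a disk) with $\pa F\setminus B_{R_2}$ (an annulus over $\nu^\perp$) along their common annular region. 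Finally, (iv) is a compactness-contradiction: if one of the listed quantities failed to converge along some sequence $v_k\to\infty$, a subsequence would produce, via the lower-bound analysis, a limit that is at once a maximizer of $\Rr(W)$ and close to a round ball in the sense of (i), violating the failure of (iv). The principal obstacle throughout is (ii), and within it the invocation of the mesoscale flatness criterion, which is the key new analytical ingredient beyond the classical perimeter-minimizer toolkit.
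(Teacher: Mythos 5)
Your architecture coincides with the paper's: a gluing construction for the upper bound in \eqref{main asymptotic expansion}, a subsequential-limit argument for the lower bound, quantitative isoperimetry plus improved convergence for (i), the mesoscale flatness criterion for (ii), the overlap of the two resolutions for (iii), and compactness for (iv). Two steps, however, would not go through as you describe them. First, the lower bound: writing $P(E_{v_k};\CC^\nu_R\setminus W)-\om_nR^n$ and letting $k\to\infty$, $R\to\infty$ does not by itself ``identify the deficit with ${\rm res}_W(F_\infty,\nu_\infty)$''. To compare $\psi_W(v_k)-P(B^{(v_k)})$ with a residue you must close $E_{v_k}$ off outside a fixed cylinder $\CC_r^\nu$ by a half-space and invoke isoperimetry, and the resulting gluing error on $\pa\CC_r^\nu$ is only ${\rm o}(1)$ as $r\to\infty$ if $\pa F_\infty$ approaches a hyperplane at the sharp rate \eqref{asymptotics of F end}. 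Establishing that $F_\infty\cup W\in\F$ at all, and that $\pa F_\infty$ has this decay, is itself a blowdown problem for an exterior minimal surface; the paper resolves it with the sharp area bound \eqref{growth bound for F} (via spherical symmetrization), Theorem \ref{theorem mesoscale criterion}-(ii), Simon's uniqueness-of-blowdown theorem, and the Schoen-type expansion of Proposition \ref{prop schoen}. Your sketch omits this entirely, and \eqref{basic C1 estimate} cannot substitute for it, since it gives no information on $\pa E_{v_k}$ (hence on $\pa F_\infty$) at bounded distance from $W$.

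Second, in (ii) you verify the wrong hypotheses of the mesoscale criterion. Small mean curvature together with tangent planes close to $\nu^\perp$ near $\pa F$ only furnishes the flatness input \eqref{mesoscale delta small s8} at the mesoscale $s$; the criterion then propagates graphicality only up to $S_*/16$, where $S_*$ is controlled by the radius $R_*$ of \eqref{mesoscale Rstar larger s4}, i.e.\ by how far out the area deficit $\de_{V,R,\Lambda}$ stays above $-\e_0$. To reach $R_1\,v^{1/(n+1)}$ one must prove $R_*\gtrsim v^{1/(n+1)}$, and this is done not near $\pa F$ but at the scale $\tau_0\,v^{1/(n+1)}$, using the resolution of part (i) to show that the area ratios of $\pa E_v$ in balls of that radius are within $\e_0/2$ of $\om_n$, and then monotonicity of the deficit to cover all intermediate radii. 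This interplay --- the spherical resolution at large scale feeding the deficit hypothesis that lets graphicality over $\pa F$ persist past the scale where the deficit changes sign --- is the crux of the argument; without it the criterion yields graphicality only on a bounded annulus, the domains of (i) and (ii) need not overlap, and your proof of (iii) collapses as well.
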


\begin{remark}[Resolution up to the obstacle]\label{remark up to the obstacle}
  {\rm By Remark \ref{remark regularity} and a covering argument, if $n\le 6$, $\de>0$, and $v>v_0(n,W,\de)$, then \eqref{f of Ev} holds with $B_{R_1\,v^{1/(n+1)}}\setminus I_\de(W)$ in place of $B_{R_1\,v^{1/(n+1)}}\setminus B_{R_2}$, where $I_\de(W)$ is the open $\de$-neighborhood of $W$. Similarly, when $\pa\Om\in C^{1,1}$ and $n=2$ (and thus $\Om\cap\pa F$ is regular up to the obstacle), we can find $v_0$ (depending on $n$ and $W$ only) such that \eqref{f of Ev} holds with $B_{R_1\,v^{1/(n+1)}}\cap \Om$ in place of $B_{R_1\,v^{1/(n+1)}}\setminus B_{R_2}$, that is, graphicality over $\pa F$ holds up to the obstacle itself.}
\end{remark}

\begin{remark}
  {\rm If $W$ is convex and $J$ is an half-space, then $\psi_W(v)\ge\psi_J(v)$ for every $v>0$, with equality for $v>0$ if and only if $\pa W$ contains a flat facet supporting an half-ball of volume $v$; see \cite{choeghomiritore,fuscomoriniCONVEX}. Since $\psi_J(v)=P(B^{(v)})/2^{1/(n+1)}$ and $\psi_W(v)-P(B^{(v)})\to-\Rr(W)$ as $v\to\infty$, the bound $\psi_W(v)\ge\psi_J(v)$ is far from optimal if $v$ is large. Are there stronger global bounds than $\psi_W\ge\psi_J$ on convex obstacles? Similarly, it would be interesting to quantify the convergence towards $\Rr(W)$ in \eqref{main asymptotic expansion}, or even that of $\pa E_v$ towards $\pa B^{(v)}$ and $\pa F$ (where \eqref{nonsharp rates} should not to be sharp).}
\end{remark}


\subsection{The mesoscale flatness criterion}\label{subsection mesoscale flatness criterion intro} We work with with hypersurfaces $M$ whose mean curvature is bounded by $\Lambda\ge0$ in an annulus $B_{1/\Lambda}\setminus\ov{B}_R$, $R\in(0,1/\Lambda)$. Even without information on  $M$ inside $B_R$ (where $M$ could have a non-trivial boundary, or topology, etc.) the classical proof of the monotonicity formula can be adapted to show the monotone increasing character on  $r\in(R,1/\Lambda)$ of
\begin{eqnarray}\nonumber
\Theta_{M,R,\Lambda}(r)&=&
\frac{\H^n\big(M\cap(B_r\setminus B_R)\big)}{r^n}+\frac{R}{n\,r^n}\,\int_{M\cap\pa B_R}\frac{|x^{TM}|}{|x|}\,d\H^{n-1}
\\\label{theta smooth}
&&
+\Lambda\,\int_R^r\frac{\H^n\big(M\cap(B_\rho\setminus B_R)\big)}{\rho^n}\,d\rho\,,\,\,\,\,
\end{eqnarray}
(here $x^{TM}={\rm proj}_{T_xM}(x)$);
moreover, if $\Theta_{M,R,\Lambda}$ is constant over $(a,b)\subset(R,1/\Lambda)$, then $M\cap(B_b\setminus\ov{B}_a)$ is a cone. Since the constant density value corresponding to $M=H\setminus B_R$, $H$ an hyperplane through the origin, is $\om_n$ (as a result of a double cancellation which also involves the ``boundary term'' in $\Theta_{H\setminus B_R,R,0}$), we consider the {\bf area deficit}
\begin{equation}
  \label{def of delta}
  \de_{M,R,\Lambda}(r)=\om_n-\Theta_{M,R,\Lambda}(r)\,,\qquad r\in(R,1/\Lambda)\,,
\end{equation}
which defines a decreasing quantity on $(R,1/\Lambda)$. Here we use the term ``deficit'', rather than the more usual term ``excess'', since $\de_{M,R,\Lambda}$ does not necessarily have non-negative sign (which is one of the crucial property of ``excess quantities'' typically used in $\e$-regularity theorems, see, e.g., \cite[Lemma 22.11]{maggiBOOK}). Recalling that $A_r^s=B_s\setminus\cl(B_r)$ if $s>r>0$, we are now ready to state the following ``smooth version'' of our mesoscale flatness criterion (see Theorem \ref{theorem mesoscale criterion} below for the varifold version).

\begin{theorem}[Mesoscale flatness criterion (smooth version)]\label{theorem mesoscale smooth}
If $n\ge 2$, $\Gamma\ge 0$, and $\s>0$, then there are $M_0$ and $\e_0$ positive and depending on $n$, $\Gamma$ and $\s$ only, with the following property. Let $\Lambda\ge0$, $R\in(0,1/\Lambda)$, and $M$ be a smooth hypersurface with mean curvature bounded by $\Lambda$ in $A^{1/\Lambda}_R$, and with
  \begin{equation}
    \label{meso intro bounds Gamma}
      \H^{n-1}\big(M\cap\pa B_{R}\big)\le\Gamma\,R^{n-1}\,,
  \quad
  \sup_{\rho\in(R,1/\Lambda)}\frac{\H^n\big(M\cap(B_\rho\setminus B_R)\big)}{\rho^n}\le\Gamma\,.
  \end{equation}
  If there is $s>0$ such that
  \begin{eqnarray}\label{meso intro range of s}
  &&\max\{M_0,64\}\,R<s<\frac{\e_0}{4\,\Lambda}\,,
  \end{eqnarray}
  and
  \begin{equation}
    \label{meso intro flat hp}
      |\de_{M,R,\Lambda}(s/8)|\le \e_0\,,
  \end{equation}
  and if, setting,
  \begin{equation}
    \label{meso intro propagation}
      R_*=\sup\Big\{\rho\ge\frac{s}8: \de_{M,R,\Lambda}(\rho)\ge -\e_0\Big\}\,,\qquad S_*=\min\Big\{R_*,\frac{\e_0}{\Lambda}\Big\}\,,
  \end{equation}
  we have $R_*>4\,s$ (and thus $S_*>4\,s$), then
  \begin{eqnarray}
    \label{meso intro conclusion}
      &&M\cap A_{s/32}^{S_*/16}=\big\{x+f(x)\,\nu_K: x\in K\big\}\cap A_{s/32}^{S_*/16}\,,
      \\
      \nonumber
      &&\sup\big\{|x|^{-1}\,|f(x)|+|\nabla f(x)|:x\in K\big\}\le C(n)\,\s\,, 
  \end{eqnarray}
  for a hyperplane $K$ with $0\in K$ and unit normal $\nu_K$, and for $f\in C^1(K)$.
\end{theorem}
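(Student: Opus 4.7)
The plan is a compactness-by-contradiction argument that converts the mesoscale deficit control $|\de_{M,R,\Lambda}(s/8)|\le\e_0$, propagated across the range $R_*>4s$, into varifold convergence of a rescaled sequence to a hyperplane through the origin, and then invokes Allard's regularity theorem for the graphical representation \eqref{meso intro conclusion}. Suppose to the contrary that the conclusion fails: then there exist sequences $M_k,R_k,\Lambda_k,s_k$ satisfying all hypotheses with $\e_0=1/k\to 0$, $M_0=M_0(k)\to\infty$, which violate \eqref{meso intro conclusion} for every hyperplane $K\ni 0$. Rescaling by $s_k^{-1}$, set $\tilde M_k=s_k^{-1}M_k$, $\tilde R_k=R_k/s_k$, $\tilde\Lambda_k=s_k\Lambda_k$. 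Scale invariance of $\Theta_{M,R,\Lambda}$, of $\de$, and of \eqref{meso intro bounds Gamma} is preserved, while \eqref{meso intro range of s} forces $\tilde R_k<1/M_0(k)\to 0$ and $\tilde\Lambda_k<\e_0/4\to 0$. Combining the monotone decreasing character of $\de$ with $|\de(s_k/8)|\le\e_0$ and $R_*^k>4\,s_k$, we obtain the uniform mesoscale bound $|\tilde\de_k(\rho)|\le\e_0$ for $\rho\in(1/8,4)$.

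By Allard's compactness theorem for integral varifolds, the density bound in \eqref{meso intro bounds Gamma} yields a subsequential varifold limit $V$ of $\tilde M_k$ on $\R^{n+1}\setminus\{0\}$, stationary because $\tilde\Lambda_k\to 0$. The crucial observation is that, in the rescaled version of \eqref{theta smooth}, the boundary term $\tfrac{\tilde R_k}{n\,\rho^n}\int_{\tilde M_k\cap\pa B_{\tilde R_k}}|x^{TM}|/|x|\,d\H^{n-1}$ vanishes uniformly on compact subsets of $(0,\infty)$, thanks to $\tilde R_k\to 0$ and the $\Gamma$-bound on $\H^{n-1}(\tilde M_k\cap\pa B_{\tilde R_k})/\tilde R_k^{n-1}$; the curvature remainder carrying $\tilde\Lambda_k$ is controlled similarly. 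Hence $\tilde\Theta_k(\rho)\to\|V\|(B_\rho)/\rho^n$, and the limit of the deficit bound reads $\|V\|(B_\rho)=\om_n\,\rho^n$ for $\rho\in(1/8,4)$. The equality case of the monotonicity formula for stationary integral varifolds then forces $V$ to be a cone on this annulus, hence, by stationarity, a global integral stationary cone with $\Theta_V(0)=\om_n$; Allard's $\e$-regularity at the origin (density-one criterion) identifies $V$ with the multiplicity-one varifold supported on some hyperplane $K_0\ni 0$.

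For every $x_0\in K_0$ with $|x_0|\in(1/32,\,S_*^k/(16\,s_k))$, the varifold convergence $\tilde M_k\to K_0$, the smallness of $\tilde\Lambda_k$, and the density bound verify the hypotheses of Allard's $\e$-regularity theorem on a ball of fixed relative radius around $x_0$, producing $C^{1,\a}$-small graphical representations of $\tilde M_k$ over $K_0$ near $x_0$, with norm going to zero as $k\to\infty$. A standard patching argument along a locally finite covering of the annulus $A_{1/32}^{S_*^k/(16\,s_k)}$ then assembles these local graphs into a single $C^1$ function $\tilde f_k$ over a hyperplane $\tilde K_k\ni 0$ (close to $K_0$) with $\|\tilde f_k\|_{C^1}\le C(n)\,\s$; undoing the rescaling contradicts the failure assumption. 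The main obstacle is the identification of the limit in the second step: the hypotheses $\max\{M_0,64\}\,R<s$ with $M_0=M_0(n,\Gamma,\s)$ eventually large, and $R_*>4\,s$, are precisely calibrated so that, after rescaling, the boundary contribution in \eqref{theta smooth} vanishes and the constancy $\|V\|(B_\rho)/\rho^n=\om_n$ holds on a range wide enough to pin down $V$ as a single multiplicity-one hyperplane rather than a more general stationary cone.
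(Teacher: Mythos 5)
There is a genuine gap, and it sits exactly where you write ``a standard patching argument along a locally finite covering of the annulus $A_{1/32}^{S_*^k/(16\,s_k)}$ then assembles these local graphs.'' After rescaling by $s_k^{-1}$, the upper radius of the annulus on which graphicality must be produced is $S_*^k/(16\,s_k)$, and the ratio $S_*^k/s_k$ is only bounded \emph{below} (by $4$): it can tend to $+\infty$ along your sequence, e.g.\ when $\Lambda_k\,s_k\to0$ and the deficit stays above $-\e_0$ for a long range. Varifold convergence of $\tilde M_k$ to a plane $K_0$ is only locally uniform on compact subsets of $\R^{n+1}\setminus\{0\}$, so Allard's theorem applied near a point $x_0$ with $|x_0|=L$ gives a graph only for $k\ge k(L)$, with $k(L)$ blowing up as $L\to\infty$; there is no single $k$ for which the local graphs cover the whole (k-dependent, possibly enormous) annulus. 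Moreover, even granting local graphicality at every scale, the reference hyperplane produced by Allard's theorem at scale $2^j s$ may drift with $j$; to end up with \emph{one} hyperplane $K$ and one $C^1$-small $f$ you must show the total tilt $\sum_j|\nu_{H_j}-\nu_{H_{j-1}}|$ is $O(\sqrt{\e_0})$, and this requires quantitative decay of the deficit, not soft compactness. What your argument actually proves is the paper's Lemma \ref{lemma graphicality lambda} (the graphicality lemma on a fixed compact annulus, obtained precisely by the rescaling--compactness--constancy--Allard chain you describe); the theorem itself is the propagation of that lemma across unboundedly many dyadic scales.

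The missing mechanism is the iteration of Sections \ref{subsection energy estimates on annuli}--\ref{subsection mesoscale flatness criterion}: the contraction estimate $\de(s_{j+1})\le\tau\,\{\de(s_j)+(1+\Gamma)\Lambda\,s_j\}$ with $\tau=\tau(n)<1$ (obtained by comparing the area of the spherical graph on concentric sub-annuli with the flat annulus via the second-variation expansion and the monotonicity gap), the energy estimate bounding $\int r^{n-1}u_j^2$ by the monotonicity gap after killing the Jacobi-field component of $u_j$ at one radius (Lemma \ref{lemma step one sigma}), and the resulting geometric summability of both $T_j$ and the tilts. These are what allow graphicality to persist \emph{past} the scale where $\de$ changes sign and all the way up to $S_*$ — which, as Remark \ref{remark unduloids} shows with the unduloid example, is the actual content of the statement. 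A further structural point your proposal does not engage with: the paper must run the iteration outward when $\de(s/8)\ge0$ and inward from $S_*$ when $\de(s/8)\le0$ (steps one and three of the proof of Theorem \ref{theorem mesoscale criterion}), and then match the two graphs on an overlap annulus when the deficit changes sign; a single blow-up at the mesoscale $s$ cannot see this dichotomy.
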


\begin{remark}[Structure of the statement]\label{remark structure of the statement}
  {\rm The first condition in \eqref{meso intro range of s} implicitly requires $R$ to be sufficiently small in terms of $1/\Lambda$, as it introduces a mesoscale $s$ which is both small with respect to $1/\Lambda$ and large with respect to $R$. The condition in \eqref{meso intro flat hp} express the flatness of $M$ at the mesoscale $s$ in terms of its area deficit. The final key assumption, $R_*>4\,s$, express the requirement that the area deficit does not decrease too abruptly, and stays above $-\e_0$ at least up to the scale $4\,s$. Under these assumptions, graphicality with respect to a hyperplane $K$ is inferred on an annulus whose lower radius $s/32$ has the order of the mesoscale $s$, and whose upper radius $S_*/16$ can be as large as the decay of the area deficit allows (potentially up to $\e_0/16\,\Lambda$ if $R_*=\infty$), but in any case not too large with respect to $1/\Lambda$.}
\end{remark}

\begin{remark}[Relationship to other flatness criteria]\label{remark relationship to other criteria}
If $M$ is a hypersurface containing the origin, so that, formally speaking, $R=0$, and the tangent cone of $M$ there is a plane, Theorem 1 reduces to Allard's theorem \cite{Allard}. Similarly, if $\Lambda=0$ and the exterior minimal hypersurface $M$ has a planar tangent cone at infinity, we recover the exterior blow-down results stated in  \cite{SimonMontecatini,SimonAIHP}. In particular, although the motivation for Theorem 1 comes from scenarios where both $R$ and $\Lambda$ are positive, it can also be viewed as a general framework containing as special cases the blow-up and blow-down flatness criteria for hypersurfaces with planar tangent cones.
\end{remark}

\begin{remark}[Sharpness of the statement]\label{remark unduloids}
  {\rm The statement is sharp in the sense that for a surface ``with bounded mean curvature and non-trivial topology inside a hole'', flatness can only be established on a mesoscale which is both large with respect to the size of the hole and small with respect to the size of the inverse mean curvature. An example is provided by unduloids $M_\e$ with waist size $\e$ and mean curvature $n$ in $\R^{n+1}$; see
  \begin{figure}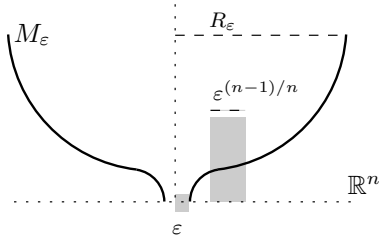
  \caption{\small{A half-period of an unduloid with mean curvature $n$ and waist size $\e$ in $\R^{n+1}$. By \eqref{def of veps}, the flatness of $M_\e$ is no smaller than ${\rm O}(\e^{2(n-1)/n})$, and is exactly ${\rm O}(\e^{2(n-1)/n})$ on an annulus sitting in the mesoscale ${\rm O}(\e^{(n-1)/n})$. This mesoscale is both very large with respect to waist size $\e$, and very small with respect to the size of the inverse mean curvature, which is order one.}}
  \label{fig undo}
  \end{figure}
  Figure \ref{fig undo}. A ``half-period'' of $M_\e$ is the graph $\{x+f_\e(x)\,e_{n+1}:x\in\R^n\,,\e<|x|<R_\e\}$ of
  \begin{equation}
    \label{def of veps}
      f_\e(x)=\int_\e^{|x|}\Big\{\Big(\frac{r^{n-1}}{r^n-\e^n+\e^{n-1}}\Big)^2-1\Big\}^{-1/2}\,dr\,,\qquad\e<|x|<R_\e\,,
  \end{equation}
  where $\e$ and $R_\e$ are the only solutions of $r^{n-1}=r^n-\e^n+\e^{n-1}$. Clearly $f_\e$ solves $-\Div(\nabla f_\e/\sqrt{1+|\nabla f_\e|^2})=n$ with $f_\e=0$, $|\nabla f_\e|=+\infty$ on $\{|x|=\e\}$, and $|\nabla f_\e|=+\infty$ on $\{|x|=R_\e\}$, where $R_\e=1-{\rm O}(\e^{n-1})$; moreover, $\min|\nabla f_\e|$ is achieved at $r={\rm O}(\e^{(n-1)/n})$, and if $r\in(a\,\e^{(n-1)/n},b\,\e^{(n-1)/n})$ for some $b>a>0$, then $|\nabla f_\e|={\rm O}_{a,b}(\e^{2(n-1)/n})$. Thus, the horizontal flatness of $M_\e$ is no smaller than ${\rm O}(\e^{2(n-1)/n})$, and has that exact order on a scale which is both very large with respect to the hole ($\e^{(n-1)/n}>\!\!\!>\e$) and very small with respect to the inverse mean curvature ($\e^{(n-1)/n}<\!\!\!<1$).}
\end{remark}

\begin{remark}[On the application to $\psi_W(v)$]
  {\rm Exterior isoperimetric sets $E_v$ with large volume $v$ have small constant mean curvature of order $\Lambda=\Lambda_0(n,W)/v^{1/(n+1)}$. We will work with ``holes'' of size $R=R_3(n,W)$, for some $R_3$ sufficiently large with respect to the radius $R_2$ appearing in Theorem \ref{thm main of residue}-(ii), and determined through the sharp decay rates \eqref{asymptotics of F}. 
  The decay properties of $F$ towards $\{x:x\cdot\nu=a\}$ when $(F,\nu)$ is a maximizer of $\Rr(W)$, the $C^1$-proximity of $\pa E$ to $\pa B^{(v)}(x)$ for $|x|\approx(\om_{n+1}/v)^{1/(n+1)}$, and the $C^1$-proximity of $\pa E$ to $\pa F$ for some optimal $(F,\nu)$ on bounded annuli of the form $A^{2\,R_3}_{R_2}$ are used in checking that \eqref{meso intro bounds Gamma} holds with $\Gamma=\Gamma(n,W)$, that $E_v$ is flat in the sense of \eqref{meso intro flat hp}, and, most importantly, that the area deficit $\de_{M,R,\Lambda}$ of $M=(\pa E_v)\setminus B_{R_3}$ lies above $-\e_0$ up to scale $r={\rm O}(v^{1/(n+1)})$ (which is the key information to deduce $R_*\approx1/\Lambda$), and thus obtain overlapping domains of resolutions in terms of $\pa B^{(v)}(x)$ and $\pa F$.}
\end{remark}

\begin{remark}
  {\rm While Theorem \ref{theorem mesoscale smooth} seems clearly applicable to other problems, there are situations where one may need to develop considerably finer ``mesoscale flatness criteria''. For example, consider the problem of ``resolving'' almost CMC boundaries undergoing bubbling \cite{ciraolomaggi2017,delgadinomaggimihailaneumayer,delgadinomaggi}. When the oscillation of the mean curvature around a constant $\Lambda$ is small, such boundaries are close to finite unions of mutually tangent spheres of radius $n/\Lambda$, and can be covered by $C^1$-small normal graphs over such spheres away from their tangency points up to distance $\e/\Lambda$, with $\e=\e(n)$, and provided the mean curvature oscillation is small in terms of $\e$. For propagating flatness up to a distance directly related to the oscillation of the mean curvature, one would need a version of Theorem \ref{theorem mesoscale smooth} for ``double'' spherical graphs; in the setting of blowup/blowdown theorems, this would be similar to passing to the harder case of multiplicity larger than one.}
\end{remark}

\begin{remark}[Comparison with blowup/blowdown results]
  {\rm From the technical viewpoint, Theorem \ref{theorem mesoscale smooth} fits into the framework set up by Allard and Almgren in \cite{allardalmgrenRADIAL} for the study of blowups and blowdowns of minimal surfaces with tangent integrable cones. At the same time, as exemplified by Remark \ref{remark unduloids}, Theorem \ref{theorem mesoscale smooth} really points in a different direction,  since it pertains to situations where neither blowup or blowdown limits make sense.
  Another interesting point is that, in \cite{allardalmgrenRADIAL}, the area deficit $\de_{M,R,\Lambda}$ is considered with a sign, non-positive for blowups, and non-negative for blowdowns, see \cite[Theorem 5.9(4), Theorem 9.6(4)]{allardalmgrenRADIAL}. A key insight here is that for hypersurfaces where the deficit changes sign, graphicality obtained through small negative (or positive) deficit nevertheless persists {\it past} the scale where $\delta_{M,R,\Lambda}$ vanishes, and possibly much farther depending on the surface in question; this is actually {\it crucial} for obtaining overlapping domains of resolutions in statements like \eqref{basic C1 estimate} and \eqref{main C1 estimate}.}
\end{remark}

\begin{remark}[Extension to general minimal cones]\label{remark extensions}
  {\rm Proving Theorem \ref{theorem mesoscale smooth} in higher codimension and with arbitrary {\it integrable} minimal cones should be possible with essentially the same proof presented here. We do not pursue this extension because, first, only the case of hypersurfaces and hyperplanes is needed in studying $\psi_W(v)$; and, second, in going for generality, one should work in the framework set up by Simon in \cite{Simon83,SimonMontecatini,simonETH}, which, at variance with the simpler Allard--Almgren's framework used here, allows one to dispense with the integrability assumption. In this direction, we notice that Theorem \ref{theorem mesoscale smooth} with $\Lambda=0$ and $R_*=+\infty$ is a blowdown result for exterior minimal surfaces (see also Theorem \ref{theorem mesoscale criterion}-(ii, iii)). A blowdown result for exterior minimal surfaces is outside the scope of \cite[Theorem 9.6]{allardalmgrenRADIAL} which pertains to {\it entire} minimal surfaces, but it is claimed, with a sketch of proof, on \cite[Page 269]{SimonMontecatini} as a modification of \cite[Theorem 5.5, $m<0$]{SimonMontecatini}. It should be mentioned that, to cover the case of exterior minimal surfaces, an additional term of the form $C\int_{\Sigma}(\dot u(t))^{2}$ should be added on the right side of  assumption \cite[5.3, $m<0$]{SimonMontecatini}. This additional term seems not to cause difficulties with the rest of the arguments leading to \cite[Theorem 5.5, $m<0$]{SimonMontecatini}. Thus Simon's approach, in addition to giving the blowdown analysis of exterior minimal surfaces, should also be viable for generalizing our mesoscale flatness criterion.}
\end{remark}

\subsection{Organization of the paper}\label{subsection organization} In section \ref{section mesoscale} we prove Theorem \ref{theorem mesoscale smooth} (actually, its generalization to varifolds, i.e. Theorem \ref{theorem mesoscale criterion}). In section \ref{section existence and quantitative isoperimetry} we prove those parts of Theorem \ref{thm main psi} which follow simply by quantitative isoperimetry (i.e., they do not require isoperimetric residues nor our mesoscale flatness analysis); see Theorem \ref{thm existence and uniform min}. Section \ref{section isoperimetric residues} is devoted to the study of isoperimetric residues and of their maximizers, and contains the proof Theorem \ref{thm main of residue}. We also present there a statement, repeatedly used in our analysis, which summarizes some results from \cite{Scho83}; see Proposition \ref{prop schoen}. Finally, in section \ref{section resolution for exterior}, we prove the energy expansion \eqref{main asymptotic expansion} and those parts of Theorem \ref{thm main psi} left out in section \ref{section existence and quantitative isoperimetry} (i.e., statements (ii, iii, iv)). This final section is, from a certain viewpoint, the most interesting part of the paper: indeed, it is only the detailed examination of those arguments that clearly illustrates the degree of fine tuning of the preliminary analysis of exterior isoperimetric sets and of maximizers of isoperimetric residues which is needed in order to allow for the application of the mesoscale flatness criterion.

\noindent {\bf Acknowledgements:} Supported by NSF-DMS RTG 1840314, NSF-DMS FRG 1854344, and NSF-DMS 2000034. We thank William Allard and Leon Simon for clarifications on \cite{allardalmgrenRADIAL} and \cite{SimonMontecatini} respectively, and Luca Spolaor for his comments on some preliminary drafts of this work.

\section{A mesoscale flatness criterion for varifolds}\label{section mesoscale} In section \ref{subsection statement meso} we introduce the class $\V_n(\Lambda,R,S)$ of varifolds used to reformulate Theorem \ref{theorem mesoscale smooth}, see Theorem \ref{theorem mesoscale criterion}. In sections \ref{subsection spherical graphs}-\ref{subsection energy estimates on annuli} we present two reparametrization lemmas (Lemma \ref{lemma step one sigma} and Lemma \ref{lemma step one}) and some ``energy estimates'' (Theorem \ref{theorem 7.15AA main estimate lambda}) for spherical graphs; in section \ref{subsection monotonicity of exterior varifolds} we state the monotonicity formula in $\V_n(\Lambda,R,S)$ and some energy estimates involving the monotonicity gap;
in section \ref{subsection mesoscale flatness criterion}, we prove Theorem \ref{theorem mesoscale criterion}.

\subsection{Statement of the criterion}\label{subsection statement meso} Given an $n$-dimensional integer rectifiable varifold $V=\var(M,\theta)$ in $\R^{n+1}$, defined by a locally $\H^n$-rectifiable set $M$, and by a multiplicity function $\theta:M\to\N$ (see \cite{SimonLN}), we denote by $\|V\|=\theta\,\H^n\llcorner M$ the weight of $V$, and by $\de V$ the first variation of $V$, so that
$\de V(X)=\int\,\Div^T\,X(x)\,dV(x,T)=\int_M\,\Div^M\,X(x)\,\theta\,d\H^n_x$ for every $X\in C^1_c(\R^{n+1};\R^{n+1})$. Given $S>R>0$ and $\Lambda\ge0$, we consider the family
\[
\V_n(\Lambda,R,S)\,,
\]
of those $n$-dimensional integral varifolds $V$ with $\spt\,V\subset\R^{n+1}\setminus B_R$ and
\begin{eqnarray*}
\de V(X)=\int\,X\cdot\vec{H}\,d\|V\|+\int X\cdot\nu_V^{\rm co}\,d \,\,{\rm bd}_V\,,\qquad \forall X\in C^1_c(B_S;\R^{n+1})\,,
\end{eqnarray*}
holds for a Radon measure ${\rm bd}_V$ in $\R^{n+1}$ supported in $\pa B_R$, and Borel vector fields $\vec{H}:\R^{n+1}\to\R^{n+1}$ with $|\vec{H}|\le \Lambda$ and $\nu_V^{\rm co}:\pa B_R\to\R^{n+1}$ with $|\nu_V^{\rm co}|=1$. We let $\M_n(\Lambda,R,S)=\{V\in\V_n(\Lambda,R,S):\mbox{$V=\var(M,1)$ for $M$ smooth}\}$, that is, $M\subset \R^{n+1}\setminus B_R$ is a smooth hypersurface with boundary in $A_R^S$, $\bd(M)\subset \pa B_R$, and $|H_M|\le\Lambda$. If $V\in\M_n(\Lambda,R,S)$, then $\vec{H}$ is the mean curvature vector of $M$, ${\rm bd}_V=\H^{n-1}\llcorner\bd(M)$, and $\nu_V^{\rm co}$ is the outer unit conormal to $M$ along $\pa B_R$. Given $V\in\V_n(\Lambda,R,S)$, we define $\Theta_{V,R,\Lambda}(r)$ as
\begin{eqnarray}\label{def theta}
\frac{\|V\|(B_r\setminus B_R)}{r^n}
  -\frac1{n\,r^n}\,\int x\cdot\nu^{{\rm co}}_V\,d\,{\rm bd}_V+\Lambda\,\int_R^r\,\frac{\|V\|(B_\rho\setminus B_R)}{\rho^n}\,d\rho\,;
\end{eqnarray}
$\Theta_{V,R,\Lambda}(r)$ is increasing for $r\in(R,S)$ (Theorem \ref{theorem 7.17AA exterior lambda}-(i) below), and equal to \eqref{theta smooth} when $V\in\M_n(\Lambda,R,S)$. The {\bf area deficit} of $V$ is then defined as in \eqref{def of delta}, while given a hyperplane $H$ in $\R^{n+1}$ with $0\in H$ we call the quantity
\begin{equation}
  \label{def of omega H}
\int_{A_r^s}\,\om_H(y)^2\,d\|V\|_y\,,\qquad \om_H(y)=\atan\Big(\frac{|y\cdot\nu_H|}{|\p_H y|}\Big)\,,
\end{equation}
the {\bf angular flatness of $V$ on the annulus $A_r^s=B_s\setminus\cl(B_r)$ with respect to $H$}. (See \eqref{seeee} for the notation concerning $H$.)

\begin{theorem}[Mesoscale flatness criterion]\label{theorem mesoscale criterion}
If $n\ge 2$, $\Gamma\ge 0$, and $\s>0$ then there are positive constants $M_0$ and $\e_0$, depending on $n$, $\Gamma$ and $\s$ only, with the following property. If $\Lambda\ge0$, $R\in(0,1/\Lambda)$, $V\in\V_n(\Lambda,R,1/\Lambda)$,
  \begin{equation}
    \label{Gamma bounds}
      \|{\rm bd}_V\|(\pa B_R)\le\Gamma\,R^{n-1}\,,\qquad \sup_{\rho\in(R,1/\Lambda)}\frac{\|V\|(B_\rho\setminus B_R)}{\rho^n}\le\Gamma\,.
  \end{equation}
  and for some $s>0$ we have
  \begin{eqnarray}
    \label{mesoscale bounds}
  &&\frac{\e_0}{4\,\Lambda}>s>\max\{M_0,64\}\,R\,,
  \\
  \label{mesoscale delta small s8}
  &&|\de_{V,R,\Lambda}(s/8)|\le \e_0\,,
  \\
  \label{mesoscale Rstar larger s4}
  &&
  R_*:=\sup\Big\{\rho\ge\frac{s}8: \de_{V,R,\Lambda}(\rho)\ge -\e_0\Big\}\ge 4\,s\,,
  \end{eqnarray}
  then {\bf (i):} if $S_*=\min\{R_*,\e_0/\Lambda\}<\infty$,
  then there is an hyperplane $K\subset\R^{n+1}$ with $0\in K$ and $u\in C^1((K\cap\SS^n)\times(s/32,S_*/16))$ with
  \begin{eqnarray}\nonumber
  &&(\spt\,V)\cap A_{s/32}^{S_*/16}=\Big\{r\,\frac{\om+u(r,\om)\,\nu_K}{\sqrt{1+u(r,\om)^2}}:\om\in K\cap\SS^n\,,r\in\big(s/32,S_*/16\big)\Big\}
  \\\label{mesoscale thesis graphicality}
  &&\sup_{(K\cap\SS^n)\times\big(s/32,S_*/16\big)}\Big\{|u|+|\nabla^{K\cap\SS^n}u|+|r\,\pa_ru|\Big\}\le C(n)\,\s\,;
  \end{eqnarray}


  \noindent{\bf (ii):} if $\Lambda=0$ and $\de_{V,R,0}\ge -\varepsilon_0$ on $(s/8,\infty)$, then $\de_{V,R,0}\ge 0$ on $(s/8,\infty)$, \eqref{mesoscale thesis graphicality} holds with $S_*=\infty$, and one has decay estimates, continuous in the radius, of the form
  \begin{eqnarray}
  \label{decay deficit for exterior minimal surfaces}
  \!\!\!\!\!\!\de_{V,R,0}(r)\!\!&\le& \!\!C(n)\,\Big(\frac{s}{r}\Big)^\a\,\de_{V,R,0}\Big(\frac{s}8\Big)\,,\qquad\forall r>\frac{s}4\,,
  \\
  \label{decay flatness for exterior minimal surfaces}
  \!\!\!\!\!\!\frac1{r^n}\,\int_{A_r^{2\,r}}\,\om_K^2\,d\|V\|\!\!&\le&\!\! C(n)\,(1+\Gamma)\,\Big(\frac{s}{r}\Big)^\a\,\de_{V,R,0}\Big(\frac{s}8\Big)\,,\qquad\forall r>\frac{s}4\,,
  \end{eqnarray}
  for some $\a(n)\in(0,1)$.
\end{theorem}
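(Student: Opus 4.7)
The plan is to adapt the Allard--Almgren framework for integrable tangent cones, here reducing to the single integrable model of a hyperplane through the origin, to the mesoscale regime where neither a tangent cone at zero nor at infinity is available but a small area deficit is provided at one intermediate scale. The first step is to exploit the monotonicity of $\Theta_{V,R,\Lambda}$ (Theorem \ref{theorem 7.17AA exterior lambda}-(i)), which makes $\de_{V,R,\Lambda}$ decreasing in $r$. Combined with $|\de_{V,R,\Lambda}(s/8)|\le\varepsilon_0$ and $\de_{V,R,\Lambda}(\rho)\ge-\varepsilon_0$ up to $\rho=R_*$, this yields the uniform two-sided bound $|\de_{V,R,\Lambda}(r)|\le\varepsilon_0$ for every $r\in(s/8,S_*)$. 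This uniform smallness of the deficit across a long dyadic range is the engine for the rest of the proof.

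Next I would initialize graphicality at a scale comparable to the mesoscale. Because $\Lambda\,s\le\varepsilon_0/4$, $R<s/\max\{M_0,64\}$, and the mass density is bounded by $\Gamma$, the restriction of $V$ to an annulus like $A_{s/16}^{2s}$ is $\varepsilon_0$-close to being a stationary varifold with no boundary contribution (the boundary term is supported on $\pa B_R$, well inside $B_{s/32}$). A compactness-contradiction argument based on Allard's regularity theorem then produces a hyperplane $K_0$ through $0$ and a $C^1$-small function $u_0$ such that $V\llcorner A_{s/32}^{s}$ is the graph of $u_0$ over $K_0$. This provides the base case for propagation.

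The main step is then to propagate this graphicality, over a single hyperplane $K$, out to scale $S_*/16$ by dyadic iteration on the radii $r_k=2^k\,s/32$. On each dyadic piece $A_{r_k}^{4\,r_k}$ I would use the spherical reparametrization lemmas (Lemma \ref{lemma step one sigma} and Lemma \ref{lemma step one}) to represent $V$ as the normal graph of a function $u_k$ over a hyperplane $K_k$. The energy estimates of Theorem \ref{theorem 7.15AA main estimate lambda} then control $\|u_k\|_{C^1}$ and the tilt $|K_{k+1}-K_k|$ by a constant multiple of $\sqrt{\de_{V,R,\Lambda}(r_{k-1})-\de_{V,R,\Lambda}(8\,r_k)}+\Lambda\,r_k$. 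Monotonicity of $\de_{V,R,\Lambda}$ turns the right-hand side into a telescoping series; summing and using the two-sided bound $|\de|\le\varepsilon_0$ together with $\Lambda\,S_*\le\varepsilon_0$ bounds the total tilt by $C(n)\,\sqrt{\varepsilon_0}$. Choosing $\varepsilon_0$ small in terms of $\s$ gives $|K_k-K_0|\le C(n)\,\s$ for all $k$, and the Cauchy property produces a limiting hyperplane $K$ over which the local graphs assemble into a single graph satisfying \eqref{mesoscale thesis graphicality}.

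The main obstacle is the absence of a sign for the deficit, which distinguishes our situation from classical blowup or blowdown schemes where one-sided excess/deficit bounds are available. This is handled by noticing that the energy estimates of Theorem \ref{theorem 7.15AA main estimate lambda} only require the non-negative monotonicity increments $\de(r)-\de(8\,r)$ rather than the deficit itself, together with the uniform two-sided bound $|\de|\le\varepsilon_0$ to close the $L^2$ spherical-graph estimates. For part (ii), once graphicality holds with $S_*=\infty$, the monotone limit $\lim_{r\to\infty}\de_{V,R,0}(r)$ is the density at infinity of $V$, which under the established graph structure must be that of a stationary multiplicity-one hyperplane; hence $\de_{V,R,0}\ge 0$ everywhere by monotonicity. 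The decay estimates \eqref{decay deficit for exterior minimal surfaces}--\eqref{decay flatness for exterior minimal surfaces} then follow from iterating the linearized Jacobi-field decay on the cylinder $\SS^n\times\R$ applied to $u_k$ at a geometric rate, as in Simon's exterior blowdown theorem \cite{SimonMontecatini}.
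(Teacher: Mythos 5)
The high-level outline you give — initialize graphicality near the mesoscale $s$ by a compactness/Allard argument, then propagate dyadically by controlling the tilt between consecutive hyperplanes via the monotonicity gap, finally using $\Lambda=0$ rigidity and linearized decay for part (ii) — is indeed the architecture of the paper's proof (Lemma \ref{lemma graphicality lambda}, the dyadic iteration in the Claim of step one, Theorem \ref{theorem 7.15AA main estimate lambda}--Theorem \ref{theorem 7.17AA exterior lambda}). But there is a genuine gap in how you propose to sum the tilts, and this gap sits exactly on the main novelty of the statement.

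You control the tilt by $|\nu_{H_{k+1}}-\nu_{H_k}|\lesssim\sqrt{\de(r_{k-1})-\de(8\,r_k)}+\Lambda\,r_k$, and then claim that ``monotonicity turns the right-hand side into a telescoping series; summing and using the two-sided bound $|\de|\le\e_0$ \ldots{} bounds the total tilt by $C(n)\sqrt{\e_0}$.'' This is false: telescoping gives $\sum_k\big(\de(r_{k-1})-\de(8\,r_k)\big)\lesssim\e_0$, but $\sum_k\sqrt{a_k}$ is \emph{not} controlled by $\sqrt{\sum_k a_k}$. Indeed $a_k=k^{-2}$ has summable $a_k$ but divergent $\sum\sqrt{a_k}$; and Cauchy--Schwarz over $N\approx\log_2(S_*/s)$ dyadic scales only yields $\sqrt{N}\,\sqrt{\e_0}$, which is not bounded because $S_*/s$ can be arbitrarily large. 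The two-sided bound $|\de|\le\e_0$ plus the monotonicity increments alone therefore do not close the argument. What the paper actually proves and uses is the \emph{geometric decay} of the deficit, $\de(s_j)\le\tau\big(\de(s_{j-1})+(1+\Gamma)\,\Lambda\,s_{j-1}\big)$ (see \eqref{claim deltaj smaller deltaj-1}), which gives $\de(s_j)\lesssim\tau^j\e_0$ and hence $\sqrt{a_k}\lesssim\tau^{k/2}\sqrt{\e_0}$, making the square-root series summable.

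But this geometric decay estimate is only useful when $\de\ge0$ (otherwise it does not constrain the magnitude of $\de$), which forces a structural case distinction that your proposal omits entirely. The paper's step one treats $\de(s/8)\ge0$ and iterates outward while $\de$ stays nonnegative; step three treats $\de(s/8)\le0$ and iterates inward from $S_*$ using the nonnegativity of $-\de$; step four glues the two regimes across the radius $R_+$ where $\de$ changes sign, using Lemma \ref{lemma graphicality lambda}-(ii) to produce a cylindrical graph on a crossover annulus and the auxiliary estimate \eqref{prove planes are close} to conclude the two limiting hyperplanes are within $\e'$ of one another, followed by a reparametrization via Lemma \ref{lemma step one}-(i). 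Without this inward/outward decomposition and gluing, the proposal does not reach the case where the deficit changes sign across $[s/8,S_*]$, which is precisely the scenario emphasized in the paper as the distinguishing feature relative to blowup/blowdown theorems.

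Your treatment of part (ii) is essentially correct in spirit (density at infinity of a stationary varifold under a global graph structure forces $\de_\infty=0$, then iterate the linear decay), and is consistent with the paper's step two, but it presupposes the propagation machinery whose summability step is the issue above.
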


\begin{remark}
  {\rm In Theorem \ref{theorem mesoscale criterion}, graphicality is formulated in terms of the notion of {\it spherical graph} (see section \ref{subsection spherical graphs}) which is more natural than the usual notion of ``cylindrical graph'' in setting up the iteration procedure behind Theorem \ref{theorem mesoscale criterion}. Spherical graphicality in terms a $C^1$-small $u$ as in \eqref{mesoscale thesis graphicality} translates into cylindrical graphicality in terms of $f$ as in \eqref{meso intro conclusion} with
  $f(x)/|x|\approx u(|x|,\hat{x})$ and $\nabla_{\hat x} f(x)-(f(x)/|x|)\approx |x|\,\pa_r\,u(|x|,\hat{x})$  for $x\ne 0$ and $\hat{x}=x/|x|$; see, in particular, Lemma \ref{lemma D1} in appendix \ref{appendix spherical cylindrical}.}
\end{remark}

\subsection{Spherical graphs}\label{subsection spherical graphs} We start setting up some notation. We denote by
\[
\H
\]
the family of the oriented hyperplanes $H\subset\R^{n+1}$ with $0\in\H$, so that choice of $H\in\H$ implies that a unit normal vector $\nu_H$ to $H$. Given $H\in\H$, we set
\begin{equation}
  \label{seeee}
  \Sigma_H=H\cap\SS^n\,,\qquad \p_H:\R^{n+1}\to H\,,\qquad \q_H:\R^{n+1}\to H^\perp\,,
\end{equation}
for the equatorial sphere defined by $H$ on $\SS^n$ and for the orthogonal projections of $\R^{n+1}$ onto $H$ and onto $H^\perp=\{t\,\nu_H:t\in\R\}$. We set
\[
\X_\s(\Sigma_H)=\big\{u\in C^1(\S_H):\|u\|_{C^1(\S_H)}<\s\big\}\,,\qquad\s>0\,.
\]
Clearly there is $\s_0=\s_0(n)>0$ such that if $H\in\H$ and $u\in\X_{\s_0}(\S_H)$, then
\[
f_u(\om)=\frac{\om+u(\om)\,\nu_H}{\sqrt{1+u(\om)^2}}\,,\qquad \om\in\S_H\,,
\]
defines a diffeomorphism of $\S_H$ into an hypersurface $\S_H(u)\subset\SS^n$, namely
\begin{equation}
  \label{def of spherical graph}
  \S_H(u)=f_u(\Sigma_H)=\Big\{\frac{\om+u(\om)\,\nu_H}{\sqrt{1+u(\om)^2}}:\om\in\S_H\Big\}\,.
\end{equation}
We call $\S_H(u)$ a {\bf spherical graph} over $\S_H$. Exploiting the fact that $\S_H$ is a minimal hypersurface in $\SS^n$ and that if $\{\tau_i\}_i$ is a local orthonormal frame on $\S_H$ then $\nu_H\cdot\nabla_{\tau_i}\tau_j=0$, a second variation computation (see, e.g., \cite[Lemma 2.1]{EngelSpolaVelichGandT}) gives, for $u\in\X_\s(\S_H)$,
\[\Big|\H^{n-1}(\S_H(u))-n\,\om_n-\frac12\,\int_{\S_H}\!\!\!|\nabla^{\S_H} u|^2-(n-1)\,u^2\Big|\le\! C(n)\,\s\!\int_{\S_H}\!\!\!u^2+|\nabla^{\Sigma_H} u|^2\,,
\](where $n\,\om_n=\H^{n-1}(\S_H)=\H^{n-1}(\S_H(0))$). We recall that $u\in L^2(\S_H)$ is a unit norm Jacobi field of $\S_H$ (i.e., a zero eigenvector of $\Delta^{\S_H}+(n-1)\,\Id$ with unit $L^2(\S_H)$-norm) if and only if there is $\tau\in\SS^n$ with $\tau\cdot\nu_H=0$ and   $u(\om)=c_0(n)\,(\om\cdot\tau)$ ($\om\in\S_H$) for $c_0(n)=(n/\H^{n-1}(\S_H))^{1/2}$. We denote by $E^0_{\S_H}$ the orthogonal projection operator of $L^2(\Om)$ onto the span of the Jacobi fields of $\S_H$. The following lemma provides a way to reparameterize spherical graphs over equatorial spheres so that the projection over Jacobi fields is annihilated.

  \begin{lemma}\label{lemma step one sigma}
  There exist constants $C_0$, $\e_0$ and $\s_0$, depending on the dimension $n$ only, with the following properties:

  \noindent {\bf (i):} if $H,K\in\H$, $|\nu_H-\nu_K|\le\e<\e_0$, and $u\in\X_\s(\S_H)$ for $\s<\s_0$, then the map $T_u^K:\S_H\to\S_K$ defined by
  \[
  T_u^K(\om)=\frac{\p_K(f_u(\om))}{|\p_K(f_u(\om))|}=\frac{\p_K\om+u(\om)\,\p_K\nu_H}{|\p_K\om+u(\om)\,\p_K\nu_H|}\,,\qquad\om\in\S_H\,,
  \]
  is a diffeomorphism between $\S_H$ and $\S_K$, and $v_u^K:\S_K\to\R$ defined by
  \begin{equation}
    \label{vuK def}
      v_u^K(T_u^K(\om))=\frac{\q_K(f_u(\om))}{|\p_K(f_u(\om))|}
      =\frac{\nu_K\cdot(\om+u(\om)\,\nu_H)}{|\p_K\om+u(\om)\,\p_K\nu_H|}\,,\qquad\om\in\S_H\,,
  \end{equation}
  is such that
  \begin{eqnarray}
  \label{for later}
  &&v_u^K\in\X_{C(n)\,(\s+\e)}(\S_K)\,,\quad \S_H(u)=\S_K(v_u^K)\,,
  \\
    \label{auto 4 pre}
    &&\Big|\int_{\S_K}(v_u^K)^2-\int_{\S_H}u^2\Big|\le C(n)\,\Big\{|\nu_H-\nu_K|^2+\int_{\S_H}u^2\Big\}\,.
  \end{eqnarray}

  \noindent {\bf (ii):} if $H\in\H$ and $u\in\X_{\s_0}(\Sigma_H)$, then there exist $K\in\H$ with $|\nu_H-\nu_K|<\e_0$ and $v\in\X_{C_0\,\s_0}(\Sigma_K)$ such that
   \begin{eqnarray}\label{auto 1}
    &&\Sigma_H(u)=\Sigma_K(v)\,,
    \\\label{auto 2}
    &&E_{\Sigma_K}^0[v]=0\,,
    \\\label{auto 3}
    &&|\nu_K-\nu_H|^2\le C_0(n)\,\int_{\Sigma_H}\,\big(E_{\Sigma_H}^0[u]\big)^2\,,
    \\\label{auto 4}
    &&\Big|\int_{\Sigma_K}v^2-\int_{\Sigma_H}u^2\Big|\le C_0(n)\,\int_{\S_H}u^2\,.
  \end{eqnarray}
\end{lemma}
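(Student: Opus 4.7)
My plan for Part (i) is direct verification from the explicit formulas. For $\om\in\Sigma_H$, the orthogonality $\om\cdot\nu_H=0$ gives $|\om\cdot\nu_K|=|\om\cdot(\nu_K-\nu_H)|\le\e$, so $|\p_K\om|\ge\sqrt{1-\e^2}$; likewise a short computation yields $|\p_K\nu_H|\le|\nu_H-\nu_K|\le\e$. Hence $|\p_K\om+u(\om)\,\p_K\nu_H|\ge\sqrt{1-\e^2}-\s\,\e\ge 1/2$ once $\s_0$ and $\e_0$ are small, so $T_u^K$ is well defined, and its Jacobian is a small perturbation of the natural identification $\Sigma_H\to\Sigma_K$. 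The numerator of \eqref{vuK def} expands as $\om\cdot(\nu_K-\nu_H)+u(\om)(1-\tfrac12|\nu_H-\nu_K|^2)$, yielding $|v_u^K|\le C(n)(\s+\e)$, with the $C^1$-bound following by the chain rule; this proves the first half of \eqref{for later}. The identity $\Sigma_H(u)=\Sigma_K(v_u^K)$ comes from the orthogonal decomposition $f_u(\om)=|\p_K f_u(\om)|\,(T_u^K(\om)+v_u^K(T_u^K(\om))\nu_K)$ combined with $|f_u(\om)|=1$, which forces $|\p_K f_u(\om)|=1/\sqrt{1+(v_u^K\circ T_u^K)^2}$ and hence $f_u=f_{v_u^K}\circ T_u^K$. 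Finally, \eqref{auto 4 pre} follows by pulling the integral on $\Sigma_K$ back to $\Sigma_H$ via $T_u^K$, using $|JT_u^K-1|\le C(n)(\s+\e)$ and expanding the squared numerator of \eqref{vuK def} to separate the $u^2$ contribution from $O(|\nu_H-\nu_K|^2)$ cross terms.

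For Part (ii) the plan is to invoke an implicit function theorem to choose $\nu_K=(\nu_H+\eta)/|\nu_H+\eta|$ with $\eta\in\nu_H^\perp$ small so that the Jacobi-field component of $v_u^{K(\eta)}$ is killed. Define $\Phi:\X_{\s_0}(\Sigma_H)\times\{\eta\in\nu_H^\perp:|\eta|<\e_0\}\to\nu_H^\perp$ by taking $\Phi(u,\eta)$ to be the Jacobi-field projection $E^0_{\Sigma_{K(\eta)}}[v_u^{K(\eta)}]$ read in the basis $\{c_0(n)\,(\cdot)\cdot\tau:\tau\perp\nu_{K(\eta)}\}$, with coordinates transported to $\nu_H^\perp$ along the short geodesic from $\nu_H$ to $\nu_{K(\eta)}$. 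Then $\Phi(0,0)=0$, and the critical computation is $\partial_\eta\Phi(0,0)=c(n)\,\mathrm{Id}$ for a nonzero $c(n)$: when $u=0$, $v_0^{K(\eta)}(T_0^{K(\eta)}(\om))=\nu_K\cdot\om/|\p_K\om|=\eta\cdot\om+O(|\eta|^2)$ (since $\nu_K=\nu_H+\eta+O(|\eta|^2)$ and $\om\cdot\nu_H=0$), and $T_0^{K(\eta)}(\om)=\om+O(|\eta|)$, so the mode associated with $\tau\perp\nu_H$ has coefficient $\int_{\Sigma_H}(\eta\cdot\om)(\om\cdot\tau)\,d\om=(\H^{n-1}(\Sigma_H)/n)\,\eta\cdot\tau$, an invertible map of $\eta$.

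A quantitative implicit function theorem (with Lipschitz remainder of $\Phi-\partial_\eta\Phi(0,0)\cdot\eta$ controlled by $C(n)(\s+|\eta|)$ on the domain, as in Part (i)) then gives a unique $\eta=\eta(u)$ solving $\Phi(u,\eta)=0$ with $|\eta(u)|\le C(n)\,|\Phi(u,0)|$. A parallel expansion yields $\Phi(u,0)=c(n)\,E^0_{\Sigma_H}[u]$ up to the same nonzero factor, modulo terms of size $C(n)\,\s_0\,\|E^0_{\Sigma_H}[u]\|_{L^2}$ that are absorbed for $\s_0$ small; this establishes \eqref{auto 3}. Setting $K=K(\eta(u))$ and $v=v_u^K$ gives the $C^1$-bound on $v$ and \eqref{auto 1} from Part (i), \eqref{auto 2} by construction, and \eqref{auto 4} from \eqref{auto 4 pre} combined with \eqref{auto 3} and $\|E^0_{\Sigma_H}[u]\|_{L^2}^2\le\|u\|_{L^2}^2$ to absorb the $|\nu_H-\nu_K|^2$ contribution. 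The main technical obstacle will be cleanly organizing $\partial_\eta\Phi(0,0)$, since the Jacobi-field space itself depends on $\eta$; once the parallel-transport identification is fixed, the rest is routine fixed-point bookkeeping.
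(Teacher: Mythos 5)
Your proof of part (i) follows the same route as the paper's: the lower bound $|\p_K\om+u\,\p_K\nu_H|\ge 1/2$, the $C^1$-closeness of $T_u^K$ to the identity, the norm identity forcing $f_u=f_{v_u^K}\circ T_u^K$, and the pullback of $\int_{\S_K}(v_u^K)^2$ via the area formula with the expansion $(\nu_K\cdot(\om+u\nu_H))^2=u^2+{\rm O}(|\nu_H-\nu_K|^2+u^2)$ are all exactly the steps in the paper, and your bookkeeping is correct (in fact $\|T_u^K-{\rm id}\|_{C^1}\le C(n)\,|\nu_H-\nu_K|$, slightly better than the $C(n)(\s+\e)$ you claim, but either suffices for \eqref{auto 4 pre}).

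For part (ii) you take a genuinely different, though closely related, route for the existence of the rotation $K$. The paper sets up the same map (its $\Psi_u(\nu)$ is your $\Phi(u,\eta)$ after the identical parallel-transport identification of the Jacobi-field spaces), proves the same two estimates — $C^0$/$C^1$-closeness of $\Psi_u$ to the model map $\Psi_*$ and the linearization $\nabla\Psi_*(\nu_H)=c_0(n)^{-1}\,{\rm Id}$, which is exactly your computation $\int_{\S_H}(\eta\cdot\om)(\om\cdot\tau)=(\H^{n-1}(\S_H)/n)\,\eta\cdot\tau$ — but then concludes via a homotopy and topological degree ($\deg(\Psi_u,A,0)=\deg(\Psi_*,A,0)=1$), whereas you invoke a quantitative implicit function theorem. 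Both mechanisms consume precisely the same inputs; yours additionally yields uniqueness of $\eta(u)$ (not needed) and packages the estimate $|\eta(u)|\le C|\Phi(u,0)|$ directly, while the paper recovers the equivalent bound \eqref{auto 3} a posteriori by integrating $\nabla\Psi_u$ along the geodesic from $\nu_H$ to $\nu_*$. One small correction that only helps you: at $\eta=0$ one has $K=H$, $T_u^H={\rm id}$ and $v_u^H=u$, so $\Phi(u,0)$ equals the Jacobi projection of $u$ exactly, with no $C(n)\s_0\|E^0_{\S_H}[u]\|$ error to absorb. The remaining deductions of \eqref{auto 1}, \eqref{auto 2}, \eqref{auto 4} from part (i) and \eqref{auto 3} are as in the paper. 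The argument is complete.
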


\begin{remark}
  {\rm It may seem unnecessary to present a detailed proof of Lemma \ref{lemma step one sigma}, as we are about to do,  given that, when $\Sigma_H$ is replaced by a generic integrable minimal surface $\Sigma$ in $\SS^n$, similar statements are found in the first four sections of \cite[Chapter 5]{allardalmgrenRADIAL}. However, two of those statements, namely \cite[5.3(4), 5.3(5)]{allardalmgrenRADIAL}, seem not to be correct; and the issue requires clarification, since those statements are used in the iteration arguments for the blowup/blowdown theorems \cite[Theorem 5.9/Theorem 9.6]{allardalmgrenRADIAL}; see, e.g., the second displayed chain of inequalities on \cite[Page 254]{allardalmgrenRADIAL}. To explain this issue {\bf we momentarily adopt the notation of \cite{allardalmgrenRADIAL}}. In \cite[Chapter 5]{allardalmgrenRADIAL} they consider a family of minimal surfaces $\{M_t\}_{t\in U}$ in $\SS^n$ obtained as diffeomorphic images of a minimal surface $M=M_0$. The parameter $t$ ranges in an open ball $U\subset\R^j$, where $j$ is the dimension of the space of Jacobi fields of $M$. Given a vector field $Z$ in $\SS^n$, defined on and normal to $M_t$, they denote by $F_t(Z)$ the diffeomorphism of $M_t$ into $\SS^n$ obtained by combining $Z$ with the exponential map of $\SS^n$ (up to lower than second order corrections in $Z$, this is equivalent to taking the graph of $Z$ over $M_t$, and then projecting it back on $\SS^n$, which is what we do, following \cite{Simon83}, in \eqref{def of spherical graph}). Then, in \cite[5.2(2)]{allardalmgrenRADIAL}, they define $\Lambda_t$ as the family of those $Z$ such that ${\rm Image}(F_t(Z))={\rm Image}(F_0(W))$ for some vector field $W$ normal to $M$, and, given $t,u\in U$ and $Z\in\Lambda_t$, they define $F_t^u:\Lambda_t\to\Lambda_u$ as the map between such classes of normal vector fields with the property that ${\rm Image}(F_t(Z))={\rm Image}(F_u(F_t^u(Z)))$: in particular, $F_t^u(Z)$ is the vector field that takes $M_u$ to the same surface to which $Z$ takes $M_t$. With this premise, in \cite[5.3(5)]{allardalmgrenRADIAL} they say that if $t,u\in U$, and $Z\in\Lambda_t$, then
  \begin{equation}
    \label{from AA}
    \Big|\int_{M_u}|F_t^u(Z)|^2-\int_{M_t}|Z|^2\Big|\le C\,|t-u|\,\int_{M_t}|Z|^2\,,
  \end{equation}
  for a constant $C$ depending on $M$ only. Testing this with $Z=0$ (notice that $0\in\Lambda_t$ by \cite[5.3(1)]{allardalmgrenRADIAL}) one finds $F_t^u(0)=0$, and thus $M_t={\rm Image}(F_t(0))={\rm Image}(F_u(F_t^u(0)))={\rm Image}(F_u(0))=M_u$. In particular, $M_u=M_t$ for every $t,u\in U$, that is, $\{M_t\}_{t\in U}$ consists of a single surface, $M$ itself. But this is never the case since $\{M_t\}_{t\in U}$ always contains, to the least, every sufficiently small rotation of $M$ in $\SS^n$. An analogous problem is contained in \cite[5.3(4)]{allardalmgrenRADIAL}. Coming back to our notation, the analogous estimate to \eqref{from AA} in our setting would mean that, for every $H,K\in\H$ with $|\nu_K-\nu_H|<\e_0$ and $u\in\X_{\s_0}(\S_H)$, $v_u^K$ defined in \eqref{vuK def} satisfies
  \begin{equation}
    \label{equivalent to from AA}
  \Big|\int_{\S_K}(v_u^K)^2-\int_{\Sigma_H}u^2\Big|\le C(n)\,|\nu_H-\nu_K|\,\int_{\S_H}u^2\,,
  \end{equation}
  which again gives a contradiction if $u=0$. A correct estimate, analogous in spirit to \eqref{equivalent to from AA} and still sufficiently precise to be used in iterations, is \eqref{auto 4 pre} in Lemma \ref{lemma step one sigma}. There should be no difficulty in adapting our proof to the more general context of integrable cones, and then in using the resulting generalization of \eqref{auto 4 pre} to implement the iterations needed in \cite[Theorem 5.9, Theorem 9.6]{allardalmgrenRADIAL}.}
\end{remark}

\begin{proof}
  [Proof of Lemma \ref{lemma step one sigma}] The constants $\e_0$ and $\s_0$ in the statement will be such that $\s_0=\e_0/C_*$ for a sufficiently large dimension dependent constant $C_*$.

\noindent {\bf Step one:} To prove statement (i), let $H,K\in\H$, $|\nu_H-\nu_K|\le\e<\e_0$ and $u\in\X_{\s}(\S_H)$ with $\s<\s_0$. Setting (for $\om\in\S_H$ and $x\in\R^{n+1}\setminus\{0\}$)
  \[
  g_u^K(\om)=\p_K\om+u(\om)\,\p_K\nu_H\,,\qquad \Phi(x)=x/|x|\,,
  \]
  we have $T_u^K=\Phi\circ g_u^K$, and, if $u$ is identically $0$,
  \[
  g_0^K(\om)=\p_K\om\,,\qquad T_0^K(\om)=\frac{\p_K\om}{|\p_K\om|}\,,\qquad\forall\om\in\S_H\,.
  \]
  By $|\p_K\nu_H|^2=1-(\nu_H\cdot\nu_K)^2\le 2\,(1-(\nu_H\cdot\nu_K))=|\nu_H-\nu_K|^2$,
  \begin{eqnarray*}
    &&|g_u^K-g_0^K|=|u|\,|\p_K\nu_H|\le|u|\,|\nu_H-\nu_K|\,,
   \\
   &&|\nabla^{\S_H}g_u^K-\nabla^{\S_H}g_0^K|\le|\nabla^{\S_H}u|\,|\nu_H-\nu_K|\,.
  \end{eqnarray*}
  In particular, $|g_u^K|\ge 1-\s_0\,\e_0\ge 1/2$, and since $\Phi$ and $\nabla\Phi$ are Lipschitz continuous on $\{|x|\ge 1/2\}$, we find
  \begin{equation}
    \label{guK and TuK near g0K and T0K}
      \max\big\{\|g_u^K-g_0^K\|_{C^1(\S_H)},\|T_u^K-T_0^K\|_{C^1(\S_H)}\big\}\le C(n)\, \|u\|_{C^1(\S_H)}\,|\nu_H-\nu_K|\,.
  \end{equation}
  Similarly, since $\om\cdot\nu_K=\om\cdot(\nu_K-\nu_H)$ for $\om\in\S_H$, we find that
  \begin{equation}
    \label{g0K and T0K close to id}
      \|g_0^K-{\rm id}\|_{C^1(\S_H)}\le C(n)\,|\nu_H-\nu_K|\,,\qquad \|T_0^K-{\rm id}\|_{C^1(\S_H)}\le C(n)\,|\nu_H-\nu_K|\,,
  \end{equation}
  and we thus conclude that $T_u^K$ is a diffeomorphism between $\S_H$ and $\S_K$. As a consequence, the definition \eqref{vuK def} of  $v_u^K$ is well-posed, and \eqref{for later} immediately follows (in particular, $\S_H(u)=\S_K(v_u^K)$ is deduced easily from \eqref{vuK def} and \eqref{def of spherical graph}). Finally, if we set $F_u^K(\om)=v_u^K(T_u^K(\om))^2\,J^{\S_H}\,T_u^K(\om)$ ($\om\in\S_H$), then
  \begin{eqnarray*}
  \int_{\S_K}(v_u^K)^2-\int_{\S_H}u^2=\int_{\S_H}\Big(\frac{\nu_K\cdot(\om+u\,\nu_H)}{|g_u^K(\om)|}\Big)^2\,J^{\S_H}\,T_u^K(\om)-u^2\,,
  \end{eqnarray*}
  where, using again $|\om\cdot\nu_K|\le|\nu_H-\nu_K|$ for every $\om\in\S_H$, we find
  \begin{eqnarray*}
    &&|J^{\S_H}T_u^K(\om)-1|\le C(n)\,\|T_u^K-{\rm id}\|_{C^1(\S_H)}\le C(n)\,|\nu_H-\nu_K|\,,
    \\
    &&\hspace{0.4cm}\big|1-|g_u^K(\om)|^2\big|\le\big|1-|\p_K\om|^2\big|+|\p_K\nu_H|\,u^2+2\,|u|\,|\p_K\nu_H|\,|\p_K\om|
    \\
    &&\hspace{2.9cm}\le C\,\big(|\nu_H-\nu_K|^2+u^2\big)\,,
    \\
    &&\big|(\nu_K\cdot(\om+u\,\nu_H))^2-u^2\big|
    \\
    &&\le|\nu_K\cdot\om|^2+u^2\,(1-(\nu_H\cdot\nu_K)^2)+2\,|u|\,|\nu_H\cdot\nu_K|\,|\om\cdot\nu_K|
    \\
    &&
    \le |\nu_K-\nu_K|^2+2\,u^2\,|\nu_H-\nu_K|+2\,|u|\,|\nu_H-\nu_K|    \le C\,\big(|\nu_H-\nu_K|^2+u^2\big)\;
  \end{eqnarray*}
  and thus, \eqref{auto 4 pre}, thanks to
  \begin{eqnarray*}\hspace{-1cm}
  &&\!\!\!\!\!\!\Big|\int_{\S_K}(v_u^K)^2-\int_{\S_H}u^2\Big|\le
    \int_{\S_H}\!\!|J^{\S_H}\,T_u^K-1|\,u^2
    +2\,\frac{|(\nu_K\cdot(\om+u\,\nu_H))^2-u^2|}{|g_u^K|^2}
    \\
    &&\hspace{2cm}+2\,\int_{\S_H}\,\Big|1-\frac1{|g_u^K|^2}\Big|\,u^2\le C(n)\,\Big(|\nu_H-\nu_K|^2+\int_{\S_H}u^2\Big)\,.
  \end{eqnarray*}

  \noindent {\bf Step two:} We prove (ii). If $E_{\Sigma_H}^0[u]=0$, then we conclude with $K=H$, $v=u$. We thus assume $\g^2=\int_{\Sigma_H}\,(E_{\Sigma_H}^0[u])^2>0$, and pick an orthonormal basis $\{\phi_H^i\}_{i=1}^n$ of $L^2(\S_H)\cap\{E_{\Sigma_H}^0=0\}$ with $E_{\Sigma_H}^0[u]=\g\,\phi_H^1$ and $\g=\int_{\Sigma_H}u\,\phi_H^1\ne 0$. This corresponds to choosing an orthonormal basis $\{\tau_H^i\}_{i=1}^n$ of $H$ such that
  \begin{equation}
    \label{what are the jacobi fields}
      \phi_H^i(\om)=c_0(n)\,\om\cdot\tau_H^i\,,\qquad\om\in\S_H\,,
  \end{equation}
  for $c_0(n)=(n/\H^{n-1}(\S_H))^{1/2}$. For each $K\in\H$ with $\dist_{\SS^n}(\nu_H,\nu_K)<\e_0$ we define an orthonormal basis $\{\tau_K^i\}_{i=1}^n$ of $K$ by parallel transport of $\{\tau_H^i\}_{i=1}^n\subset H\equiv T_{\nu_H}\SS^n$ to $K\equiv T_{\nu_K}\SS^n$. The maps $\nu\mapsto\tau^i(\nu):=\tau_{K(\nu)}^i$ define an orthonormal frame $\{\tau^i\}_{i=1}^n$ of $\SS^n$ on the open set $A=B_{\e_0}^{\SS^n}(\nu_H)=\{\nu\in\SS^n:\dist_{\SS^n}(\nu,\nu_H)<\e_0\}$. We denote by $\rho_H^K$ the rotation of $\R^{n+1}$ which takes $H$ into $K$ by setting $\rho_H^K(\tau_H^i)=\tau_K^i$ and $\rho_H^K(\nu_H)=\nu_K$. By the properties of parallel transport we have that
  \begin{equation}
    \label{small rotation}
    \|\rho_H^K-{\rm Id}\|_{C^0(\Sigma_K)}\le C(n)\,\dist_{\SS^n}(\nu_H,\nu_K)\le C(n)\,\e_0\,.
  \end{equation}
  Finally, we define an $L^2(\Sigma_K)$-orthonormal basis $\{\phi_K^i\}_{i=1}^n$ of $L^2(\S_K)\cap\{E_{\Sigma_K}^0=0\}$ by setting
  $\phi_K^i(\om)=c_0(n)\,\om\cdot\tau_K^i$ ($\om\in\S_K$), and correspondingly consider the map $\Psi_u:A\to\R^n$ defined by setting
  \[
  \Psi_u(\nu)=\Big(\int_{\S_{K(\nu)}}v_u^{K(\nu)}\,\phi_{K(\nu)}^1,\dots,\int_{\S_{K(\nu)}}v_u^{K(\nu)}\,\phi_{K(\nu)}^n\Big)\,,\qquad\nu\in A\,,
  \]
  where $v_u^{K(\nu)}$ is well-defined for every $\nu\in A$ thanks to step one. {\bf We now claim} the existence of $\nu_*\in A$ such that
  $\Psi_u(\nu_*)=0$.  By the area formula, \eqref{vuK def}, and $\q_{K(\nu)}[e]=\nu\cdot e$, we find
  \begin{eqnarray*}
    &&\!\!\!\!\!\!\!\!\!(e_j\cdot\Psi_u)(\nu)
    :=\int_{\S_{K(\nu)}}\!\!\!\!\!\!\!\!v_u^{K(\nu)}\,\phi_{K(\nu)}^j
    =\!\!
    \int_{\S_H}\!\!\!v_u^{K(\nu)}(T_u^{K(\nu)})\,\phi_{K(\nu)}^j(T_u^{K(\nu)})\,J^{\S_H}T_u^{K(\nu)}
    \\
    &=&
    \!\!\!\!c_0(n)\,\int_{\S_H}\!\!\nu\cdot(\om+u\,\nu_H)\Big(\rho_H^{K(\nu)}[\tau_H^j]\cdot\frac{\p_K(\om+u\,\nu_H)}{|\p_K(\om+u\,\nu_H)|^2}\Big)
    J^{\S_H}T_u^{K(\nu)}d\H^{n-1}_\om\,,
  \end{eqnarray*}
  so that \eqref{guK and TuK near g0K and T0K} gives
  \begin{eqnarray}
    \label{psiu vicina psi0}
    &&\!\!\!\!\!\!\!\!\|\Psi_u-\Psi_0\|_{C^1(A)} \le C(n)\,\s_0\,,\qquad\mbox{where}
    \\\nonumber
    &&\!\!\!\!\!\!\!\!e_j\cdot\Psi_0(\nu)=c_0(n)\,\int_{\S_H}(\nu\cdot\om)\,\,\Big(\rho_H^{K(\nu)}[\tau_H^j]\cdot\frac{\p_K\om}{|\p_K\om|^2}\Big)\,\,
    J^{\S_H}\Big[\frac{\p_K\om}{|\p_K\om|}\Big]\,d\H^{n-1}_\om\,.
  \end{eqnarray}
  By definition of $A$ and by \eqref{g0K and T0K close to id} and \eqref{small rotation},
  \begin{eqnarray}\nonumber
  &&\sup_{\nu\in A}\sup_{\om\in\S_H}\,\Big|\tau_H^j\cdot\om- \Big(\rho_H^{K(\nu)}[\tau_H^j]\cdot\frac{\p_K\om}{|\p_K\om|^2}\Big)\,\,
    J^{\S_H}\Big[\frac{\p_K\om}{|\p_K\om|}\Big]\Big|\le C(n)\,\e_0\,,
  \\\label{psi0 vicina psistar}
  &&\mbox{and thus}\,\,\|\Psi_0-\Psi_*\|_{C^1(A)}\le C(n)\,(\s_0+\e_0)\,,
  \end{eqnarray}
  where $\Psi_*:A\to\R^n$ is defined by $e_j\cdot\Psi_*(\nu)=c_0(n)\,\int_{\S_H}
  (\nu\cdot\om)\,(\tau_H^j\cdot\om)\,d\H^{n-1}_\om$ ($\nu\in A$).
  Recalling that $\{\tau^i\}_{i=1}^n$ is an orthonormal frame of $\SS^n$ on $A$, with $\nabla_{\tau^i}\nu=\tau^i(\nu)=\tau_{K(\nu)}^i=\rho_H^{K(\nu)}[\tau^i_H]$, we find
  \begin{eqnarray*}
  &&e_j\cdot\nabla_{\tau^i}\Psi_*(\nu)=
  c_0(n)\,\int_{\S_H}
  (\rho_H^{K(\nu)}[\tau^i_H]\cdot\om)\,(\tau_H^j\cdot\om)\,d\H^{n-1}_\om\,,
  \\
  &&e_j\cdot\nabla_{\tau^i}\Psi_*(\nu_H)=c_0(n)\,\int_{\S_H}(\tau^i_{H}\cdot\om)\,(\tau_H^j\cdot\om)\,d\H^{n-1}_\om=\de_{ij}/c_0(n)\,.
  \end{eqnarray*}
  By \eqref{small rotation}, \eqref{psiu vicina psi0} and \eqref{psi0 vicina psistar} we conclude that
  \begin{eqnarray}\label{pino 1}
    &&\|\Psi_u-\Psi_*\|_{C^1(A)}\le C(n)\,(\s_0+\e_0)\,,
    \\\label{pino 2}
    &&\big\|\nabla^{\SS^n}\Psi_u-c_0(n)^{-1}\,\sum_{j=1}^n\,e_j\otimes\tau^j\big\|_{C^0(A)}\le C(n)\,(\s_0+\e_0)\,.
  \end{eqnarray}
  Let us finally consider the map $h:A\times[0,1]\to\R^n$,
  \[
  h(\nu,t)=h_t(\nu)=t\,\Psi_*(\nu)+(1-t)\,\Psi_u(\nu)\,,\qquad(\nu,t)\in A\times[0,1]\,,
  \]
  which defines an homotopy between $\Psi_*$ and $\Psi_u$. By \eqref{pino 1} and \eqref{pino 2} we see that if $\nu\in\pa A$, that is, if $\dist_{\SS^n}(\nu,\nu_H)=\e_0$, then, denoting by $[\nu_H,\nu]_s$ the unit-speed length minimizing geodesic from $\nu_H$ to $\nu$, considering that $[\nu_H,\nu]_s\in A$ for every $s\in(0,\e_0$), and that $\SS^n$ is close to be flat in $A$, we find
  \begin{eqnarray*}
  |h_t(\nu)|&\ge&\Big|\int_0^{\e_0}\frac{d}{ds}\,h_t([\nu_H,\nu]_s)\,ds\Big|-|h_t(\nu_H)|
    \\
    &\ge&\Big(\frac1{c_0(n)}-C(n)\,(\e_0+\s_0)\Big)\,\e_0-C(n)\,\s_0\ge\frac{\e_0}{2\,c_0(n)}\,,
  \end{eqnarray*}
  provided $\s_0=\e_0/C_*$ is small enough with respect to $\e_0$ (i.e., provided $C_*$ is large), $\e_0$ is small in terms of $c_0$, and where we have used $\Psi_*(\nu_H)=0$ and
  \begin{equation}
    \label{pino 3}
  |\Psi_u(\nu_H)|=|\g|=\Big|\int_{\S_H}u\,\phi^1_H\Big|\le C(n)\,\s_0\,,
  \end{equation}
  to deduce $|h_t(\nu_H)|\le C(n)\,\s_0$. This proves that
  $0\not\in \pa\,h_t(\pa A)$ for every $t\in[0,1]$,
  so that $\deg(h_t,A,0)$ is independent of $t\in[0,1]$. In particular, $h_0=\Psi_u$ and $h_1=\Psi_*$ give
  $\deg(\Psi_u,A,0)=\deg(\Psi_*,A,0)=1$,
  where we have used $\Psi_*(\nu_H)=0$ and the fact that, up to decreasing the value of $\e_0$, $\Psi_*$ is injective on $A$. By $\deg(\Psi_u,A,0)=1$, there is $\nu_*\in A$ such that $\Psi_u(\nu_*)=0$, {\bf as claimed}. With $K=K(\nu_*)$ and $v=v_u^K$ we deduce \eqref{auto 1} from \eqref{for later} and \eqref{auto 2} from $\Psi_u(\nu_*)=0$. By \eqref{pino 2} and \eqref{pino 3}, if $\eta=\dist_{\SS^n}(\nu_*,\nu_H)$, then
  \begin{eqnarray*}
    &&\Big(\int_{\Sigma_H}\!\!\!\big(E_{\Sigma_H}^0[u]\big)^2\Big)^{1/2}=|\g|=|\Psi_u(\nu_H)|=|\Psi_u(\nu_H)-\Psi_u(\nu_*)|
    \\
    &=&\Big|\int_0^{\eta}\!\!\!\frac{d}{ds}\,\Psi_u([\nu_H,\nu_*]_s)\,ds\Big|
    \ge\Big(\frac1{c_0(n)}-C(n)\,(\e_0+\s_0)\Big)\,\eta
    \ge \frac{|\nu_*-\nu_H|}{2\,c_0(n)}\,\,,
  \end{eqnarray*}
  that is \eqref{auto 3}. Finally, \eqref{auto 4} follows from \eqref{auto 3} and \eqref{auto 4 pre}.
  \end{proof}

\subsection{Energy estimates for spherical graphs over annuli}\label{subsection energy estimates on annuli} Given $H\in\H$ and $0<r_1<r_2$ we let $\X_\s(\Sigma_H,r_1,r_2)$ be the class of those $u\in C^1(\Sigma_H\times(r_1,r_2))$ such that, setting $u_r=u(\cdot,r)$, one has
$u_r\in\X_\s(\Sigma_H)$ for every $r\in(r_1,r_2)$ and $|r\,\pa_r u|\le\s$ on $\Sigma_H\times(r_1,r_2)$. If $u\in\X_\s(\Sigma_H,r_1,r_2)$, then the spherical graph of $u$ over $\Sigma_H\times(r_1,r_2)$, given by
\[
\Sigma_H(u,r_1,r_2)=\Big\{r\,\frac{\om+u_r(\om)\,\nu_H}{\sqrt{1+u_r(\om)^2}}:\om\in\Sigma_H\,,r\in(r_1,r_2)\Big\}\,,
\]
is an hypersurface in $A_{r_1}^{r_2}$. It is useful to keep in mind that
$\Sigma_H(0,r_1,r_2)=\{r\,\om:\om\in\Sigma\,,r\in(r_1,r_2)\}=H\cap A_{r_1}^{r_2}$ is a flat annular region of area $\om_n\,(r_2^n-r_1^n)$, and that if $\s<\s_1=\s_1(n)$, then
\begin{equation}
  \label{equuivalence between u square and omega square}
  \frac1{C(n)}\,\int_{\S_H(u,r_1,r_2)}\!\!\!\!\om_H^2\,d\H^n\le \int_{\S_H\times(r_1,r_2)}\!\!r^{n-1}\,u^2\le C(n)\,\int_{\S_H(u,r_1,r_2)}\!\!\!\!\om_H^2\,d\H^n\,.
\end{equation}

\begin{lemma}\label{lemma step one}
  There are $\e_0$, $\s_0$, $C_0$ positive, depending on $n$ only, such that:

  \noindent {\bf (i):} if $H,K\in\H$, $\nu_H\cdot\nu_K>0$, $|\nu_H-\nu_K|=\e<\e_0$, $u\in\X_{\s}(\S_H,r_1,r_2)$, and $\s<\s_0$, then there is $v\in \X_{C_0(\s+\e)}(\S_H,r_1,r_2)$ such that $\S_K(v,r_1,r_2)=\S_H(u,r_1,r_2)$.

  \noindent {\bf (ii):} if $H\in\H$, $u\in\X_{\s_0}(\Sigma_H,r_1,r_2)$, and $(a,b)\cc(r_1,r_2)$, then there exist $K\in\H$, $v\in\X_{C_0\,\s_0}(\Sigma_K,r_1,r_2)$, and $r_*\in[a,b]$ such that
  \begin{eqnarray}\label{biro 1}
    &&\Sigma_H(u,r_1,r_2)=\Sigma_K(v,r_1,r_2)\,,
    \\\label{biro 2}
    &&E_{\Sigma_K}^0\big(v_{r_*}\big)=0\,,
    \\\label{biro 3}
    &&|\nu_H-\nu_K|^2\le C_0(n)\,\min_{\rho\in[a,b]}\int_{\Sigma_H}\,\big(E_{\Sigma_H}^0[u_\rho]\big)^2\,.
 \end{eqnarray}
 Moreover,  for every $r\in(r_1,r_2)$,
 \begin{equation}
   \label{biro 4}
    \Big|\int_{\Sigma_K}(v_r)^2-\int_{\Sigma_H}(u_r)^2\Big|\le C_0(n)\,\Big\{
    \min_{\rho\in[a,b]}\int_{\Sigma_H}\,(u_\rho)^2+\int_{\Sigma_H}(u_r)^2\Big\}\,.
 \end{equation}
\end{lemma}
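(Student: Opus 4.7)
The plan is to reduce everything to the spherical reparametrization of Lemma \ref{lemma step one sigma} by working one radial slice at a time, exploiting the fact that the map $u\mapsto v_u^K$ from \eqref{vuK def} acts pointwise in the radius.

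For part (i), given $K$ with $|\nu_H-\nu_K|=\e<\e_0$ and $u\in\X_\s(\S_H,r_1,r_2)$, I set $v(\cdot,r)=v_{u_r}^K$ via \eqref{vuK def} for each $r\in(r_1,r_2)$, where $u_r=u(\cdot,r)$. Lemma \ref{lemma step one sigma}(i), applied at each $r$, gives $v_r\in\X_{C(n)(\s+\e)}(\S_K)$ with $\S_K(v_r)=\S_H(u_r)$; since a point of $\S_H(u,r_1,r_2)$ has radial part $r$ and angular part $f_{u_r}(\omega)\in\SS^n$, scaling these spherical identities by $r$ and taking the union over $(r_1,r_2)$ yields $\S_K(v,r_1,r_2)=\S_H(u,r_1,r_2)$. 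Joint smoothness of $v$ in $(\omega,r)$ is immediate from the explicit dependence of \eqref{vuK def} on $(r,\omega,u(\omega,r))$.

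The delicate remaining bound in part (i) is the radial condition $|r\,\pa_r v|\le C_0(\s+\e)$. Differentiating the identity $f_{v_r}(T_{u_r}^K(\omega))=f_{u_r}(\omega)$ in $r$ at fixed $\omega\in\S_H$ and rearranging expresses $(\pa_r v_r)(T_{u_r}^K(\omega))$ as a linear combination of $\pa_r u_r(\omega)$ and $\nabla^{\S_K}v_r\cdot\pa_r T_{u_r}^K(\omega)$, with coefficients $C^0$-controlled by $\|v_r\|_{C^1}$ and the smooth maps $f_{(\cdot)}$, $T_{(\cdot)}^K$. Since $\pa_r T_{u_r}^K(\omega)$ is itself linear in $\pa_r u_r(\omega)$ through the formula for $T_u^K$, and $\|v_r\|_{C^1}\le C(n)(\s+\e)$ from Lemma \ref{lemma step one sigma}(i), the hypothesis $|r\,\pa_r u|\le\s$ propagates to $|r\,\pa_r v|\le C_0(\s+\e)$.

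For part (ii), applying Lemma \ref{lemma step one sigma}(ii) at every radius is not an option, since this would yield a varying family $\{K(r)\}$. Instead, I pick $r_*\in[a,b]$ realizing $\min_{\rho\in[a,b]}\int_{\S_H}(E^0_{\S_H}[u_\rho])^2$ (continuous on a compact interval, hence attaining its minimum), apply Lemma \ref{lemma step one sigma}(ii) to $u_{r_*}$ to obtain a single $K\in\H$ with $|\nu_K-\nu_H|^2\le C_0\int_{\S_H}(E^0[u_{r_*}])^2$, and then invoke part (i) to reparametrize all of $u$ over $\S_K$ as $v\in\X_{C_0\s_0}(\S_K,r_1,r_2)$ (absorbing $\e_0\le\s_0$ into $C_0$). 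The slice $v_{r_*}$ coincides with the output of Lemma \ref{lemma step one sigma}(ii) at $r_*$ by uniqueness of the formula \eqref{vuK def}, which gives \eqref{biro 2}; \eqref{biro 3} is immediate from the choice of $r_*$; and \eqref{biro 4} follows by applying \eqref{auto 4 pre} slicewise and inserting the bound $|\nu_H-\nu_K|^2\le C\min_\rho\int(u_\rho)^2$, using $\int(E^0[u_\rho])^2\le\int(u_\rho)^2$ by orthogonal projection. I expect the main obstacle to be the $r$-derivative bound in part (i); once this is in hand, part (ii) is essentially a bookkeeping assembly that uses the minimizing-radius choice to keep a single hyperplane $K$ across all slices.
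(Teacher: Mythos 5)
Your proposal is correct and follows essentially the same route as the paper: slicewise application of Lemma \ref{lemma step one sigma}-(i) with the radial bound obtained by differentiating the reparametrization formulas in $r$, and for part (ii) a single application of Lemma \ref{lemma step one sigma}-(ii) at the minimizing radius $r_*$ followed by part (i) to keep one hyperplane $K$ across all slices, with \eqref{biro 4} assembled from \eqref{auto 4 pre} and \eqref{biro 3}.
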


\begin{proof}
   We prove statement (i). If $|\nu_H-\nu_K|=\e<\e_0$, since $u_r\in\X_{\s}(\S_H)$ for every $r\in(r_1,r_2)$, by Lemma \ref{lemma step one sigma}-(i) we see that $T_r:\S_H\to\S_K$,
  \begin{equation}
    \label{tr}
      T_r(\om)=|\p_K[\om+u_r(\om)\,\nu_H]|^{-1}\,\p_K[\om+u_r(\om)\,\nu_H]\qquad\om\in\S_H\,,
  \end{equation}
  is a diffeomorphism between $\S_H$ and $\S_K$, and $v_r:\S_K\to\R$,
  \begin{equation}
    \label{vrtr}
  v_r(T_r(\om))=\frac{\nu_K\cdot(\om+u_r(\om)\,\nu_H)}{|\p_K[\om+u_r(\om)\,\nu_H]|}\,,\qquad\om\in\S_H\,,
  \end{equation}
  satisfies $v_r\in\X_{C_0\,(\s+\e)}(\S_K)$, $\S_H(u_r)=\S_K(v_r)$ for every $r\in(r_1,r_2)$, and
  \begin{equation}
    \label{biro 5}
    \Big|\int_{\S_K}\!\!\!(v_r)^2-\int_{\S_H}\!\!\!(u_r)^2\Big|\le C(n)\,\Big\{|\nu_H-\nu_K|^2+\int_{\S_H}\!\!\!(u_r)^2\Big\}\,.
  \end{equation}
  Since $u\in\X_{\s}(\Sigma_H,r_1,r_2)$, and $T_r$ and $v_r$ depend smoothly on $u_r$,  setting $v(\om,r):=v_r(\om)$ we have $\S_H(u,r_1,r_2)=\S_K(v,r_1,r_2)$ (by $\S_H(u_r)=\S_K(v_r)$ for every $r\in(r_1,r_2)$), and $v\in\X_{C_0\,(\s+\e)}(\Sigma_H,r_1,r_2)$ ($|r\,\pa_rv_r|\le C_0(\s+\e)$ is deduced by differentiation in \eqref{tr} and \eqref{vrtr}, and by $|u_r|,|r\,\pa_ru_r|<\s$).

   \noindent {\bf Step two:} We prove (ii). Let
   $\g=\min_{\rho\in[a,b]}\int_{\Sigma_H}\,\big(E_{\Sigma_H}^0[u_\rho]\big)^2$,
   and let $r_*\in[a,b]$ be such that the minimum $\g$ is achieved at $r=r_*$. If $\g=0$, then we set $K=H$ and $v=u$. If $\g>0$, then we apply Lemma \ref{lemma step one sigma}-(ii) to $u_{r_*}\in\X_{\s_0}(\S_H)$, and find $K\in\H$ with $|\nu_K-\nu_H|<\e_0$ and $v_{r_*}\in\X_{C_0\,s_0}(\S_K)$ such that $\Sigma_H(u_{r_*})=\Sigma_K(v_{r_*})$ and
  \begin{eqnarray}
  \label{biro 6}
    &&E_{\Sigma_K}^0[v_{r_*}]=0\,,
    \\\label{biro 7}
    &&|\nu_K-\nu_H|^2\le C_0(n)\,\int_{\Sigma_H}\,\big(E_{\Sigma_H}^0[u_{r_*}]\big)^2=C_0(n)\,\g\,,
    \\\label{biro 8}
    &&\Big|\int_{\Sigma_K}(v_{r_*})^2-\int_{\Sigma_H}(u_{r_*})^2\Big|\le C_0(n)\,\int_{\S_H}(u_{r_*})^2\,.
  \end{eqnarray}
  Since $v_{r_*}=v(\cdot,r_*)$ for $v$ constructed in step one starting from $u$, $H$ and $K$, we deduce \eqref{biro 4} by \eqref{biro 5} and \eqref{biro 7}, while \eqref{biro 6} is \eqref{biro 2}.
\end{proof}

We will use two basic ``energy estimates'' for spherical graphs over annuli. To streamline the application of these estimates to diadic families of annuli we consider intervals $(r_1,r_2)$ and $(r_3,r_4)$ are {\bf $(\eta,\eta_0)$-related}, meaning that
\begin{equation}
  \label{r1r2r3r4}
  r_2=r_0(1+\eta_0)\,,\quad r_1=r_0(1-\eta_0)\,,\quad   r_4=r_0(1+\eta)\,,\quad r_3=r_0(1-\eta)\,,
\end{equation}
for some $\eta_0>\eta>0$, and with $r_0=(r_1+r_2)/2=(r_3+r_4)/2$; in particular, $(r_3,r_4)$ is contained in, and concentric to, $(r_1,r_2)$. The case $\Lambda=0$ of the following statement is the codimension one, equatorial spheres case of \cite[Lemma 7.14, Theorem 7.15]{allardalmgrenRADIAL}.

\begin{theorem}[Energy estimates for spherical graphs]\label{theorem 7.15AA main estimate lambda}
If $n\ge 2$ and $\eta_0>\eta>0$, then there are $\s_0=\s_0(n,\eta_0,\eta)$ and $C_0=C_0(n,\eta_0,\eta)$ positive, with the following property. If $H\in\H$, $\Lambda\ge0$, and $u\in\X_{\s}(\Sigma_H,r_1,r_2)$ is such that $\max\{1,\Lambda\,r_2\}\,\s\le\s_0$ and $\Sigma_H(u,r_1,r_2)$ has mean curvature bounded by $\Lambda$ in $A_{r_1}^{r_2}$, then, whenever $(r_1,r_2)$ and $(r_3,r_4)$ are $(\eta,\eta_0)$-related as in \eqref{r1r2r3r4},
\[
  \Big|\H^n(\Sigma_H(u,r_3,r_4))-\H^n(\Sigma_H(0,r_3,r_4))\Big|\le C_0\,\int_{\Sigma_H\times(r_1,r_2)}\!\!\!\!\!\!\!\!\!\!r^{n-1}\,\big(u^2+\Lambda\,r\,|u|\big)\,.
\]
Moreover, if there is $r\in(r_1,r_2)$ s.t. $E_{\Sigma_H}^0 u_r=0$ on $\Sigma_H$, then we also have
\[
\int_{\Sigma_H\times(r_3,r_4)} r^{n-1}\,u^2
\le C(n)\,\Lambda\,r_2\,(r_2^n-r_1^n)+ C_0\,\int_{\Sigma_H\times(r_1,r_2)}\,r^{n-1}\,(r\,\pa_ru)^2\,.
\]
\end{theorem}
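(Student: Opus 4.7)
The overall strategy is to view $\S_H(u,r_1,r_2)$ as an approximate normal graph over the flat (minimal) annulus $H\cap A_{r_1}^{r_2}$ via the spherical-to-cylindrical correspondence of Appendix D, so that $\S_H(u,r_1,r_2)\approx\{z+f(z)\,\nu_H:z\in H\cap A_{r_1}^{r_2}\}$ with $f(r\om)\approx r\,u(\om,r)$ in polar coordinates $(r,\om)\in(r_1,r_2)\times\S_H$. Because the base is a flat piece of a hyperplane (minimal, with vanishing second fundamental form), the area expansion reads
\[
\H^n\big(\S_H(u,r_3,r_4)\big)-\H^n\big(\S_H(0,r_3,r_4)\big)=\tfrac{1}{2}\int_{H\cap A_{r_3}^{r_4}}|\nabla_H f|^2+O(\s_0)\cdot(\text{quadratic}),
\]
with the cubic error absorbed into the quadratic term by the smallness $\max\{1,\Lambda r_2\}\,\s\le\s_0$. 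A direct computation in $(r,\om)$-coordinates shows
\[
\int_{H\cap A_{r_3}^{r_4}}|\nabla_H f|^2=\int_{\S_H\times(r_3,r_4)}r^{n-1}\Big[(u+r\pa_r u)^2+|\nabla^{\S_H}u|^2\Big].
\]

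For the first bound, the plan is a Caccioppoli-type argument exploiting the mean curvature equation $\mathrm{div}(\nabla_H f/\sqrt{1+|\nabla_H f|^2})=-H$ with $|H|\le\Lambda$ for the normal graph $f$. Testing its weak form against $\phi=f\,\eta^2$, where $\eta(|z|)$ is a radial cutoff with $\eta\equiv 1$ on $(r_3,r_4)$, $\spt\eta\subset(r_1,r_2)$, and $|\eta'|\le C(n,\eta_0,\eta)/r_0$, and absorbing the cross term, yields
\[
\int_{H\cap A_{r_3}^{r_4}}|\nabla_H f|^2\le C\int_{H\cap A_{r_1}^{r_2}}\Big[|\nabla_H \eta|^2\,f^2+\Lambda\,|f|\,\eta^2\Big].
\]
Since $f\approx r\,u$ and $|\nabla_H\eta|^2\le C/r_0^2$ with $r_0\sim r$ on $\spt\eta$, the right-hand side is bounded by $C(n,\eta_0,\eta)\int_{\S_H\times(r_1,r_2)}r^{n-1}(u^2+\Lambda r|u|)$, which combined with the area expansion gives the first estimate.

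For the second bound the idea is to decouple in spherical harmonics on $\S_H=\SS^{n-1}$. Writing $u(r,\om)=\sum_{k\ge 0}c_k(r)\phi_k(\om)$ in an $L^2(\S_H)$-orthonormal basis with $\{\phi\}_{k=1}$ spanning the Jacobi fields, the orthogonality assumption $E^0_{\S_H}u_{r_*}=0$ becomes $c_1(r_*)=0$. The linearization of the mean curvature equation for the normal graph $f=ru$, written in $(r,\om)$ and projected modewise, yields to leading order the radial ODEs
\[
r^2c_k''+(n+1)\,r\,c_k'+\l_k\,c_k=r\,h_k(r),\qquad \l_k=(n-1)-k(k+n-2),
\]
up to $O(\s)$ cross-mode couplings, with $\|h_k\|_{L^\infty}\le\Lambda$. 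For $k=0$ the indicial roots are $-1,-(n-1)$ and the particular solution gives a pointwise bound $|c_0(r)|\le C(n)\Lambda r$; combining with $|c_0|\le\s\le\s_0(n,\eta_0,\eta)$ delivers $\int r^{n-1}c_0^2\le C(n)\Lambda r_2(r_2^n-r_1^n)$. For $k=1$ (Jacobi mode), $\l_1=0$ and the ODE becomes $(r^{n+1}c_1')'=r^n h_1$; integrating from $r_*$ where $c_1$ vanishes, together with a Hardy/Poincar\'e argument on $(r_1,r_2)$ using the cutoff, yields $\int r^{n-1}c_1^2\le C\int r^{n-1}(r\,c_1')^2$. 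For $k\ge 2$, the spectral gap $|\l_k|\ge n+1$ furnishes a coercive Poincar\'e-type estimate trading $\int r^{n-1}c_k^2$ against $\int r^{n-1}(r\,c_k')^2$, with the $\Lambda$-forcing absorbed into the constant-mode term. Summing over $k$ via Parseval recovers the stated inequality.

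The principal obstacle I anticipate is the uniform-in-$k$ control of the $O(\s)$ mode-coupling corrections that the decoupled ODE picture above ignores: these must be absorbed via the smallness $\max\{1,\Lambda r_2\}\,\s\le\s_0$ without spoiling the dependence of the constant $C_0$ on $\eta_0,\eta$. A secondary subtlety is that $E^0_{\S_H}u_{r_*}=0$ is imposed only at a single radius $r_*$, so the estimate for the Jacobi mode must propagate this vanishing across the whole annulus $(r_3,r_4)$ through integration of the radial ODE with weights chosen to keep the Poincar\'e constants uniform under the dyadic scaling encoded by the $(\eta,\eta_0)$-related intervals.
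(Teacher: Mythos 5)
Your overall architecture coincides with the paper's: for the first estimate, a second-order expansion of the area plus a Caccioppoli inequality obtained by testing the prescribed-mean-curvature equation against $u\,\psi^2$ (the paper does this directly in spherical coordinates via the Allard--Almgren expansions \eqref{second order area radial}--\eqref{second order first variation radial}, rather than passing to a cylindrical graph $f\approx r\,u$; your route is workable but you must account for the fact that the projection of $\S_H(u,r_3,r_4)$ onto $H$ is not the flat annulus $H\cap A_{r_3}^{r_4}$ — the quadratic domain-mismatch term is exactly what turns $\tfrac12\int|\nabla_H f|^2$ into the paper's $\tfrac12\int r^{n-1}(|\nabla^{\S_H}u|^2+(r\pa_ru)^2-(n-1)u^2)$). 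For the second estimate, your three regimes $k=0$, $k=1$, $k\ge 2$ are precisely the paper's decomposition $u^+,u^0,u^-$ by the spectral projections of the Jacobi operator, with the Jacobi mode handled by a one-dimensional Poincar\'e inequality from the vanishing at $r_*$ (the paper needs no ODE here, only \eqref{elementary 2}) and the remaining modes by the spectral gap fed into the first-variation identity.

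There is, however, a genuine error in your treatment of the constant mode $k=0$. The radial ODE $(r^{n+1}c_0')'= r^n h_0$ has homogeneous solutions $r^{-1}$ and $r^{-(n-1)}$ (indeed $A r^{-1}+Br^{-(n-1)}$ for arbitrary $A,B$), and no boundary conditions are available on $(r_1,r_2)$ to exclude them; hence the claimed pointwise bound $|c_0(r)|\le C(n)\Lambda r$ is false. A concrete counterexample with $\Lambda=0$ is the end of a higher-dimensional catenoid, $f(\rho)=b\,\rho^{2-n}$, for which $c_0(r)\sim b\,r^{-(n-1)}\not\equiv 0$; your bound would force $c_0\equiv 0$. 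Consequently $\int r^{n-1}c_0^2\le C\Lambda r_2(r_2^n-r_1^n)$ cannot hold as stated — the constant mode must be controlled by $\int r^{n-1}(r\,\pa_r c_0)^2$ as well, which is exactly why the theorem's right-hand side contains the radial-derivative term. The paper obtains this by applying the same spectral-gap/variational argument used for $k\ge 2$ (its $u^-$ estimate \eqref{deh} and ``the analogous estimate for $u^+$''): the form $\int (n-1)v^2-|\nabla^{\S_H}v|^2$ is coercive on the constant mode, so testing $T_\psi(u,u^\pm)$ with the slice-wise orthogonal projections yields $Q_\psi(u^\pm)\lesssim Q_{r\psi}(\pa_r u^\pm)+\Lambda\,r_2\,(r_2^n-r_1^n)+\s_0\,Q_\psi(u)$. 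This grouped, variational formulation is also what resolves the mode-coupling obstacle you correctly flag at the end: rather than controlling $O(\s)$ couplings uniformly in $k$, one only ever estimates the three projections $u^0,u^\pm$, whose slice-wise orthogonality to $u-u^\pm$ kills the cross terms in the quadratic form up to the $C(n)\,\s_0$ errors already present in \eqref{going back to psi w plug in}. With the $k=0$ step repaired along these lines your argument closes; as written it does not.
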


\begin{proof}
  See appendix \ref{appendix proof of expansion mean curvature}.
\end{proof}

\subsection{Monotonicity for exterior varifolds with bounded mean curvature}\label{subsection monotonicity of exterior varifolds}
The following theorem states the monotonicity of $\Theta_{V,R,\Lambda}$ for $V\in\V_n(\Lambda,R,S)$, and provides, when $V$ corresponds to a spherical graph, a quantitative lower bound for the gap in the associated monotonicity formula; the case $\Lambda=0$, $R=0$ is contained in \cite[Lemma 7.16, Theorem 7.17]{allardalmgrenRADIAL}.

\begin{theorem}\label{theorem 7.17AA exterior lambda} {\bf (i):} If $V\in\V_n(\Lambda,R,S)$, then
\begin{equation}
  \label{lambdamonotonicity of theta ext}
\mbox{$\Theta_{V,R,\Lambda}$ is increasing on $(R,S)$}\,.
\end{equation}

\noindent {\bf (ii):} There is $\s_0(n)$ such that, if $V\in\V_n(\Lambda,R,S)$ and, for some $H\in\H$, $u\in\X_\s(\Sigma,r_1,r_2)$ with $\s\le\s_0(n)$, and $(r_1,r_2)\subset(R,S)$, we have
\begin{eqnarray}
  \label{V corresponds to u monoto lambda}
  \mbox{$V$ corresponds to $\Sigma_H(u,r_1,r_2)$ in $A_{r_1}^{r_2}$}\,,
\end{eqnarray}
then
\begin{eqnarray}
\label{lemma 7.16AA monoto lambda}
  \int_{\S_H\times(r_1,r_2)}\!\!\!\!\!\!\!\!r^{n-1} (r\,\pa u_r)^2\le C(n)\,r_2^n\,
  \Big\{\Theta_{V,R,\Lambda}(r_2)-\Theta_{V,R,\Lambda}(r_1)\Big\}\,.
\end{eqnarray}
{\bf (iii):} Finally, given $\eta_0>\eta>0$, there exist $\s_0$ and $C_0$ depending on $n$, $\eta_0$, and $\eta$ only, such that if the assumptions of part (i) and part (ii) hold and, in addition to that, we also have $\max\{1,\Lambda\,r_2\}\,\s\le\s_0$ and
\begin{equation}
  \label{lambdaAA 7.15(4) for 7.17 hole}
  \mbox{$\exists\,r\in(r_1,r_2)$ s.t. $E_{\Sigma_H}^0 u_r=0$ on $\Sigma_H$}\,,
\end{equation}
then, whenever $(r_1,r_2)$ and $(r_3,r_4)$ are $(\eta,\eta_0)$-related as in \eqref{r1r2r3r4}, we have
\begin{eqnarray}\label{lambdaAA tesi 7.17 hole}
  &&\Big|\H^n(\Sigma_H(u,r_3,r_4))-\H^n(\Sigma_H(0,r_3,r_4))\Big|
  \\\nonumber
  &&\hspace{2cm}\le C_0\,r_2^n\,\Big\{\Theta_{V,R,\Lambda}(r_2)-\Theta_{V,R,\Lambda}(r_1)+(\Lambda\,r_2)^2\Big\}\,.
\end{eqnarray}
\end{theorem}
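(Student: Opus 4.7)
I would follow the order of the three parts. For part (i), I would test the first variation identity against the radial vector field $X_\phi(x) = \phi(|x|)\,x$, use the identity $\mathrm{div}^T X_\phi = n\phi(|x|) + \phi'(|x|)(|x| - |x^\perp|^2/|x|)$, and approximate $\phi \to \chi_{[0,r]}$. The ${\rm bd}_V$-contribution persists because $\phi(R) = 1$ (since $R < r$), producing exactly the $-\tfrac{1}{n r^n}\int x\cdot\nu_V^{\mathrm{co}}\,d\,{\rm bd}_V$ term in $\Theta_{V,R,\Lambda}$; the bound $|\vec{H}|\le\Lambda$ contributes an error exactly cancelled by the $\Lambda$-integral built into $\Theta_{V,R,\Lambda}$; and the non-negative slice term in $|x^\perp|^2$ that survives yields $\Theta_{V,R,\Lambda}'(r) \ge 0$. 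Integrating in $r$ gives the sharper form
\[
\Theta_{V,R,\Lambda}(r_2) - \Theta_{V,R,\Lambda}(r_1) \;\ge\; \int_{A_{r_1}^{r_2}} \frac{|x^\perp|^2}{|x|^{n+2}}\,d\|V\|,
\]
which then drives part (ii).

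For part (ii), I would compute $|x^\perp|^2$ directly on the spherical graph. Writing $\Psi(r,\omega) = r\,\phi_u$ with $\phi_u = (\omega + u\,\nu_H)/\sqrt{1+u^2}$, the radial tangent $\partial_r\Psi = \phi_u + r\,\partial_r\phi_u$ satisfies $\phi_u \perp \partial_r\phi_u$ (from $|\phi_u|=1$), and the angular tangent vectors are perpendicular to $\phi_u = \Psi/r$ as well; a direct decomposition gives $|x^\perp|^2 = r^4\,|\partial_r\phi_u|^2/(1+r^2|\partial_r\phi_u|^2) \ge c(n)\,r^2\,(r\,\partial_r u)^2$ provided $\|u\|_{C^1}\le \s_0$ is small. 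Combined with $d\|V\| \ge c(n)\,r^{n-1}\,d\omega\,dr$ on the graph and $|x|=r$, substituting into the previous display and estimating $r^{n-1} = r^n/r \le r_2^n/r$ on $(r_1,r_2)$ yields
\[
\int_{\Sigma_H\times(r_1,r_2)} r^{n-1}\,(r\,\partial_r u)^2 \;\le\; C(n)\,r_2^n\,\bigl(\Theta_{V,R,\Lambda}(r_2) - \Theta_{V,R,\Lambda}(r_1)\bigr),
\]
as required.

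For part (iii), I would combine Theorem~\ref{theorem 7.15AA main estimate lambda} with part (ii) on a nested annulus $(\rho_1,\rho_2)$ with $(r_3,r_4)\subset(\rho_1,\rho_2)\subset(r_1,r_2)$, chosen with proportions compatible with $\eta$ and $\eta_0$. The first estimate there bounds the area deficit by $C\int_{\Sigma_H\times(\rho_1,\rho_2)} r^{n-1}(u^2 + \Lambda r|u|)$, and Young's inequality $\Lambda r|u| \le u^2 + \tfrac14(\Lambda r)^2$ yields the $(\Lambda r_2)^2\,r_2^n$-term appearing in the final bound. The residual $u^2$-integral is then controlled via the second estimate of Theorem~\ref{theorem 7.15AA main estimate lambda} on $(r_1,r_2)$ (whose hypothesis is exactly \eqref{lambdaAA 7.15(4) for 7.17 hole}) and by part (ii), producing $\Lambda r_2(r_2^n - r_1^n) + C\,r_2^n(\Theta(r_2)-\Theta(r_1))$. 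The leftover $\Lambda r_2(r_2^n - r_1^n) \le C(\eta_0)\,\Lambda r_2^{n+1}$ is absorbed into $r_2^n(\Theta(r_2)-\Theta(r_1))$ through the $\Lambda\int_{r_1}^{r_2}\|V\|(B_\rho\setminus B_R)/\rho^n\,d\rho$ contribution in $\Theta_{V,R,\Lambda}$, since $\|V\|(B_\rho\setminus B_R)/\rho^n$ is bounded below on spherical graphs close to a plane. The main obstacle is the explicit lower bound $|x^\perp|^2 \ge c\,r^2(r\partial_r u)^2$ in part (ii): one must unravel the nonlinear dependence on $u$ through the normalization $\sqrt{1+u^2}$ so that the Jacobian factor and $r$-weights line up exactly with the $r^{n-1}$ target integrand; the rest is careful bookkeeping over the nested annuli.
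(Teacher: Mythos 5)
Your parts (i) and (ii) follow the paper's route: (i) is the standard monotonicity computation with the radial field, keeping the $\mathrm{bd}_V$-term and cancelling the mean-curvature error against the $\Lambda$-integral built into $\Theta_{V,R,\Lambda}$, and your integrated form $\Theta_{V,R,\Lambda}(r_2)-\Theta_{V,R,\Lambda}(r_1)\ge\int_{A_{r_1}^{r_2}}|x^\perp|^2|x|^{-n-2}\,d\|V\|$ is exactly what the paper extracts. For (ii) the paper simply cites \cite[Lemma 3.5(6)]{allardalmgrenRADIAL} for the pointwise lower bound on $|x^\perp|^2$ along a spherical graph; your direct computation (using $\phi_u\perp\partial_r\phi_u$, $x\perp\partial_i\Psi$, and $|\partial_r\phi_u|=|\partial_ru|/(1+u^2)$) reproduces it, modulo the non-orthogonality of $\partial_r\Psi$ to the angular tangent vectors, which only contributes $O(\sigma^2)$ corrections and is harmless. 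The skeleton of (iii) — intermediate annulus, first estimate of Theorem \ref{theorem 7.15AA main estimate lambda} on the inner pair, second estimate on the outer pair, then part (ii) — is also the paper's argument.

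The genuine gap is the last step of (iii). You claim that the leftover term $\Lambda r_2(r_2^n-r_1^n)$ can be absorbed into $r_2^n\{\Theta_{V,R,\Lambda}(r_2)-\Theta_{V,R,\Lambda}(r_1)\}$ ``through the $\Lambda\int_{r_1}^{r_2}\|V\|(B_\rho\setminus B_R)\rho^{-n}d\rho$ contribution.'' This is false. That contribution is indeed bounded below by $c(n,\eta_0)\Lambda r_2$ on a spherical graph, but it is only one summand of the difference: the remaining part, $\bigl[\rho^{-n}\|V\|(B_\rho\setminus B_R)-(n\rho^n)^{-1}\int x\cdot\nu_V^{\rm co}\,d\,{\rm bd}_V\bigr]_{r_1}^{r_2}$, can be \emph{negative} of exactly the same order — the whole point of adding the $\Lambda$-integral to $\Theta_{V,R,\Lambda}$ is to compensate for the possible decrease of the density ratio caused by the mean curvature (the worst case in the proof of (i) is $\vec H\cdot x=\Lambda|x||x^{TM}|$-saturating, where $\Theta'={\rm Mon}$ with no surplus). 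So all one knows is $\Theta_{V,R,\Lambda}(r_2)-\Theta_{V,R,\Lambda}(r_1)\ge 0$, not $\ge c\Lambda(r_2-r_1)$, and the absorption fails. Note also that $\Lambda r_2(r_2^n-r_1^n)$ is \emph{not} dominated by the $(\Lambda r_2)^2r_2^n$ term allowed in \eqref{lambdaAA tesi 7.17 hole} when $\Lambda r_2$ is small, so the term cannot simply be waved into the conclusion either. The correct fix is the one you already use in the first application: run Young's inequality $\Lambda r|u|\le u^2+\tfrac14(\Lambda r)^2$ \emph{inside} the derivation of the second estimate of Theorem \ref{theorem 7.15AA main estimate lambda} as well, so that the error term appears as $(\Lambda r_2)^2(r_2^n-r_1^n)\le(\Lambda r_2)^2r_2^n$ rather than $\Lambda r_2(r_2^n-r_1^n)$ (this is what the paper's proof of (iii) silently does, writing $(\Lambda r_2)^2(r_2^n-r_1^n)$ in the second application even though the boxed statement of the second estimate carries the cruder $\Lambda r_2(r_2^n-r_1^n)$).
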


\begin{proof}
  We give details of the proof of (i) when $V\in\M_n(\Lambda,R,S)$ (whereas the general case is addressed as in \cite[Section 17]{SimonLN}). By the coarea formula, the divergence theorem and $|\vec{H}|\le\Lambda$, for a.e. $\rho>R$,
  \begin{eqnarray}\nonumber
    \frac{d}{d\rho}\,\frac{\|V\|(B_\rho\setminus B_R)}{\rho^n}&=&
    \frac1{\rho^n}\,\int_{M\cap\pa B_\rho}\frac{|x|\,d\H^{n-1}}{|x^{TM}|}
  -\frac{n\,\H^n(M\cap (B_\rho\setminus B_R))}{\rho^{n+1}}
  \\\nonumber
  &=&
  \frac1{\rho^n}\,\int_{M\cap\pa B_\rho}\frac{|x|\,d\H^{n-1}}{|x^{TM}|}-\frac1{\rho^n}\int_{M\cap(B_\rho\setminus B_R)}\,\frac{x}{\rho}\cdot\vec{H}\,d\H^n
  \\\nonumber
  &&
  \!\!\!\!-\frac{1}{\rho^{n+1}}\Big\{\int_{M\cap\pa B_\rho}\!\!\!\!\!\nu^{{\rm co}}_M\cdot x\,d\H^{n-1}+\int_{M\cap\pa B_{R}}\!\!\!\!\!\nu^{{\rm co}}_M\cdot x\,d\H^{n-1}\Big\}
  \\\nonumber
  &\ge& \frac1{\rho^n}\,\int_{M\cap\pa B_\rho}\Big(\frac{|x|}{|x^{TM}|}-\frac{|x^{TM}|}{|x|}\Big)\,d\H^{n-1}
   \\\nonumber
    && \!\!\!\!-\frac{1}{\rho^{n+1}}\,\int_{M\cap\pa B_R}\!\!\!\!\!\nu^{{\rm co}}_M\cdot x\,d\H^{n-1}
    -\Lambda\,\frac{\H^n(M\cap(B_\rho\setminus B_R))}{\rho^n}
  \\\label{conelimit}
  &&\!\!\!\!\!\!\!\!\!\!\!\!\!\!\!\!\!\!\!\!\!\!\!\!\!\!\!\!\!\!\!\!\!\!\!\!\!\!\!\!\!\!\!\!\!\!\!
  ={\rm Mon}(V,\rho)+\frac{d}{d\rho}\,\frac1{n\,\rho^n}\,\int x\cdot\nu^{{\rm co}}_V\,d\,{\rm bd}_V-\Lambda\,\frac{\|V\|(B_\rho\setminus B_R)}{\rho^n}\,\,\,\,\,\,\,\,\,\,\,
  \end{eqnarray}
  where ${\rm Mon}(V,\rho)=(d/d\rho)\int_{B_\rho\setminus B_R}\,|x^\perp|^2\,|x|^{-n-2}\,d\|V\|$. Since ${\rm Mon}(V,\rho)\ge0$, this proves \eqref{lambdamonotonicity of theta ext}. Assuming now \eqref{V corresponds to u monoto lambda}, by using \cite[Lemma 3.5(6)]{allardalmgrenRADIAL} as done in the proof of \cite[Lemma 7.16]{allardalmgrenRADIAL}, we see that, under \eqref{V corresponds to u monoto lambda},
  \[
  C(n)\,r_2^n\,\int_{r_1}^{r_2}{\rm Mon}(V,\rho)\,d\rho\ge \int_{\Sigma_H\times(r_1,r_2)}r^{n-1}\,(r\,\pa_ru)^2\,,
  \]
  thus proving (ii). To prove (iii), we set $a=r_0\,(1-(\eta+\eta_0)/2)$ and $b=r_0\,(1+(\eta+\eta_0)/2)$, so that $(a,b)$ and $(r_3,r_4)$ are $(\eta,(\eta+\eta_0)/2)$-related, and $(r_1,r_2)$ and $(a,b)$ are $((\eta+\eta_0)/2,\eta_0)$-related (in particular, $(r_3,r_4)\subset (a,b)\subset (r_1,r_2)$). By suitably choosing $\s_0$ in terms of $n$, $\eta$ and $\eta_0$, we can apply Theorem \ref{theorem 7.15AA main estimate lambda} with $(r_3,r_4)$ and $(a,b)$, so to find (with $C=C(n,\eta_0,\eta)$)
\begin{eqnarray*}
&&\Big|\H^n(\S(u,r_3,r_4))-\H^n(\S(0,r_3,r_4))\Big|\le C\,\int_{\S_H\times(a,b)} r^{n-1}\big(u^2+\Lambda\,r\,|u|\big)
\\
&&\hspace{3cm}\le C\,\Big\{(\Lambda\,b)^2\,(b^n-a^n)+\int_{\S_H\times(a,b)} r^{n-1}\,u^2\Big\}\,.
\end{eqnarray*}
Thanks to \eqref{lambdaAA 7.15(4) for 7.17 hole} we can apply Theorem \ref{theorem 7.15AA main estimate lambda} with
$(a,b)$ and $(r_1,r_2)$ to find
\[
\int_{\S_H\times(a,b)} r^{n-1}\,u^2\le
C\,\Big\{(\Lambda\,r_2)^2\,(r_2^n-r_1^n)+\int_{\S_H\times(r_1,r_2)} r^{n-1}\,(r\,\pa_ru)^2\Big\}\,.
\]
We find \eqref{lambdaAA tesi 7.17 hole} by \eqref{lemma 7.16AA monoto lambda} and $(\Lambda\,b)^2\,(b^n-a^n)\le (\Lambda\,r_2)^2\,r_2^n$.
\end{proof}

\subsection{Proof of the mesoscale flatness criterion}\label{subsection mesoscale flatness criterion} As a final preliminary result to the proof of Theorem \ref{theorem mesoscale criterion}, we prove the following lemma, where Allard's regularity theorem is combined with a compactness argument to provide the basic graphicality criterion used throughout the iteration. The statement should be compared to \cite[Lemma 5.7]{allardalmgrenRADIAL}.

\begin{lemma}[Graphicality lemma]\label{lemma graphicality lambda}
    Let $n\ge 2$. For every $\s>0$, $\Gamma\ge0$, $(\l_3,\l_4)\cc (\l_1,\l_2)\cc (0,1)$, and $(\eta_1,\eta_2)\cc(0,1)$, there are positive constants $\e_1$ and $M_1$, depending only on $n$, $\sigma$, $\Gamma$, $(\l_1,\l_2)$, $(\l_3,\l_4)$, and $(\eta_1,\eta_2)$, and $\e_2$ and $M_2$, depending only on $n$, $\sigma$, $\Gamma$, $\l_1$, and $(\eta_1,\eta_2)$, with the following properties.\par
  \noindent {\bf (i):} If $\Lambda\ge0$, $R\in(0,1/\Lambda)$, $V\in\V_n(\Lambda,R,1/\Lambda)$,
  \begin{equation}\label{gamma bound lemma statement}
  \|{\rm bd}_V\|(\pa B_{R})\le\Gamma\,R^{n-1}\,,\qquad \sup_{\rho\in(R,1/\Lambda)}\frac{\|V\|(B_\rho\setminus B_R)}{\rho^n}\le\Gamma\,,
  \end{equation}
  there exists $r>0$ such that
  \begin{eqnarray}
  \label{conditions on r lambda}
  &&\hspace{-0.2cm}\max\{M_1,64\}\,R\le r\le \frac{\e_1}\Lambda\,,
  \\
  \label{ext hp step two 1 lambda}
  &&\hspace{1cm}|\de_{V,R,\Lambda}(r)|\le\e_1\,,
  \\\label{ext hp step two 3 lambda}
  &&\hspace{0.9cm}\|V\|(A_{\l_3\,r}^{\l_4\,r})>0\,,
  \end{eqnarray}
  and if, for some $K\in\H$, we have
  \begin{equation}
  \label{ext hp step two 2 lambda}
  \frac1{r^n}\,\int_{A_{\l_1\,r}^{\l_2\,r}}\,\om_K^2\,d\|V\|\le\e_1\,,
  \end{equation}
  then there exists $u\in\X_\s(\Sigma_K,\eta_1 \, r,\eta_2 \, r)$ such that
  \[
  \mbox{$V$ corresponds to $\Sigma_K(u,\eta_1 \, r,\eta_2 \, r)$ on $A_{\eta_1 \, r}^{\eta_2 \, r}$}\,.
  \]
 \noindent {\bf (ii):} If $\Lambda$, $R$, and $V$ are as in (i), \eqref{gamma bound lemma statement} holds, and there exists $r$ such that \begin{eqnarray}
  \label{alternative conditions on r lambda}
  &&\hspace{0.5cm}\max\{M_2,64\}\,R\le r\le \frac{\e_2}\Lambda\,,
  \\
  \label{alternative hypo to angular flatness}
  &&\max\{|\de_{V,R,\Lambda}(\lambda_1\, r)|,|\de_{V,R,\Lambda}(r)|\}\le\e_2\,,
  \end{eqnarray}
 then there exists $K\in \mathcal{H}$ and $u\in\X_\s(\Sigma_K, \eta_1\, r, \eta_2\, r)$ such that
  \[
  \mbox{$V$ corresponds to $\Sigma_K(u, \eta_1 \, r , \eta_2 \, r)$ on $A_{\eta_1 \, r}^{\eta_2 \, r}$}\,.
  \]
\end{lemma}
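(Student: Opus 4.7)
Both parts are proved by a contradiction/compactness argument combined with Allard's regularity theorem, with part (ii) reduced to part (i) via the monotonicity formula.

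\textbf{Proof of (i).} Suppose for contradiction that for some admissible choice of parameters there exist sequences $\Lambda_k\ge 0$, $R_k\in(0,1/\Lambda_k)$, $V_k\in\V_n(\Lambda_k,R_k,1/\Lambda_k)$ satisfying \eqref{gamma bound lemma statement}, radii $r_k$ and hyperplanes $K_k\in\H$ meeting \eqref{conditions on r lambda}--\eqref{ext hp step two 2 lambda} with $\e_1=1/k$ and $M_1=k$, yet the conclusion fails. Rescale by $1/r_k$, so that the new radii become $1$; set $\widetilde R_k=R_k/r_k$, $\widetilde\Lambda_k=\Lambda_k r_k$, and let $\widetilde V_k$ be the rescaled varifolds. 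By \eqref{conditions on r lambda}, $\widetilde R_k\to 0$ and $\widetilde\Lambda_k\to 0$; by \eqref{gamma bound lemma statement}, the mass ratios of $\widetilde V_k$ are uniformly bounded on every bounded annulus centered at $0$ and avoiding $B_{\widetilde R_k}$. Up to extracting a subsequence and rotating so that $K_k\to K_\infty\in\H$, varifold compactness yields $\widetilde V_k \rightharpoonup \widetilde V_\infty$ in $B_2\setminus\{0\}$, and $\widetilde V_\infty$ is stationary there because $\widetilde\Lambda_k\to 0$ and the boundary measures $\mathrm{bd}_{\widetilde V_k}$, supported on $\pa B_{\widetilde R_k}$ with total mass $\le\Gamma\,\widetilde R_k^{n-1}\to 0$, vanish in the limit. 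From \eqref{ext hp step two 2 lambda} with $\e_1\to 0$, the angular flatness of $\widetilde V_\infty$ with respect to $K_\infty$ on $A_{\l_1}^{\l_2}$ vanishes, so $\spt\,\widetilde V_\infty\cap A_{\l_1}^{\l_2}\subset K_\infty$. Combined with stationarity, the constancy theorem for stationary varifolds on the flat annulus $K_\infty\cap A_{\l_1}^{\l_2}$ gives that $\widetilde V_\infty$ is an integer multiple of $\H^n\llcorner K_\infty$ there. The area deficit hypothesis \eqref{ext hp step two 1 lambda} and the monotonicity of $\Theta_{V,R,\Lambda}$ (Theorem \ref{theorem 7.17AA exterior lambda}-(i)) forces that multiplicity to be $1$ on $A_{\l_3}^{\l_4}$ (where \eqref{ext hp step two 3 lambda} rules out the zero multiplicity alternative). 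Allard's $\e$-regularity theorem now applies on any slightly larger annulus inside $A_{\eta_1}^{\eta_2}$ to give $C^{1,\alpha}$-graphicality of $\widetilde V_k$ over $\Sigma_{K_k}$ with $\|u_k\|_{C^1}\to 0$, contradicting failure of the conclusion for $k$ large.

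\textbf{Proof of (ii).} The only additional ingredient is the extraction of an initial hyperplane $K$ at which the angular flatness is small, so that (i) can be applied. Run the same rescaling/compactness argument: $\widetilde V_k \rightharpoonup \widetilde V_\infty$, stationary in $B_2\setminus\{0\}$, with uniformly bounded mass ratios. Hypothesis \eqref{alternative hypo to angular flatness} with $\e_2\to 0$ and the inequality \eqref{lambdamonotonicity of theta ext} combined with the sign of the monotonicity gap (as recorded in the proof of Theorem \ref{theorem 7.17AA exterior lambda}-(i), where $\mathrm{Mon}(V,\rho)\ge 0$ controls $\int_{B_\rho\setminus B_R}|x^\perp|^2/|x|^{n+2}\,d\|V\|$) forces $\widetilde V_\infty$ to be a cone on $A_{\l_1}^1$ with density identically $\om_n$. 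By the density theorem for stationary integral varifolds, $\widetilde V_\infty\llcorner A_{\l_1}^1=\H^n\llcorner K_\infty$ for some $K_\infty\in\H$. Consequently, for $k$ large, the angular flatness of $\widetilde V_k$ with respect to $K_\infty$ on $A_{\l_1}^{\l_2'}$ (for any chosen $\l_2'\in(\l_1,1)$) is as small as desired, and both the deficit and mass hypotheses of (i) hold with $\widetilde V_k$, the hyperplane $K_\infty$, and a radius close to $r$. The conclusion follows by applying (i) and unscaling.

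\textbf{Main obstacle.} The delicate point is not Allard's theorem itself but verifying the hypotheses of the stationary-varifold limit: one must control the boundary first variation $\mathrm{bd}_{V_k}$ (supported near the hole $\pa B_{R_k}$, which after rescaling shrinks to the origin and is therefore excised from the domain of stationarity) and simultaneously ensure that no mass concentrates at $0$ in the limit. Both are handled using \eqref{gamma bound lemma statement} and the monotonicity of $\Theta_{V,R,\Lambda}$: the boundary mass bound $\Gamma\widetilde R_k^{n-1}$ vanishes and the density ratio at any small radius of $\widetilde V_\infty$ is controlled by the limit of $\Theta_{V_k,R_k,\Lambda_k}(r_k)$, which is close to $\om_n$ by \eqref{ext hp step two 1 lambda} (or \eqref{alternative hypo to angular flatness}). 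This prevents the concentration of a ``phantom'' singular piece at the origin and yields the planar limit required to invoke Allard.
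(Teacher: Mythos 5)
Your overall strategy (rescale, extract a stationary varifold limit in the punctured ball, identify it as a multiplicity-one plane, invoke Allard) is the same as the paper's, and your treatment of the boundary measure and of the mass/deficit normalization is correct. But there is a genuine gap at the point where you invoke Allard. The conclusion of the lemma is graphicality on $A_{\eta_1 r}^{\eta_2 r}$ for an \emph{arbitrary} $(\eta_1,\eta_2)\cc(0,1)$, which is in general not contained in $(\l_1,\l_2)$ (in the paper's own applications one has $(\l_1,\l_2)=(1/8,1/2)$ but $(\eta_1,\eta_2)=(1/32,1/2)$, so $\eta_1<\l_1$). Your argument identifies the limit $\widetilde V_\infty$ as an integer multiple of $\H^n\llcorner K_\infty$ only on $A_{\l_1}^{\l_2}$ (with multiplicity one only on $A_{\l_3}^{\l_4}$), and then asserts that Allard applies "on any slightly larger annulus inside $A_{\eta_1}^{\eta_2}$." That assertion is unjustified: nothing you have proved excludes that $\widetilde V_\infty$ carries mass off the plane $K_\infty$ at radii below $\l_1$ or above $\l_2$, which would destroy both the density hypothesis of Allard's theorem there and the graphical conclusion. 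Your closing remark about density control near the origin addresses concentration at $0$ but not the structure of the limit on $B_{\l_1}\setminus\{0\}$ or on $A_{\l_2}^{1}$.

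This propagation of planarity from the flat annulus to the whole ball is precisely the content of the paper's preliminary step: if $V$ is stationary and integral in $B_1$ with $\|V\|(B_1)\le\om_n$ and $\spt V\cap A_{\beta_1}^{\beta_2}\subset K$ (nonempty), then $V=\var(K\cap B_1,1)$. The proof there is not a formality: one first shows by a cutoff argument that $V\mres B_{\beta'}$ is itself stationary away from the compact set $K\cap(\ov{B}_{\beta_2}\setminus B_{\beta'})$, so the convex hull property confines $\spt(V\mres B_{\beta'})$ to $K$; the constancy theorem then gives $V\mres B_{\beta_2}=\var(K\cap B_{\beta_2},\theta)$ with constant $\theta\ge1$, whence $0\in\spt\|V\|$ and the monotonicity formula pinches $\om_n\le\lim_{\rho\to0^+}\|V\|(B_\rho)\rho^{-n}\le\|V\|(B_1)\le\om_n$, forcing $V=\var(K\cap B_1,1)$ everywhere. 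You need this (or an equivalent argument) before Allard can be applied on $A_{\eta_1}^{\eta_2}$. The same gap is inherited by your part (ii), since you reduce it to part (i); otherwise your route for (ii) — using the two deficit bounds and the rigidity in the monotonicity formula to show the limit is a cone of density $\om_n$ on $A_{\l_1}^{1}$, hence a multiplicity-one hyperplane there — matches the paper's argument.
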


\begin{proof} \textit{Step one}: As a preliminary, we first show that if $V$ is a stationary, $n$-dimensional, integer rectifiable varifold in $B_1$ such that
\begin{equation}\label{plane mass}
\|V\|(B_1)\leq \omega_n\,,\quad \mathrm{spt}\, V \cap A_{\beta_1}^{\beta_2}\subset K\,,\quad\textup{and}\quad \mathrm{spt}\, V \cap A_{\beta_1}^{\beta_2} \neq \emptyset\,,
\end{equation}
for some $K\in\mathcal{H}$ and $0<\beta_1<\beta_2\leq 1$,
then $V = \var (K\cap B_1,\left.1\right|_{K\cap B_1})$.

Let $\beta'\in (\beta_1,\beta_2)$ and $\varphi_1$, $\varphi_2\in C^\infty(\mathbb{R}^{n+1};[0,1])$ be such that $\spt\, \varphi_1 \subset B_{\beta_2}$, $\left.\varphi_1\right|_{B_{\beta'}}\equiv 1$, and $\varphi_1+\varphi_2\equiv 1$. As a consequence of \eqref{plane mass} and the stationarity of $V$ in $B_{\beta_2}$, for $X\in  C_c^1(\mathbb{R}^{n+1}\setminus (K \cap (\overline{B}_{\beta_2} \setminus B_{\beta'}))$, we have
\begin{eqnarray}\notag
   \delta (V\mres B_{\beta'})(X)&=& \int_{B_{\beta'}}\Div^{M}(\varphi_1 X)+\Div^{M}(\varphi_2 X)\,d\|V\| \\ \notag
   &=&\int_{B_{\beta_2}}\Div^{M}(\varphi_1 X)\,d\|V\|
   =0\,.
\end{eqnarray}
Then by the convex hull property \cite[Theorem 19.2]{SimonLN}, $\mathrm{spt}\,(V \mres B_{\beta'}) \subset K$.
By the constancy theorem \cite[Theorem 41.1]{SimonLN}, $V \mres B_{\beta_2}=\var ( K \cap B_{\beta_2},\theta)$ for some constant $\theta$. Furthermore, since $V$ assigns non-trivial mass to $B_{\beta_2}$ by \eqref{plane mass} and is integer rectifiable, $\theta\geq 1$. Therefore $0\in \spt \|V\|$, and the monotonicity formula gives $\omega_n \leq \lim_{r\to 0^+}\|V\|(B_r)r^{-n} \leq \|V\|(B_1)\leq \omega_n$. Thus $V$ is a stationary, $n$-dimensional, integer rectifiable varifold in $B_1$ with constant area ratios $\omega_n$ and $\spt V \cap A_{\beta_1}^{\beta_2}\subset K$, so $V = \var (K\cap B_1,\left.1\right|_{K\cap B_1})$.
\par
\noindent\textit{Step two}: We prove item (i) by contradiction. If it were false, we could find $\s>0$, $\Gamma\ge0$, $(\l_3,\l_4)\cc(\l_1,\l_2)\cc(0,1)$, $(\eta_1,\eta_2)\subset (0,1)$, with $K_j\in H$, positive numbers $R_j$, $\Lambda_j<1/R_j$, $r_j$, and $W_j\in\V_n(\Lambda_j,R_j,1/\Lambda_j)$  such that
$\|W_j\|\big(A_{\l_3\,r_j}^{\l_4\,r_j}\big)>0$, $\|{\rm bd}_{W_j}\|(\pa B_{R_j})\le\Gamma\,R_j^{n-1}$, $\|W_j\|(B_\rho\setminus B_{R_j})\le\Gamma\,\rho^n$ for every $\rho\in(R_j,1/\Lambda_j)$, and $\rho_j=R_j/r_j\to 0$, $r_j\,\Lambda_j\to 0$, $\de_{W_j,R_j,\Lambda_j}(r_j)\to 0$, and $r_j^{-n}\int_{B_{\l_2\,r_j}\setminus B_{\l_1\,r_j}} \om_{K_j}^2\,d\|W_j\|\to 0$, but there is no $u\in\X_{\s}(\Sigma_{K_j},\eta_1\,r_j,\eta_2\,r_j)$ with the property that $W_j$ corresponds to $\Sigma_{K_j}(u,\eta_1\,r_j,\eta_2\,r_j)$ on $A_{\eta_1\,r_j}^{\eta_2\,r_j}$.
Hence, setting $V_j=W_j/r_j$, no $u\in\X_{\s}(\Sigma_{K_j},\eta_1,\eta_2)$ can exist such that $V_j$ corresponds to $\Sigma_{K_j}(u,\eta_1,\eta_2)$ on $A_{\eta_1}^{\eta_2}$, despite the fact that each $V_j$ belongs to $\V_n(r_j\,\Lambda_j,\rho_j,1/(r_j\,\Lambda_j))$ and satisfies
\begin{eqnarray}\nonumber
&&\|V_j\|(A_{\l_3}^{\l_4})>0\,,\,\,
\frac{\|{\rm bd}_{V_j}\|(\pa B_{\rho_j})}{\rho_j^{n-1}}\le\Gamma\,,\,\,
\sup_{\rho\in(\rho_j,1/(\Lambda_j\,r_j))}\!\!\frac{\|V_j\|(B_\rho\setminus B_{\rho_j})}{\rho^n}\le\Gamma\,,
\\ \label{wigged out}
&&\hspace{2cm}\lim_{j\to\infty}\max\big\{\delta_{V_j,\rho_j,r_j\,\Lambda_j}(1)\,,\,\,\, \int_{A_{\l_1}^{\l_2}}\,\om_{K_j}^2\,d\|V_j\|\big\}=0\,.
\end{eqnarray}
Clearly we can find $K\in\H$ such that, up to extracting subsequences, $K_j\cap B_1\to K\cap B_1$ in $L^1(\R^{n+1})$. Similarly, by \eqref{wigged out}, we can find an $n$-dimensional integer rectifiable varifold $V$ such that $V_j\weak V$ as varifolds in $B_1\setminus\{0\}$. Since the bound on the distributional mean curvature of $V_j$ on $B_{1/(\Lambda_j\,r_j)}\setminus \ov{B}_{\rho_j}$ is $r_j\,\Lambda_j$, and since $\rho_j\to 0^+$ and $r_j\,\Lambda_j\to 0^+$, it also follows that $V$ is stationary in $B_1\setminus \{0\}$, and thus, by a standard argument and since $n\ge 2$, on $B_1$. By $\|V_j\|(A_{\l_3}^{\l_4})>0$, for every $j$ there is $x_j\in A_{\l_3}^{\l_4}\cap\spt\,V_j$,  so that, up to extracting subsequences, $x_j\to x_0$ for some $x_0\in \ov{A}_{\l_3}^{\l_4}\cap\spt\,V$.
By $(\l_3,\l_4)\cc  (\l_1,\l_2)$, there is $\rho>0$ such that $B_\rho(x_0)\subset A_{\l_1}^{\l_2}$, hence
\begin{equation}\label{nontrivial V step 2}
\|V\|(A_{\l_1}^{\l_2})\ge\|V\|(B_\rho(x_0))\ge \om_n\,\rho^n>0\,,
\end{equation}
thus proving $V\,\llcorner\, A_{\l_1}^{\l_2}\ne\emptyset$. By this last fact, by $\om_K=0$ on $(\spt\,V)\cap A_{\l_1}^{\l_2}$, and by the constancy theorem \cite[Theorem 41.1]{SimonLN}, we have
\begin{equation}\label{support in plane step two}
  A_{\l_1}^{\l_2}\cap\spt\,V=A_{\l_1}^{\l_2}\cap K\,.
  \end{equation}
  At the same time, since $\|{\rm bd}_{V_j}\|(\pa B_{\rho_j})\le\Gamma\,\rho_j^{n-1}$ and $\|V_j\|(B_\rho\setminus B_{\rho_j})\le\Gamma\,\rho^n$ for every $\rho\in(\rho_j,1/(\Lambda_j\,r_j))\supset (\rho_j\,1)$, by \eqref{wigged out},
  \begin{eqnarray}\notag
  \om_n&=&\!\!\!\!\lim_{j\to\infty}\|V_j\|(B_1\setminus B_{\rho_j})-
  \frac{\rho_j}n\,\|\de V_j\|(\pa B_{\rho_j})+\Lambda_j\,r_j\,\int_{\rho_j}^1\,\frac{\|V_j\|(B_\rho\setminus B_{\rho_j})}{\rho^n}\,d\rho
  \\ \label{less mass than plane step two}
  &\ge&\|V\|(B_1)-\Gamma\,\varlimsup_{j\to\infty}\big(\rho_j^n+\Lambda_j\,r_j\big)=\|V\|(B_1)\,.
  \end{eqnarray}
  Since $V$ is stationary in $B_1$ and integer rectifiable, and since \eqref{nontrivial V step 2} and \eqref{less mass than plane step two} imply \eqref{plane mass} with $\lambda_1=\beta_1$ and $\lambda_2=\beta_2$, the first step yields $V = \mathbf{var}\,(K\cap B_1,\left.1\right|_{K\cap B_1})$. By Allard's regularity theorem and by $V_j\weak V$ as $j\to\infty$ we deduce the existence of a sequence $\{u_j\}_j$, with $u_j\in\X_{\s_j}(\Sigma_K,\eta_1,\eta_2)$ for some $\s_j\to 0$ as $j\to\infty$, such that $V_j$ corresponds to $\Sigma_K(u_j,\eta_1,\eta_2)$ in $A_{\eta_1}^{\eta_2}$ for $j$ large enough. As soon as $j$ is large enough to give $\s_j<\s$, we have reached a contradiction.\par
  \noindent\textit{Step three}: For item (ii), we again argue by contradiction. Should the lemma be false, then we could find $\s>0$, $\Gamma\ge0$, $\l_1\in (0,1)$, $(\eta_1,\eta_2)\subset (0,1)$, 
positive numbers $R_j$, $\Lambda_j<1/R_j$, $r_j$, and, by the same rescaling as in step two, $V_j\in\V_n(r_j\,\Lambda_j,\rho_j,1/(r_j\,\Lambda_j))$ with
\begin{align}\label{gamma bounds}
&\frac{\|{\rm bd}_{V_j}\|(\pa B_{\rho_j})}{\rho_j^{n-1}}\le\Gamma\,,\qquad
\sup_{\rho\in(\rho_j,1/(\Lambda_j\,r_j))}\frac{\|V_j\|(B_\rho\setminus B_{\rho_j})}{\rho^n}\le\Gamma\,,
\\  \label{lambdaext contra 1}
&\lim_{j\to\infty}\max\Big\{\rho_j=\frac{R_j}{r_j}\,,\,\, r_j\,\Lambda_j\,,\,\,
|\delta_{V_j,\rho_j,r_j\,\Lambda_j}(1)|\,,\,\, |\delta_{V_j,\rho_j,r_j\,\Lambda_j}(\lambda_1)|\Big\}=0\,,
\end{align}
such that there exists no $u\in\X_{\s}(\Sigma_{K_j},\eta_1 ,\eta_2)$ with the property that $V_j$ corresponds to $\Sigma_{K_j}(u,\eta_1,\eta_2 )$ on $A_{\eta_1}^{\eta_2 }$.
As in step two, we can find an $n$-dimensional integer rectifiable varifold $V{\color{black}=\mathbf{var}\,(M,\theta)}$ such that $V_j\weak V$ as varifolds in $B_1\setminus\{0\}$ and $V$ is stationary on $B_1$. If for some $K\in \mathcal{H}$, $V  =\mathbf{var}\,(K\cap B_1,\left.1\right|_{K\cap B_1})$, then using Allard's theorem as in the proof of (i), we have a contradiction. So we prove $V  =\mathbf{var}\,(K\cap B_1,\left.1\right|_{K\cap B_1})$.
\par
{\color{black}For every $r\in [\lambda_1,1]$, using $\rho_j\to 0^+$ and $r_j \Lambda_j \to 0^+$ in conjunction with \eqref{gamma bounds}, and then the monotonicity of $\delta_{V_j, \rho_j, r_j\,\Lambda_j}$ and \eqref{lambdaext contra 1}, we have
\begin{eqnarray}\notag
\varlimsup_{j\to \infty}\bigg|\omega_n - \frac{\|V_j\|(B_{r}\setminus B_{\rho_j})}{r^n} \bigg| &=& \varlimsup_{j\to \infty}\big|\delta_{V_j, \rho_j, r_j\,\Lambda_j}(r)\big|\\ \notag
&\leq& \lim_{j\to\infty}\max_{r\in\{\lambda_1,1\}}\Big\{|\delta_{V_j,\rho_j,r_j\,\Lambda_j}(r)|\Big\}=0\,.
\end{eqnarray}
Thus the convergence $V_j \rightharpoonup V$ and the monotonicity of $\|V \|(B_r)/r^n$ yield
\begin{equation}\label{density for V}
\|V\|(B_{r})=\omega_n r^n\quad\forall r\in(\lambda_1,1)
\quad
\textup{and}\quad
    \| V \|(B_1) 
    = \omega_n\,.
\end{equation}
By \eqref{density for V}, $V\mres( B_1\setminus \overline{B}_{\lambda_1})=\var(C,\theta_C) \mres ( B_1\setminus \overline{B}_{\lambda_1})$ for some locally $\mathcal{H}^n$-rectifiable cone $C\subset\mathbb{R}^{n+1}$ and zero homogeneous $\theta_C:C\to \mathbb{N}$. 
Now since the integer rectifiable varifold cone $\var(C,\theta_C)$ is stationary in $B_1\setminus \overline{B}_{\lambda_1}$, it is stationary in $\mathbb{R}^{n+1}$ by $n\geq 2$, and due to \eqref{density for V}, satisfies $\int_{C\cap B_1} \theta_C\, d\mathcal{H}^n = \omega_n$. Therefore $C =K$ for some $K\in  \mathcal{H}$, and $\theta_C\equiv 1$. From the definition of $C$, it follows that
\begin{equation}\label{annuluar containment}
\mathrm{spt}\,V \cap (B_1\setminus \overline{B}_{\lambda_1}) \subset K\,.
\end{equation}
Finally, \eqref{density for V} and \eqref{annuluar containment} give \eqref{plane mass} with $\beta_1=\lambda_1$, $\beta_2=1$. The result of step one then completes the proof that $V  =\mathbf{var}\,(K\cap B_1,\left.1\right|_{K\cap B_1})$.
}
\end{proof}

\begin{proof}[Proof of Theorem \ref{theorem mesoscale criterion}]
{\color{black}The proof proceeds in four steps, which we outline here. Precise statements can be found at the beginning of each step. First, we assume that $\delta_{V,R,\Lambda}(s/8)\geq 0$, and prove that $C^1$-graphicality can be propagated from $s/32$ to an upper radius $S_+/16\leq S_*/16$ as long as $\delta_{V,R,\Lambda}(S_+)$ remains non-negative and $S_+\leq \varepsilon_0/\Lambda$. This is then enough to prove the exterior blow-down result in part (ii) of Theorem \ref{theorem mesoscale criterion} in step two. In the third step, we argue that if $\delta_{V,R,\Lambda}(s/8)\leq 0$, then $C^1$-graphicality can be propagated inwards from $S_*/2$ down to $s/32$. The details in this step are quite similar to the first, so we summarize them. Finally, the first and third steps are combined in step four to conclude the proof Theorem \ref{theorem mesoscale criterion}-(i), in which there are no sign restrictions on the deficit.}\par
\noindent\textit{Step one}: In this step, given $n\geq 2$, $\Gamma\geq 0$, and $\s>0$, we prove the existence of $\e_0$ and $M_0$ (specified below in \eqref{choice of e0 2} and \eqref{choice of M0 2}, and depending on $n$, $\Gamma$, and $\s$) such that if \eqref{Gamma bounds}, \eqref{mesoscale bounds}, \eqref{mesoscale delta small s8} and \eqref{mesoscale Rstar larger s4} hold with $\e_0$ and $M_0$, and in addition,
\begin{equation}\label{step one positive deficit assumption}
    0\leq \delta_{V,R,\Lambda}(s/8)\leq \varepsilon_0\,,
\end{equation}
then there exists $K_+\in \mathcal{H}$ and $u_+\in \X_\sigma(\Sigma_{K_+},s/32,S_{+}/16
)$ such that
\begin{equation}\label{step one conclusion}
\textup{$V$ corresponds to $\Sigma_{K_+}(u_+,s/32,S_{+}/16)$ on $A_{s/32}^{S_{+}/16}$,}
\end{equation}
where
\begin{equation}\label{S+ definition}
R_+= \max \Big\{\sup\Big\{\rho\ge\frac{s}8: \de_{V,R,\Lambda}(\rho)\ge 0\Big\}, 4\,s\Big\} \,,\quad
S_+= \min \Big\{R_+,\frac{\varepsilon_0}{\Lambda} \Big\}\geq 4\,s \,.
\end{equation}\par
We start by imposing some constraints on the constants $\e_0$ and $M_0$. For the finite set
  \begin{equation}
    \label{def of J lambda}
      J=\Big\{\Big(\frac13,\frac16\Big),\Big(\frac23,\frac13\Big)\Big\}\subset\big\{(\eta_0,\eta):\eta_0>\eta>0\big\}\,,
  \end{equation}
  we let $\s_0=\s_0(n)$ be such that Lemma \ref{lemma step one}-(ii), Theorem \ref{theorem 7.15AA main estimate lambda}, and Theorem \ref{theorem 7.17AA exterior lambda}-(ii,iii) hold for every $(\eta_0,\eta)\in J$, Lemma \ref{lemma step one}-(i) holds for $\s<\s_0$, and
  \begin{equation}
    \label{choice of sigma0}
  \s_0\le\frac{\s_1}{C_0}\qquad\mbox{for $\s_1(n)$ as in \eqref{equuivalence between u square and omega square}, and
  $C_0(n)$ as in Lemma \ref{lemma step one}-(ii)}\,;
  \end{equation}
  we shall henceforth assume, without loss of generality, that
  \[
  \s<\s_0\,.
  \]
  Moreover, for $\e_1$ and $M_1$ as in Lemma \ref{lemma graphicality lambda}-(i) and $C_0$ as in Lemma \ref{lemma step one}, we let
  \begin{eqnarray}
   \notag
      M_0'&\ge&\max\Big\{M_1\Big(n,\frac{\s}{2\,C_0},\Gamma,\Big(\frac1{8},\frac12\Big),\Big(\frac16,\frac14\Big),\Big(\frac{1}{32},\frac{1}{2}\Big)\Big),\\  \label{choice of M0}
      &&\qquad M_1\Big(n,\frac{\s}{2\,C_0},\Gamma,\Big(\frac1{16},\frac18\Big),\Big(\frac{3}{32},\frac{7}{64}\Big),\Big(\frac{1}{32},\frac{1}{2}\Big)\Big)\Big\}\,,
  \end{eqnarray}
  \begin{eqnarray}
    \notag
  \e_0'&\le&
  \min\Big\{\e_1\Big(n,\frac{\s}{2\,C_0},\Gamma,\Big(\frac1{8},\frac12\Big),\Big(\frac16,\frac14\Big),\Big(\frac{1}{32},\frac{1}{2}\Big)\Big),\\ \label{choice of eps0}
 && \qquad \e_1\Big(n,\frac{\s}{2\,C_0},\Gamma,\Big(\frac1{16},\frac18\Big),\Big(\frac{3}{32},\frac{7}{64}\Big),\Big(\frac{1}{32},\frac{1}{2}\Big)\Big)\Big\}\,.
  \end{eqnarray}
  We also assume that
  \begin{equation}\label{ability to rotate}
      C(n,\Gamma)(\e_0')^{1/2} \leq \min \Big\{ \e_0 , \frac{\s}{2\,C_0}\Big\}\,,
  \end{equation}
  where $C(n,\Gamma)$ is specified in \eqref{root bound 2}, $C_0$ is as in Lemma \ref{lemma step one}, and $\e_0$ is smaller than both of the $n$-dependent $\e_0$'s appearing in Lemma \ref{lemma step one sigma} and Lemma \ref{lemma step one}. Lastly, we choose $\overline{\s}>0$ such that
  \begin{equation}\label{sprime le s and e0}
      \overline{\s}\leq \min \Big\{ \frac{\s}{2\,C_0}, \sqrt{\e_0'/\omega_n} \Big\}\,,
  \end{equation}
  and then, for $\e_2$, $M_2$ as in Lemma \ref{lemma graphicality lambda}-(ii), we choose $\e_0$ and $M_0$ so that
  \begin{equation}\label{choice of e0 2}
      \e_0 \leq \min \Big\{
      \e_0'\,, \e_2\Big(n,\overline{\s},\Gamma,\frac{1}{8} ,\Big(\frac{1}{32},\frac{1}{2}\Big)\Big) \Big\}
  \end{equation}
  \begin{equation}\label{choice of M0 2}
      M_0 \geq \max \Big\{M_0'\,, M_2\Big(n,\overline{\s},\Gamma,\frac{1}{8},\Big(\frac{1}{32},\frac{1}{2}\Big) \Big) \Big\}\,.
  \end{equation}
Let us now recall that, by assumption, $V\in\V_n(\Lambda,R,1/\Lambda)$ is such that
  \begin{equation}
    \label{bounds needed for proof meso}
    \|{\rm bd}_V\|(\pa B_{R})\le\Gamma\,R^{n-1}\,,\qquad \sup_{\rho\in(R,1/\Lambda)}\frac{\|V\|(B_\rho\setminus B_R)}{\rho^n}\le\Gamma\,;
  \end{equation}
  in particular, by Theorem \ref{theorem 7.17AA exterior lambda}-(i),
  \begin{equation}
    \label{is decreasing}
    \mbox{$\de_{V,R,\Lambda}$ is decreasing on $(R,1/\Lambda)$}\,.
  \end{equation}
  Moreover, we are assuming the existence of $s$ with $\max\{64,M_0\}\,R<s<\e_0/4\,\Lambda$ such that
  \begin{eqnarray}\label{ext bd easy 1 proof lambda}
  &&|\de_{V,R,\Lambda}(s/8)|\le \e_0\,,
  \\\label{ext bd easy 3 proof lambda}
  &&R_*=\sup\Big\{\rho\ge\frac{s}8: \de_{V,R,\Lambda}(\rho)\ge -\e_0\Big\}\ge 4\,s\,,
  \end{eqnarray}
  so that the latter inequality, together with \eqref{S+ definition}, implies
  \begin{equation}\label{R+ inequality}
    R_* \geq R_+\,.
  \end{equation}
  By \eqref{is decreasing}, \eqref{ext bd easy 1 proof lambda} and \eqref{R+ inequality} we have
  \begin{equation}
    \label{delta below eps0 in absolute value}
  |\de_{V,R,\Lambda}(r)|\le\e_0\,,\qquad\forall r\in[s/8,R_+]\,.
  \end{equation}
\par
By \eqref{bounds needed for proof meso}, the specification of $s$ satisfying \eqref{mesoscale bounds}, and \eqref{delta below eps0 in absolute value}, the assumptions \eqref{gamma bound lemma statement}, \eqref{alternative conditions on r lambda}, and \eqref{alternative hypo to angular flatness}, respectively, of Lemma \ref{lemma graphicality lambda}-(ii) with $r=s$, $\lambda_1=1/8$, and $(\eta_1,\eta_2)=(1/32,1/2)$ are satisfied due to our choices \eqref{choice of e0 2} and \eqref{choice of M0 2}. Setting $H_0=H$, {\color{black}where $H\in\mathcal{H}$ is from the application of Lemma \ref{lemma graphicality lambda}-(ii),} we thus find $u_0\in\X_{\overline{\s}}(\S_{H_0},s/32,s/2)$ such that 
\begin{equation}\label{ext bd easy corresponds 0 lambda}
    \mbox{\textup{$V$ corresponds to $\Sigma_{H_0}(u_0,s/32, s/2)$ on $A_{s/32}^{s/2}$.}}
\end{equation}
If it is the case that $S_+=4\,s$, we are in fact done with the proof of \eqref{step one conclusion}, since then $s/2\geq S_+ / 16$. We may for the rest of this step assume then that $S_+ > 4\,s$, so that
\begin{equation}\label{remaining definition of R+}
   R_+=\sup\Big\{\rho\ge\frac{s}8: \de_{V,R,\Lambda}(\rho)\ge 0\Big\}\geq S_+ > 4\,s\,.
\end{equation}

\par

First, we observe that
  thanks to \eqref{ext bd easy corresponds 0 lambda} and then \eqref{sprime le s and e0},
  \begin{eqnarray}\label{ext bd decay Tj 0 lambda}
  T_0:=\frac{1}{(s/4)^n}\,\int_{s/8}^{s/4}\,r^{n-1}\,dr\,\int_{\Sigma_{H_0}}[u_0]_r^2
  \le \omega_n \,\overline{\s}^2 \le  \e_0'\,.
  \end{eqnarray}
  We let $s_j=2^{j-3}\,s$ for  $j\in\mathbb{Z}_{\geq -1}$. By \eqref{remaining definition of R+} and by $s<\e_0/4\,\Lambda\leq \e_0'/4\,\Lambda$ there exists $N\in\{j\in\N:j\ge 2\}\cup\{+\infty\}$ such that
  \begin{equation}
    \label{def of N}
    \{0,1,..,N\}=\Big\{j\in\N: 8\,s_j\le S_+=\min\big\{R_+,\frac{\e_0'}\Lambda\big\}\Big\}\,.
  \end{equation}
  Notice that if $\Lambda>0$ then it must be $N<\infty$. We are now in the position to make the following:

\noindent {\bf Claim:} There exist $\tau=\tau(n)\in(0,1)$ and $\{(H_j,u_j)\}_{j=0}^{N-2}$ with $H_j\in\H$ such that, setting
  \[
  T_j=\frac1{s_{j+1}^n}\,\int_{s_j}^{s_{j+1}}\,r^{n-1}\,dr\,\int_{\Sigma_{H_j}}\,[u_j]_r^2\,,
  \]
  for every $j=0,...,N-2$,
  \begin{eqnarray}
  \label{claim is spherical graph 1}
  &&\mbox{$u_j\in\X_\s(\Sigma_{H_j},s/32,4\,s_{j-1})\cap \X_{\s/2\,C_0}(\Sigma_{H_j},s_j/4,4\,s_j)$}\,,
  \\ \label{claim is spherical graph 2}
  &&\mbox{$V$ corresponds to $\Sigma_{H_j}(u_j,s/32,4\,s_j)$ on $A_{s/32}^{4\,s_j}$}\,,
  \end{eqnarray}
  where $C_0$ is from Lemma \ref{lemma step one}, and
  \begin{eqnarray}
  \label{claim delta smaller eps0}
  |\de_{V,R,\Lambda}(s_j)|&\le&\e_0'\,,
  \\
  \label{claim Tj smaller eps0}
  T_j&\le& C(n)\,\e_0'\,;
  \end{eqnarray}
  additionally, for every $j=1,...,N-2$,
  \begin{eqnarray}
  \label{claim decay normal vectors}
  |\nu_{H_j}-\nu_{H_{j-1}}|^2&\le&C(n)\,T_{j-1}\,,
  \\
  \label{claim deltaj smaller deltaj-1}
  \de_{V,R,\Lambda}(s_j)&\le& \tau\,\big\{\de_{V,R,\Lambda}(s_{j-1})+(1+\Gamma)\,\Lambda\,s_{j-1}\big\}\,,
  \\
  \label{claim Tj smaller deltaj-1}
  T_j&\le& C(n)\,\Big\{\de_{V,R,\Lambda}(s_{j-1})-\de_{V,R,\Lambda}(s_{j+2})+\Lambda\,s_{j-1}\Big\}\,.
  \end{eqnarray}
  \noindent {\bf Proof of the claim:} We argue by induction. Clearly \eqref{claim is spherical graph 1}$_{j=0}$, \eqref{claim is spherical graph 2}$_{j=0}$, \eqref{claim delta smaller eps0}$_{j=0}$ and \eqref{claim Tj smaller eps0}$_{j=0}$ are, respectively, \eqref{ext bd easy corresponds 0 lambda}, \eqref{ext bd easy 1 proof lambda} and \eqref{ext bd decay Tj 0 lambda}. This concludes the proof of the claim if $N=2$, therefore we shall assume $N\ge 3$ for the rest of the argument. To set up the inductive argument, we consider $\ell\in\N$ such that: either $\ell=0$; or $1\le\ell\le N-3$ and \eqref{claim is spherical graph 1}, \eqref{claim is spherical graph 2}, \eqref{claim delta smaller eps0}, and
  \eqref{claim Tj smaller eps0} hold for $j=0,...,\ell$, and \eqref{claim decay normal vectors}, \eqref{claim deltaj smaller deltaj-1} and
  \eqref{claim Tj smaller deltaj-1} hold for $j=1,...,\ell$; and prove that all the conclusions of the claim hold with $j=\ell+1$.

   The validity of \eqref{claim delta smaller eps0}$_{j=\ell+1}$ is of course immediate from \eqref{delta below eps0 in absolute value} and \eqref{def of N}. Also, after proving \eqref{claim Tj smaller deltaj-1}$_{j=\ell+1}$, we will be able to combine with  \eqref{claim delta smaller eps0}$_{j=\ell+1}$ and \eqref{def of N} to deduce \eqref{claim Tj smaller eps0}$_{j=\ell+1}$. We now prove, in order,  \eqref{claim decay normal vectors}, \eqref{claim is spherical graph 1}, \eqref{claim is spherical graph 2}, \eqref{claim deltaj smaller deltaj-1}, and \eqref{claim Tj smaller deltaj-1} with $j=\ell+1$.

   \noindent{\it To prove \eqref{claim decay normal vectors}$_{j=\ell+1}$}: Let $[a,b]\cc (s_\ell,s_{\ell+1})$ with $(b-a)=(s_{\ell+1}-s_\ell)/2$, so that
  \begin{equation}
    \label{ext sell star lambda}
  \frac1{C(n)}\,\min_{r\in[a,b]}\int_{\Sigma_{H_\ell}}[u_\ell]_r^2\le
  \frac1{s_{\ell+1}^n}\,\int_{s_{\ell}}^{s_{\ell+1}}\,r^{n-1}\,dr\,\int_{\Sigma_{H_\ell}}[u_\ell]_r^2= T_\ell\,.
  \end{equation}
  Keeping in mind \eqref{claim is spherical graph 1}$_{j=\ell}$, \eqref{claim is spherical graph 2}$_{j=\ell}$, we can apply Lemma \ref{lemma step one}-(ii) with $(r_1,r_2)=(s/32,4\,s_\ell)$ and $[a,b]\subset(s_\ell,s_{\ell+1})$ to find $H_{\ell+1}\in\H$,
  \begin{equation}\label{first u ell+1 bound}
  u_{\ell+1}\in\X_{C_0\,\s_0}(\Sigma_{H_{\ell+1}},s/32,4\,s_{\ell})
  \end{equation}
  (with $C_0$ as in Lemma \ref{lemma step one}-(ii)) and
  \begin{equation}
    \label{where is sell star}
      s_\ell^*\in[a,b]\subset(s_\ell,s_{\ell+1})\,,
  \end{equation}
  such that, thanks also to \eqref{ext sell star lambda},
  \begin{eqnarray}\label{ext u ell plus one 1 lambda}
  &&\Sigma_{H_\ell}(u_\ell,s/32,4\,s_{\ell})=\Sigma_{H_{\ell+1}}(u_{\ell+1},s/32,4\,s_{\ell})\,,
  \\\label{ext u ell plus one 2 lambda}
  &&E_{\Sigma_{H_\ell+1}}^0\big([u_{\ell+1}]_{s_{\ell}^*}\big)=0\,,
  \\\label{ext u ell plus one 3 lambda}
  &&|\nu_{H_{\ell}}-\nu_{H_{\ell+1}}|^2\le C(n)\,T_\ell\,,
  \\\label{ext u ell plus one 4 lambda}
  &&\int_{\Sigma_{H_{\ell+1}}}[u_{\ell+1}]_r^2\le C(n)\,\Big(T_\ell+\int_{\Sigma_{H_\ell}}[u_\ell]_r^2\Big)\,\,,\qquad\forall r\in(s/32,4\,s_{\ell})\,.\hspace{1cm}
  \end{eqnarray}
  In particular, \eqref{ext u ell plus one 3 lambda} is \eqref{claim decay normal vectors}$_{j=\ell+1}$.

  \noindent {\it To prove \eqref{claim is spherical graph 1}$_{j=\ell+1}$ and \eqref{claim is spherical graph 2}$_{j=\ell+1}$}: Notice that \eqref{first u ell+1 bound}, \eqref{ext u ell plus one 1 lambda} do not imply \eqref{claim is spherical graph 1}$_{j=\ell+1}$ and \eqref{claim is spherical graph 2}$_{j=\ell+1}$, since, in \eqref{claim is spherical graph 2}$_{j=\ell+1}$, we are claiming the graphicality of $V$ inside $A_{s/32}^{4\,s_{\ell+1}}$ (which is strictly larger than $A_{s/32}^{4\,s_\ell}$), and in \eqref{claim is spherical graph 1}$_{j=\ell+1}$ we are claiming that $u_{\ell+1}$ has $C^1$-norm bounded by $\s$ or $\s/2\,C_0$ (depending on the radius), and not just by $C_0\,\s_0$ (with $C_0$ as in Lemma \ref{lemma step one}-(ii)).

   We want to apply Lemma \ref{lemma graphicality lambda}-(i) with $K=H_{\ell+1}$ and
  \begin{equation}
    \label{ext parameters for step two lambda}
      r=8\,s_{\ell+1}\,,\,\, (\l_1,\l_2)=\Big(\frac1{16},\frac18\Big)\,,\,\,(\l_3,\l_4)=\Big(\frac{3}{32},\frac{7}{64}\Big)\,,\,\,  (\eta_1,\eta_2)=\Big(\frac{1}{32},\frac{1}{2}\Big)\,.
  \end{equation}
  We check the validity of \eqref{conditions on r lambda}, \eqref{ext hp step two 1 lambda}, \eqref{ext hp step two 3 lambda}, and \eqref{ext hp step two 2 lambda} with $\e_1=\e_0'$ and $M_1=M_0'$ for these choices of $r$, $\l_1$, $\l_2$, $\l_3$, $\l_4$, $\eta_1$, $\eta_2$, and $K$. Since $r=8\,s_{\ell+1}\ge s\ge\max\{M_0,64\,R\}\geq \max\{M_0',64\,R\}$, and since \eqref{def of N} and $\ell+1\le N$ give $r=8\,s_{\ell+1}\le \e_0/\Lambda\leq \e_0'/\Lambda$, we deduce the validity of \eqref{conditions on r lambda} with $r=8\,s_{\ell+1}$. The validity of \eqref{ext hp step two 1 lambda} with $r=8\,s_{\ell+1}$ is immediate from \eqref{delta below eps0 in absolute value} by our choice \eqref{choice of eps0} of $\e_0'$. Next we notice that
  \[
  \|V\|(A_{\l_3\,r}^{\l_4\,r})=\|V\|(A_{3\,[8\,s_{\ell+1}]/32}^{7\,[8\,s_{\ell+1}]/64})=\|V\|(A_{3\,s_{\ell}/2}^{7\,s_\ell/4})>0
  \]
  thanks to \eqref{claim is spherical graph 2}$_{j=\ell}$, so that \eqref{ext hp step two 3 lambda} holds for $r$, $\l_3$ and $\l_4$ as in \eqref{ext parameters for step two lambda}. Finally, by \eqref{equuivalence between u square and omega square} (which can be applied to $u_{\ell+1}$ thanks to \eqref{choice of sigma0}), \eqref{ext u ell plus one 1 lambda} and \eqref{claim is spherical graph 1}$_{j=\ell}$, and, then by \eqref{ext u ell plus one 4 lambda}, we have
  \begin{eqnarray*}
      \frac1{r^n}\,\int_{A_{\l_1\,r}^{\l_2\,r}}\,\om_{H_{\ell+1}}^2\,d\|V\|&\le&
      \frac{C(n)}{s_{\ell+1}^n}\,\int_{s_\ell}^{s_{\ell+1}}\,r^{n-1}\,dr\,\int_{\Sigma_{H_{\ell+1}}}[u_{\ell+1}]_r^2
      \\
      &\le& C(n)\,T_\ell+
      \frac{C(n)}{s_{\ell+1}^n}\,\int_{s_\ell}^{s_{\ell+1}}\,r^{n-1}\,dr\,\int_{\Sigma_{H_{\ell}}}[u_{\ell}]_r^2
      \\
      &\le& C(n)\,T_\ell\le  C(n)\,\e_0'\,,
  \end{eqnarray*}
  where in the last inequality we have used \eqref{claim Tj smaller eps0}$_{j=\ell}$. Again by our choice \eqref{choice of eps0} of $\e_0'$, we deduce that \eqref{ext hp step two 2 lambda} holds with $r$, $\l_1$ and $\l_2$ as in \eqref{ext parameters for step two lambda}.
  We can thus apply Lemma \ref{lemma graphicality lambda}-(i), and find $v\in\X_{\s/2\,C_0}(\Sigma_{H_{\ell+1}},s_{\ell+1}/4,4\,s_{\ell+1})$ such that
  \begin{equation}
    \label{ext jelly lambda}
      \mbox{$V$ corresponds to $\Sigma_{H_{\ell+1}}(v,s_{\ell+1}/4,4\,s_{\ell+1})$ on $A_{s_{\ell+1}/4}^{4\,s_{\ell+1}}$}\,.
  \end{equation}
  By \eqref{ext u ell plus one 1 lambda}, \eqref{claim is spherical graph 2}$_{j=\ell}$, and \eqref{ext jelly lambda}, $v=u_{\ell+1}$ on $\S_{H_{\ell+1}}\times(s_{\ell+1}/4, 4\,s_\ell)$. We can thus use $v$ to extend $u_{\ell+1}$ from $\S_{H_{\ell+1}}\times(s/32, 4\,s_\ell)$ to $\S_{H_{\ell+1}}\times(s/32,4\,s_{\ell+1})$, and, thanks to \eqref{ext u ell plus one 1 lambda}, \eqref{claim is spherical graph 2}$_{j=\ell}$ and \eqref{ext jelly lambda}, the resulting extension is such that
  \begin{eqnarray}
      \label{initial ul+1 bound}
  &&\mbox{$u_{\ell+1}\in\X_{\s/2\,C_0}(\Sigma_{H_{\ell+1}},s_{\ell+1}/4,4\,s_{\ell+1})$}\quad\textup{and}\\ \label{V correspondence}
  &&\textup{$V$ corresponds to $\Sigma_{H_{\ell+1}}(u_{\ell+1},s/32,4\,s_{\ell+1})$ on $A_{s/32}^{4\,s_{\ell+1}}$}\,.
  \end{eqnarray}
  The bound \eqref{initial ul+1 bound} is part of \eqref{claim is spherical graph 1}$_{j=\ell+1}$, and \eqref{V correspondence} is \eqref{claim is spherical graph 2}$_{j=\ell+1}$, so in order to complete the proof of \eqref{claim is spherical graph 1}$_{j=\ell+1}$ and \eqref{claim is spherical graph 2}$_{j=\ell+1}$, it remains to show that the $C^1$-norm of $u$ is bounded by $\sigma$ in between $s/32$ and $4\,s_{\ell}$.

\par

  Towards this end, we record the following consequence of taking square roots in \eqref{claim deltaj smaller deltaj-1}$_{j=m}$ (using $\delta_{V,R,\Lambda}\geq 0$ from \eqref{def of N}) and summing over $m=1,...,i$ for any $1\leq i \leq\ell$: for $\alpha=\sum_{k=0}^\infty 2^{-k/2}$ and $\tilde{C}(n,\Gamma)=\tau^{1/2}(1+\Gamma)$,
\begin{eqnarray}\notag
    S_{i}:=\sum_{m=0}^{i}\delta_{V,R,\Lambda}(s_m)^{1/2} &\leq& \tau^{1/2} \sum_{m=0}^{i-1}\delta_{V,R,\Lambda}(s_{m})^{1/2} + (1+\Gamma)(\Lambda \, s_m)^{1/2} \\ \notag &&\qquad+\delta_{V,R,\Lambda}(s_0)^{1/2}\\ \notag
    &\leq& \tau^{1/2} S_{i-1} +\alpha\,\tilde{C}(n,\Gamma)(\Lambda \, s_{i-1})^{1/2} +\delta_{V,R,\Lambda}(s_0)^{1/2}\\ \label{iterated delta bound}
    &\leq&\tau^{1/2} S_{i-1}+ (1+\alpha\,\tilde{C}(n,\Gamma))(\e_0')^{1/2}\,,
\end{eqnarray}
where in the last line we have used \eqref{def of N} and \eqref{delta below eps0 in absolute value}. By induction, utilizing \eqref{step one positive deficit assumption}, \eqref{choice of e0 2} for the base case and \eqref{iterated delta bound} for the induction step we have
\begin{equation}\label{sum bound}
    S_{i}\leq  \frac{(1+\alpha\,\tilde{C}(n,\Gamma))(\e_0')^{1/2}}{1-\tau^{1/2}}\quad\forall \,0\leq i \leq \ell\,.
\end{equation}
Now by the positivity of $\delta_{V,R,\Lambda}$ and \eqref{claim Tj smaller deltaj-1}$_{j=\ell}$, for all $m=1,...,\ell$,
\begin{align}\label{iterated T bound}
    T_m^{1/2} \leq C(n)\delta_{V,R,\Lambda}(s_{m-1})^{1/2}  + C(n)(\Lambda\, s_{m-1})^{1/2}\,,
\end{align}
In turn, by \eqref{claim decay normal vectors}$_{j=\ell+1}$, \eqref{ext bd decay Tj 0 lambda} and \eqref{iterated T bound}, then \eqref{def of N} and \eqref{sum bound}$_{i=\ell-1}$,
\begin{eqnarray}\notag
    \frac{1}{C(n)} \sum_{m=1}^{\ell+1} |\nu_{H_m}-\nu_{H_{m-1}}|
    &\leq& \sum_{m=0}^{\ell} T_{m}^{1/2} \\ \notag
    &\leq&   (\e_0')^{1/2}+C(n)S_{\ell-1} + \alpha\,C(n)(\Lambda\,{s_{\ell-1}})^{1/2}  \\ \label{root epsilon bound}
    &\leq& C(n,\Gamma)(\e_0')^{1/2}
\end{eqnarray}
for suitable $C(n,\Gamma)$. Combining constants, we use \eqref{root epsilon bound} to see
\begin{eqnarray}\label{root bound 2}
    |\nu_{H_i}-\nu_{H_{\ell+1}}| \leq C(n,\Gamma)(\e_0')^{1/2}\quad\forall i=0,...,\ell \,.
\end{eqnarray}
Now $u_i \in \X_{\s/2\,C_0}(\Sigma_{H_j},s_i/4, 4\,s_i)$ by \eqref{claim is spherical graph 1}$_{j=i}$, and $\s/2\,C_0$ and $|\nu_{H_i}-\nu_{H_{\ell+1}}|$ are small enough to apply Lemma \ref{lemma step one}-(i) by our choice of $\s$ above \eqref{choice of sigma0} and \eqref{root bound 2} with \eqref{ability to rotate}, respectively. Then we obtain $w_i$ 
corresponding to $V$ on $A_{s_i/4}^{4\,s_i}$ and in $ \X_{\s/2 + C_0|\nu_{H_i}-\nu_{H_{\ell+1}}|}(\Sigma_{H_{\ell+1}}, s_i/4,4\,s_i)$, and by \eqref{root bound 2}, \eqref{ability to rotate},
$$
\frac{\s}{2}+C_0\,|\nu_{H_i}-\nu_{H_{\ell+1}}| \leq \frac{\s}{2}+C_0\,\frac{\s}{2\,C_0}=\s\,,
$$
so $w_i\in \mathcal{X}_\sigma(\Sigma_{H_{\ell+1}},s_i/4,4\,s_i)$. Finally, since they represent the same surface over $\S_{H_{\ell+1}}$, $w_i=u_{\ell+1}$ on $A^{4\,s_i}_{s_i/4}$. Gathering these estimates for $i=0,...,\ell$, we have $u_{\ell+1}\in \X_\s(\S_{H_{\ell+1}},s/32, 4\,s_\ell)$, which finishes the proof of \eqref{claim is spherical graph 1}$_{j=\ell+1}$.

 \par

  \noindent {\it To prove \eqref{claim deltaj smaller deltaj-1}$_{j=\ell+1}$}: We set $r_0=(s_\ell+s_{\ell+1})/2$ and notice that for $\eta_0=1/3$,
  \begin{equation}
    \label{ext choices 1 lambda}
      r_1=r_0\,(1-\eta_0)=s_\ell\,,\qquad r_2=r_0\,(1+\eta_0)=s_{\ell+1}\,.
  \end{equation}
  For $\eta=1/6$ we correspondingly set
  \begin{equation}
    \label{ext choices 2 lambda}
    r_3=r_0\,(1-\eta)=:s_\ell^-\,,\qquad r_4=r_0\,(1+\eta)=:s_\ell^+\,,
  \end{equation}
  and notice that $(\eta_0,\eta)\in J$, see \eqref{def of J lambda}. With the aim of applying Theorem \ref{theorem 7.17AA exterior lambda}-(iii) to these radii, we notice that \eqref{claim is spherical graph 2}$_{j=\ell+1}$ implies that assumption \eqref{V corresponds to u monoto lambda} holds with $H=H_{\ell+1}$ and $u=u_{\ell+1}$, while, by \eqref{ext u ell plus one 2 lambda}, $r=s_\ell^*\in(s_\ell,s_{\ell+1})$ is such that \eqref{lambdaAA 7.15(4) for 7.17 hole} holds. By $\Lambda\,s_{\ell+1}\le\e_0\le 1$, \eqref{def of N}, and \eqref{lambdaAA tesi 7.17 hole}, with $C(n)=C_0(n,1/6,1/3)$ for $C_0$ as in Theorem \ref{theorem 7.17AA exterior lambda}-(iii), we have
  \begin{eqnarray}
    \nonumber
    &&s_{\ell+1}^{-n}\,\,\big|\|V\|\big(B_{s_\ell^+}\setminus B_{s_\ell^-}\big)-\om_n\,\big((s_\ell^+)^n-(s_\ell^-)^n\big)\big|
    \\\nonumber
    &&= s_{\ell+1}^{-n}\big|\H^n(\Sigma_{H_{\ell+1}}(u_{\ell+1},s_\ell^-,s_\ell^+))-\H^n(\Sigma_{H_{\ell+1}}(0,s_\ell^-,s_\ell^+))\big|
   \\\label{compare to lambda}
&&\le C(n)\,\big\{(\Lambda\,s_{\ell+1})^2\,\,
+\Theta_{V,R,\Lambda}(s_{\ell+1})-\Theta_{V,R,\Lambda}(s_{\ell})\big\}\,.
  \end{eqnarray}
Setting for brevity $\de=\de_{V,R,\Lambda}$ and $\Theta=\Theta_{V,R,\Lambda}$, and recalling that
\begin{eqnarray*}
&&r^n\,\de(r)=\om_n\,r^n-\Theta(r)\,r^n
\\
&&=\om_n\,r^n-\|V\|(B_r\setminus B_R)
-\Lambda\,r^n\,\int_R^r\,\frac{\|V\|(B_\rho\setminus B_R)}{\rho^n}\,d\rho+\frac{R\,\|\de V\|(\pa B_R)}{n}
\end{eqnarray*}
we have
\begin{eqnarray*}
&&\!\!\!\!\!\!\!\!s_\ell^{-n}\,\big|(s_{\ell}^-)^n\,\de(s_{\ell}^-)
  -(s_{\ell}^+)^n\,\de(s_{\ell}^+)\big|
  \le C(n)\,\big\{(\Lambda\,s_\ell)^2+\Theta(s_{\ell+1})-\Theta(s_{\ell})\big\}
\\
&&\!\!\!\!\!\!\!\!\!\!+C(n)\,\Lambda\,s_\ell^{-n}\Big\{(s_\ell^+)^n\int_{R}^{s_{\ell}^+}\frac{\|V\|(B_\rho\setminus B_R)}{\rho^n}\,d\rho
-(s_\ell^-)^n\int_{R}^{s_{\ell}^-}\frac{\|V\|(B_\rho\setminus B_R)}{\rho^n}\,d\rho\Big\}
\\
&&\!\!\le C(n)\,\Big\{(\Lambda\,s_\ell)^2 +\Theta(s_{\ell+1})-\Theta(s_{\ell})\Big\}
+C(n)\,\Lambda\, \int_{R}^{s_{\ell}^+}\frac{\|V\|(B_\rho\setminus B_R)}{\rho^n}\,d\rho\,.
\end{eqnarray*}
By $\Lambda\,s_\ell\le 1$ and since $s_{\ell}^+\le s_\ell\le \e_0/8\,\Lambda$ thanks to $\ell<N$, we can use the upper bound  $\|V\|(B_\rho\setminus B_R)\le \Gamma\,\rho^n$ with $\rho\in(R,s_\ell^+)\subset(R,1/\Lambda)$, to find that
\begin{eqnarray*}
\Big|\frac{(s_{\ell}^-)^n}{s_\ell^n}\,\de(s_{\ell}^-)
  -\frac{(s_{\ell}^+)^n}{s_\ell^n}\,\de(s_{\ell}^+)\Big|
\le\! C_*(n)\,\big\{\de(s_{\ell})-\de(s_{\ell+1})\big\}+C_*(n)\,(\Gamma+1)\,\Lambda\,s_\ell\,,
\end{eqnarray*}
for a constant $C_*(n)$. By rearranging terms and using the monotonicity of $\de$ on $(R,\infty)$ and $(s_\ell^-,s_\ell^+)\subset(s_\ell,s_{\ell+1})$ we find that
  \begin{eqnarray}\nonumber
  &&\big(C_*(n)\,+(s_{\ell}^+)^n/(s_\ell^n)\big)\,\de(s_{\ell+1})\le C_*(n)\,\de(s_{\ell+1})+\big((s_{\ell}^+)^n/(s_\ell^n)\big)\,\de(s_{\ell}^+)
  \\\nonumber
  &\le&
  C_*(n)\,\de(s_\ell)+\big((s_{\ell}^-)^n/(s_\ell^n)\big)\,\de(s_\ell^-)+C_*(n)\,(1+\Gamma)\,\Lambda \,s_\ell
  \\\label{ext grandi aa lambda}
  &\le&
  \big(C_*(n)\,+(s_{\ell}^-)^n/(s_\ell^n)\big)\,\de(s_{\ell})+C_*(n)\,(1+\Gamma)\,\Lambda \,s_\ell\,.
  \end{eqnarray}
  We finally notice that by \eqref{ext choices 1 lambda}, \eqref{ext choices 2 lambda}, $\eta_0=1/3$, and $\eta=1/6$, we have
  \[
  \frac{s_\ell^-}{s_\ell}=\frac{r_0\,(1-\eta)}{r_0\,(1-\eta_0)}=\frac{5}4\,,\qquad
  \frac{s_\ell^+}{s_\ell}=2\,\frac{s_\ell^+}{s_{\ell+1}}=2\,\frac{1+\eta}{1+\eta_0}=\frac{7}4\,,
  \]
  so that, we find $\de(s_{\ell+1})\le\tau\{\de(s_\ell)+(1+\Gamma)\,\Lambda\,s_\ell\}$ (i.e. \eqref{claim deltaj smaller deltaj-1}$_{j=\ell+1}$) with
  \[
  \tau=\tau(n)=\frac{C_*(n)+(5/4)^n}{C_*(n)+(7/4)^n}\,,\qquad \tau_*=\tau_*(n)=\frac{C_*(n)}{C_*(n)+(7/4)^n}<\tau\,.
  \]


  \noindent {\it To prove  \eqref{claim Tj smaller deltaj-1}$_{j=\ell+1}$}: We finally prove  \eqref{claim Tj smaller deltaj-1}$_{j=\ell+1}$, i.e.
  \begin{equation}
    \label{ext bd decay Tj j ell plus one lambda}  \frac1{s_{\ell+1}^n}\int_{s_{\ell+1}}^{2\,s_{\ell+1}}\!\!\!r^{n-1}\!\!\int_{\Sigma_{H_{\ell+1}}}\!\!\!\!\![u_{\ell+1}]_r^2\le
    C(n)\big\{\de_{V,R,\Lambda}(s_{\ell})-\de_{V,R,\Lambda}(s_{\ell+3})+\Lambda\,s_{\ell}\big\}.
  \end{equation}
  By \eqref{claim is spherical graph 2}$_{j=\ell+1}$ we know that
  \begin{equation}
    \label{ext yes we know lambda}
    \mbox{$V$ corresponds to $\S_{H_{\ell+1}}(u_{\ell+1},s/32,4\,s_{\ell+1})$ on $A_{s/32}^{4\,s_{\ell+1}}$}\,.
  \end{equation}
  Now, \eqref{r1r2r3r4} holds with $r_0=3\,s_\ell$ and $(\eta_0,\eta)=(2/3,1/3)\in J$, see \eqref{def of J lambda}, if
  \begin{eqnarray*}
  &&r_1=s_\ell=3\,s_\ell-2\,s_\ell\,,\qquad r_2=5\,s_{\ell}=3\,s_\ell+2\,s_\ell\,,
  \\
  &&r_3=s_{\ell+1}=3\,s_\ell-s_\ell\,,\qquad\,\,\,\, \,r_4=2\,s_{\ell+1}=3\,s_\ell+s_\ell\,.
  \end{eqnarray*}
  Since $s_\ell^*\in(s_\ell,s_{\ell+1})\subset (r_1,r_2)$,
  by \eqref{ext yes we know lambda}, \eqref{ext u ell plus one 2 lambda} and $(r_1,r_2)\subset (s/32,4\,s_{\ell+1})$ we can apply  Theorem \ref{theorem 7.15AA main estimate lambda}  to deduce that
  \[
  \int_{s_{\ell+1}}^{2\,s_{\ell+1}}\!\!\!r^{n-1}\!\int_{\Sigma_{H_{\ell+1}}}\,[u_{\ell+1}]_r^2\le C(n)\,\int_{s_\ell}^{5\,s_\ell}\!\!\!\!r^{n+1}\!\int_{\Sigma_{H_{\ell+1}}}\!\!\!\!\!\!(\pa_ru_{\ell+1})_r^2
  +C(n)\,\Lambda\,(s_\ell)^{n+1}\,.
  \]
  Again by \eqref{ext yes we know lambda}, Theorem \ref{theorem 7.17AA exterior lambda}-(ii) with $(r_1,r_2)=(s_\ell,8\,s_\ell)$ gives
  \begin{eqnarray*}
  &&s_\ell^{-n}\,\int_{s_\ell}^{5\,s_\ell}\,r^{n+1}\,\int_{\Sigma_{H_{\ell+1}}}\,(\pa_r[u_{\ell+1}])_r^2
  \le
  s_\ell^{-n}\,\int_{s_\ell}^{8\,s_\ell}\,r^{n+1}\,\int_{\Sigma_{H_{\ell+1}}}\,(\pa_r[u_{\ell+1}])_r^2
  \\
  &&\le C(n)\,\big\{\Theta_{V,R,\Lambda}(8\,s_\ell)-\Theta_{V,R,\Lambda}(s_{\ell})\big\}
  \le C(n)\,\big\{\de_{V,R,\Lambda}(s_{\ell})-\de_{V,R,\Lambda}(s_{\ell+3})\big\}\,.
  \end{eqnarray*}
  The last two estimates combined give \eqref{ext bd decay Tj j ell plus one lambda}, which finishes the {\bf claim}.

  \noindent {\it Proof of \eqref{step one conclusion}}: We assume $S_+<\infty$ (that is either $\Lambda>0$ or $R_+<\infty$), and recall that we have already proved \eqref{step one conclusion} if $S_+=4\,s$. Otherwise, $N$ (as defined in \eqref{def of N}) is finite, with $2^N\le \frac{S_+}{s} <2^{N+1}$. By \eqref{claim is spherical graph 1}$_{j=N-2}$ and \eqref{claim is spherical graph 2}$_{j=N-2}$, we have that $u_{N-2}\in \mathcal{X}_{\s}(\S_{H_{N-2}},s/32,4\,s_{N-2})$ and $V$ corresponds to $\Sigma_{H_{N-2}}(u_{N-2},s/32,4\,s_{N-2})$ on $A_{s/32}^{4\,s_{N-2}}$. Since $4\,s_{N-2}
  =2^{N+1}\,s/16> S_+/16$, we deduce \eqref{step one conclusion} with $K_+=H_{N-2}$ and $u_+=u_{N-2}$.



  \noindent \noindent {\it Step two}: In this step we prove statement (ii) in Theorem \ref{theorem mesoscale criterion}. We assume that $\Lambda=0$ and that
  \begin{equation}
    \label{positive}
    \de(r)\ge - \e_0\qquad\forall r\ge \frac{s}8\,,
  \end{equation}
  where we have set for brevity $\de=\de_{V,R,0}$. We must first show that \begin{equation}\label{actually positive}
  \delta(r) \geq 0 \quad \forall r\ge \frac{s}{8}\,.
  \end{equation}
  Since $\delta$ is decreasing in $r$, it has a limit $\lim_{r\to \infty}\delta(r)=:\delta_\infty\geq -\e_0$. Next, we know that for any sequence $R_i\to \infty$, $V/R_i$ converges locally in the varifold sense to a limiting integer rectifiable varifold cone $W$. By the local varifold convergence and $n\geq 2$, $W$ is stationary in $\mathbb{R}^{n+1}$, and it is the case that $$
  \delta_{W,0,0}(r)=\delta_\infty\geq -\e_0 \quad \forall r>0\,.
  $$
  Up to decreasing $\e_0$ if necessary (and recalling that $\delta_{W,0,0}$ is the usual area excess multiplied by $-1$), Allard's theorem and the fact that $W/r = W$ imply that $W$ corresponds to a multiplicity one plane. In particular, it must be that $\delta_\infty=0$, which together with the monotonicity of $\delta$ yields \eqref{actually positive}.
  \par
  By \eqref{actually positive}, $S_+=S_*=\infty$, and so by \eqref{claim is spherical graph 1} and \eqref{claim is spherical graph 2}, there is a sequence  $\{(H_j,u_j)\}_{j=0}^N$ but with $N=\infty$ now, satisfying
  \begin{eqnarray}
  \label{claim is spherical graph fine}
  &&\mbox{$V$ corresponds to $\Sigma_{H_j}(u_j,s/32,4\,s_j)$ on $A_{s/32}^{4\,s_j}$}\quad\forall j\geq 0\,,
  \end{eqnarray}
  \begin{eqnarray}
  \label{claim decay normal vectors fine}
  |\nu_{H_j}-\nu_{H_{j-1}}|^2&\le& C(n)\,T_{j-1}\,,\qquad\hspace{0.8cm}\mbox{if $j\ge 1$}\,,
  \\
  \label{claim delta fine}
  \de(s_j)&\le&
  \left\{
  \begin{split}
  &\e_0\,,\hspace{1.75cm}\qquad\mbox{if $j=0$}\,,
  \\
  &\tau\,\de(s_{j-1})\,,\hspace{0.63cm}\qquad\mbox{if $j\ge1$}\,,
  \end{split}
  \right .
  \\
  \label{claim Tj fine}
  T_j&\le&\left\{
  \begin{split}
  &C(n)\,\e_0\,,\hspace{0.85cm}\qquad\mbox{if $j=0$}\,,
  \\
  &C(n)\,\de(s_{j-1})\,,\qquad\mbox{if $j\ge1$}\,.
  \end{split}
  \right .
  \end{eqnarray}
  Notice that, in asserting the validity of \eqref{claim Tj fine} with $j\ge 1$, we have used \eqref{actually positive} to estimate $-\de(s_{j+2})\le0$ in \eqref{claim Tj smaller deltaj-1}$_{j}$. By iterating \eqref{claim delta fine} we find
  \begin{equation}
    \label{fine deltaj}
      \de(s_j)\le \tau^j\,\de(s/8)\le \tau^j\,\e_0\,,\qquad\forall j\ge 1\,,
  \end{equation}
  which, combined with \eqref{claim Tj fine} and \eqref{claim decay normal vectors fine}, gives, for every $j\ge 1$,
  \begin{eqnarray}\label{fine Tj}
  T_j\le C(n)\,\min\{1,\tau^{j-1}\}\,\de(s/8)\le C(n)\,\tau^j\,\de(s/8)\,,
  \\\label{fine nuj}
  |\nu_{H_j}-\nu_{H_{j-1}}|^2\le C(n)\,\min\{1,\tau^{j-2}\}\,\de(s/8)\le C(n)\,\tau^j\,\de(s/8)\,,
  \end{eqnarray}
  thanks also to $\tau=\tau(n)$ and, again, to \eqref{actually positive}. By \eqref{fine nuj}, for every $j\ge 0$, $k\ge 1$, we have $|\nu_{H_{j+k}}-\nu_{H_j}|
  \le C(n)\,\sqrt{\de(s/8)}\,\sum_{h=1}^{k+1}\big(\sqrt\tau\big)^{j-1+h}$, so that there exists $K\in\H$ such that
  \begin{equation}
    \label{nuK convergence}
    \e_j^2:=|\nu_K-\nu_{H_j}|^2\le C(n)\,\tau^j\,\de(s/8)\,,\qquad\forall j\ge 1\,,
  \end{equation}
  In particular, for $j$ large enough, we have $\e_j<\e_0$, and thus, by Lemma \ref{lemma step one}-(i) and by \eqref{claim is spherical graph fine} we can find $v_j\in\X_{C(n)\,(\s+\e_j)}(\S_{K};s/32,4\,s_{j})$ such that
   \begin{equation}
       \label{claim is spherical graph jjj}
   \mbox{$V$ corresponds to $\Sigma_{K}(v_{j},s/32,4\,s_{j})$ on $A_{s/32}^{4\,s_{j}}$}\,.
  \end{equation}
   By \eqref{claim is spherical graph jjj}, $v_{j+1}=v_{j}$ on $\Sigma_{K}\times(s/32,4\,s_{j})$. Since $s_{j}\to\infty$ we have thus found $u\in\X_{C(n)\,\s}(\S_{K};s/32,\infty)$ such that
   \begin{equation}\label{lambdaext bd easy corresponds final}
   \mbox{$V$ corresponds to $\Sigma_{K}(u,s/32,\infty)$ on $A_{s/32}^\infty$}\,,
   \end{equation}
  which is \eqref{mesoscale thesis graphicality} with $S_*=\infty$.

 %
%
To prove \eqref{decay deficit for exterior minimal surfaces}, we notice that if $r\in(s_j,s_{j+1})$ for some $j\ge1$, then, setting $\tau=(1/2)^\a$ (i.e., $\a=\log_{1/2}(\tau)\in(0,1)$) and noticing that $r/s\le 2^{j+1-3}$, by \eqref{is decreasing} and \eqref{fine deltaj} we have
   \begin{eqnarray}\nonumber
  \de(r)\!\!\!&\le&\!\!\!\de(s_j)\le \tau^j\,\de(s/8)=2^{-j\,a}\,\de(s/8)=
  4^{-\a}\,2^{-(j-2)\a}\,\de(s/8)
  \\\label{ex cont}
  \!\!\!&\le&\!\!\! C(n)\,(s/r)^\a\,\de(s/8)\,,\,\,\,\,\,
  \end{eqnarray}
  where in the last inequality \eqref{positive} was used again; this proves \eqref{decay deficit for exterior minimal surfaces}. To prove \eqref{decay flatness for exterior minimal surfaces}, we recall that $\om_K(y)=\arctan(|\nu_K\cdot\hat{y}|/|\p_K\,\hat{y}|)$, provided $\arctan$ is defined on $\R\cup\{\pm\infty\}$, and where $\hat{y}=y/|y|$, $y\ne 0$. Now, by \eqref{lambdaext bd easy corresponds final},
  \[
  y=|y|\,\frac{\p_K\,\hat{y}+u(\p_K\,\hat{y},|y|)\,\nu_K}{\sqrt{1+u(\p_K\,\hat{y},|y|)^2}}\,,\qquad\forall y\in(\spt\,V)\setminus B_{s/32}\,,
  \]
  so that $|\p_K\,\hat{y}|\ge 1/2$ for $y\in(\spt V)\setminus B_{s/32}$; therefore, by \eqref{nuK convergence}, up to further decreasing the value of $\e_0$, and recalling $\de(s/8)\le\e_0$, we conclude
  \begin{equation}
    \label{nuK uniform projections}
      |\p_{H_j}\,\hat{y}|\ge \frac13\,,\qquad\forall y\in(\spt V)\setminus B_{s/32}\,,
  \end{equation}
  for every $j\in\N\cup\{+\infty\}$ (if we set $H_\infty=K$). By \eqref{nuK uniform projections} we easily find
  \[
  |\om_K(y)-\om_{H_j}(y)|\le C\,|\nu_{H_j}-\nu_K|\,,\qquad\forall y\in(\spt V)\setminus B_{s/32}\,,\forall j\ge 1\,,
  \]
  from which we deduce that, if $j\ge 1$ and $r\in(s_j,s_{j+1})$, then
  \begin{eqnarray*}
    \frac1{r^n}\,\int_{A_r^{2\,r}}\om_K^2\,d\|V\|\!\!\!&\le&\!\!\!C(n)\,\Big\{\frac1{s_j^n}\,\int_{A_{s_j}^{s_{j+1}}}\om_K^2\,d\|V\|+
    \frac1{s_{j+1}^n}\,\int_{A_{s_{j+1}}^{s_{j+2}}}\om_K^2\,d\|V\|\Big\}
    \\
    \!\!\!&\le&\!\!\!C(n)\,\Big\{\frac1{s_j^n}\,\int_{A_{s_j}^{s_{j+1}}}\om_{H_j}^2\,d\|V\|+
    \frac1{s_{j+1}^n}\,\int_{A_{s_{j+1}}^{s_{j+2}}}\om_{H_{j+1}}^2\,d\|V\|\Big\}
    \\
    \!\!\!&&\!\!\!+C(n)\,\Gamma\,\big\{|\nu_K-\nu_{H_j}|^2+|\nu_K-\nu_{H_{j+1}}|^2\big\}\,,
  \end{eqnarray*}
  where \eqref{bounds needed for proof meso} was used to bound $\|V\|(A_\rho^{2\,\rho})\le \Gamma\,(2\,\rho)^n$ with $\rho=s_j,s_{j+1}\in(R,1/\Lambda)$.
  By \eqref{claim is spherical graph fine} we can exploit \eqref{equuivalence between u square and omega square} on the first two integrals, so that taking \eqref{nuK convergence} into account we find that, if $j\ge 1$ and $r\in(s_j,s_{j+1})$, then $r^{-n}\,\int_{A_r^{2\,r}}\om_K^2\,d\|V\|\le C(n)\{T_j+T_{j+1}\big\}+C(n)\,\Gamma\,\tau^j\,\de(s/8)
        \le C(n)\,(1+\Gamma)\,\tau^j\,\de(s/8)$, where in the last inequality we have used \eqref{fine Tj}. Since $\tau^j\le C(n)\,(s/r)^\a$, we conclude the proof of \eqref{decay flatness for exterior minimal surfaces}, and thus, of Theorem \ref{theorem mesoscale criterion}-(ii).

  \par

\noindent\textit{Step three}: In this step, given $n\geq 2$, $\Gamma\geq 0$, and $\s>0$, we claim the existence of $\e_0$ and $M_0$, depending only on $n$, $\Gamma$, and $\s$, such that if \eqref{Gamma bounds}, \eqref{mesoscale bounds}, \eqref{mesoscale delta small s8} and \eqref{mesoscale Rstar larger s4} hold with $\e_0$ and $M_0$, and in addition,
\begin{equation}\label{step three sign assumption}
    -\varepsilon_0\leq \delta_{V,R,\Lambda}(s/8)\leq 0\,,
\end{equation}
then there exists $K_-\in\mathcal{H}$ and $u_-\in \X_\sigma(\Sigma_{K_-},s/32,S_{*}/2
)$ such that
\begin{equation}\label{step three conclusion}
\textup{$V$ corresponds to $\Sigma_{K_-}(u_+,s/32,S_{*}/2)$ on $A_{s/32}^{S_{*}/2}$,}
\end{equation}
where $S_*$ and $R_*$ are as in Theorem \ref{theorem mesoscale criterion}.
The argument is quite similar to that of the first step, with minor differences due to the opposite sign of the deficit. The first is that the iteration instead begins at the outer radius $S_*$ and proceeds inwards via intermediate radii $s_j=2^{-j}S_*$, and the second is that, in the analogue of the graphicality propagation claims \eqref{claim is spherical graph 1}$_{j=\ell+1}$ and \eqref{claim is spherical graph 2}$_{j=\ell+1}$, the negative sign on $\delta_{V,R,\Lambda}$ is used to sum the ``tilting" between successive planes $H_j$ and $H_{j+1}$.

\par

\noindent\textit{Step four}: Finally, we combine steps one and three to prove statement (i) in Theorem \ref{theorem mesoscale criterion}. Before choosing the  parameters $\e_0$ and $M_0$, we need a preliminary result. We claim that for any $\e'>0$, there exists $\s'(\e')>0$ such that if $r_1<r_2$, $K_1$, $K_2\in \mathcal{H}$ with $\nu_{K_1}\cdot \nu_{K_2}\geq 0$ and accompanying $u_i\in \X_{\s'}(\S_{K_i}, r_1 , r_2)$, and $M$ is a smooth hypersurface such that $M\cap A_{r_1}^{r_2}$ corresponds to $\S_{K_i}(u_i,r_1,r_2)$ for $k=1,2$, then
\begin{align}\label{prove planes are close}
    |\nu_{{K_1}}-\nu_{{K_2}}|< \e'\,.
\end{align}
This is immediate from $\nu_{K_1}\cdot \nu_{K_2}\geq 0$ the fact that the $L^\infty$-bounds on $u_i$ imply that $M$ is contained in the intersection of two cones containing $K_1$ and $K_2$, whose openings become arbitrarily narrow as $\s'\to 0$\,.

\par

Fix $n\geq 2$, $\Gamma\geq 0$, and $\s>0$; we assume without loss of generality that $\s<\s_0$, where $\s_0$ is the dimension-dependent constant from Lemma \ref{lemma step one}. We choose $\e'$ with corresponding $\s'$ according to \eqref{prove planes are close} such that, up to decreasing $\s'$ if necessary,
\begin{align}\label{were gonna tilt}
   \e'<\e_0\,,\quad  C_0(\s' + \e') \leq \s\,,
\end{align}
where $\e_0$, $C_0$ are as in Lemma \ref{lemma step one}. Next, we choose $\e_0=\e_0(n,\Gamma,\s)$ and $M_0=M_0(n,\Gamma,\s)$ to satisfy several restrictions: first, $\e_0$ is smaller than the $\e_0$ from Lemma \ref{lemma step one} and each $\e_0(n,\Gamma,\s')$ from steps one and three, and $M_0$ is larger than $M_0(n,\Gamma,\s')$ from those steps; second, with $\e_2$ and $M_2$ as in Lemma \ref{lemma graphicality lambda}-(ii), we also assume that
\begin{equation}\label{step four eps0 M0 lemma}
    \e_0 \leq \min\Big\{\e',\e_2\Big(n,\sigma',\Gamma,\frac{1}{16},\Big(\frac{1}{128},\frac{1}{2}\Big) \Big)\Big\},\, M_0 \geq M_2\Big(n,\sigma',\Gamma,\frac{1}{16},\Big(\frac{1}{128},\frac{1}{2}\Big) \Big)
\end{equation}
In the remainder of this step, we suppose that
\begin{equation}\label{step four V assumptions}
\mbox{$V$ satisfies \eqref{Gamma bounds}, \eqref{mesoscale bounds}, \eqref{mesoscale delta small s8} and \eqref{mesoscale Rstar larger s4} at mesoscale $s$}\,.
\end{equation}
In proving Theorem \ref{theorem mesoscale criterion}-(i), there are three cases depending on whether $\delta_{V,R,\Lambda}$ changes sign on $[s/8, S_*]$.

\par

\noindent\textit{Case one}: $\delta_{V,R,\Lambda}(r)\geq 0$ for all $ r\in[s/8, S_*]$. If the deficit is non-negative, then in particular
\begin{equation}\label{case one step four}
0\leq \delta_{V,R,\Lambda}(s/8)\leq \e_0
\end{equation}
and $S_*=S_+$, where $S_+$ was defined in \eqref{S+ definition}. By our choice of $\e_0$ and $M_0$ at the beginning of this step and the equivalence of \eqref{case one step four} and \eqref{step one positive deficit assumption}, step one applies and the conclusion \eqref{step one conclusion} is \eqref{mesoscale thesis graphicality}. Thus Theorem \ref{theorem mesoscale criterion}-(i) is proved.

\par

\noindent\textit{Case two}: $\delta_{V,R,\Lambda}(r)\leq 0$ for all $r\in [s/8,S_*]$. Should the deficit be non-positive in this interval, then in particular, \eqref{step three sign assumption} holds in addition to \eqref{Gamma bounds}, \eqref{mesoscale bounds}, \eqref{mesoscale delta small s8} and \eqref{mesoscale Rstar larger s4}. Therefore, by our choice of $\e_0$ and $M_0$, step three applies. The conclusion \eqref{step three conclusion} is \eqref{mesoscale thesis graphicality} (in fact with larger upper radii $S_*/2$), and Theorem \ref{theorem mesoscale criterion}-(i) is proved.

\par

\noindent\textit{Case three}: $\delta_{V,R,\Lambda}$ changes sign in $[s/8,S_*]$. By the monotonicity of $\delta_{V,R,\Lambda}$,
\begin{equation}\label{deficit changes sign}
\delta_{V,R,\Lambda}(s/8) > 0 > \delta_{V,R,\Lambda}(S_*)\,.
\end{equation}
First, by \eqref{deficit changes sign}, \eqref{step one positive deficit assumption} is satisfied, so \eqref{step one conclusion} gives $K_+\in \mathcal{H}$ and $u_+\in \X_{\sigma'}(\Sigma_{K_+},s/32,S_{+}/16
)$ such that
\begin{equation}\label{step one conclusion redux}
\textup{$V$ corresponds to $\Sigma_{K_+}(u_+,s/32,S_{+}/16)$ on $A_{s/32}^{S_{+}/16}$,}
\end{equation}
where
\begin{equation}\label{S+ definition redux}
R_+= \max \Big\{\sup\Big\{\rho\ge\frac{s}8: \de_{V,R,\Lambda}(\rho)\ge 0\Big\}, 4\,s\Big\}\,,\quad S_+= \min \Big\{R_+,\frac{\varepsilon_0}{\Lambda} \Big\}\,.
\end{equation}
If $S_+ = S_*$, then \eqref{step one conclusion redux} is \eqref{mesoscale thesis graphicality} and we are done. So we assume for the rest of this case that $S_+<S_*$, which implies $S_+\neq \e_0/\Lambda$ and thus \begin{equation}\label{R+ le Sstar}
4\,s \leq R_+=S_+<S_*\,.
\end{equation}
Next, we make the following

\par

\noindent\textbf{Claim:} There exists $K_-\in \mathcal{H}$ and $u_-\in \X_{\s'}(\Sigma_{K_-},R_+/2,S_{*}/2
)$ such that
\begin{equation}\label{step three conclusion redux}
\textup{$V$ corresponds to $\Sigma_{K_-}(u_-,R_+/2,S_{*}/2)$ on $A_{R_+/2}^{S_{*}/2}$}\,.
\end{equation}
\par

\noindent\textbf{Proof of the claim:} There are two subcases.

\par

\noindent\textit{Subcase one}: $16\,R_+ < \e_0/4\,\Lambda $ and $ 64\,R_+ < R_* $. We claim the conditions of step three are verified at $s'=16\,R_+$. First, \eqref{Gamma bounds} holds from \eqref{step four V assumptions}, and
\begin{equation}\notag
    \max\{64,M_0 \}\,R< 16\,R_+ < \frac{\e_0}{4\,\Lambda}
\end{equation}
(which is \eqref{mesoscale bounds}) holds due to the assumption of the subcase and $16R_+\geq s > \max\{64,M_0 \}R$. Next, $2\, R_+< R_*/4$ by the assumption of the subcase, which combined with the monotonicity of $\delta_{V,R,\Lambda}$ and \eqref{S+ definition redux} gives $-\varepsilon_0\leq \delta_{V,R,\Lambda}(2\,R_+) \leq 0$. This implies \eqref{step three sign assumption} and \eqref{mesoscale delta small s8} with $s'=16\,R_+$. Lastly, \eqref{mesoscale Rstar larger s4} holds at $s'=16\,R_+$ since $64\,R_+ < R_*$. Thus we apply \eqref{step three conclusion} at $s'=16\,R_+$, finding \eqref{step three conclusion redux}.

\par

\noindent\textit{Subcase two}: One or both of $16\,R_+ \geq \e_0/4\,\Lambda$, $64\,R_+ \geq R_*$ hold. In this case,
\begin{equation}\label{inequality of Sstar and R_+}
    64\,R_+ \geq \min \{\e_0 / \Lambda, R_* \}=S_*\,.
\end{equation}
We wish to apply Lemma \ref{lemma graphicality lambda}-(ii) with $r=S_*$, $\lambda_1=\frac{1}{16}$, $(\eta_1,\eta_2) = (\frac{1}{128},\frac{1}{2})$. By \eqref{step four V assumptions}, \eqref{gamma bound lemma statement} holds for $V$, and by \eqref{step four eps0 M0 lemma},
\eqref{step four V assumptions}, and $S_*\geq 4\,s$,
\begin{equation}\notag
    \max \{M_2,64 \}\,R \leq s \leq \frac{S_*}{4} \leq S_*\leq  \frac{\e_2}{\Lambda}\,,
\end{equation}
which is \eqref{alternative conditions on r lambda}. Finally, we have $R_*\geq S_*/16\geq s/8$, so that by the definition of $R_*$, \eqref{mesoscale delta small s8}, the monotonicity of $\delta_{V,R,\Lambda}$, and \eqref{step four eps0 M0 lemma},
\begin{equation}\notag
    \max\Big\{\Big|\delta_{V,R,\Lambda}\Big(\frac{S_*}{16}\Big)\Big|,|\delta_{V,R,\Lambda}(S_*)| \Big\}\leq \e_0\leq \varepsilon_2\,,
\end{equation}
which is \eqref{alternative hypo to angular flatness}. By the choices \eqref{step four eps0 M0 lemma}, Lemma \ref{lemma graphicality lambda}-(ii) applies and yields the existence of $K_-\in\mathcal{H}$ and $u_-\in \X_{\sigma'}(\Sigma_{K_-},S_*/128,S_{*}/2
)$ such that
\begin{equation}\label{step three conclusion subcase two}
\textup{$V$ corresponds to $\Sigma_{K_-}(u_+,S_*/128,S_{*}/2)$ on $A_{S_*/128}^{S_{*}/2}$,}
\end{equation}
By \eqref{inequality of Sstar and R_+}, $S_*/128 \leq R_+/2$, so \eqref{step three conclusion subcase two} implies \eqref{step three conclusion redux}. The proof of the {\bf claim} is complete.

\par

Returning to the proof of Theorem \ref{theorem mesoscale criterion}-(i) under the assumption \eqref{deficit changes sign}, we recall \eqref{R+ le Sstar} and choose $R'\in (R_+, \min\{2\,R_+,S_* \})$. Again, we want to apply Lemma \ref{lemma graphicality lambda}-(ii), this time with $ r=R'$, $\lambda_1=\frac{1}{16}$, and $(\eta_1,\eta_2) = (1/128,1/2)$. To begin with, $V$ satisfies \eqref{gamma bound lemma statement} as usual from \eqref{step four V assumptions}. Second, \eqref{alternative conditions on r lambda} holds at $R'$ by $s\leq R'\leq S_*$, \eqref{step four V assumptions}, and the choices \eqref{step four eps0 M0 lemma}. By the monotonicity of $\delta_{V,R,\Lambda}$ and $[R'/16,R']\subset [R_+/16, S_*] \subset [s/4, S_*]$, \eqref{alternative hypo to angular flatness} is valid by our choice \eqref{step four eps0 M0 lemma} of $\e_0$. The graphicality result from Lemma \ref{lemma graphicality lambda}-(ii) therefore yields $K\in \mathcal{H}$ and $u\in \X_{\s'}(\Sigma_{K},R'/128, R'/2 )$ such that
\begin{equation}\label{double graph correspondence}
    \mbox{$V$ corresponds to $\S_{K}(u,R'/128,R'/2)$ on $A_{R'/128}^{R'/2}$}\,.
\end{equation}
Now $s/32\leq R'/128 < R_+/64<S_+/16$ by $R' < 2\,R_+$ and \eqref{R+ le Sstar}, and $R_+/2<R'/2 <S_*/2$, so by \eqref{step one conclusion redux} and \eqref{step three conclusion redux}, respectively, we have
\begin{eqnarray}\label{K+ corr}
    &&\mbox{$V$ corresponds to $\S_{K_+}(u_+, R'/128, S_+/16)$ on $A_{R'/128}^{S_+/16}$}
    \\ \label{K- corr}
    &&\mbox{$V$ corresponds to $\S_{K_-}(u_-, R_+/2, R'/2)$ on $A_{R_+/2}^{R'/2}$}\,,
\end{eqnarray}
where $u_+\in \X_{\s'}(\S_{K_+},R'/128, S_+/16)$ and $u_-\in \X_{\s'}(\S_{K_-},R_+/2, R'/2)$. Furthermore, up multiplying $\nu_{K_+}$ or $\nu_{K_-}$ by minus one, we may assume $\nu_{K}\cdot \nu_{K_\pm}\geq 0$. Thus $V$ is represented by multiple spherical graphs on nontrivial annuli. 
By combining \eqref{double graph correspondence}, \eqref{K+ corr} and \eqref{K- corr}, $\nu_{K}\cdot \nu_{K_\pm}\geq 0$ and $\s'=\s'(\e')$, \eqref{prove planes are close} applies and gives
\begin{equation}\label{closeness of nus}
    |\nu_{K}-\nu_{K_+}| < \e'\,,\quad |\nu_{K}-\nu_{K_-}| < \e'\,.
\end{equation}
But $\e'$ was chosen according to \eqref{were gonna tilt} so that Lemma \ref{lemma step one}-(i) is applicable; that is, since $\e'<\e_0$ and $\s'<\s_0$ from that lemma, we may reparametrize \eqref{step one conclusion redux} and \eqref{step three conclusion redux}, respectively, as
\begin{eqnarray}\label{K+ corr 2}
    &&\mbox{$V$ corresponds to $\S_{K}(w_+, s/32, S_+/16)$ on $A_{s/32}^{S_+/16}$}
    \\ \label{K- corr 2}
    &&\mbox{$V$ corresponds to $\S_{K}(w_-, R_+/2, S_*/2)$ on $A_{R_+/2}^{S_*/2}$}\,,
\end{eqnarray}
where
\begin{equation}\notag
w_+ \in \X_{C_0(\s' + \e')}(\S_K,s/32, S_+/16)\,,\quad w_- \in \X_{C_0(\s' + \e')}(\S_K,R_+/2, S_*/2)\,.
\end{equation}
By \eqref{were gonna tilt}, $C_0(\s'+\e')\leq \s$, and by $R'/128<S_+/16<R_+/2<R'/2$, \eqref{K+ corr 2} and \eqref{K- corr 2}, we may extend the $u$ defined in \eqref{double graph correspondence} onto $\S_K \times (s/32,S_*/2) $ using $w_+$ and $w_-$ with $C^1$-norm bounded by $\s$. The resulting extension is such that \eqref{mesoscale thesis graphicality} holds, so the proof of Theorem \ref{theorem mesoscale criterion} is finished.

\end{proof}

\section{Application of quantitative isoperimetry}\label{section existence and quantitative isoperimetry} Here we apply quantitative isoperimetry to prove Theorem \ref{thm main psi}-(i) and parts of Theorem \ref{thm main psi}-(iv).

\begin{theorem}\label{thm existence and uniform min}
If $W\subset\R^{n+1}$ is compact, $v>0$, then ${\rm Min}[\psi_W(v)]\ne\emptyset$.  Moreover, depending on $n$ and $W$ only, there are $v_0$, $C_0$, $\Lambda_0$ positive, $s_0\in(0,1)$, and $R_0(v)$  with $R_0(v)\to 0^+$ and $R_0(v)\,v^{1/(n+1)}\to\infty$ as $v\to\infty$, such that, if $v> v_0$ and $E_v$ is a minimizer of $\psi_W(v)$, then:

\noindent {\bf (i):} $E_v$ is a {\bf $(\Lambda_0/v^{1/(n+1)},s_0\,v^{1/(n+1)})$-perimeter minimizer with free boundary in $\Om$}, that is
  \begin{equation}
    \label{uniform lambda minimality}
      P(E_v; \Om\cap B_r(x))\le P(F;\Om\cap B_r(x))+\frac{\Lambda_0}{v^{1/(n+1)}}\,\big|E_v\Delta F\big|\,,
  \end{equation}
  for every $F\subset\Om=\R^{n+1}\setminus W$ with $E_v\Delta F\cc B_r(x)$ and $r<s_0\,v^{1/(n+1)}$;
\noindent {\bf (ii):} $E_v$ determines $x\in\R^{n+1}$ such that
\begin{equation}\label{isop estimate 2}
|E_v\Delta B^{(v)}(x)| \le C_0\,v^{-1+1/[2(n+1)]}\,;
\end{equation}
if $\Rr(W)>0$, then $E_v$ also determines $u\in C^\infty(\pa B^{(1)})$ such that
\begin{eqnarray}
  \label{x and u of Ev take 2}
 &&(\pa E_v)\setminus B_{R_0\,v^{1/(n+1)}}
 \\\nonumber
 &&=\Big\{y+ v^{1/(n+1)}u\Big(\frac{y-x}{v^{1/(n+1)}}\Big)\,\nu_{B^{(v)}(x)}(y):y\in\pa B^{(v)}(x)\Big\}\setminus B_{R_0\,v^{1/(n+1)}}\,;
\end{eqnarray}
\noindent {\bf (iii):} if $\Rr(W)>0$ and $x$ and $u$ depend on $E_v$ as in \eqref{isop estimate 2} and \eqref{x and u of Ev take 2}, then
\begin{equation}\label{limsupmax goes to zero take 2}
\lim_{v\to\infty}\sup_{E_v\in{\rm Min}[\psi_W(v)]}\!\!\!\max\big\{\big||x|\,v^{-1/(n+1)}-\omega_{n+1}^{-1/(n+1)}\big|\,,
  \|u\|_{C^1(\pa B^{(1)})}\big\}=0\,.
\end{equation}
\end{theorem}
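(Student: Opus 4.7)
The plan is to derive Theorem~\ref{thm existence and uniform min} from four standard ingredients: the direct method for existence, a volume-restoration lemma for uniform $\La$-minimality, quantitative Euclidean isoperimetry \eqref{quantitative euclidean isop} for $L^1$-closeness to a ball, and the $\eps$-regularity theory for $\La$-perimeter minimizers for the $C^\infty$-graphicality in \eqref{x and u of Ev take 2}.

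For existence, I would start by noting $\psi_W(v)<\infty$, since placing a ball $B^{(v)}(x_0)\subset\Om$ with $|x_0|$ large gives $\psi_W(v)\le P(B^{(v)})$. A minimizing sequence $\{E_k\}$ with $|E_k|=v$ and $P(E_k;\Om)\to\psi_W(v)$ admits, by BV-compactness, an $L^1_{\loc}$ subsequential limit $E\subset\Om$ with $P(E;\Om)\le\psi_W(v)$. The sole obstruction to $|E|=v$ is loss of mass at infinity: setting $\eta=v-|E|$, adjoining to $E$ a disjoint ball of volume $\eta$ placed far from $W\cup E$ yields a competitor of perimeter $P(E;\Om)+P(B^{(\eta)})$, and strict subadditivity of $t\mapsto P(B^{(t)})$ forces $\eta=0$. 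For the uniform $\La$-minimality \eqref{uniform lambda minimality} in (i), I would use a volume-restoration argument in the spirit of \cite[Example~21.3]{maggiBOOK}: given a competitor $F\subset\Om$ with $F\Delta E_v\cc B_r(x)$ and $r<s_0\,v^{1/(n+1)}$, the volume deficit $\eta=|F|-|E_v|$ is restored by a normal perturbation supported in a ``good region'' $B_\rho(y)\subset\Om$ disjoint from $B_r(x)$, at perimeter cost $O(|\eta|/v^{1/(n+1)})$. The factor $v^{-1/(n+1)}$ is the mean-curvature scale set by the Lagrange multiplier, and the uniform-in-$v$ existence of such a good region follows from \eqref{basic energy estimate} together with a density estimate on $\pa E_v$ away from $W$.

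For (ii), the $L^1$-bound \eqref{isop estimate 2} combines $\psi_W(v)\le P(B^{(v)})$ with the perimeter estimate $P(E_v)\le P(E_v;\Om)+\H^n(\pa W)\le P(B^{(v)})+C(W)$; plugging into \eqref{quantitative euclidean isop} and using $P(B^{(v)})\asymp v^{n/(n+1)}$ yields the claimed $L^1$-closeness of $E_v$ to some ball $B^{(v)}(x)$. Graphicality \eqref{x and u of Ev take 2} is then obtained by rescaling by $v^{-1/(n+1)}$: the rescaled set is a uniform $(C,s_0)$-perimeter minimizer, $L^1$-close to a unit ball, with the obstacle $W/v^{1/(n+1)}$ shrinking to a set of diameter $O(v^{-1/(n+1)})\to 0$. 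Choosing any gauge $R_0(v)$ with $R_0(v)\to 0^+$ and $R_0(v)\,v^{1/(n+1)}\to\infty$ (e.g.\ $R_0(v)=v^{-1/[4(n+1)]}$) ensures that the annular region under consideration is well separated from the rescaled obstacle, and the $\eps$-regularity theorem for $(\La,r_0)$-perimeter minimizers (\cite[Theorem~26.3]{maggiBOOK}) yields $C^{1,\a}$-graphicality of $\pa E_v$ over $\pa B^{(v)}(x)$, with $C^\infty$-regularity following by elliptic bootstrapping.

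Finally, (iii) is a contradiction/compactness argument. Were \eqref{limsupmax goes to zero take 2} to fail along $v_k\to\infty$ and $E_{v_k}\in{\rm Min}[\psi_W(v_k)]$, the rescalings $E_{v_k}/v_k^{1/(n+1)}$ would converge in $L^1$ along a subsequence to a unit ball $B^{(1)}(x_\infty)$; the constraint $E_{v_k}\subset\Om$ with $W/v_k^{1/(n+1)}\to\{0\}$ forces $|x_\infty|\ge\om_{n+1}^{-1/(n+1)}$, while the reverse inequality comes from comparing $\psi_W(v_k)$ with an appropriate competitor (a ball nearly tangent to $W$) which gives a uniform perimeter saving in the limit. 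The uniform $C^{1,\a}$-bounds from (ii), combined with the $L^1$-decay of the graph functions, then promote the convergence to $C^1$ by interpolation, closing the contradiction. I expect the main technical point to be the uniform $\La$-minimality in (i), where both the good-region construction and the correct mean-curvature scaling must hold uniformly as $v\to\infty$; the upper bound on $|x|$ in (iii) is similarly delicate and relies on the comparison competitor giving a perimeter saving bounded away from zero as $v\to\infty$.
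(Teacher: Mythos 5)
Your overall architecture (direct method, quantitative isoperimetry, improved convergence for $\Lambda$-minimizers) matches the paper's, but two steps as you describe them would fail. The most serious is part (iii). The claim that ``the constraint $E_{v_k}\subset\Om$ with $W/v_k^{1/(n+1)}\to\{0\}$ forces $|x_\infty|\ge\om_{n+1}^{-1/(n+1)}$'' is not correct: the rescaled obstacle has vanishing Lebesgue measure, so it imposes no constraint whatsoever on the $L^1$-limit ball, which may perfectly well contain the origin in its interior. Likewise, your ``reverse inequality'' only invokes the upper bound $\psi_W(v)\le P(B^{(v)})-\Ss(W)+{\rm o}(1)$; what is actually needed is a \emph{lower} bound showing that any configuration whose bulk ball sits at the wrong distance from $W$ costs at least $P(B^{(v)})-{\rm o}(1)$. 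This is delicate because the perimeter saving $\Rr(W)$ is ${\rm O}(1)$ while the total perimeter is ${\rm O}(v^{n/(n+1)})$: a priori a minimizer could consist of a ball far from $W$ plus a small residual piece glued to $W$ that harvests the ${\rm O}(1)$ saving, and no rescaled $L^1$ or measure convergence can rule this out. The paper's proof handles exactly this via Alexandrov's theorem (the far component is a sphere), a case analysis on whether $W$ lies inside or outside that sphere, and, in the exterior case, a sliding argument combined with the strong maximum principle and density estimates. All of this hinges on the strict energy gap $\varlimsup_v\psi_W(v)-P(B^{(v)})\le-\Rr(W)<0$, which is where the hypothesis $\Rr(W)>0$ enters; your sketch never uses this hypothesis, and without it the statement of (iii) is simply not provable by soft compactness.

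The existence argument also has a gap. From $E_k\to E$ you only get $P(E;\Om)\le\varliminf_k P(E_k;\Om)=\psi_W(v)$, and adjoining a far ball of volume $\eta=v-|E|$ gives $\psi_W(v)\le P(E;\Om)+P(B^{(\eta)})$; these two inequalities are consistent with any $\eta\in[0,v]$, and strict subadditivity of $t\mapsto P(B^{(t)})$ yields no contradiction (indeed $\psi_W(v)\ge\psi_W(v-\eta)+P(B^{(\eta)})$ is not absurd, since $\psi_W(v-\eta)$ may be well below $P(B^{(v-\eta)})$). One must either prove the sharper lower bound $\varliminf_k P(E_k;\Om)\ge P(E;\Om)+P(B^{(\eta)})$ (so that $E$ plus a far ball is itself a minimizer), or, as the paper does, first establish boundedness of $E$ via density estimates built from volume-fixing variations, then \emph{modify the minimizing sequence} by cutting at a good radius $T$ (chosen so that $\H^n(E_j\cap\pa B_T)\to0$) and replacing $E_j\setminus B_T$ by a far ball of equal volume, obtaining an equibounded minimizing sequence to which the direct method applies. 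Two smaller points: your bound $P(E_v)\le P(E_v;\Om)+\H^n(\pa W)$ presumes $\H^n(\pa W)<\infty$, which is not assumed for a general compact $W$ (the paper works with $E_v\setminus B_R$ and the cost $n\om_nR^n$ instead); and your direct volume-restoration proof of (i) needs the restoration constants to be uniform in $v$, which the paper secures by running a contradiction argument and taking the volume-fixing variations on the limit ball $B^{(1)}$ rather than on $E_v$ itself.
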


\begin{remark}[Improved convergence]\label{remark improved convergence}
  {\rm We will repeatedly use the following fact (see, e.g. \cite{FigalliMaggiARMA,F2M3,CicaleseL.ardi,CiLeMaIC1}): {\it If $\Om$ is an open set, $\Lambda\ge0$, $s>0$, if $\{F_j\}_j$ are {\bf $(\Lambda,s)$-perimeter minimizers in $\Om$}, i.e. if it holds that
  \begin{equation}
    \label{in comparing}
      P(F_j;B_r(x))\le P(G_j; B_r(x))+\Lambda\,|F_j\Delta G_j|\,,
  \end{equation}
  whenever $G_j\Delta F_j\cc B_r(x)\cc \Om$ and $r<s$, and if $F$ is an open set with smooth boundary in $\Om$ such that $F_j\to F$ in $L^1_{{\rm loc}}(\Om)$ as $j\to\infty$, then for every $\Om'\cc \Om$ there is $j(\Om')$ such that
  \[
  (\pa F_j)\cap\Om'= \Big\{y+u_j(y)\,\nu_F(y):y\in \Om\cap\pa F\Big\}\cap\Om'\,,\qquad\forall j\ge j(\Om')\,,
  \]
  for a sequence $\{u_j\}_j\subset C^1(\Om\cap\pa F)$ with $\|u_j\|_{C^1(\Om\cap\pa F)}\to 0$.}
  Compare the terminology used in \eqref{uniform lambda minimality} and \eqref{in comparing}: when we add ``with free boundary'', the ``localizing balls'' $B_r(x)$ are not required to be compactly contained in $\Om$, and the perimeters are computed in $B_r(x)\cap\Om$.
  }
\end{remark}

\begin{proof}[Proof of Theorem \ref{thm existence and uniform min}] {\bf Step one:} We prove ${\rm Min}[\psi_W(v)]\ne\emptyset$ for all $v>0$. Since $W$ is compact, $B^{(v)}(x)\cc\Om$ for $|x|$ large. Hence there is $\{E_j\}_j$ with
  \begin{equation}
    \label{minimizing sequence}
E_j\subset\Om\,,\,\,\, |E_j|=v\,,\,\,\,P(E_j;\Om)\le \min\Big\{P(B^{(v)}),P(F;\Om)\Big\}+(1/j)\,,
  \end{equation}
for every $F\subset\Om$ with $|F|=v$. Hence, up to extracting subsequences, $E_j\to E$ in $L^1_{\rm loc}(\R^{n+1})$ with $P(E;\Omega) \leq \varliminf_{j\to \infty}P(E_j;\Omega)$, where $E\subset\Om$ and $|E|\le v$. We now make three remarks concerning $E$:

\noindent {\bf (a):} {\it If $\{\Omega_i\}_{i\in I}$ are the connected components of $\Om$, then $\Om\cap\pa^*E=\emptyset$ if and only  $E = \bigcup_{i\in I_0} \Omega_i$} ($I_0\subset I$). Indeed, $\Omega \cap \partial^\ast E=\emptyset$ implies $\cl(\pa^*E)\cap \Omega =\partial E\cap \Omega$, hence $\pa E\subset\pa\Om$ and $E = \bigcup_{i\in I_0} \Omega_i$. The converse is immediate.

\noindent {\bf (b):} {\it If $\Om\cap\pa^*E\ne\emptyset$, then we can construct a system of ``volume--fixing variations'' for $\{E_j\}_j$}. Indeed, if $\Om\cap\pa^*E\ne\emptyset$, then there are $B_{S_0}(x_0)\cc\Om$ with $P(E;\pa B_{S_0}(x_0))=0$ and a vector field $X\in C^\infty_c(B_{S_0}(x_0);\R^{n+1})$ such that $\int_E\Div\,X=1$. By \cite[Theorem 29.14]{maggiBOOK}, there are constants $C_0,c_0>0$, depending on $E$ itself, with the following property: whenever $|(F\Delta E)\cap B_{S_0}(x_0)|<c_0$, then there is a smooth function $\Phi^F:\R^n\times(-c_0,c_0)\to\R^n$ such that, for each $|t|<c_0$, the map $\Phi_t^F=\Phi^F(\cdot,t)$ is a smooth diffeomorphism with $\{\Phi_t^F\ne\id\}\cc B_{S_0}(x_0)$, $|\Phi_t^F(F)|=|F|+t$, and $P(\Phi_t^F(F);B_{S_0}(x_0))\le(1+C_0\,|t|)\,P(F;B_{S_0}(x_0))$. For $j$ large enough, we evidently have $|(E_j\Delta E)\cap B_{S_0}(x_0)|<c_0$, and thus we can construct smooth functions $\Phi^j:\R^n\times(-c_0,c_0)\to\R^n$ such that, for each $|t|<c_0$, the map $\Phi_t^j=\Phi^j(\cdot,t)$ is a smooth diffeomorphism with $\{\Phi_t^j\ne\id\}\cc B_{S_0}(x_0)$, $|\Phi_t^j(E_j)|=|E_j|+t$, and $P(\Phi_t^j(E_j);B_{S_0}(x_0))\le(1+C_0\,|t|)\, P(E_j;B_{S_0}(x_0))$.

  \noindent {\bf (c):} {\it If $\Om\cap\pa^*E\ne\emptyset$, then $E$ is bounded}. Since $|E|\le v<\infty$, it is enough to prove that $\Om\cap\pa^*E$ is bounded. In turn, taking $x_0\in\Om\cap\pa^*E$, and since $W$ is bounded and $|E|<\infty$, the boundedness of $\Om\cap\pa^*E$ descends immediately by the following density estimate: there is $r_1>0$ such that
  \begin{equation}
    \label{lower bound volume of E}
    \begin{split}
      &|E\cap B_r(x)|\ge c(n)\,r^{n+1}
      \\
      &\forall\,\,x\in\Om\cap\pa^*E\,,\,\, r<r_1\,,\,\, B_r(x)\cc \R^{n+1}\setminus \big(I_{r_1}(W)\cup B_{S_0}(x_0)\big)\,.
    \end{split}
  \end{equation}
  To prove \eqref{lower bound volume of E}, let $r_1>0$ be such that $|B_{r_1}|<c_0$, let $x$ and $r$ be as in \eqref{lower bound volume of E}, and set $F_j=(\Phi_t^j(E_j)\cap B_{S_0}(x_0))\cup[E_j\setminus(B_r(x)\cup B_{S_0}(x_0))]$ for $t=|E_j\cap B_r(x)|$ (which is an admissible value of $t$ by $|B_{r_1}|<c_0$). In this way, $|F_j|=|E_j|=v$, and thus we can exploit \eqref{minimizing sequence} with $F=F_j$. A standard argument (see, e.g. \cite[Theorem 21.11]{maggiBOOK}) leads then to \eqref{lower bound volume of E}.

Now, since $\pa\Om\subset W$ is bounded, every connected component of $\Om$ with finite volume is bounded. Thus, by (a), (b) and (c) above, there is $R>0$ such that $W\cup E\,\cc \,B_R$. Since $|E\cap [B_{R+1}\setminus B_R]|=0$, we can pick $T\in(R,R+1)$ such that $\H^n(E_j\cap\pa B_T)\to 0$ and $P(E_j\setminus B_T)=\H^n(E_j\cap\pa B_T)+P(E_j;\Om\setminus B_T)$,  and consider the sets $F_j=(E_j\cap B_T)\cup B_{\rho_j}(y)$ corresponding to $\rho_j=(|E_j\setminus B_T|/\om_{n+1})^{1/(n+1)}$ and to $y\in\R^{n+1}$ which is independent from $j$ and such that $|y|>\rho_j+T$ (notice that $\sup_j\rho_j\le C(n)\,v^{1/(n+1)}$). Since $|F_j|=|E_j|=v$, \eqref{minimizing sequence} with $F=F_j$ and $P(B_{\rho_j})\le P(E_j\setminus B_T)$ give
  \begin{eqnarray*}
  P(E_j;\Om)-(1/j)\!\!&\le& \!\!\!\!P(F_j;\Om)\le P(E_j;\Om\cap B_T)+\H^n(E_j\cap\pa B_T)+P(B_{\rho_j})
  \\
  &\le&\!\!\!\!P(E_j;\Om)+2\,\H^n(E_j\cap\pa B_T)\,,
  \end{eqnarray*}
so that, by the choice of $T$, $\{F_j\}_j$ is a minimizing sequence for $\psi_W(v)$, with $F_j\subset B_{T^*}$ and $T^*$ independent of $j$. We conclude by the Direct Method.

  \noindent{\bf Step two:} We prove \eqref{isop estimate 2}. If $E_v$ a minimizer of $\psi_W(v)$ and $R>0$ is such that $W \cc B_R$, then by $P(E_v;\Om)\le P(B^{(v)})$ we have, for $v>v_0$, and $v_0$ and $C_0$ depending on $n$ and $W$,
\begin{eqnarray}
\label{cookie 1}
P(E_v\setminus B_{R})&\le&P(E_v;\Omega) + n\,\om_n\,R^n\le P(B^{(v)})+C_0
\\\nonumber
&\le&(1+(C_0/v))\,P(B^{(|E_v\setminus B_{R}|)})  + C_0\,,
\end{eqnarray}
where we have used that, if $v>2\,b>0$ and $\a=n/(n+1)$, then
\[
P(B^{(v)})\,P(B^{(v-b)})^{-1}-1=(v/(v-b))^\a-1\le \a\,b/(v-b)\le 2\,\a\,b\,v^{-1}\,.
\]
By combining \eqref{quantitative euclidean isop} and \eqref{cookie 1} we conclude that, for some $x\in\R^{n+1}$,
\[
c(n)\,\Big(\frac{|(E_v\setminus B_{R})\Delta B^{(|E_v\setminus B_{R}|)}(x)|}{|E_v \setminus B_{R}|}\Big)^2
 \le \frac{P(E_v\setminus B_{R})}{P(B^{(|E_v\setminus B_{R}|)})}-1\le \frac{C_0}{v^{n/(n+1)}}\,,
\]
provided $v>v_0$. Hence we deduce \eqref{isop estimate 2} from
\begin{eqnarray*}
  &&|E_v \Delta B^{(v)}(x)|=2\,|E_v \setminus B^{(v)}(x)|\le C_0+2\,\big|(E_v\setminus B_R) \setminus B^{(v)}(x)\big|
  \\
  &&\le C_0+2\,\big|(E_v\setminus B_R) \setminus B^{(|E_v\setminus B_{R}|)}(x)\big|\le C_0+|E_v\setminus B_R|\,C_0\,v^{-n/2\,(n+1)}\,.
\end{eqnarray*}

  \noindent {\bf Step three:} We prove the existence of $v_0$, $\Lambda_0$, and $s_0$ such that every $E_v\in{\rm Min}[\psi_W(v)]$ with $v>v_0$ satisfies \eqref{uniform lambda minimality}. Arguing by contradiction, we assume the existence of $v_j\to\infty$, $E_j\in{\rm Min}[\psi_W(v_j)]$, $F_j\subset\Om$ with $|F_j\Delta E_j|>0$ and $F_j\Delta E_j\cc B_{r_j}(x_j)$ for some $x_j\in\R^{n+1}$ and $r_j=v_j^{1/(n+1)}/j$, such that
  \[
  P(E_j;\Om\cap B_{r_j}(x_j))\ge  P(F_j;\Om\cap B_{r_j}(x_j))+j\,v_j^{-1/(n+1)}\,\big|E_j\Delta F_j\big|\,.
  \]
  Denoting by $E_j^*$, $F_j^*$ and $\Om_j$ the sets obtained by scaling $E_j$, $F_j$ and $\Om$ by a factor $v_j^{-1/(n+1)}$, we find that $F_j^*\Delta E_j^*\cc B_{1/j}(y_j)$ for some $y_j\in\R^{n+1}$, and
  \begin{equation}
    \label{unifor min contra scaled}
      P(E_j^*;\Om_j\cap B_{1/j}(y_j))\ge  P(F_j^*;\Om_j\cap B_{1/j}(y_j))+j\,\big|E_j^*\Delta F_j^*\big|\,.
  \end{equation}
  By \eqref{isop estimate 2} there are $z_j\in\R^{n+1}$ such that $|E_j^*\Delta B^{(1)}(z_j)|\to 0$. We can therefore use the volume-fixing variations of $B^{(1)}$ to find diffeomorphisms $\Phi^j_t:\R^n\to\R^n$ and constants $c(n)$ and $C(n)$ such that, for every $|t|<c(n)$, one has $\{\Phi^j_t\ne\id\}\cc U_j$ for some open ball $U_j$ with $U_j\cc \Om_j\setminus B_{1/j}(y_j)$,
  $|\Phi^j_t(E_j^*)\cap U_j|=|E_j^*\cap U_j|+t$, and $P(\Phi^j_t(E_j^*);U_j)\le (1+C(n)\,|t|)\,P(E_j^*;U_j)$.  Since $F_j^*\Delta E_j^*\cc B_{1/j}(y_j)$ implies $||F_j^*|-|E_j^*||<c(n)$ for $j$ large, if $t=|E_j^*|-|F_j^*|$, then $G_j^*=\Phi^j_t(F_j^*)$ is such that $|G_j^*|=|E_j^*|$, and by $E_j\in{\rm Min}[\psi_W(v_j)]$,
  \begin{eqnarray*}
  &&\!\!\!\!\!\!\!\!\!\!\!\!\!\!\!P(E_j^*;\Om_j)\le P(G_j^*;\Om_j)
  \le P\big(E_j^*;\Om_j\setminus(U_j\cup B_{1/j}(y_j))\big)
  \\
  &&+P(F_j^*;\Om_j\cap B_{1/j}(y_j))+P(E_j^*;U_j)+C(n)\,P(E_j^*;U_j)\,\big|E_j^*\Delta F_j^*\big|\,.
  \end{eqnarray*}
  Taking into account $P(E^*_j;U_j)\le \psi_W(v_j)/v_j^{n/(n+1)}\le C(n)$, we thus find
  \begin{eqnarray*}
  P(E_j^*;\Om_j\cap B_{1/j}(y_j))\le P(F_j^*;\Om_j\cap  B_{1/j}(y_j))+C(n)\,\big|E_j^*\Delta F_j^*\big|\,,
  \end{eqnarray*}
  which, by \eqref{unifor min contra scaled}, gives $j\,\big|E_j^*\Delta F_j^*\big|\le C(n)\,\big|E_j^*\Delta F_j^*\big|$. Since $|E_j^*\Delta F_j^*|>0$, this is a contradiction for $j$ large enough.

\noindent {\bf Step four:} We now prove that, if $\Rr(W)>0$, then
\begin{equation}\label{xv equation}
\lim_{v\to\infty}\,\sup_{E_v\in{\rm Min}[\psi_W(v)]}\,\big||x|\,v^{-1/(n+1)}-\omega_{n+1}^{-1/(n+1)}\big|=0\,,
\end{equation}
where $x$ is related to  $E_v$  by \eqref{isop estimate 2}. In proving \eqref{xv equation} we will use the assumption $\Rr(W)>0$ and the energy upper bound
\begin{equation}
  \label{bello}
  \varlimsup_{v\to \infty} \psi_W(v) - P(B^{(v)})\le-\Rr(W)\,.
\end{equation}
A proof of \eqref{bello} is given in step one of the proof of Theorem \ref{thm main psi}, see section \ref{section resolution for exterior}; in turn, that proof is solely based on the results from section \ref{section isoperimetric residues}, where no part of Theorem \ref{thm existence and uniform min} (not even the existence of minimizers in $\psi_W(v)$) is ever used. This said, when $|W|>0$, and thus $\Ss(W)>0$, one can replace \eqref{bello} in the proof of \eqref{xv equation} by the simpler upper bound
\begin{equation}\label{strict}
\varlimsup_{v\to \infty} \psi_W(v) - P(B^{(v)})\le-\Ss(W)\,,
\end{equation}
where, we recall, $\Ss(W)=\sup\{\H^n(W\cap\Pi):\mbox{$\Pi$ is a hyperplane in $\R^{n+1}$}\}$. To prove \eqref{strict}, given $\Pi$, we construct competitors for $\psi_W(v)$ by intersecting $\Om$ with balls $B^{(v')}(x_v)$ with $v'>v$ and $x_v$ such that
$|B^{(v')}(x_v)\setminus W|=v$ and $\H^n(W\cap\pa B^{(v')}(x_v))\to\H^n(W\cap\Pi)$ as $v\to\infty$. Hence,
$\varlimsup_{v\to\infty}\psi_W(v)-P(B^{(v)})\le-\H^n(W\cap\Pi)$,
thus giving \eqref{strict}. The proof of \eqref{bello} is identical in spirit to that of \eqref{strict}, with the difference that to glue a large ball to $(F,\nu)\in{\rm Max}[\Rr(W)]$ we will need to establish the decay of $\pa F$ towards a hyperplane parallel to $\nu^\perp$ to the high degree of precision expressed in \eqref{asymptotics of F}. Now to prove \eqref{xv equation}: by contradiction, consider $v_j\to\infty$, $E_j\in{\rm Min}[\psi_W(v_j)]$, and $x_j\in\R^{n+1}$ with $\inf_{x\in\R^{n+1}}|E_j\Delta B^{(v_j)}(x)|=|E_j\Delta B^{(v_j)}(x_j)|$, such that
\begin{equation}\label{bad assumption on xv}
 \varliminf_{j\to\infty}\big||x_j|\,v_j^{-1/(n+1)}-\omega_{n+1}^{-1/(n+1)}\big|>0\,,
\end{equation}
and set $\l_j=v_j^{-1/(n+1)}$, $E_j^*=\l_j\,(E_j-x_j)$, $W_j^*=\l_j\,(W-x_j)$, and $\Om_j^*=\l_j\,(\Om-x_j)$. By \eqref{uniform lambda minimality}, each $E_j^*$ is a  $(\Lambda_0,s_0)$-perimeter minimizer with free boundary in $\Om_j^*$. By \eqref{isop estimate 2} and the defining property of $x_j$, $E_j^*\to B^{(1)}$ in $L^1(\R^{n+1})$. Moreover, $\diam(W_j^*)\to 0$ and, by \eqref{bad assumption on xv},
\begin{equation}
  \label{because}
  \varliminf_{j\to\infty}\dist\big(W_j^*,\pa B^{(1)}\big)>0\,.
\end{equation}
Thus there is $z_0\not\in\pa B^{(1)}$ such that, for every $\rho<\dist(z_0,\pa B^{(1)})$, there is $j(\rho)$ such that $\{E_j^*\}_{j\ge j(\rho)}$ is a sequence of $(\Lambda_0,s_0)$-perimeter minimizers in $\R^{n+1}\setminus B_{\rho/2}(z_0)$. By Remark \ref{remark improved convergence}, up to increasing $j(\rho)$, $(\pa E_j^*)\setminus B_{\rho}(z_0)$ is contained in the normal graph over $\pa B^{(1)}$ of $u_j$ with $\|u_j\|_{C^1(\pa B^{(1)})}\to 0$; in particular, by \eqref{because}, $(\pa E_j^*)\setminus B_{\rho}(z_0)$ is disjoint from $W_j^*$. By the constant mean curvature condition satisfied by $\Om\cap\pa E_j^*$, and by Alexandrov's theorem \cite{alexandrov}, $(\pa E_j^*)\setminus B_{\rho}(z_0)$ is a sphere $M_j^*$ for $j\ge j(\rho)$. Let $B_j^*$ be the ball bounded by $M_j^*$. Since $M_j^*\cap W_j^*=\emptyset$, we have either one of the following:

\noindent {\bf Case one:} $W_j^*\subset B_j^*$. We have $\pa[B_j^*\cup E_j^*]\subset M_j^*\cup[(\pa E_j^*)\setminus \cl(B_j^*)]\subset (\pa E_j^*)\setminus W_j^*$, so that, by $|B_j^*\cup E_j^*|\ge |E_j^*|+|W_j^*|\ge 1$, we find $P(E_j^*;\Om_j^*)\ge P(B_j^*\cup E_j^*)\ge P(B^{(1)})$, that is, $\psi_W(v_j)\ge P(B^{(1)})$, against \eqref{bello}.

\noindent {\bf Case two:}  $W_j^*\cap B_j^*=\emptyset$. In this case, $E_j^*=B_j^*\cup G_j^*$, where $G_j^*$ is the union of the connected components of $E_j^*$ whose boundaries have non-empty intersection with $W_j^*$: in other words, we are claiming that $B_j^*$ is the only connected component of $E_j^*$ whose closure is disjoint from $W_j^*$. Indeed, if this were not the case, we could recombine all the connected components of $E_j^*$ with closure disjoint from $W_j^*$ into a single ball of same total volume, centered far away from $W_j^*$, in such a way to strictly decrease $P(E_j^*;\Om_j^*)$, against $E_j\in{\rm Min}[\psi_W(v_j)]$. Let us now set
$G_j=x_j+v_j^{1/(n+1)}\,G_j^*$ and $U_j=x_j+v_j^{1/(n+1)}\,B_j^*$, so that $E_j=G_j\cup U_j$ and $\dist(G_j,U_j)>0$.

If we start sliding $U_j$ from infinity towards $G_j\cup W$ along arbitrary directions, then at least one of the resulting ``contact points'' $z_j$ belongs to $\Om\cap\pa G_j$: if this were not the case, then $G_j$ would be contained in the convex envelope of $W$, so that $|B_j|=|E_j|-|G_j|\ge v_j-C(W)$, and thus, by $\psi_W(v_j)=P(E_j;\Om)\ge P(B_j;W)=P(B_j)$, and by $P(B_j)\ge P(B^{(v_j-C(W))})\ge P(B^{(v_j)})-C(W)\,v_j^{-1/(n+1)}$, against with \eqref{bello} for $j$ large.

By construction, there is a half-space $H_j$ such that $G_j\subset H_j$, $z_j\in(\pa G_j)\cap(\pa H_j)$, and $G_j$ is a perimeter minimizer in $B_r(z_j)$ for some small $r>0$. By the strong maximum principle, see, e.g. \cite[Lemma 2.13]{dephilippismaggiCAP-ARMA}, $G_j$ has $H_j-z_j$ as its unique blowup at $z_j$. By De Giorgi's regularity theorem, see e.g. \cite[Part III]{maggiBOOK}, $G_j$ is an open set with smooth boundary in a neighborhood of $z_j$. Therefore, if we denote by $U_j'$ the translation of $U_j$ constructed in the sliding argument, then, $E_j'=G_j\cup U_j'\in{\rm Min}[\psi_W(v)]$ and, in a neighborhood of $z_j$, $E_j'$ is the union of two disjoint sets with smooth boundary which touch tangentially at $z_j$. In particular, $|E_j'\cap B_r(z_j)|/|B_r|\to 1$ as $r\to 0^+$, against volume density estimates implied by \eqref{uniform lambda minimality}, see, e.g. \cite[Theorem 21.11]{maggiBOOK}.

\noindent {\bf Step five:} We finally show the existence of $v_0$ and $R_0(v)$ with $R_0(v)\to 0^+$ and $R_0(v)\,v^{1/(n+1)}\to \infty$, such that each $E_v\in{\rm Min}[\psi_W(v)]$ with $v>v_0$ determines $x$ and $u\in C^{\infty}(\partial B^{(1)})$ such that \eqref{x and u of Ev take 2} holds and $\sup_{E_v} \|u \|_{C^1(\partial B^{(1)})}\to 0$ as $v\to\infty$. To this end, let us consider $v_j\to\infty$, $E_j\in{\rm Min}[\psi_W(v_j)]$, and define $x_j$, $E_j^*$ and $W_j^*$ as in step four. Thanks to \eqref{xv equation}, there is $z_0\in\pa B^{(1)}$ s.t. $\dist(z_0,W_j^*)\to 0$. In particular, for every $\rho>0$, we can find $j(\rho)\in\N$ such that if $j\ge j(\rho)$, then $E_j^*$ is a $(\Lambda_0,s_0)$-perimeter minimizer in $\R^{n+1}\setminus B_\rho(z_0)$, with $E_j^*\to B^{(1)}$. By Remark \ref{remark improved convergence}, there are $u_j\in C^1(\pa B^{(1)})$ such that
\[
(\pa E_j^*)\setminus B_{2\,\rho}(z_0)=\big\{y+u_j(y)\,\nu_{B^{(1)}}(y):y\in\pa B^{(1)}\big\}\setminus B_{2\,\rho}(z_0)\,,\,\,\forall j\ge j(\rho)\,,
\]
and $\|u_j\|_{C^1(\pa B^{(1)})}\to 0$. By the arbitrariness of $\rho$ and by a contradiction argument, \eqref{x and u of Ev take 2} holds with $R_0(v)\to 0^+$ such that $R_0(v)\,v^{1/(n+1)}\to\infty$ as $v\to\infty$, and with the uniform decay of $\|u \|_{C^1(\partial B^{(1)})}$.
\end{proof}

\section{Properties of isoperimetric residues}\label{section isoperimetric residues} Here we prove Theorem \ref{thm main of residue}. It will be convenient to introduce some notation for cylinders and slabs in $\R^{n+1}$: precisely, given $r>0$, $\nu\in\SS^n$ and $I\subset\R$, and setting $\pp_{\nu^\perp}(x)=x-(x\cdot\nu)\,\nu$ ($x\in\R^{n+1}$), we let
\begin{eqnarray}\nonumber
\textbf{D}_r^\nu&=&\big\{x\in\R^{n+1}:|\pp_{\nu^\perp}x|<r\,,x\cdot\nu=0\big\}\,,
\\\nonumber
\textbf{C}_r^\nu&=&\big\{x\in\R^{n+1}:|\pp_{\nu^\perp}x|<r\big\}\,,
\\\label{cylinders and slabs}
\textbf{C}_{r,I}^\nu&=&\big\{x\in\R^{n+1}:|\pp_{\nu^\perp}x|<r\,,x\cdot\nu\in I\big\}\,,
\\\nonumber
\partial_\ell \textbf{C}_{r,I}^\nu&=&\big\{x\in\R^{n+1}:|\pp_{\nu^\perp}x|=r\,,x\cdot\nu\in I\big\}\,,
\\\nonumber
\textbf{S}_{I}^\nu &=& \big\{x\in\R^{n+1}: x \cdot \nu \in I \big\}\,.
\end{eqnarray}
In each case, given $x\in\R^{n+1}$, we also set $\textbf{D}_r^\nu(x)=x+\textbf{D}_r^\nu$, $\textbf{C}_r^\nu(x)=x+\textbf{C}_r^\nu$, etc. We premise the following proposition, used in the proof of Theorem \ref{thm main of residue} and Theorem \ref{thm main psi}, and based on \cite[Proposition 1 and Proposition 3]{Scho83}.

\begin{proposition}\label{prop schoen}
  Let $n\ge 2$, $\nu\in\SS^n$, and let $f$ be a Lipschitz solution to the minimal surface equation on $\nu^\perp\setminus\cl(\DD_R^\nu)$.
  If $n=2$, assume in addition that $M=\{x+f(x)\,\nu:|x|>R\}$ is stable and has natural area growth, i.e.
  \begin{eqnarray}
    \label{schoen 1}
    \int_M\,|\nabla^M\vphi|^2-|A|^2\,\vphi^2\ge0\,,&&\qquad\forall\vphi\in C^1_c(\R^3\setminus B_R)\,,
    \\
    \label{schoen 2}
    \H^2(M\cap B_r)\le C\,r^2\,,&&\qquad\forall r>R\,.
  \end{eqnarray}
  Then there are $a,b\in\R$ and $c\in\nu^\perp$ such that, for every $|x|>R$,
  \begin{eqnarray}
    \label{schoen conclusion 1}
    &&\big|f(x)-\big(a+b\,|x|^{2-n}+(c\cdot x)\,|x|^{-n}\big)\big|\le C\,|x|^{-n}\,,\,\,(n\ge 3)
    \\
    \label{schoen conclusion 2}
    &&\big|f(x)-\big(a+b\,\log|x|+(c\cdot x)\,|x|^{-2}\big)\big|\le C\,|x|^{-2}\,,\,\, (n=2)
    \\
    \label{schoen derivatives}
    &&\max\Big\{|x|^{n-1}\,|\nabla f(x)|,|x|^n\,|\nabla^2f(x)|:|x|>R\Big\}\le C\,,\,\,(\mbox{every $n$})\,.
  \end{eqnarray}
\end{proposition}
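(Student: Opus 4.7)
My plan is to reduce the proposition to \cite[Propositions 1 and 3]{Scho83}, which give asymptotic expansions of precisely the desired form for Lipschitz solutions of the minimal surface equation on the exterior of a disk in $\R^n$, provided one has smoothness of $f$, decay of the second fundamental form at infinity, and (for $n=2$) stability plus Euclidean area growth. After a rotation, I would assume without loss of generality that $\nu = e_{n+1}$, so that $f : \R^n\setminus\cl(\DD_R^{e_{n+1}}) \to \R$ solves the minimal surface equation in classical form.

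The preparatory inputs I would establish are the following. Since $f$ is Lipschitz, the minimal surface equation rewritten in divergence form $\partial_i(a_{ij}(\nabla f)\,\partial_j f)=0$ is uniformly elliptic with smooth coefficients, so De Giorgi--Nash--Moser followed by Schauder bootstrapping gives $f\in C^\infty$ on its domain. Next, I would prove $|A_M|(x)\to 0$ and $|\nabla f|(x)\to 0$ as $|x|\to\infty$: for $n\geq 3$ this follows from interior gradient and curvature estimates for minimal graphs combined with a blow-down argument (no stability is needed in codimension one thanks to the graph structure), while for $n=2$ this is exactly the content of Schoen's curvature estimate for stable minimal surfaces with Euclidean area growth under \eqref{schoen 1} and \eqref{schoen 2}. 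With $|\nabla f|(x)\to 0$, the graph $M$ admits a unique tangent hyperplane at infinity, necessarily normal to $\nu$, which supplies the constant $a$ appearing in the expansions.

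At this point \cite[Propositions 1 and 3]{Scho83} apply directly, yielding \eqref{schoen conclusion 1} for $n\geq 3$ and \eqref{schoen conclusion 2} for $n=2$, with $b\in\R$ encoding the leading catenoidal correction and $c\in\nu^\perp$ encoding the dipole correction. Finally, to obtain \eqref{schoen derivatives} I would argue by scaling: for $|x_0|=r\gg R$, setting $\tilde f(y) := r^{n-1}\,[f(ry) - L(ry)]$ on the annulus $\{1/2<|y|<2\}$, where $L$ denotes the leading-order expansion just obtained, yields a function with $\|\tilde f\|_{L^\infty}\leq C$ (from the error bound in \eqref{schoen conclusion 1}--\eqref{schoen conclusion 2}) solving a uniformly elliptic equation with smooth coefficients; Schauder estimates then control $\|\tilde f\|_{C^2}$, which unrescales to $|x|^{n-1}|\nabla f| + |x|^n|\nabla^2 f|\leq C$. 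The main technical burden lies entirely inside Schoen's original argument, namely the careful separation of the catenoidal and dipole modes via spherical-harmonic decomposition on cross-sectional spheres and the control of the nonlinear error across dyadic annular regions; this does not require any adaptation for our application, so the proof reduces to verifying, as above, that the hypotheses of Schoen's propositions are met.
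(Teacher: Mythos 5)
Your high-level strategy — reduce to \cite[Propositions 1 and 3]{Scho83} after verifying their hypotheses — is the same as the paper's, and for $n\ge 3$ the two proofs are essentially identical: Schoen's Proposition 3 takes exactly the given hypothesis (Lipschitz solution of the minimal surface equation on an exterior domain) and produces \eqref{schoen conclusion 1}--\eqref{schoen derivatives} via the Littman--Stampacchia--Weinberger kernel representation, so your preliminary decay estimates for $\nabla f$ and $A_M$ are byproducts of that proposition rather than prerequisites for it. Your scaling-plus-Schauder derivation of \eqref{schoen derivatives} is a reasonable way to get the derivative bounds, modulo noting that the equation satisfied by $\tilde f$ is inhomogeneous (since $L$ is harmonic, not an exact solution) with an error of the right size.

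The genuine gap is in the $n=2$ case. You go from \eqref{schoen 1}--\eqref{schoen 2} to pointwise curvature decay via Schoen's curvature estimate and then claim that \cite[Proposition 1]{Scho83} ``applies directly.'' But Proposition 1 of \cite{Scho83} requires \emph{finite total curvature} $\int_M|K|\,d\H^2<\infty$, which is what drives the conformal compactification $M\cong\overline{M}\setminus\{p_1,\dots,p_m\}$ and hence the end-by-end expansion \eqref{schoen conclusion 2}. Pointwise decay $|A|(x)\le C/|x|$ does not by itself give $\int_M|A|^2<\infty$ (the dyadic contributions are each $\mathrm{O}(1)$). The missing step — which the paper supplies — is the logarithmic cut-off argument: plugging $\vphi$ supported in $B_{r^2}\setminus B_R$ with $|\nabla\vphi|\lesssim(\rho\log r)^{-1}$ into the stability inequality \eqref{schoen 1}, and using the quadratic area growth \eqref{schoen 2} to estimate $\int_M|\nabla^M\vphi|^2\lesssim 1/\log r\to0$, which forces $\int_M|A|^2<\infty$. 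Only then does one obtain the Riemann-surface compactification (see \cite[Section 1.2]{PerezRos}), apply Proposition 1 of \cite{Scho83} to each of the $m$ ends, and finally use the graph structure of $M$ to conclude $m=1$. As written your argument skips the total-curvature step entirely and would not suffice to invoke Proposition 1.
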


\begin{proof} If $n\ge 3$, the fact that $\nabla f$ is bounded allows one to represent $f$ as the convolution with a singular kernel which, by a classical result of Littman,  Stampacchia, and Weinberger \cite{LSWannSNS}, is comparable to the Green's function of $\R^n$; \eqref{schoen conclusion 1} is then deduced starting from that representation formula. For more details, see \cite[Proposition 3]{Scho83}. In the case $n=2$, by \eqref{schoen 1} and \eqref{schoen 2}, we can exploit a classical ``logarithmic cut-off argument'' to see that $M$ has finite total curvature, i.e. $\int_M |K| \,d\H^2< \infty$, where $K$ is the Gaussian curvature of $M$. As a consequence, see, e.g. \cite[Section 1.2]{PerezRos}, the compactification $\ov{M}$ of $M$ is a Riemann surface with boundary, and $M$ is conformally equivalent to $\ov{M}\setminus\{p_1,...,p_m\}$, where $p_i$ are interior points of $\ov{M}$. One can thus conclude by the argument in \cite[Proposition 1]{Scho83} that $M$ has $m$-many ends satisfying the decay \eqref{schoen conclusion 2}, and then that $m=1$ thanks to the fact that $M=\{x+f(x)\,\nu:|x|>R\}$.
\end{proof}

\begin{proof}[Proof of Theorem \ref{thm main of residue}] {\bf Step one:} Given a hyperplane $\Pi$ in $\R^{n+1}$, if $F$ is a half-space with $\pa F=\Pi$ and $\nu$ is a unit normal to $\Pi$, then ${\rm res}_W(F,\nu)=\H^n(W\cap\Pi)$. Therefore the lower bound in \eqref{R larger than S} follows by
\begin{equation}
  \label{famous lb}
\Rr(W)\ge\Ss(W)=\sup\big\{\H^n(\Pi\cap W):\mbox{$\Pi$ an hyperplane in $\R^{n+1}$}\big\}\,.
\end{equation}

\noindent {\bf Step two:} We notice that, if  $(F,\nu)\in\F$, then by \eqref{def Sigma nu 1}, \eqref{def Sigma nu 2}, and the divergence theorem (see, e.g., \cite[Lemma 22.11]{maggiBOOK}), we can define a Radon measure on the open set $\nu^\perp\setminus\pp_{\nu^\perp}(W)$ by setting
\begin{equation}
  \label{is a radon measure}
  \mu(U)=P\big(F;(\pp_{\nu^\perp})^{-1}(U)\big)-\H^n(U)\,,\qquad U\subset\nu^\perp\setminus\pp_{\nu^\perp}(W)\,.
\end{equation}
In particular, setting $R'=\inf\{\rho: W\subset\CC_\rho^\nu\}$, $\mu(\DD_R^\nu\setminus\pp_{\nu^\perp}(W))\ge0$  gives
\[
P(F;\CC_R^\nu\setminus W)\ge\om_n\,R^n-\H^n(\pp_{\nu^\perp}(W))\,,\qquad\forall R>R'\,,
\]
while the identity
\begin{eqnarray*}
  \om_n\,R^n-P(F;\CC_R^\nu\setminus W)=-\mu(\DD_R^\nu\setminus\DD_{R'}^\nu)+\om_n\,(R')^n-P(F;\CC_{R'}^\nu\setminus W)
\end{eqnarray*}
(which possibly holds as $-\infty=-\infty$ if $P(F;\CC_{R'}^\nu\setminus W)=+\infty$) gives that
\begin{equation}
  \label{perimeter is decreasing}
  R\in(R',\infty)\mapsto \om_n\,R^n-P(F;\CC_R^\nu\setminus W)\,\,\,
  \mbox{is {\it decreasing} on $(R',\infty)$}\,.
\end{equation}
In particular, the limsup defining ${\rm res}_W$ always exists as a limit.

\noindent {\bf Step three:} We prove the existence of $(F,\nu)\in{\rm Max}[\Rr(W)]$ and \eqref{local perimeter minimizer}. We first claim that if $\{(F_j,\nu_j)\}_j$ is a maximizing sequence for $\Rr(W)$, then, in addition to $\pp_{\nu_j^\perp}(\pa F_j)=\nu_j^\perp$, one can modify $(F_j,\nu_j)$, preserving the optimality in the limit $j\to \infty$, so that (setting $X\subset^{\L^{n+1}} Y$ for $|X\setminus Y|=0$)
\begin{eqnarray}
  \label{properties of Fj updated}
&&\pa F_j\subset \textbf{S}_{[A_j,B_j]}^{\nu_j}\,,\,\,\, \textbf{S}_{(-\infty, A_j)}^{\nu_j}\stackrel{\L^{n+1}}{\subset}F_j\,,
\,\,\, \textbf{S}_{(B_j,\infty)}^{\nu_j} \stackrel{\L^{n+1}}{\subset}\R^{n+1}\setminus F_j\,,\hspace{0.5cm}
\\
  \label{E containment}
&&\mbox{where}\,\,[A_j,B_j]=\bigcap\big\{(\a,\b):W\subset \textbf{S}_{(\a,\b)}^{\nu_j}\big\}\,.
\end{eqnarray}
Indeed, since $(F_j,\nu_j)\in\F$, for some $\a_j<\b_j\in\R$ we have
\begin{equation}
  \label{properties of Fj}
  \pa F_j\subset \textbf{S}_{[\a_j,\beta_j]}^{\nu_j}\,,\qquad \pp_{\nu_j^\perp}(\pa F_j)=\nu_j^\perp\,.
\end{equation}
Would it be that either $\textbf{S}_{(-\infty, \a_j)\cup(\b_j,\infty)}^{\nu_j} \subset_{\L^{n+1}} F_j$ or $\textbf{S}_{(-\infty, \a_j)\cup(\b_j,\infty)}^{\nu_j} \subset_{\L^{n+1}} \R^{n+1}\setminus F_j$, then, by the divergence theorem and by $\pp_{\nu_j^\perp}(\pa F_j)=\nu_j^\perp$,
\[
P(F_j;\CC_R^{\nu_j}\cap\Om)\ge 2\,\big(\om_n\,R^n-\H^n(\pp_{\nu_j^\perp}(W))\big)\,,\qquad\forall R>0\,,
\]
and thus ${\rm res}_{W}(F_j,\nu_j)=-\infty$; in particular, $(F_j,\nu_j)\in\F$ being a maximizing sequence, we would have $\Rr(W)=-\infty$, against \eqref{famous lb}. This proves the validity (up to switching $F_j$ with $\R^{n+1}\setminus F_j$), of the inclusions
\begin{equation}
  \label{properties of Fj updated new}
  \textbf{S}_{(-\infty, \a_j)}^{\nu_j} \subset_{\L^{n+1}} F_j\,,
\qquad \textbf{S}_{(\b_j,\infty)}^{\nu_j} \subset_{\L^{n+1}} \R^{n+1}\setminus F_j\,.
\end{equation}
Thanks to \eqref{properties of Fj updated new} (and by exploiting basic set operations on sets of finite perimeter, see, e.g., \cite[Theorem 16.3]{maggiBOOK}),  we see that
\begin{eqnarray}\label{truncate up to the obstacle}
&&\mbox{$F_j^*=\big(F_j\cup \textbf{S}_{(-\infty,A_j-1/j)}^{\nu_j}\big)\cap \textbf{S}_{(-\infty,B_j+1/j)}^{\nu_j}$ satisfies}
\\\nonumber
&&(F_j^*,\nu_j)\in\F\,,\qquad P\big(F_j^*;\CC_R^{\nu_j}\setminus W\big)\le P\big(F_j;\CC_R^{\nu_j}\setminus W\big)\,,\qquad\forall R>0\,;
\end{eqnarray}
in particular, $\{(F_j^*,\nu_j)\}_j$ is also a maximizing sequence for $\Rr(W)$. By standard compactness theorems there are $F$ of locally finite perimeter in $\R^{n+1}$ and $\nu\in\SS^n$ such that $F_j\to F$ in $L^1_{{\rm loc}}(\R^{n+1})$ and $\nu_j\to\nu$. If $A\cc\CC_R^\nu\setminus W$ is open, then, for $j$ large enough, $A\cc \CC_R^{\nu_j}\setminus W$, and thus
\begin{equation}
  \label{liminf on cylinders}
P(F;\CC_R^\nu\setminus W)=\sup_{A\cc\CC_R^\nu\setminus W}\,P(F;A)\le\varliminf_{j\to\infty}P(F_j;\CC_R^{\nu_j}\setminus W)\,.
\end{equation}
By \eqref{perimeter is decreasing}, $R\mapsto \om_n\,R^n-P(F_j;\CC_R^{\nu_j}\setminus W)$ is decreasing on $R>R_j=\inf\{\rho:W\subset \CC_\rho^{\nu_j}\}$. By $\sup_jR_j\le C(W)<\infty$ and \eqref{liminf on cylinders} we have
\[
\om_n\,R^n-P(F;\CC_R^\nu\setminus W)\ge\varlimsup_{j\to\infty}\om_n\,R^n-P(F_j;\CC_R^{\nu_j}\setminus W)\ge\varlimsup_{j\to\infty}{\rm res}_W(F_j,\nu_j)\,,
\]
for every $R>C(W)$; in particular, letting $R\to\infty$,
\begin{equation}
  \label{max inq}
  {\rm res}_W(F,\nu)\ge\varlimsup_{j\to\infty}{\rm res}_W(F_j,\nu_j)=\Rr(W)\,.
\end{equation}
By $F_j\to F$ in $L^1_{{\rm loc}}(\R^{n+1})$, $\pa F=\cl(\pa^*F)$ is contained in the set of accumulation points of sequences $\{x_j\}_j$ with $x_j\in\pa F_j$, so that \eqref{properties of Fj updated} gives
\begin{equation}
  \label{properties of F updated}
  \pa F\subset \textbf{S}_{[A,B]}^{\nu}\,,\qquad \textbf{S}_{(-\infty, A)}^{\nu} \subset_{\L^{n+1}} F\,,
\qquad \textbf{S}_{(B,\infty)}^{\nu} \subset_{\L^{n+1}}\R^{n+1}\setminus F\,,
\end{equation}
if $[A,B]=\bigcap\{(\a,\b):W\subset\textbf{S}^\nu_{(\a,\b)}\}$. Therefore $(F,\nu)\in\F$, and thus, by \eqref{max inq}, $(F,\nu)\in{\rm Max}[\Rr(W)]$. We now show that \eqref{max inq} implies \eqref{local perimeter minimizer}, i.e.
\begin{equation}\label{local perimeter minimizer proof}
P(F;\Omega \cap B) \leq P(G;\Omega \cap B)\,,\qquad\mbox{$\forall F\Delta G\cc B$, $B$ a ball}\,.
\end{equation}
Indeed, should \eqref{local perimeter minimizer proof} fail, we could find $\de>0$ and $G\subset\R^{n+1}$ with $F\Delta G\cc B$ for some ball $B$, such that $P(G;B\setminus W)+\de\le P(F;B\setminus W)$. For $R$ large enough to entail $B\cc \CC_R^\nu$ we would then find
\[
{\rm res}_W(F,\nu)+\de\le\om_n\,R^n-P(F;\CC_R^\nu\setminus W)+\de\le\om_n\,R^n-P(G;\CC_R^\nu\setminus W)\,,
\]
which, letting $R\to\infty$, would violate the maximality of $(F,\nu)$ in $\Rr(W)$.

\noindent {\bf Step four:} We show that if $\Rr(W)>0$ and $(F,\nu)\in{\rm Max}[\Rr(W)]$, then $\partial F\subset\mathbf{S}_{[A,B]}^\nu$ for $A,B$ as in \eqref{properties of F updated}. Otherwise, by the same truncation procedure leading to \eqref{truncate up to the obstacle} and by $(F,\nu)\in{\rm Max}[\Rr(W)]$, we would find
\begin{equation*}
\omega_n R^n - P\big(F^*;\CC_R^{\nu_j}\setminus W\big)\geq \omega_n R^n -P\big(F;\CC_R^{\nu_j}\setminus W\big)\geq \mathcal{R}(W) \qquad\forall R>0\,,
\end{equation*}
so that $(F^\ast,\nu)\in{\rm Max}[\Rr(W)]$ too. Now $P\big(F;\CC_R^{\nu_j}\setminus W\big) - P\big(F^*;\CC_R^{\nu_j}\setminus W\big)  $ is increasing in $R$, and since ${\rm res}_W(F,\nu) = {\rm res}_W(F^\ast,\nu)$, it follows that $P\big(F;\CC_R^{\nu_j}\setminus W\big) = P\big(F^*;\CC_R^{\nu_j}\setminus W\big) $ for large $R$. But this can hold only if $\partial F \cap \Omega$ is an hyperplane disjoint from $W$, in which case $\mathcal{R}(W)={\rm res}_W(F,\nu)=0$.

\noindent {\bf Step five:} Still assuming $\Rr(W)>0$, we complete the proof of statement (ii) by proving \eqref{asymptotics of F}. By \eqref{properties of F updated}, if $(F,\nu)\in{\rm Max}[\Rr(W)]$, then $F/R\to H^-=\{x\in\R^{n+1}:x\cdot\nu<0\}$ in $L^1_{{\rm loc}}(\R^{n+1})$ as $R\to\infty$. By \eqref{local perimeter minimizer proof} and by improved convergence (i.e., Remark \ref{remark improved convergence} -- notice carefully that $\pa F$ is bounded in the direction $\nu$ thanks to step four), we find $R_F>0$ and functions $\{f_R\}_{R>R_F}\subset C^1(\DD_2^\nu\setminus\DD_1^\nu)$ such that
\[
\big(\CC_2^\nu\setminus\CC_1^\nu\big)\cap\pa (F/R)=\big\{x+f_R(x)\,\nu:x\in \DD_2^\nu\setminus\DD_1^\nu\big\}\,,\qquad\forall R>R_F\,.
\]
with $\|f_R\|_{C^1(\DD_2^\nu\setminus\DD_1^\nu)}\to 0$ as $R\to\infty$. Scaling back to $F$ we deduce that
\begin{equation}
  \label{F represented by u}
  (\pa F)\setminus\CC_{R_F}^\nu=\big\{x+f(x)\,\nu:x\in\nu^\perp\setminus\DD_{R_F}^\nu\big\}\,,
\end{equation}
for a (necessarily smooth) solution $f$ to the minimal surfaces equation with
\begin{equation}
  \label{u estimates}
  \|f\|_{C^0(\nu^\perp\setminus\DD_{R_F}^\nu)}\le B-A\,,\qquad
  \lim_{R\to\infty}\|\nabla f\|_{C^0(\DD_{2\,R}^\nu\setminus \DD_{R}^\nu)}=0\,,
\end{equation}
thanks to the fact that $f(x)=R\,f_R(x/R)$ if $x\in\DD_{2\,R}^\nu\setminus \DD_{R}^\nu$. {\bf When $n\ge 3$}, \eqref{asymptotics of F} follows by \eqref{F represented by u} and Proposition \ref{prop schoen}. {\bf When $n=2$}, \eqref{schoen 1} holds by \eqref{local perimeter minimizer proof}. To check \eqref{schoen 2}, we deduce by ${\rm res}_W(F,\nu)\ge0$ the existence of $R'>R_F$ such that $\om_n\,R^n\ge P(F;\CC_R^\nu\setminus W)-1$ if $R>R'$. In particular, setting $M=(\pa F)\setminus B_{R_F}$, for $R>R'$ we have
\[
\H^2(M\cap B_R)\le \H^2(M\cap W)+P(F;\CC_R^\nu\setminus W)\le \om_n\,R^n+1+\H^2(M\cap W)\le C\,R^n\,,
\]
provided $C=\om_n+[(1+\H^2(M\cap W))/(R')^n]$; while if $R\in(R_F,R')$, then $\H^2(M\cap B_R)\le C\,R^n$ with $C=\H^2(M\cap B_{R'})/R_F^n$. This said, we can apply Proposition \ref{prop schoen} to deduce \eqref{schoen conclusion 2}. Since $\pa F$ is bounded in a slab, the logarithmic term in \eqref{schoen conclusion 2} must vanish (i.e. \eqref{schoen conclusion 2} holds with $b=0$), and thus \eqref{asymptotics of F} is proved. {\bf Finally, when $n=1$}, by \eqref{F represented by u} and \eqref{u estimates} there are $a_1,a_2\in\R$, $x_1<x_2$, $x_1,x_2\in\nu^\perp\equiv\R$ such that $f(x)=a_1$ for $x\in\nu^\perp$, $x<x_1$, and $f(x)=a_2$ for $x\in\nu^\perp$, $x>x_2$. Now, setting $M_1=\{x+a_1\,\nu:x\in\nu^\perp,x<x_1\}$ and $M_2=\{x+a_2\,\nu:x\in\nu^\perp,x>x_2\}$, we have that
\[
P(F;\CC_R^\nu\setminus W)=\H^n\big(\CC_R^\nu\cap(\pa F)\setminus(W\cup M_1\cup M_2)\big)+2\,R-|x_2-x_1|\,;
\]
while, if $L$ denotes the line through $x_1+a_1\,\nu$ and $x_2+a_2\,\nu$, then we can find $\nu_L\in\SS^1$ and a set $F_L$ such that $(F_L,\nu_L)\in\F$ with $\pa F_L=\big[\big((\pa F)\setminus(M_1\cup M_2)\big)\cup (L_1\cup L_2)\big]$, where $L_1$ and $L_2$ are the two half-lines obtained by removing from $L$ the segment joining $x_1+a_1\,\nu$ and $x_2+a_2\,\nu$. In this way, $P(F_L;\CC_R^{\nu_L}\setminus W)=\H^n\big(\CC_R^\nu\cap(\pa F)\setminus(W\cup M_1\cup M_2)\big)+2\,R-\big|(x_1+a_1\,\nu)-(x_2+a_2\,\nu)\big|$, so that ${\rm res}_W(F_L,\nu_L)-{\rm res}_W(F,\nu)=\big|(x_1+a_1\,\nu)-(x_2+a_2\,\nu)\big|-|x_2-x_1|>0$, against $(F,\nu)\in{\rm Max}[\Rr(W)]$ if $a_1\ne a_2$. Hence, $a_1=a_2$.

We are left to prove that \eqref{F represented by u} holds with $R_2=R_2(W)$ in place of $R_F$, and the constants $a$, $b$, $c$ and $C_0$ appearing in \eqref{asymptotics of F} can be bounded in terms of $W$ only. To this end, we notice that the argument presented in step one shows that ${\rm Max}[\Rr(W)]$ is pre-compact in $L^1_{\rm loc}(\R^{n+1})$. Using this fact and a contradiction argument based on improved convergence (Remark \ref{remark improved convergence}), we conclude the proof of statement (ii).

\noindent {\bf Step six:} We complete the proof of statement (i) and begin the proof of statement (iii) by showing that, setting for brevity $d=\diam(W)$, it holds
\begin{equation}
\label{RW upper}
\H^n(W\cap\Pi)\le\Rr(W) \leq \sup_{\nu\in\SS^n}\hn(\pp_{\nu^\perp} (W))\leq\om_n\,(d/2)^n\,,
\end{equation}
whenever $\Pi$ is a hyperplane in $\R^{n+1}$. We have already proved the first inequality in step one. To prove the others, we notice that, if $(F,\nu)\in\F$, then $\pp_{\nu^\perp}(\pa F)=\nu^\perp$ and \eqref{perimeter is decreasing}
give, for every $R>R'$,
\begin{eqnarray}
  \nonumber
  &&\!\!\!\!\!\!\!\!\!\!-{\rm res}_W(F,\nu)
  \ge
  P(F;\CC_R^\nu\setminus W)-\om_n\,R^n
  \ge\H^n\big(\pp_{\nu^\perp}(\pa F\setminus W)\cap\DD_R^\nu\big)-\om_n\,R^n
  \\
  \label{projection type inequality}
  &&\!\!\!\!=-\H^n\big(\DD_R^\nu\setminus\pp_{\nu^\perp}(\pa F\setminus W)\big)
  \ge-\H^n(\pp_{\nu^\perp}(W))\ge-\om_n\,(d/2)^n\,,
\end{eqnarray}
where in the last step we have used the isodiametric inequality. Maximizing over $(F,\nu)$ in \eqref{projection type inequality} we complete the proof of \eqref{RW upper}. Moreover, if $W=\cl(B_{d/2})$, then, since $\Ss(\cl(B_{d/2}))=\H^n(\cl(B_{d/2})\cap\Pi)=\om_n\,(d/2)^n$ for any hyperplane $\Pi$ through the origin, we find that $\Rr(\cl(B_{d/2}))=\om_n\,(d/2)^n$; in particular, \eqref{RW upper} implies \eqref{optimal RW}.

\noindent {\bf Step seven:} We continue the proof of statement (iii) by showing \eqref{characterization 2}. Let $\Rr(W)=\om_n\,(d/2)^n$ and let $(F,\nu)\in{\rm Max}[\Rr(W)]$. Since every inequality in \eqref{projection type inequality} holds as an equality, we find in particular that
\begin{eqnarray}\label{dai 1}
  &&\sup_{R>R'}P(F;\CC_R^\nu\setminus W)-\H^n\big(\pp_{\nu^\perp}(\pa F\setminus W)\cap\DD_R^\nu\big)=0\,,
  \\\label{dai 3}
  &&\H^n(\pp_{\nu^\perp}(W))=\om_n\,(d/2)^n\,.
\end{eqnarray}
By \eqref{dai 3} and the discussion of the equality cases for the isodiametric inequality (see, e.g. \cite{maggiponsiglionepratelli}), we see that, for some $x_0\in\nu^\perp$,
\begin{equation}
  \label{third condition}
  \pp_{\nu^\perp}(W)=\cl(\DD_{d/2}^\nu(x_0))\,,\qquad\mbox{so that $W\subset\CC_{d/2}^\nu(x_0)$}\,.
\end{equation}
Condition \eqref{dai 1} implies that \eqref{asymptotics of F} holds with $u\equiv a$ for some $a\in[A,B]=\bigcap\{(\a,\b):W\subset\mathbf{S}^\nu_{(\a,\b)}\}$; in particular, since $(\pa F)\setminus W$ is a minimal surface and $W\subset\CC_{d/2}^\nu(x_0)$, by analytic continuation we find that
\begin{equation}
  \label{two conditions i}
  (\partial F)\setminus\CC_{d/2}^\nu(x_0) =\Pi\setminus\CC_{d/2}^\nu(x_0)\,,\qquad\Pi=\big\{x:x\cdot\nu=a\big\}\,.
\end{equation}
By \eqref{two conditions i}, we have that for $R>R'$,
\[
P(F;\CC_R^\nu\setminus W)-\om_n\,R^n=P(F;\CC_{d/2}^\nu(x_0)\setminus W)-\om_n\,(d/2)^n\,.
\]
Going back to \eqref{projection type inequality}, this implies $P(F;\CC_{d/2}^\nu(x_0)\setminus W)=0$. However, since $(\pa F)\setminus W$ is (distributionally) a minimal surface, $P(F;B_\rho(x)\setminus W)\ge\om_n\,\rho^n$ whenever $x\in (\pa F)\setminus W$ and $\rho<\dist(x,W)$, so that
$P(F;\CC_{d/2}^\nu(x_0)\setminus W)=0$ gives $((\pa F)\setminus W)\cap\CC_{d/2}^\nu(x_0)=\emptyset$. Hence, using also \eqref{two conditions i}, we find $(\partial F)\setminus W=\Pi\setminus\cl\big(B_{d/2}(x)\big)$ for some $x\in\Pi$, that is \eqref{characterization 2}.

\noindent {\bf Step eight:} We finally prove that $\Rr(W)=\om_n\,(d/2)^n$ if and only if there are a hyperplane $\Pi$ and a point $x\in\Pi$ such that
\begin{eqnarray}
  \label{condo 1}
  &&\Pi\cap\pa B_{d/2}(x)\subset W\,,
  \\
  \label{condo 2}
  &&\mbox{$\Om\setminus(\Pi\setminus B_{d/2}(x))$ has two unbounded connected components}\,.
\end{eqnarray}
We first prove that the two conditions are sufficient. Let $\nu$ be a unit normal to $\Pi$ and let $\Pi^+$ and $\Pi^-$ be the two open half-spaces bounded by $\Pi$. The condition $\Pi\cup\pa B_{d/2}(x)\subset W$ implies $W\subset\CC_{d/2}^\nu(x)$, and thus
\[
\Om\setminus\cl\big[ \CC^\nu_{d/2,(-d,d)}(x)\big]=(\Pi^+\cup\Pi^-)\setminus\cl\big[\CC^\nu_{d/2,(-d,d)}(x)\big]\,.
\]
In particular, $\Omega \setminus(\Pi\setminus B_{d/2}(x))$ has a connected component $F$ which contains
\[
\Pi^+\setminus\cl\big[\CC^\nu_{d/2,(-d,d)}(x)\big]\,;
\]
and since $\Om\setminus(\Pi\setminus B_{d/2}(x))$ contains exactly two unbounded connected components, it cannot be that $F$ contains also $\Pi^-\setminus\cl[\CC^\nu_{d/2,(-d,d)}(x)]$, therefore
\begin{equation}
  \label{pi plus pi minus}
\Pi^+\setminus\cl\big[\CC^\nu_{d/2,(-d,d)}(x)\big]\subset F\,,\qquad \Pi^-\setminus\cl\big[\CC^\nu_{d/2,(-d,d)}(x)\big]\subset\R^{n+1}\setminus\cl(F)\,.
\end{equation}
As a consequence $\pa F$ is contained in the slab $\{y:|(y-x)\cdot\nu|<d\}$, and is such that $\pp_{\nu^\perp}(\pa F)=\nu^\perp$, that is, $(F,\nu)\in\F$. Moreover, \eqref{pi plus pi minus} implies
\[
\Pi\setminus \cl(B_{d/2}(x))\subset\Om\cap\pa F\,,
\]
while the fact that $F$ is a connected component of $\Omega \setminus (\Pi\setminus B_{d/2}(x))$ implies $\Om\cap\pa F\subset \Pi\setminus \cl(B_{d/2}(x))$. In conclusion, $\Om\cap\pa F=\Pi\setminus\cl(B_{d/2}(x))$, hence
\begin{equation}
\om_n\,(d/2)^n=
\lim_{r\to \infty}\omega_n r^n - P(F; \textbf{C}_r^{\nu}\setminus W) \leq \Rr(W) \leq \om_n\,(d/2)^n\,,
\end{equation}
and $\Rr(W)=\om_n\,(d/2)^n$, as claimed. We prove that the two conditions are necessary. Let $(F,\nu)\in{\rm Max}[\Rr(W)]$. As proved in step seven, there is a hyperplane $\Pi$ and $x\in\Pi$ such that $\Om\cap\pa F=\Pi\setminus \cl(B_{d/2}(x))$. If $z\in\Pi\cap\pa B_{d/2}(x)$ but $z\in\Om$, then there is $\rho>0$ such that $B_\rho(z)\subset\Om$, and since $\pa F$ is a minimal surface in $\Om$, we would obtain that $\Pi\cap B_\rho(z)\subset\Om\cap\pa F$, against $\Om\cap\pa F=\Pi\setminus\cl(B_{d/2}(x))$. So it must be $\Pi\cap\pa B_{d/2}(x)\subset W$, and the necessity of \eqref{condo 1} is proved. To prove the necessity of \eqref{condo 2}, we notice that since $\Pi^+\setminus\cl[\CC^\nu_{d/2,(-d,d)}(x)]$ and $\Pi^-\setminus\cl[\CC^\nu_{d/2,(-d,d)}(x)]$ are both open, connected, and unbounded subsets of $\Omega \setminus (\Pi\setminus B_{d/2}(x))$, and since the complement in $\Omega \setminus (\Pi\setminus B_{d/2}(x))$ of their union is bounded, it must be that $\Omega \setminus (\Pi\setminus B_{d/2}(x))$ has {\it at most} two unbounded connected components: therefore we just need to exclude that {\it it has only one}. Assuming by contradiction that this is the case, we could then connect any point $x^+\in\Pi^+\setminus\cl[\CC^\nu_{d/2,(-d,d)}(x)]$  to any point $x^-\in\Pi^-\setminus\cl[\CC^\nu_{d/2,(-d,d)}(x)]$ with a continuous path $\g$ entirely contained in $\Omega \setminus (\Pi\setminus  B_{d/2}(x))$. Now, recalling that $\Om\cap\pa F=\Pi\setminus\cl(B_{d/2}(x))$, we can pick $x_0\in\Pi\setminus\cl(B_{d/2}(x))$ and $r>0$ so that
\begin{equation}
  \label{separating F}
B_r(x_0)\cap\Pi^+\subset F\,,\qquad B_r(x_0)\cap\Pi^-\subset\R^{n+1}\setminus\cl(F)\,,
\end{equation}
and $B_r(x_0)\cap\cl[\CC^\nu_{d/2,(-d,d)}(x)]=\emptyset$. We can then pick $x^+\in B_r(x_0)\cap\Pi^+$, $x^-\in B_r(x_0)\cap\Pi^-$, and then connect them by a path $\g$ entirely contained in $\Omega \setminus (\Pi\setminus  B_{d/2}(x))$. By \eqref{separating F}, $\g$ must intersect $\pa F$, and since $\g$ is contained in $\Om$, we see that $\g$ must intersect $\Om\cap\pa F=\Pi\setminus\cl(B_{d/2}(x))$, which of course contradicts the containment of $\g$ in $\Omega \setminus (\Pi\setminus  B_{d/2}(x))$. We have thus proved that $\Omega \setminus (\Pi\setminus  B_{d/2}(x))$ has exactly two unbounded connected components.
\end{proof}

\section{Resolution theorem for exterior isoperimetric sets}\label{section resolution for exterior} The notation set in \eqref{cylinders and slabs} is in use. Given $v_j\to\infty$, we set $\l_j=v_j^{1/(n+1)}$.

\begin{proof}[Proof of Theorem \ref{thm main psi}] We recall that, throughout the proof, $\Rr(W)>0$. Theorem \ref{thm main psi}-(i) and the estimate for $|v^{-1/(n+1)}\,|x|-\om_{n+1}^{-1/(n+1)}|$ in Theorem \ref{thm main psi}-(iv), have already been proved in Theorem \ref{thm existence and uniform min}-(ii, iii).

\noindent {\bf Step one:} We prove that
  \begin{equation}\label{energy upper bound}
  \varlimsup_{v \to \infty} \psi_W(v)- P(B^{(v)})\leq -\Rr(W)\,.
  \end{equation}
  To this end, let $(F,\nu)\in{\rm Max}[\Rr(W)]$, so that by \eqref{main residue graphicality of F} and \eqref{asymptotics of F}, we have
  \begin{equation}
    \label{order of F0}
      F\setminus\CC_{R_2}^\nu=\big\{x+t\,\nu:x\in\nu^\perp\,,|x|>R_2\,,t<f(x)\big\}\,,
  \end{equation}
  for a function $f\in C^1(\nu^\perp)$ satisfying
  \begin{eqnarray}
    \label{order of F}
    &&\big|f(x)-\big(a+b\,|x|^{2-n}+(c\cdot x)\,|x|^{-n}\big)\big|\le C_0\,|x|^{-n}\,,
    \\
    \nonumber
    &&\max\big\{|x|^{n-1}\,|\nabla f(x)|,|x|^{n}\,|\nabla^2 f(x)|\big\}\le C_0\,,\qquad\forall x\in\nu^\perp\,,|x|>R_2\,,
  \end{eqnarray}
  and for some $a,b\in\R$ and $c\in\nu^\perp$ such that $\max\{|a|,|b|,|c|\}\le C(W)<\infty$ (moreover, we can take $b=0$, $c=0$ and $C_0=0$ if $n=1$). We are going to construct competitors for $\psi_W(v)$ with $v$ large by gluing a large sphere $S$ to $\pa F$ along $\pa\CC_r^\nu$ for $r>R_2$. This operation comes at the price of an area error located on the cylinder $\pa\CC_r^\nu$. We can make this error negligible thanks to the fact that \eqref{order of F} determines the distance (inside of $\pa\CC_r^\nu$) of $\pa F$ from a hyperplane (namely, $\pa G_r$ for the half-space $G_r$ defined below) up to ${\rm o}(r^{1-n})$ as $r\to\infty$. Thus, the asymptotic expansion \eqref{asymptotics of F} is just as precise as needed in order to perform this construction, i.e. our construction would not be possible with a less precise information.

We now discuss the construction in detail. Given $r>R_2$, we consider the half-space $G_r\subset\R^{n+1}$ defined by the condition that
\begin{equation}
  \label{def of Gr}
 G_r \cap \partial\CC_r^\nu =\big\{x+t\,\nu:x\in\nu^\perp\,,|x|=r\,, t< a + b\,r^{2-n}+(c\cdot x)\,r^{-n}\big\}\,,
\end{equation}
so that $G_r$ is the ``best half-space approximation'' of $F$ on $\pa\CC_r^\nu$ according to \eqref{order of F}. Denoting by $\hd(X,Y)$ the Hausdorff distance between $X,Y\subset\R^{n+1}$, for every $r>R_2$ and $v>0$ we can define $x_{r,v}\in\R^{n+1}$ in such a way that $v\mapsto x_{r,v}$ is continuous and
\begin{eqnarray}
\label{hausdorff convergence to halfspace}
  \lim_{v\to\infty}\hd(B^{(v)}(x_{r,v})\cap K,G_r\cap K)=0\qquad\forall\,K\cc\R^{n+1}\,.
\end{eqnarray}
Thus, the balls $B^{(v)}(x_{r,v})$ have volume $v$ and are locally converging in Hausdorff distance, as $v\to\infty$, to the optimal half-space $G_r$. Finally, we notice that by \eqref{order of F} we can find $\a<\b$ such that
\begin{align}\label{containment in finite cylinder}
\big((\partial F) \cup (\partial G_r) \cup (G_r \Delta F)\big) \cap \CC_r^\nu \,\,\subset\,\, \CC_{r,(\a+1,\b-1)}^\nu\,,
\end{align}
and then define $F_{r,v}$ by setting
\begin{equation}\label{the competitors}
  F_{r,v}=\big(F\cap\CC_{r,(\a,\b)}^\nu\big)\cup\big(B^{(v)}(x_{r,v})\setminus\cl\big[\CC_{r,(\a,\b)}^\nu\big]\big)\,,
\end{equation}
see
\begin{figure}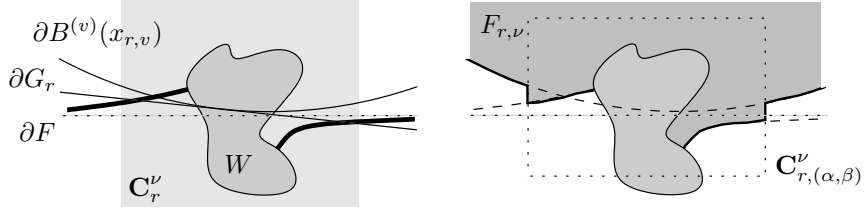\caption{\small{The competitors $F_{r,v}$ constructed in \eqref{the competitors}. A maximizer $F$ in the isoperimetric residue $\Rr(W)$ is joined to a ball of volume $v$, whose center $x_{r,v}$ is determined by looking at best hyperplane $\pa G_r$ approximating $\pa F$ on the ``lateral'' cylinder $\pa\CC_r^\nu$. To ensure the area error made in joining this large sphere to $\pa F$ is negligible, the distance between $\pa F$ and the sphere inside $\pa\CC_r^\nu$ must be ${\rm o}(r^{1-n})$ as $r\to\infty$. The asymptotic expansion \eqref{order of F} gives a hyperplane $\pa G_r$ which is close to $\pa F$ up to ${\rm O}(r^{-n})$, and is thus just as precise as needed to perform the construction.}}\label{fig upperbound}\end{figure}
Figure \ref{fig upperbound}. We claim that, by using $F_{r,v}$ as comparisons for $\psi_W(|F_{r,v}|)$, and then sending first $v\to\infty$ and then $r\to\infty$, one obtains \eqref{energy upper bound}. We first notice that by \eqref{hausdorff convergence to halfspace} and \eqref{containment in finite cylinder} (see, e.g. \cite[Theorem 16.16]{maggiBOOK}), we have
\begin{eqnarray}\nonumber
P(F_{r,v};\Om)&=&P(F;\CC_{r,(\a,\b)}^\nu\setminus W)+P\big(B^{(v)}(x_{r,v});\R^{n+1}\setminus\cl\big[\CC_{r,(\a,\b)}^\nu\big]\big)
\\\label{perimeter Frv}
&&+\H^n\big((F\Delta B^{(v)}(x_{r,v}))\cap\pa_\ell\CC_{r,(\a,\b)}^\nu\big)\,,
\end{eqnarray}
where the last term is the ``gluing error'' generated by the mismatch between the boundaries of $\pa F$ and $\pa B^{(v)}(x_{r,v})$ along $\pa_\ell\CC_{r,(\a,\b)}^\nu$. Now, thanks to \eqref{order of F} we have
$\hd(G_r\cap\pa\CC_r^\nu,F\cap\pa\CC_r^\nu)\le C_0\,r^{-n}$, so that
\begin{equation}
  \label{small error on lateral boundary}
  \H^n\big((F\Delta G_r)\cap\pa\CC_r^\nu\big)\le n\,\om_n\,r^{n-1}\,  \hd(G_r\cap\pa\CC_r^\nu,F\cap\pa\CC_r^\nu)\le C(n,W)/r\,.
\end{equation}
At the same time, by \eqref{hausdorff convergence to halfspace},
\begin{equation}
\label{L1 convergence on cylinder boundaries}
\lim_{v\to\infty}\H^n\big((G_r\Delta B^{(v)}(x_{r,v}))\cap\pa_\ell\CC_{r,(\a,\b)}^\nu\big)=0\,,
\end{equation}
and thus we have the following estimate for the gluing error,
\begin{equation}
\label{gin 1}
  \varlimsup_{v\to\infty}\H^n\big((F\Delta B^{(v)}(x_{r,v}))\cap\pa_\ell\CC_{r,(\a,\b)}^\nu\big)\le\frac{C(n,W)}r\,,\qquad\forall r>R_2\,.
\end{equation}
Again by \eqref{hausdorff convergence to halfspace}, we find
\begin{eqnarray}
\label{ball perimeter converges to halfspace perimeter}
&&\!\!\!\!\!\!\!\!\!\!\!\!\!\!\!\!\!\!\!\!\!\!\lim_{v\to\infty}P\big(B^{(v)}(x_{r,v});\CC_{r,(\a,\b)}^\nu\big)=P\big(G_r;\CC_{r,(\a,\b)}^\nu\big)
\\
\label{Gr perimeter}
&&\!\!\!\!\!\!\!\!\!\!\!\!\!\!\!\!\!\!\!\!\!\!1\le(\om_n\,r^n)^{-1}\,P\big(G_r;\CC_{r,(\a,\b)}^\nu\big)=\fint_{\DD_r^\nu}\sqrt{1+(c/r^n)^2}\le 1+C_0\,r^{-2\,n}\,,
\end{eqnarray}
so that, by \eqref{ball perimeter converges to halfspace perimeter} and by the lower bound in \eqref{Gr perimeter}, for every $r>R_2$,
\begin{equation}\label{gin 2}
\varlimsup_{v\to\infty}P\big(B^{(v)}(x_{r,v});\R^{n+1}\setminus\cl\big[\CC_{r,(\a,\b)}^\nu\big]\big)-P(B^{(v)})
\le -\om_n\,r^n\,.
\end{equation}
Combining \eqref{gin 1} and \eqref{gin 2} with \eqref{perimeter Frv} and the fact that $\CC_{r,(\a,\b)}^\nu\cap\pa F=\CC_r^\nu\cap\pa F$ (see \eqref{containment in finite cylinder}), we find that for every $r>R_2$,
\begin{eqnarray}\nonumber
&&\varlimsup_{v\to\infty}P(F_{r,v};\Om)-P(B^{(v)})\le P(F;\CC_r^\nu\setminus W)-\om_n\,r^n+ C(n,W)/r
\\
\label{perimeter Frv 2}
&&\le-{\rm res}_W(F,\nu)+C(n,W)/r=-\Rr(W)+C(n,W)/r\,.
\end{eqnarray}
where \eqref{perimeter is decreasing} has been used. Now, combining the elementary estimates
\begin{equation}
  \label{volum of Frv}
  \max\big\{\big||F_{r,v}|-v\big|\,,v^{-1/(n+1)}\,|P(B^{(v)})-P(B^{(|F_{r,v}|)})|\big\}\le C(n)\,r^{n+1}
\end{equation}
with \eqref{perimeter Frv 2}, we see that
\begin{eqnarray}\label{perimeter Frv 3}
\varlimsup_{v\to\infty}\psi_W(|F_{r,v}|)-P(B^{(|F_{r,v}|)})\le-\Rr(W)+ C(n,W)/r\,,\,\,\forall r>R_2\,.
\end{eqnarray}
Again by \eqref{volum of Frv} and since $v\mapsto|F_{r,v}|$ is a continuous function, we see that
$\varlimsup_{v\to\infty}\psi_W(|F_{r,v}|)-P(B^{(|F_{r,v}|)})=\varlimsup_{v\to\infty}\psi_W(v)-P(B^{(v)})$. This last identity combined with \eqref{perimeter Frv 3} implies \eqref{energy upper bound} in the limit $r\to\infty$.

\noindent {\bf Step two:} Now let $E_j\in{\rm Min}[\psi_W(v_j)]$ for $v_j \to \infty$. By \eqref{uniform lambda minimality} and a standard argument (see, e.g. \cite[Theorem 21.14]{maggiBOOK}), there is a local perimeter minimizer with free boundary $F$ in $\Om$ such that, up to extracting subsequences,
\begin{eqnarray}\nonumber
  &&\mbox{$E_j\to F$ in $L^1_{{\rm loc}}(\R^{n+1})$, $\H^n\llcorner\pa E_j\weak\H^n\llcorner\pa F$ as Radon measures in $\Om$}\,,
  \\
  &&\hd(K\cap\pa E_j;K\cap\pa F)\to0\qquad\mbox{for every $K\cc\Om$}\,.\label{convergence of Ej to F last proof}
\end{eqnarray}
Notice that it is not immediate to conclude from $E_j\in{\rm Min}[\psi_W(v_j)]$ that (for some $\nu\in\SS^n$) $(F,\nu)\in{\rm Max}[\Rr(W)]$ (or even that $(\nu,F)\in\F$), nor that $P(E_j;\Om)-P(B^{(v_j)})$ is asymptotically bounded from below by $-{\rm res}_W(F,\nu)$. In this step we prove some preliminary properties of $F$, and in particular we exploit the blowdown result for exterior minimal surfaces contained in Theorem \ref{theorem mesoscale criterion}-(ii) to prove that $F$ satisfies \eqref{order of F0} and \eqref{order of F} (see statement (c) below). Then, in step three, we use the decay rates \eqref{order of F} to show that $E_j$ can be ``glued'' to $F$, similarly to the construction of step one, and then derive from the corresponding energy estimates the lower bound matching \eqref{energy upper bound} and the optimality of $F$ in $\Rr(W)$.

\noindent {\bf (a)} {\it $\Om\cap\pa F\cap\pa B_\rho\ne\emptyset$ for every $\rho$ such that $W\cc B_\rho$}: If not there would be $\e>0$ such that $W\cc B_{\rho-\e}$ and $\Om\cap\pa F\cap A_{\rho-\e}^{\rho+\e}=\emptyset$ (recall that $A_r^s=\{x:s>|x|>r\}$). By \eqref{convergence of Ej to F last proof} and the constant mean curvature condition satisfied by $\Om\cap\pa E_j$, we would then find that each $E_j$ (with $j$ large enough) has a connected component of the form $B^{(w_j)}(x_j)$, with $B^{(w_j)}(x_j)\cc \R^{n+1}\setminus B_{\rho+\e}$ and $w_j\ge v_j-C(n)\,(\rho+\e)^{n+1}$. In particular, against $\Rr(W)>0$,
\[
\psi_W(v_j)=P(E_j;\Om)\ge P(B^{(v_j-C\,(\rho+\e)^{n+1})})\ge P(B^{(v_j)})-C\l_j^{-1}(\rho+\e)^{n+1}\,.
\]

\noindent {\bf (b)} {\it Sharp area bound}: We combine the upper energy bound \eqref{energy upper bound} with the perimeter inequality for spherical symmetrization, to prove
\begin{equation}\label{growth bound for F}
   P(F;\Omega \cap B_r) \leq \omega_n r^n - \Rr(W)\,,\qquad\mbox{for every $r$ s.t. $W\cc B_r$}\,.
\end{equation}
(Notice that \eqref{growth bound for F} does not immediately imply the bound for $P(F;\Omega\cap\CC_r^\nu)$ which would be needed to compare $\Rr(W)$ and ${\rm res}_W(F,\nu)$.) To prove \eqref{growth bound for F} we argue by contradiction, and consider the existence of $\de>0$ and $r$ with $W\cc B_r$ such that
$P(F;\Om\cap\ B_r)\ge \om_n\,r^n-\Rr(W)+\de$. In particular, for $j$ large enough, we would then have
\begin{equation}\label{bad one}
P(E_j;\Om\cap B_r)\ge\omega_nr^n - \Rr(W)+\de\,.
\end{equation}
Again for $j$ large, it must be $\H^n(\pa E_j\cap\pa B_r)=0$: indeed, by \eqref{uniform lambda minimality}, $\Om\cap\pa E_j$ has mean curvature of order ${\rm O}(\l_{j}^{-1})$, while of course $\pa B_r$ has constant mean curvature equal to $n/r$. Thanks to $\H^n(\pa E_j\cap\pa B_r)=0$,
\begin{equation}
  \label{bad one 1}
  P(E_j;\Om)=P(E_j;\Om\cap B_r)+P\big(E_j;\R^{n+1}\setminus\cl(B_r)\big)\,.
\end{equation}
If $E_j^s$ denotes the spherical symmetral of $E_j$ such that $E_j^s\cap\pa B_\rho$ is a spherical cap in $\pa B_\rho$, centered at $\rho\,e_{n+1}$, with area equal to $\H^n(E_j\cap\pa B_\rho)$, then we have the perimeter inequality
\begin{equation}
  \label{bad one 2}
  P\big(E_j;\R^{n+1}\setminus\cl(B_r)\big)\ge P\big(E_j^s;\R^{n+1}\setminus\cl(B_r)\big)\,;
\end{equation}
see \cite{cagnettiperuginistoger}. Now, we can find a half-space $J$ orthogonal to $e_{n+1}$ and such that $\H^n(J\cap\pa B_r)=\H^n(E_j\cap\pa B_r)$. In this way, using that $|E_j^s\setminus B_r|=|E_j\setminus B_r|$ (by Fubini's theorem in spherical coordinates), and that $\H^n(B_r\cap\pa J)\le\om_n\,r^n$ (by the fact that $\pa J$ is a hyperplane), we find
\begin{eqnarray*}
P\big(E_j^s;\R^{n+1}\setminus\cl(B_r)\big)&=&P\big((E_j^s\setminus\cl(B_r))\cup (J\cap B_r)\big)-\H^n(B_r\cap\pa J)
\\
&\ge&P\big(B^{(|E_j|-|E_j\cap B_r|+|J\cap B_r|)}\big)-\om_n\,r^n
\\
&\ge&P(B^{(v_j)})-C(n)\,r^{n+1}\,\l_{j}^{-1}-\om_n\,r^n
\end{eqnarray*}
which, with \eqref{bad one}, \eqref{bad one 1} and \eqref{bad one 2}, finally gives
$P(E_j;\Om)-P(B^{(v_j)})> -\Rr(W)+\de-C(n)\,r^{n+1}\,\l_{j}^{-1}$
for $j$ large, against \eqref{energy upper bound}.

\noindent {\bf (c)} {\it Asymptotic behavior of $\pa F$}: We prove that there are $\nu\in\SS^n$, $f\in C^\infty(\nu^\perp)$, $a,b\in\R$, $c\in\nu^\perp$, $R'>\sup\{\rho:W\subset\CC_\rho^\nu\}$ and $C$ positive, with
\begin{eqnarray}
  \label{main residue graphicality of F end}
  &&\partial F \setminus \CC^\nu_{R'}=\big\{x+f(x)\,\nu:x\in\nu^\perp\,,|x|>R'\big\}\,,
\\\nonumber
  &&f(x)=a\,,\hspace{6.6cm} (n=1)
  \\\label{asymptotics of F end}
  &&\big|f(x)-\big(a+b\,|x|^{2-n}+(c\cdot x)\,|x|^{-n}\big)\big|\le C\,|x|^{-n}\,,\,\,\, (n\ge 2)\,,
  \\\nonumber
  &&
  \max\big\{|x|^{n-1}\,|\nabla f(x)|,|x|^n\,|\nabla^2f(x)|\big\}\le C_0\,,\qquad\forall x\in\nu^\perp\,,|x|>R'\,.
\end{eqnarray}
To this end, by a standard argument exploiting the local perimeter minimality of $F$ in $\Om$, given $r_j\to\infty$, then, up to extracting subsequences, $F/r_j\toloc J$ in $L^1_{\rm loc}(\R^{n+1})$, where $J$ is a perimeter minimizer in $\R^{n+1}\setminus\{0\}$, $0\in\pa J$ (thanks to property (a)), $J$ is a cone with vertex at $0$
(thanks to Theorem \ref{theorem 7.17AA exterior lambda} and, in particular to \eqref{conelimit}), and $P(J;B_1)\le\om_n$ (by \eqref{growth bound for F}). {\bf If $n\ge 2$}, then $\pa J$ has vanishing distributional mean curvature in $\R^{n+1}$ (as points are removable singularities for the mean curvature operator when $n\ge 2$), thus $P(J;B_1)\ge\om_n$ by upper semicontinuity of area densities, and, finally, by $P(J;B_1)=\om_n$ and Allard's regularity theorem, $J$ is a half-space. {\bf If $n=1$}, then $\pa J$ is the union of two half-lines $\ell_1$ and $\ell_2$ meeting at $\{0\}$. If $\ell_1$ and $\ell_2$ are not opposite (i.e., if $J$ is not a half-space), then we can find a half-space $J^*$ such that $(J\cap J^*)\Delta J\cc B\cc \R^2\setminus\{0\}$ for some ball $B$, and $P(J\cap J^*;B)<P(J;B)$, thus violating the fact that $J$ is a perimeter minimizer in $\R^{n+1}\setminus\{0\}$.

If $n=1$ it is immediate from the above information that, for some $R'>0$, $F\setminus B_{R'}=J\setminus B_{R'}$; this proves \eqref{main residue graphicality of F end} and \eqref{asymptotics of F end} in the case $n=1$. To prove \eqref{main residue graphicality of F end} and \eqref{asymptotics of F end} when $n\ge 2$, we let $M_0$ and $\e_0$ be as in Theorem \ref{theorem mesoscale criterion}-(ii) with $(n,\Gamma=2\,n\,\om_n,\s=1)$. Since $J$ is a half-space, by using Remark \ref{remark improved convergence} and $F/r_j\toloc J$ on the annulus $A_{1/2}^{2\,L}$, for some $L>\max\{M_0,64\}$ to be chosen later on depending also on $\e_0$, we find that
\begin{equation}\label{between one half and L}
(\pa F)\cap A^{4\,L\,r_j}_{r_j/2}
=\big\{x+r_j\,f_j\big(x/r_j\big)\,\nu:x\in\nu^\perp\big\}\cap A^{4\,L\,r_j}_{r_j/2}\,,\qquad \nu^\perp=\pa J\,,
\end{equation}
for  $f_j\in C^1(\nu^\perp)$ with $\|f_j\|_{C^1(\nu^\perp)}\to 0$. By \eqref{between one half and L},
$V_j=\var\big((\pa F)\setminus B_{r_j},1)\in\V_n(0,r_j,\infty)$, with (for ${\rm o}(1)\to 0$ as $j\to\infty$)
\begin{eqnarray}\label{FF1}
  &&r_j^{-n}\,\int x\cdot\nu^{\rm co}_{V_j}\,d{\rm bd}_{V_j}=-n\,\om_n+{\rm o}(1)
  \\\label{FF2}
  &&r_j^{1-n}\|{\rm bd}_{V_j}\|(\pa B_{r_j})=n\,\om_n+{\rm o}(1)\,,
  \\\label{FF3}
  &&\sup_{r\in(r_j,3\,L\,r_j)}\big|(r^n-r_j^n)^{-1}\,\|V_j\|(B_r\setminus B_{r_j})-\om_n\big|={\rm o}(1)\,.
\end{eqnarray}
By our choice of $\Gamma$, by \eqref{growth bound for F} and \eqref{FF1} we see that, for $j$ large, we have
\begin{equation}
  \label{blowdown hp check 1}
  \|{\rm bd}_{V_j}\|(\pa B_{r_j})\le \Gamma\,r_j^{n-1}\,,\qquad \|V_j\|(B_\rho\setminus B_{r_j})\le\Gamma\rho^n\,,\,\,\forall\rho>r_j\,.
\end{equation}
Moreover, we claim that setting
\[
s_j=2\,L\,r_j
\]
(so that, in particular, $s_j>\max\{M_0,64\}\,r_j$), then
\begin{eqnarray}
\label{blowdown hp check 2}
  |\de_{V_j,r_j,0}(s_j/8)|\le\e_0\,,\qquad \inf_{r>s_j/8}\de_{V_j,r_j,0}(r)\ge-\e_0\,,
\end{eqnarray}
provided $j$ and $L$ are taken large enough depending on $\e_0$. To check the first inequality in \eqref{blowdown hp check 2} we notice that, by \eqref{FF1} and \eqref{FF3},
\begin{eqnarray*}
   \de_{V_j,r_j,0}(s_j/8)\!\!&=&\!\!\om_n-\frac{\|V_j\|(B_{s_j/8}\setminus B_{r_j})}{(s_j/8)^n}+\frac1{n\,(s_j/8)^n}\,\int x\cdot\nu^{\rm co}_{V_j}\,d\,{\rm bd}_{V_j}
   \\
   &=&\!\!\om_n-\big(\om_n+{\rm o}(1)\big)\,\frac{(s_j/8)^n-r_j^n}{(s_j/8)^n}-\frac{\om_n\,r_j^n}{(s_j/8)^n}\,\big(1+{\rm o}(1)\big)
   \\
   &=&\!\!{\rm o}(1)\,(1+(r_j/s_j)^n)={\rm o}(1)\,,
\end{eqnarray*}
so that $|\de_{V_j,r_j,0}(s_j/8)|\le\e_0$ as soon as $j$ is large with respect to $\e_0$. Similarly, if $r>s_j/8=(L\,r_j)/4$, then by \eqref{FF1}, \eqref{FF3}, \eqref{growth bound for F}, and $r_j/r\le 4/L$,
\begin{eqnarray*}
  &&\!\!\!\!\!\!\!\de_{V_j,r_j,0}(r)=\om_n-\frac{\|V_j\|(B_r\setminus B_{2\,r_j})}{r^n}-\frac{\|V_j\|(B_{2\,r_j}\setminus B_{r_j})}{r^n}
  -\frac{\om_n\,r_j^n}{r^n}\,\big(1+{\rm o}(1)\big)
  \\
  &&\ge\om_n-\frac{\om_n\,r^n-\Rr(W)}{r^n}-\big(\om_n+{\rm o}(1)\big)\,\frac{(2\,r_j)^n-r_j^n}{r^n}-\frac{\om_n\,r_j^n}{r^n}\,\big(1+{\rm o}(1)\big)
  \\
  &&\ge r^{-n}\, \Rr(W) -2\,(4/L)^n\,\big(\om_n+{\rm o}(1)\big)-(4/L)^n\,{\rm o}(1)\ge-3\,(4/L)^n\,\om_n\,,
\end{eqnarray*}
provided $j$ is large; hence the second inequality in \eqref{blowdown hp check 2} holds if $L$ is large in terms of $\e_0$.
By \eqref{blowdown hp check 1} and \eqref{blowdown hp check 2}, Theorem \ref{theorem mesoscale criterion}-(ii) can be applied to $(V,R,\Lambda,s)=(V_j,r_j,0,s_j)$ with $j$ large. As a consequence, passing from spherical graphs to cylindrical graphs with the aid of Lemma \ref{lemma D1}, we find that, for some large $j$,
\begin{equation}\label{between one half and L fine}
(\pa F)\setminus B_{s_j/16}
=\big\{x+f(x)\,\nu:x\in\nu^\perp\big\}\setminus B_{s_j/16}\,,
\end{equation}
where $f:\nu^\perp\to\R$ is a smooth function which solves the minimal surfaces equation on $\nu^\perp\setminus B_{s_j/16}$. Since $\pa F$ admits at least one sequential blowdown limit hyperplane (namely, $\nu^\perp=\pa J$), by a theorem of Simon \cite[Theorem 2]{SimonAIHP} we find that $\nabla f$ has a limit as $|x|\to\infty$; in particular, $|\nabla f|$ is bounded. Moreover, by \eqref{between one half and L fine} (or by the fact that $F$ is a local perimeter minimizer in $\Om$), $\pa F$ is a stable minimal surface in $\R^{n+1}\setminus B_{s_j/16}$, which, thanks to \eqref{growth bound for F}, satisfies an area growth bound like \eqref{schoen 2}. We can thus apply Proposition \ref{prop schoen} to deduce the validity of \eqref{asymptotics of F end} when $n\ge 3$, and of $|f(x)-[a+b\,\log\,|x|+(c\cdot x)\,|x|^{-2}]|\le C\,|x|^{-2}$ for all $|x|>R'$ when $n=2$ (with $R'>s_j$). Recalling that $F$ is a local perimeter minimizer with free boundary in $\Om$ (that is, $P(F;\Om\cap B)\le P(F';\Om\cap B)$ whenever $F\Delta F'\cc B\cc\R^3$) it must be that $b=0$, as it can be seen by comparing $F$ with the set $F'$ obtained by changing $F$ inside $\CC_r^\nu$ ($r>>R'$) with the half-space $G_r$ bounded by the plane $\{x+t\,\nu:x\in\nu^\perp,t=a+b\,\log(r)+c\cdot x/r^2\}$ and such that $\H^2((F\Delta G_r)\cap\pa\CC_r^\nu)\le C/r^2$ (we omit the details of this standard comparison argument). Having shown that $b=0$, the proof is complete.

\noindent {\bf (d)} {\it $F\cup W$ defines an element of $\F$}: With $R>R'$ as in \eqref{main residue graphicality of F end} and \eqref{asymptotics of F end}, $V_R=\var((\pa F)\cap(B_R\setminus W))$ is a stationary varifold in $\R^{n+1}\setminus K_R$ for
$K_R=W\cup\big\{x+f(x)\,\nu:x\in\nu^\perp\,,|x|=R\}$, and has bounded support. By the convex hull property \cite[Theorem 19.2]{SimonLN}, we deduce that, for every $R>R'$, $\spt V_R$ is contained in the convex hull of $K_R$, for every $R>R'$. Taking into account that $f(x)\to a$ as $|x|\to\infty$ we conclude that $\Om\cap\pa F$ is contained in the smallest slab $\mathbf{S}_{[\a,\b]}^\nu$ containing both $W$ and $\{x:x\cdot\nu=a\}$. Now set
$F'=F\cup W$. Clearly $F'$ is a set of locally finite perimeter in $\Om$ (since $P(F';\Om')=P(F;\Om')$ for every $\Om'\cc\Om$). Second, $\pa F'$ is contained in $\mathbf{S}_{[\a,\b]}^\nu$ (since $\pa F'\subset [(\pa F)\cap\Om]\cup W$). Third, by \eqref{main residue graphicality of F end} and \eqref{asymptotics of F end},
\begin{eqnarray}
\label{coo1}
&&\big\{x+t\,\nu:x\in\nu^\perp\,,|x|>R'\,,t<\a\big\}\subset F'\,,
\\
\label{coo2}
&&\big\{x+t\,\nu:x\in\nu^\perp\,,|x|>R'\,,t>\b\big\}\subset \R^{n+1}\setminus F'\,,
\\
\label{coo3}
&&\big\{x+t\,\nu:x\in\nu^\perp\,,|x|<R'\,,t\in\R\setminus[\a,\b]\big\}\cap(\pa F')=\emptyset\,.
\end{eqnarray}
By combining \eqref{coo1} and \eqref{coo3} we see that $\{x+t\,\nu:x\in\nu^\perp\,,t<\a\}\subset F'$, and by combining \eqref{coo2} and \eqref{coo3} we see that $\{x+t\,\nu:x\in\nu^\perp\,,t>\b\}\subset \R^{n+1}\setminus F'$: in particular, $\pp_{\nu^\perp}(\pa F')=\nu^\perp$, and thus $(F',\nu)\in\F$.

\noindent {\bf Step three:} We prove that
\begin{equation}\label{lower bound equation}
\varliminf_{v\to \infty} \psi_W(v) - P(B^{(v)}) \geq -\Rr(W)\,.
\end{equation}
For  $v_j\to\infty$ achieving the liminf in \eqref{lower bound equation}, let $E_j\in{\rm Min}[\psi_W(v_j)]$ and let $F$ be a (sub-sequential) limit of $E_j$, so that properties (a), (b), (c) and (d) in step two hold for $F$. In particular, properties \eqref{main residue graphicality of F end} and \eqref{asymptotics of F end} from (c) are entirely analogous to properties \eqref{order of F0} and \eqref{order of F} exploited in step one: therefore, the family of half-spaces $\{G_r\}_{r>R'}$ defined by \eqref{def of Gr} is such that
\begin{eqnarray}
  \label{b1}
  \big((\partial F) \cup (\partial G_r) \cup (G_r \Delta F)\big) \cap \CC_r^\nu \,\,\subset\,\, \CC_{r,(\a+1,\b-1)}^\nu\,,
  \\
  \label{b2}
  \H^n\big((F\Delta G_r)\cap\pa\CC_r^\nu\big)\le r^{-1}\,C(n,W)\,,
  \\
  \label{b3}
  \big|P\big(G_r;\CC_{r,(\a,\b)}^\nu\big)-\om_n\,r^n\big|\le r^{-n}\,C(n,W)\,,
\end{eqnarray}
(compare with \eqref{containment in finite cylinder}, \eqref{small error on lateral boundary}, and \eqref{Gr perimeter} in step one). By \eqref{b3} we find
\begin{equation}
  \label{coo4}
  -{\rm res}_W(F',\nu)
  =\lim_{r\to\infty}P(F;\CC_r^\nu\setminus W)-P(G_r;\CC_{r,(\a,\b)}^\nu)\,.
\end{equation}
In order to relate the residue of $(F',\nu)$ to $\psi_W(v_j)-P(B^{(v_j)})$ we consider the sets
$Z_j=(G_r\cap\CC_{r,(\a,b)}^\nu)\cup(E_j\setminus\CC_{r,(\a,\b)}^\nu)$, which, by isoperimetry, satisfy
\begin{eqnarray}\label{thanks iso}
P(Z_j)\!\ge\! P(B^{(|E_j\setminus\CC_{r,(\a,\b)}^\nu|)})\ge
P(B^{(v_j)})-C(n)\,r^n\,(\b-\a)\,\l_{j}^{-1}\,.
\end{eqnarray}
Since for a.e. $r>R'$ we have
\[
P(Z_j)=P(E_j;\R^{n+1}\!\setminus\!\CC_{r,(\a,\b)}^\nu)+P(G_r;\CC_{r,(\a,b)}^\nu)+\H^n\big((E_j\Delta G_r)\cap\pa \CC_{r,(\a,b)}^\nu\big)\,,
\]
we conclude that
\begin{eqnarray*}
\psi_W(v_j)-P(B^{(v_j)})\!\!\!\!&=&\!\!\!P(E_j;\CC_{r,(\a,\b)}^\nu\!\setminus\! W)\!+P(E_j;\R^{n+1}\!\setminus\!\CC_{r,(\a,\b)}^\nu)\!-\!P(B^{(v_j)})
\\
&=&\!\!\!P(E_j;\CC_{r,(\a,\b)}^\nu\setminus W)+P(Z_j)-P(B^{(v_j)})
\\
&&\!\!\!-P(G_r;\CC_{r,(\a,b)}^\nu)-\H^n\big((E_j\Delta G_r)\cap\pa \CC_{r,(\a,b)}^\nu\big)
\end{eqnarray*}
so that $E_j\to F$ in $L^1_{\rm loc}(\R^{n+1})$ and \eqref{thanks iso} give, for a.e. $r>R'$,
\begin{eqnarray*}
&&\!\!\!\!\!\!\!\!\!\varliminf_{j\to\infty}\psi_W(v_j)-P(B^{(v_j)})\ge P(F;\CC_{r,(\a,\b)}^\nu\!\setminus\! W)-P(G_r;\CC_{r,(\a,b)}^\nu)
\\
&&-\H^n\big((F\Delta G_r)\cap\pa \CC_{r,(\a,\b)}^\nu\big)\ge P(F;\CC_r^\nu\!\setminus\! W)-P(G_r;\CC_r^\nu)-C(n,W)/r\,,
\end{eqnarray*}
thanks to \eqref{b2} and $(F\Delta G_r)\cap\pa\CC_r^\nu=(F\Delta G_r)\cap\pa\CC_{r,(\a,\b)}^\nu$. Letting $r\to\infty$, recalling \eqref{coo4}, and by $(F',\nu)\in\F$, we find $\varliminf_{j\to\infty}\psi_W(v_j)-P(B^{(v_j)})\ge-{\rm res}_W(F',\nu)\ge-\Rr(W)$. This completes the proof of \eqref{lower bound equation}, which in turn, combined with \eqref{energy upper bound}, gives \eqref{main asymptotic expansion}, and also shows that $L^1_{\rm loc}$-subsequential limits $F$ of $E_j\in{\rm Min}[\psi_W(v_j)]$ for $v_j\to\infty$ are such that, for some $\nu\in\SS^n$, $(F\cup W,\nu)\in\F$ and $F'=F\cup W\in{\rm Max}[\Rr(W)]$.

\noindent {\bf Step four:} Moving towards the proof of \eqref{f of Ev}, we prove the validity, uniformly among varifolds associated to maximizers of $\Rr(W)$, of estimates analogous to \eqref{blowdown hp check 1} and \eqref{blowdown hp check 2}.
For a constant $\Gamma>2\,n\,\om_n$ to be determined later on (see \eqref{blowdown hp check 1 Ej part two}, \eqref{choose Gamma 1}, and \eqref{choose Gamma 2} below) in dependence of $n$ and $W$, and for $\s>0$, we let $M_0=M_0(n,2\,\Gamma,\s)$ and $\e_0=\e_0(n,2\,\Gamma,\s)$ be determined by Theorem \ref{theorem mesoscale criterion}. If $(F,\nu)\in{\rm Max}[\Rr(W)]$, then by Theorem \ref{thm main of residue}-(ii) we can find $R_2=R_2(W)>0$, $f\in C^\infty(\nu^\perp)$ such that
\begin{equation}
  \label{basta}
  (\pa F)\setminus\CC_{R_2}^\nu=\big\{x+f(x)\,\nu:x\in\nu^\perp\,,|x|>R_2\big\}\,,
\end{equation}
and such that \eqref{asymptotics of F} holds with  $\max\{|a|,|b|,|c|\}\le C(W)$ and $|\nabla f(x)|\le C_0/|x|^{n-1}$ for $|x|>R_2$. Thus $\|\nabla f\|_{C^0(\nu^\perp\setminus\DD_r^\nu)}\to 0$ as $r\to\infty$ uniformly on $(F,\nu)\in{\rm Max}[\Rr(W)]$, and there is $R_3>\max\{2\,R_2,1\}$ (depending on $W$) such that, if
$V_F=\var((\pa F)\setminus B_{R_3},1)$, then $V_F\in\V_n(0,R_3,\infty)$, and
\begin{equation}
  \label{blowdown hp check 1 four}
  \|{\rm bd}_{V_F}\|(\pa B_{R_3})\le \Gamma\,R_3^{n-1}\,,\qquad \|V_F\|(B_\rho\setminus B_{R_3})\le\Gamma\,\rho^n\qquad\forall\rho>R_3\,,
\end{equation}
(compare with \eqref{blowdown hp check 1}). Then, arguing as in step three-(c), or more simply by exploiting \eqref{basta} and the decay estimates \eqref{asymptotics of F}, we see that there is $L>\max\{M_0,64\}$, depending on $n$, $W$ and $\s$ only, such that, setting
\begin{equation}
  \label{def of sWs}
  s_W(\s)=2\,L\,R_3
\end{equation}
we have for some $c(n)>0$ (compare with \eqref{blowdown hp check 2})
\begin{eqnarray}
\label{blowdown hp check 2 four}
  |\de_{V_F,R_3,0}(s_W(\s)/8)|\le\e_0/2\,,\qquad \inf_{r>s_W(\s)/8}\de_{V_F,R_3,0}(r)\ge-\e_0/2\,.
\end{eqnarray}
\noindent {\bf Step five:} Given $E_j\in{\rm Min}[\psi_W(v_j)]$ for $v_j\to\infty$, we prove the existence of $(F,\nu)\in{\rm Max}[\Rr(W)]$ and $h_j\in C^\infty((\pa F)\setminus B_{R_2})$ such that
\begin{eqnarray}
  \label{f of Ev jj}
  &&(\pa E_j)\cap A^{R_1\,\l_{j}}_{4\,R_2}
  =\Big\{y+h_j(y)\,\nu_F(y):y\in\pa F\Big\}\cap A^{R_1\,\l_{j}}_{4\,R_2}\,,
  \\
  \label{aspetta}
  &&\lim_{j\to\infty}\|h_j\|_{C^1((\pa F)\cap A_{4\,R_2}^M)}=0\,,\qquad\forall M<\infty\,;
\end{eqnarray}
and that if $x_j$ satisfies $|E_j\Delta B^{(v_j)}(x_j)|=\inf_x|E_j\Delta B^{(v_j)}(x)|$, then
\begin{equation}
  \label{aspetta ancora}
  \lim_{j\to\infty}||x_j|^{-1}\,x_j-\nu|=0\,;
\end{equation}
finally, we prove statement (iii) (i.e., $(\pa E_j)\setminus B_{R_2}$ is diffeomorphic to an $n$-dimensional disk). By step three, there is $(F,\nu)\in{\rm Max}[\Rr(W)]$ such that, up to extracting subsequences, \eqref{convergence of Ej to F last proof} holds. By \eqref{convergence of Ej to F last proof} and \eqref{basta}, and with $s_W(\s)$ defined as in step four (see \eqref{def of sWs}) starting from $F$, we can apply Remark \ref{remark improved convergence} to find $f_j\in C^\infty(\nu^\perp)$ such that
\begin{equation}
  \label{basta j}
  (\pa E_j)\cap A^{s_W(\s)}_{2\,R_2}=\big\{x+f_j(x)\,\nu:x\in\nu^\perp\big\}\cap A^{s_W(\s)}_{2\,R_2}\,,
\end{equation}
for $j$ large enough (in terms of $\s$, $n$, $W$, and $F$), and such that $f_j\to f$ in $C^1(\DD_{s_W(\s)}^\nu\setminus\DD_{2\,R_2}^\nu)$.
With $R_3$ as in step four and with the goal of applying Theorem \ref{theorem mesoscale criterion} to the varifolds
$V_j=\var((\pa E_j)\setminus B_{R_3},1)$, we notice that $V_j\in\V_n(\Lambda_j,R_3,\infty)$, for some $\Lambda_j\le \Lambda_0\,\l_{j}^{-1}$ (thanks to \eqref{uniform lambda minimality}). In particular, by \eqref{def of sWs}, $s_W(\s)$ satisfies the ``mesoscale bounds'' (compare with \eqref{mesoscale bounds})
\begin{equation}
  \label{mesoscale bounds end}
  \e_0\,(4\,\Lambda_j)^{-1}>s_W(\s)>\max\{M_0,64\}\,R_3
\end{equation}
provided $j$ is large. Moreover, by $R_3>2\,R_2$ and $s_W(\s)/8>2\,R_2$, by \eqref{basta}, \eqref{basta j} and $f_j\to f$ in $C^1$, we exploit \eqref{blowdown hp check 1 four} and \eqref{blowdown hp check 2 four} to deduce
\begin{eqnarray}
  \label{blowdown hp check 1 Ej part one}
  \|{\rm bd}_{V_j}\|(\pa B_{R_3})&\le&(2\,\Gamma)\,R_3^{n-1}\,,
  \\
  \label{blowdown hp check 2 Ej part one}
  |\de_{V_j,R_3,0}(s_W(\s)/8)|&\le&(2/3)\,\e_0\,.
\end{eqnarray}
We claim that, up to increasing $\Gamma$ (depending on $n$ and $W$), we can entail
\begin{equation}
  \label{blowdown hp check 1 Ej part two}
 \|V_j\|(B_\rho\setminus B_{R_3}) \le\Gamma\,\rho^n\,,\qquad\forall \rho>R_3\,.
\end{equation}
Indeed, by Theorem \ref{thm existence and uniform min}-(i), for some positive $\Lambda_0$ and $s_0$ depending on $W$ only,  $E_j$ is a $(\Lambda_0\,\l_{j}^{-1},s_0\,\l_{j})$-perimeter minimizer with free boundary in $\Om$. Comparing $E_j$ to $E_j\setminus B_r$ by  \eqref{uniform lambda minimality}, for every $r<s_0\,\l_{j}$,
\begin{equation}
  \label{choose Gamma 1}
  P(E_j;\Om\cap B_r)\le C(n)\,\big(r^n+\Lambda_0\,\l_{j}^{-1}\,r^{n+1}\big)\le C(n,W)\,r^n\,;
\end{equation}
since, at the same time, if $r>s_0\,\l_{j}$, then
\begin{equation}
  \label{choose Gamma 2}
  P(E_j;\Om\cap B_r)\le P(E_j;\Om)=\psi_W(v_j)\le P(B^{(v_j)})\le C(n)\,s_0^{-n}\,r^n\,,
\end{equation}
by combining \eqref{choose Gamma 1} and \eqref{choose Gamma 2} we find \eqref{blowdown hp check 1 Ej part two}. With \eqref{blowdown hp check 1 Ej part one} and \eqref{blowdown hp check 1 Ej part two} at hand, we can also show that
\begin{equation}
\label{blowdown hp check 2 Ej part two}
  |\de_{V_j,R_3,\Lambda_j}(s_W(\s)/8)|\le\e_0\,.
\end{equation}
Indeed, by $s_W(\s)=2\,L\,R_3$ and by $\Lambda_j\le \Lambda_0\,\l_{j}^{-1}$,
\begin{eqnarray*}
  &&\big|\de_{V_j,R_3,\Lambda_j}(s_W(\s)/8)-\de_{V_j,R_3,0}(s_W(\s)/8)\big|
  \\
  &&\le(\Lambda_0/\l_{j})\,\int_{R_3}^{s_W(\s)/8}\!\!\!\!\rho^{-n}\,\|V_j\|(B_\rho\setminus B_{R_3})\,d\rho
  \le\frac{\Lambda_0\,R_3\,\Gamma}{\l_{j}}\,\Big(\frac{L}{4}-1\Big)\le\frac{\e_0}3\,,
\end{eqnarray*}
provided $j$ is large enough. To complete checking that Theorem \ref{theorem mesoscale criterion} can be applied to every $V_j$ with $j$ large enough, we now consider the quantities
\[
R_{*j}=\sup\big\{\rho>s_W(\s)/8:\de_{V_j,R_3,\Lambda_j}(\rho)\ge-\e_0\big\}\,,
\]
and prove that, for a constant $\tau_0$ depending on $n$ and $W$ only, we have
\begin{equation}
  \label{Rstar true}
  R_{*j}\ge \tau_0\,\l_{j}\,;
\end{equation}
in particular, provided $j$ is large enough, \eqref{Rstar true} implies immediately
\begin{equation}
  \label{Rstar check}
  R_{*j}\ge 4\,s_W(\s)\,,
\end{equation}
which was the last assumption in Theorem \ref{theorem mesoscale criterion} that needed to be checked. To prove \eqref{Rstar true}, we pick $\tau_0$ such that
\begin{equation}
  \label{geometry of B1}
  \big|\tau_0^{-n}\,\H^n\big(B_{\tau_0}(z)\cap\pa B^{(1)}\big)-\om_n\big|\le \e_0/2\,,\qquad\forall z\in\pa B^{(1)}\,.
\end{equation}
(Of course this condition only requires $\tau_0$ to depend on $n$.) By definition of $x_j$ and by \eqref{limsupmax goes to zero take 2}, and up to extracting a subsequence, we have $x_j\to z_0$ for some $z_0\in\pa B^{(1)}$. In particular, setting $\rho_j=\tau_0\,\l_{j}$, we find
\begin{eqnarray*}
  \rho_j^{-n}\,\|V_j\|(B_{\rho_j}\setminus B_{R_3})\!\!\!&=&\!\!\!\tau_0^{-n}\,P\big((E_j-x_j)/\l_{j}\,;\,B_{\tau_0}(-x_j)\setminus B_{R_3/\rho_j}(-x_j)\big)
  \\
  &&\to\tau_0^{-n}\,\H^n\big(B_{\tau_0}(-z_0)\cap\pa B^{(1)}\big)\le \om_n+(\e_0/2)\,,
\end{eqnarray*}
thus proving that, for $j$ large enough,
\begin{eqnarray*}
  &&\!\!\!\!\!\!\!\!\!\de_{V_j,R_3,\Lambda_j}(\rho_j)\!\!\ge\!\!-\frac{\e_0}2+\frac{1}{n\,\rho_j^n}\,\int\,x\cdot\nu^{\rm co}_{V_j}\,d\,{\rm bd}_{V_j}-\Lambda_j\,\int_{R_3}^{\rho_j}\frac{\|V_j\|(B_{\rho}\setminus B_{R_3})}{\rho^n}\,d\rho
  \\
  &&\ge\!\!-\frac{\e_0}2-\frac{2\,\Gamma\,R_3^n}{n\,\tau_0^n\,\l_{j}}
  -\Lambda_0\,\Gamma\,\frac{(\rho_j-R_3)}{\l_{j}}
  \ge-\frac{\e_0}2-\frac{C_*(n,W)}{\tau_0^n\,\l_{j}}-C_{**}(n,W)\,\tau_0\,,
\end{eqnarray*}
where we have used \eqref{blowdown hp check 1 Ej part one}, $\spt\,{\rm bd}_{V_j}\subset\pa B_{R_3}$, and \eqref{blowdown hp check 1 Ej part two}. Therefore, provided we pick $\tau_0$ depending on $n$ and $W$ so that $C_{**}\,\tau_0\le\e_0/4$, and then we pick $j$ large enough to entail $(C_*(n,W)/\tau_0^n)\l_{j}^{-1}\le\e_0/4$, we conclude that if $r\in(R_3,\rho_j]$, then
$\de_{V_j,R_3,\Lambda_j}(r)\ge  \de_{V_j,R_3,\Lambda_j}(\rho_j)\ge-\e_0$,
where in the first inequality we have used Theorem \ref{theorem 7.17AA exterior lambda}-(i) and the fact that $V_j\in\V_n(\Lambda_j,R_3,\infty)$. In summary, by \eqref{blowdown hp check 1 Ej part one} and \eqref{blowdown hp check 1 Ej part two} (which give \eqref{Gamma bounds}), by \eqref{mesoscale bounds end} (which gives \eqref{mesoscale bounds} with $s=s_W(\s)/8$), and by \eqref{blowdown hp check 2 Ej part two} and \eqref{Rstar check} (which imply, respectively, \eqref{mesoscale delta small s8} and \eqref{mesoscale Rstar larger s4}) we see that Theorem \ref{theorem mesoscale criterion}-(i) can be applied with $V=V_j$ and $s=s_W(\s)/8$ provided $j$ is large in terms of $\s$, $n$, $W$ and the limit $F$ of the $E_j$'s. Thus, setting
\begin{equation}\label{def of Starj}
S_{*j}=\min\big\{R_{*j},\e_0/\Lambda_j\big\}\,,
\end{equation}
and noticing that by \eqref{Rstar true} and $\Lambda_j\le \Lambda_0\,\l_{j}^{-1}$ we have
\begin{equation}
  \label{Starj big}
  S_{*j}\ge 16\,R_1\,\l_{j}\,,
\end{equation}
(for $R_1$ depending on $n$ and $W$ only) we conclude that, for $j$ large, there are $K_j\in\H$ and $u_j\in\X_\s(\S_{K_j},\s_W(\s)/32,R_1\,\l_{j})$, such that
\begin{equation}
  \label{par Ej}
  (\pa E_j)\cap A_{s_W(\s)/32}^{R_1\,\l_{j}}=\S_{K_j}\big(u_j,s_W(\s)/32,R_1\,\l_{j}\big)\,.
\end{equation}
%
%
%
Similarly, by \eqref{blowdown hp check 1 four} and \eqref{blowdown hp check 2 four}, thanks to Theorem \ref{theorem mesoscale criterion}-(ii) we have
\begin{equation}
  \label{par F}
  (\pa F)\cap \big(\R^{n+1}\setminus B_{s_W(\s)/32}\big)=\Sigma_{\nu^\perp}\big(u,s_W(\s)/32,\infty\big)\,,
\end{equation}
for $u\in\X_{\s'}(\S_{\nu^\perp},s_W(\s)/32,\infty)$ for every $\s'>\s$. Now, by $E_j\to F$ in $L^1_{\rm loc}(\R^{n+1})$, \eqref{par Ej} and \eqref{par F} can hold only if $|\nu_{K_j}-\nu|\le \zeta(\s)$ for a function $\zeta$, depending on $n$ and $W$ only, such that $\zeta(\s)\to 0$ as $\s\to 0^+$. In particular (denoting by $\s_0^*$, $\e_0^*$ and $C_0^*$ the dimension dependent constants originally introduced in Lemma \ref{lemma step one} as $\s_0$, $\e_0$ and $C_0$) we can find $\s_1=\s_1(n,W)\le\s_0^*$ such that if $\s<\s_1$, then $\e_0^*\ge \zeta(\s)\ge |\nu_{K_j}-\nu|$, and correspondingly, Lemma \ref{lemma step one}-(i) can be used to infer the existence of
$u_j^*\in\X_{C_0\,(\s+\zeta(\s))}(\S_{\nu^\perp},s_W(\s)/32,2\,R_1\,\l_{j})$ such that, for $j$ large,
\begin{eqnarray}
\nonumber
  \S_{\nu^\perp}\big(u_j^*,s_W(\s)/32,2\,R_1\,\l_{j}\big)&=&\S_{K_j}\big(u_j,s_W(\s)/32,2\,R_1\,\l_{j}\big)
  \\
  &=&(\pa E_j)\cap A_{s_W(\s)/32}^{2\,R_1\,\l_{j}}\,.
    \label{star 2 jj}
\end{eqnarray}
By \eqref{basta j} and Lemma \ref{lemma D1}, \eqref{star 2 jj} implies cylindrical graphicality: more precisely, provided $\s_1$ is small enough, there are $g_j\in C^1(\nu^\perp)$ such that
\begin{eqnarray}
  \label{what about gj 1}
  &&\sup_{x\in\nu^\perp}\{|g_j(x)|\,|x|^{-1},|\nabla g_j(x)|\}\le C\,\big(\s+\zeta(\s)\big)\,,
  \\
  \label{what about gj 2}
  &&(\pa E_j)\cap A^{R_1\,\l_{j}}_{2\,R_2}
  =\big\{x+g_j(x)\,\nu:x\in\nu^\perp\big\}\cap A^{R_1\,\l_{j}}_{2\,R_2}\,.
\end{eqnarray}
At the same time, by \eqref{basta}, \eqref{asymptotics of F}, and up to further increasing $R_2$ and decreasing $\s_1$, we can exploit Lemma \ref{lemma D2} in the appendix to find  $h_j\in C^1(G(f))$, $G(f)=\{x+f(x)\,\nu:x\in\nu^\perp\}$, such that
\[
\big\{x+g_j(x)\,\nu:x\in\nu^\perp\big\}\setminus B_{4\,R_2}
=\big\{z+h_j(z)\,\nu_F(z):z\in G(f)\big\}\setminus B_{4\,R_2}
\]
which, combined with \eqref{basta} and \eqref{what about gj 2} shows that
\begin{equation}\nonumber
  (\pa E_j)\cap A^{R_1\,\l_{j}}_{4\,R_2}
  =\big\{z+h_j(z)\,\nu_F(z):z\in\pa F\big\}\cap A^{R_1\,\l_{j}}_{4\,R_2}
\end{equation}
that is \eqref{f of Ev jj}. By $E_j\to F$ in $L^1_{\rm loc}(\R^{n+1})$, we find $h_j\to 0$ in $L^1((\pa F)\cap A_{4\,R_2}^M)$ for every $M<\infty$, so that, by elliptic regularity, \eqref{aspetta} follows. We now recall that, by Theorem \ref{thm existence and uniform min}-(ii), $(\pa E_j)\setminus B_{R_0(v_j)\,\l_{j}}$ coincides with
\begin{eqnarray}\nonumber
&&\!\!\!\!\!\!\!\!\big\{y+ \l_{j}w_j\big((y-x_j)/\l_{j}\big)\,\nu_{B^{(v_j)}(x_j)}(y):y\in\pa B^{(v_j)}(x_j)\big\}\setminus B_{R_0(v_j)\,\l_{j}}
\\\label{x and u of Ev take 2 jjj}
&&\!\!\!\!\!\!\!\!\mbox{for $\|w_j\|_{C^1(\pa B^{(1)})}\to 0$ and $R_0(v_j)\to 0$}\,.
\end{eqnarray}
The overlapping of \eqref{what about gj 2} and \eqref{x and u of Ev take 2 jjj} (i.e., the fact that $R_0(v_j)<R_1$ if $j$ is large enough) implies statement (iii). Finally, combining \eqref{what about gj 1} and \eqref{what about gj 2} with \eqref{x and u of Ev take 2 jjj} and $\|w_j\|_{C^1(\pa B^{(1)})}\to 0$ we deduce the validity of \eqref{aspetta ancora}. More precisely, rescaling by $\l_j$ in \eqref{what about gj 1} and \eqref{what about gj 2} and setting $E_j^*=E_j/\l_j$, we find $g_j^*\in C^1(\nu^\perp)$ such that, for every $j\ge j_0(\s)$ and  $\s<\s_1$,
\begin{eqnarray}
  \label{what about gj 1 star}
  &&\sup_{x\in\nu^\perp}\{|g_j^*(x)||x|^{-1},|\nabla g_j^*(x)|\}\le C\,\big(\s+\zeta(\s)\big)\,,
  \\
  \label{what about gj 2 star}
  &&(\pa E_j^*)\cap A^{R_1}_{2\,R_2/\l_j}=\big\{x+g_j^*(x)\,\nu:x\in\nu^\perp\big\}\cap A^{R_1}_{2\,R_2/\l_j}\,,
\end{eqnarray}
while rescaling by $\l_j$ in \eqref{x and u of Ev take 2 jjj} and setting $z_j=x_j/\l_j$ we find
\begin{eqnarray}
  \label{x and u of Ev take 2 jjjjj}
 &&\!\!\!\!\!\!\!\!\!\!\!\!\!\!\!\!\!\!\!\!(\pa E_j^*)\setminus B_{R_0(v_j)}
 \!\!=\!\big\{z_j+z+w_j(z)\,\nu_{B^{(1)}}(z)::y\in\pa B^{(1)}(z_j)\big\}\!\setminus \!B_{R_0(v_j)}
\end{eqnarray}
where $||z_j|-\om_{n+1}^{1/(n+1)}|\to 0$ thanks to \eqref{limsupmax goes to zero take 2}. Up to subsequences, $z_j\to z_0$, where $|z_0|=\om_{n+1}^{1/(n+1)}$. Should $z_0\ne|z_0|\,\nu$, then picking $\s$ small enough in terms of $|\nu-(z_0/|z_0|)|>0$ and picking $j$ large enough, we would then be able to exploit \eqref{what about gj 1 star} to get a contradiction with $\|w_j\|_{C^1(\pa B^{(1)})}\to 0$.

\noindent {\bf Conclusion}: Theorem \ref{thm existence and uniform min} implies  Theorem \ref{thm main psi}-(i), and \eqref{main asymptotic expansion} was proved in step three. Should Theorem \ref{thm main psi}-(ii), (iii), or (iv) fail, then we could find a sequence $\{(E_j,v_j)\}_j$ contradicting the conclusions of either step five or Theorem \ref{thm existence and uniform min}. We have thus completed the proof of Theorem \ref{thm main psi}.
\end{proof}

\appendix

\section{Proof of Theorem \ref{theorem 7.15AA main estimate lambda}}\label{appendix proof of expansion mean curvature} We assume $H\in\H$, $\Lambda\ge0$, $\eta_0>\eta>0$,  $(r_1,r_2)$ and $(r_3,r_4)$ are $(\eta,\eta_0)$-related as in \eqref{r1r2r3r4}, and $u\in\X_{\s}(\Sigma_H,r_1,r_2)$ is such that $\S_H(u,r_1,r_2)$ has mean curvature bounded by $\Lambda$ in $A_{r_1}^{r_2}$. We want to find $\s_0$ and $C_0$, depending on $n$, $\eta_0$, and $\eta$ only, such that, if $\max\{1,\Lambda\,r_2\}\,\s\le\s_0$, then
\begin{equation}
  \label{lambdaAA 7.14 proof}
  \Big|\H^n(\Sigma_H(u,r_3,r_4))-\H^n(\Sigma_H(0,r_3,r_4))\Big|\le C_0\,\int_{\Sigma_H\times(r_1,r_2)}\!\!\!\!\!\!\!\!\!\!\!\!\!\!\!\!\!r^{n-1}\,\big(u^2+\Lambda\,r\,|u|\big);
\end{equation}
and such that, if there is $r\in(r_1,r_2)$ s.t. $E_{\Sigma_H}^0[u_r]=0$ on $\Sigma_H$, then
\begin{equation}
\label{lambdaAA 7.15(5) proof}
 \int_{\Sigma_H\times(r_3,r_4)} \!\!\!\!\!\!\!\!\!r^{n-1}\,u^2 \le C(n)\,\Lambda\,r_2\,(r_2^n-r_1^n)+ C_0\,\int_{\Sigma_H\times(r_1,r_2)}\!\!\!\!\!\!\!\!\!r^{n-1}\,(r\,\pa_ru)^2 \,.
\end{equation}
We make three preliminary considerations: {\bf (i):} By \cite[4.5(8)]{allardalmgrenRADIAL}
\begin{eqnarray}\label{second order area radial}
&&\Big|\H^n(\S_H(u,r_1,r_2))-\H^n(\S_H(0,r_1,r_2))
\\
\nonumber
&&\hspace{3cm}-\frac12\int_{\S_H\times(r_1,r_2)}\!\!\!\!\!\!\!\!\!r^{n-1}\,\big(|\nabla^{\Sigma_H} u|^2+(r\,\pa_ru)^2-(n-1)\,u^2\big)\Big|
\\
\nonumber
&&\hspace{3cm}\le C(n)\,\s\,\int_{\S_H\times(r_1,r_2)}\!\!\!\!\!\!\!\!\!r^{n-1}\big(u^2+|\nabla^{\Sigma_H} u|^2+(r\,\pa_ru)^2\big)\,.
\end{eqnarray}
Similarly, by the last displayed formula on \cite[Page 236]{allardalmgrenRADIAL} and by \cite[Lemma 4.9(1)]{allardalmgrenRADIAL}, if $\vphi=\psi^2\,w$, $w\in C^1(\S_H\times(r_1,r_2))$ and $\psi\in C^1(r_1,r_2)$, then
\begin{eqnarray}\label{second order first variation radial}
  &&\Big|\frac{d}{dt}\bigg|_{t=0}\!\!\!\!\!\H^n(\S_H(u+t\,\vphi,r_1,r_2))
  \\\nonumber
  &&\hspace{0.5cm}-\int_{\Sigma_H\times(r_1,r_2)}\!\!\!\!\!\!\!\!\!\!r^{n-1}\,\Big\{\nabla^{\Sigma_H}u\cdot\nabla^{\S_H}\vphi+(r\,\pa_ru)\,(r\,\pa_r \vphi)-(n-1)\,u\,\vphi\Big\}\Big|
  \\\nonumber
  &\le& C(n)\,\s\,\int_{\Sigma_H\times(r_1,r_2)}\!\!\!\!\!\!\!\!\!\!r^{n-1}\,\psi^2\,\big\{|\nabla^{\Sigma_H}u|^2+|\nabla^{\Sigma_H}w|^2
  +(r\,\pa_ru)^2+(r\,\pa_rw)^2
  \\\nonumber
  &&\hspace{5.5cm}+\,u^2+w^2+(r\,\psi')^2\,w^2\big\}\,,
\end{eqnarray}
which is the second order expansion of the first variation of the area at $\S_H(u,r_1,r_2)$ along outer variations in spherical coordinates of the form $\vphi=\psi^2\,w$, $\psi=\psi(r)$. {\bf (ii):} For the sake of brevity, given $\zeta:(r_1,r_2)\to\R$ a radial function, $u,v:\Sigma_H\times(r_1,r_2)\to\R$, $X,Y:\Sigma_H\times(r_1,r_2)\to\R^m$, we set
\begin{eqnarray*}
Q_\zeta(u,v)=\int_{\Sigma_H\times(r_1,r_2)}\!\!\!\!\!\!\!\!\!\!\!\!r^{n-1}\,\zeta(r)^2\,u\,v\,,\quad Q_\zeta(X,Y)=\int_{\Sigma_H\times(r_1,r_2)}\!\!\!\!\!\!\!\!\!\!\!\!r^{n-1}\,\zeta(r)^2\,X\cdot Y\,,
\end{eqnarray*}
and $Q_\zeta(u)=Q_\zeta(u,u)$, $Q_\zeta(X)=Q_\zeta(X,X)$. {\bf (iii):} The following two estimates (whose elementary proof is contained in \cite[Lemma 7.13]{allardalmgrenRADIAL}) hold: whenever $v\in C^1(\Sigma_H\times (r_1,r_2))$, we have
\begin{eqnarray}\label{elementary 1}
  \int_{\S_H\times(r_1,r_2)}\!\!\!\!\!\!\!\!\!\!r^{n-1}\,v^2\le C(n,\eta,\eta_0)\,\Big\{  \int_{\S_H\times(r_1,r_2)}\!\!\!\!\!\!\!\!\!\!r^{n-1}\,(r\,\pa_rv)^2
  +  \int_{\S_H\times(r_3,r_4)}\!\!\!\!\!\!\!\!\!\!\!\!r^{n-1}\,v^2\Big\},
\end{eqnarray}
and, provided there is $r\in[r_1,r_2]$ such that $v_r=0$ on $\S_H$, we have
\begin{equation}\label{elementary 2}
    \int_{\S_H\times(r_1,r_2)}r^{n-1}\,v^2\le C(n,\eta_0)\,  \int_{\S_H\times(r_1,r_2)}r^{n-1}\,(r\,\pa_rv)^2\,.
\end{equation}
We are now ready for the proof. Compared to \cite[Chapter 4]{allardalmgrenRADIAL}, the main difference is that we replace \cite[Lemma 4.10]{allardalmgrenRADIAL} with \eqref{going back to psi w plug in}.

\noindent {\bf Step one:} We prove that there is $h:\Sigma_H\times(r_1,r_2)\to[-\Lambda,\Lambda]$ such that for every $w\in C^1(\S_H\times(r_1,r_2))$ and $\psi\in C^1(r_1,r_2)$ we have
\begin{eqnarray}\nonumber
&&\Big|T_\psi(u,w)-\int_{\Sigma_H\times(r_1,r_2)}\!\!\!\!\!\!\!\!\!\!\!r^n\,\psi^2\,w\,h\Big|
\le C(n)\s_0\big(Q_\psi(u)+Q_\psi(w)+Q_\psi(\nabla^{\Sigma_H} u)
\\\label{going back to psi w plug in}
&&\hspace{2cm}+Q_\psi(\nabla^{\Sigma_H} w)+Q_{r\,\psi}(\pa_ru)+Q_{r\,\psi}(\pa_rw)+Q_{r\,\psi'}(w)\big)\,.
\end{eqnarray}
where $T_\psi(u,w)=Q_\psi(\nabla^{\Sigma_H} u,\nabla^{\Sigma_H} w)+Q_{r}\big(\pa_ru,\pa_r[\psi^2w]\big)-(n-1)\,Q_\psi(u,w)$.
We start rewriting \eqref{second order first variation radial} as
\begin{eqnarray}\label{going back to psi w}
&&\Big|T_\psi(u,w)-\frac{d}{dt}\Big|_{t=0}\,\H^n(\S(u+t\,\psi^2\,w,r_1,r_2))\Big|
\\\nonumber
  &&\le C(n)\,\s\,\big(Q_\psi(u)+Q_\psi(w)+Q_\psi(\nabla^{\Sigma_H} u)+Q_\psi(\nabla^{\Sigma_H} w)
\\\nonumber
&&\hspace{5.5cm}+Q_{r\,\psi}(\pa_ru)+Q_{r\,\psi}(\pa_rw)+Q_{r\,\psi'}(w)\big)\,.
\end{eqnarray}
If $F_{u+t\,\vphi}:{\Sigma_H}\times(r_1,r_2)\to{\Sigma_H}(u+t\,\vphi,r_1,r_2)$, $\vphi=\psi^2\,w$, is given by
\[
F_{u+t\,\vphi}(\om,r)=r\,\frac{\om+(u(\om,r)+t\vphi(\om,r))\,\nu_H}{\sqrt{1+(u(\om,r)+t\vphi(\om,r))^2}}\,,
\]
then $\{\Phi_t=F_{u+t\,\vphi}\circ (F_u)^{-1}\}_{t\in[0,1]}$ are diffeomorphisms on ${\Sigma_H}(u,r_1,r_2)$, with $\Phi_t(\S_H(u,r_1,r_2))=\Sigma_H(u+t\,\vphi,r_1,r_2)$ and $\dot{\Phi}_0=(d/dt)_{t=0}\Phi_t$. Since $\S_H(u,r_1,r_2)$ has mean curvature bounded by $\Lambda$ in $A_{r_1}^{r_2}$, for some bounded function $h:\Sigma_H\times(r_1,r_2)\to[-\Lambda,\Lambda]$ we have
\begin{eqnarray*}
  \frac{d}{dt}\Big|_{t=0}\,\H^n(\S(u+t\vphi,r_1,r_2))
  &=&\Lambda\,\int_{\S_H(u,r_1,r_2)}\!\!\!\!\!\!\!\!\!\!\!h(F_u^{-1})\,\dot{\Phi}_0\cdot\nu_{\S_H(u,r_1,r_2)}
  \\
  &=&\Lambda\,\int_{\Sigma_H\times(r_1,r_2)}\!\!\!\!\!\!\!\!\!\!\!h\,\dot{\Phi}_0\big(F_u\big)\cdot\star\big(\pa_r F_u\wedge\,\wedge_{i=1}^{n-1}\pa_i F_u\big)\,,
\end{eqnarray*}
where $\pa_i=\nabla_{\tau_i}$ for a local orthonormal frame $\{\tau_i\}_{i=1}^{n-1}$ in $\Sigma_H$, and where $\star$ is the Hodge star-operator (so that $\star\,(v_1\wedge v_2...\wedge v_n)$ is a normal vector to the hyperplane spanned by the $v_i$s, with length equal to the $n$-dimensional volume of the parallelogram defined by the $v_i$s, and whose orientation depends on the ordering of the $v_i$s themselves). We can compute the initial velocity $\dot{\Phi}_0$ of $\{\Phi_t\}_{t\in[0,1]}$ by noticing that
$\Phi_t\big(F_u(\om,r)\big)=r\,(1+(u+t\,\vphi)^2)^{-1/2}\,(\om+(u+t\,\vphi)\,\nu_H)$, so that,
\begin{eqnarray*}
  \dot\Phi_0(F_u)&=&\frac{d}{dt}\Big|_{t=0}\,r\,\frac{\om+(u+t\,\vphi)\,\nu_H}{\sqrt{1+(u+t\,\vphi)^2}}
  =r\,\frac{-u\,\vphi\,\om+\vphi\,\nu_H}{(1+u^2)^{3/2}}
  \\
  &=&r\,\big(-u\,\vphi\,\om+\vphi\,\nu_H)+r\,\s\,{\rm O}\big(\psi^2\,(u^2+w^2)\big)\,.
\end{eqnarray*}
At the same time
\begin{eqnarray*}
  \pa_rF_u\!\!&=&\!\!\!\frac{\om+u\,\nu_H}{\sqrt{1+u^2}}\!+r\,\pa_r\Big(\frac{\om+u\,\nu_H}{\sqrt{1+u^2}}\Big)
  \!\!=\!\frac{\om+u\,\nu_H}{\sqrt{1+u^2}}\!-\frac{r\,u\,\pa_ru\,\om}{(1+u^2)^{3/2}}
  +\!\frac{r\,\pa_r u\,\nu_H}{(1+u^2)^{3/2}}
  \\
  &=&\!\!\!\big(1-(u^2/2)-u\,r\,\pa_ru\big)\,\om+\big(u+r\,\pa_ru\big)\,\nu_H+\s\,{\rm O}(u^2+(r\pa_r u)^2)
  \\
  &=&\!\!\!A\,\om+B\,\nu_H+\s\,{\rm O}(u^2+(r\pa_r u)^2)\,,
\\
  \frac{\pa_iF_u}r\!\!&=&\!\!\!\pa_i\Big(\frac{\om+u\,\nu_H}{\sqrt{1+u^2}}\Big)=\frac{\tau_i}{\sqrt{1+u^2}}-\frac{u\,\pa_iu}{(1+u^2)^{3/2}}\,\om
  +\frac{\pa_iu}{(1+u^2)^{3/2}}\,\nu_H
  \\
  &=&\big(1-(u^2/2)\big)\,\tau_i-u\,\pa_iu\,\om+\pa_iu\,\nu_H+\s\,{\rm O}(u^2+(\pa_iu)^2)
  \\
  &=& C\,\tau_i+ E_i\,\om+F_i\,\nu_H+\s\,{\rm O}(u^2+(\pa_iu)^2)
\end{eqnarray*}
so that, with $\Xi=\wedge_{i=1}^{n-1}\tau_i$, $\hat\tau_i=\wedge_{j\ne i}\tau_j$, and $P(u)^2=u^2+|\nabla^{\Sigma_H} u|^2+(r\pa_r u)^2$,
\begin{eqnarray*}
  &&\!\!\!\!\!\!\!\!\frac{\pa_r F_u\wedge\wedge_{i=1}^{n-1}\pa_iF_u}{r^{n-1}}
  \!=\!\big(A\,\om+B\,\nu_H\big)\!\wedge\!\wedge_{i=1}^{n-1}\big(C\,\tau_i+E_i\,\om+F_i\,\nu_H\big)
  \!+\!\s{\rm O}\big(P(u)^2\big)
  \\
  &&=A\,C^{n-1}\,\om\wedge\Xi+B\,C^{n-1}\,\nu_H\wedge\Xi\,
  +G_i\,\big(\om\wedge\nu_H\wedge\hat\tau_i\big)+\s{\rm O}\big(P(u)^2\big),
\end{eqnarray*}
for a coefficient $G_i$ which we do not need to compute. Indeed, $\star(\om\wedge\nu_H\wedge\hat\tau_i)$, being parallel to $\tau_i$, is orthogonal to $\om$ and $\nu_H$, so that
\begin{eqnarray*}
  &&r^{-n}\,\dot{\Phi}_0\big(F_u(r,\om)\big)\cdot\star\big(\pa_r F_u\wedge\,\wedge_{i=1}^{n-1}\pa_i F_u\big)
  \\
  &=&\!\!\!\!\big[\big(-u\,\vphi\,\om+\vphi\,\nu_H)+\s\,{\rm O}\big(\psi^2(u^2+v^2)\big)\big]\cdot
  \\
  &&\hspace{4cm}\cdot\big[A\,C^{n-1}\,\nu_H-B\,C^{n-1}\,\om
  +\s\,{\rm O}\big(P(u)^2\big)\big]
  \\
  &=&\!\!\!\!C^{n-1}\!\big[\big(1-\frac{u^2}2-u\,r\,\pa_ru\big)\,\vphi+(u+r\,\pa_ru)\,u\,\vphi\big]
  \!\!+\s\,{\rm O}\big(\psi^2\,(w^2+P(u)^2)\big)
  \\
  &=&\!\!\!\!\vphi+\s\,{\rm O}\big(\psi^2\,(w^2+P(u)^2)\big)
\end{eqnarray*}
In particular, since $|h|\le\Lambda$,
\begin{eqnarray*}
  &&\!\!\!\!\!\!\!\!\frac{d}{dt}\Big|_{t=0}\,\H^n(\S(u+t\vphi,r_1,r_2))
  =\int_{\S\times(r_1,r_2)}\!\!\!\!\!\!\!\!h\,\,\dot{\Phi}_0\big(F_u\big)\cdot\star\big(\pa_r F_u\wedge\,\wedge_{i=1}^{n-1}\pa_i F_u\big)
  \\
  &=&\!\!\!\!\!\!\int_{{\Sigma_H}\times(r_1,r_2)}\!\!\!\!\!\!\!\!\!\!\!\!\!\!\!r^n\,\psi^2w\,h+
  \s\Lambda r_2{\rm O}\big(Q_\psi(u)+Q_\psi(w)+Q_\psi(\nabla^{\Sigma_H} u)+Q_{r\,\psi}(\pa_ru)\big)\,.
\end{eqnarray*}
Plugging this estimate into \eqref{going back to psi w}, and  by $\max\{1,\Lambda\,r_2\}\,\s\le\s_0$, we find \eqref{going back to psi w plug in}.

\noindent {\bf Step two:} We prove that
\begin{eqnarray}\label{lemma 411(1)}
  Q_\psi(\nabla^{\Sigma_H} u)+Q_{r\,\psi}(\pa_ru)\le Q_\psi(|u|,\Lambda\,r)+C(n)\,\big(Q_{\psi}(u)+Q_{r\,\psi'}(u)\big)\,.
\end{eqnarray}
By $Q_r(\psi\,\pa_ru,\psi'u)\le Q_{r\,\psi}(\pa_ru)/4+C\,Q_{r\,\psi'}(u)$ and by \eqref{going back to psi w plug in}$_{w=u}$ we find
\begin{eqnarray*}\label{proof of 411(1)}
&&Q_\psi(\nabla^{\Sigma_H} u)+Q_{r\,\psi}(\pa_ru)\le
Q_\psi(|u|,\Lambda\,r)
+C(n)\,\big(Q_\psi(u)+Q_{r\,\psi'}(u)\big)
  \\\nonumber
  &&+C(n)\,\s_0\,\big(Q_\psi(u)+Q_{r\,\psi'}(u)+Q_\psi(\nabla^{\Sigma_H} u)+Q_{r\,\psi}(\pa_ru)\big)\,.
\end{eqnarray*}
which implies \eqref{lemma 411(1)} provided $\s_0$ is small enough.

\noindent {\bf Step three:} We prove that, if $w:\S_H\times(r_1,r_2)\to\R$ is slice-wise orthogonal to $u-w$, in the sense that $\int_{\Sigma_H}w_r\,(u_r-w_r)=0$, $\int_{\Sigma_H}\pa_rw_r\,(\pa_ru_r-\pa_rw_r)=0$, and $\int_{\Sigma_H}\nabla^{\Sigma_H} w_r\cdot(\nabla^{\Sigma_H} u_r-\nabla^{\Sigma_H} w_r)=0$ for every $r\in(r_1,r_2)$, then
\begin{equation}
  \label{lemma 411(2)}
  |T_\psi(u,w)|\le Q_\psi(|w|,\Lambda\,r)+C(n)\,\s_0\,\big(Q_\psi(u)+Q_{r\,\psi'}(u)+Q_\psi(|u|,\Lambda\,r)\big)\,.
\end{equation}
Indeed, by slice-wise orthogonality, we find that $Q_\zeta(w)\le Q_\zeta(u)$, $Q_\zeta(\pa_rw)\le Q_\zeta(\pa_ru)$ and $Q_\zeta(\nabla^{\Sigma_H} w)\le Q_\zeta(\nabla^{\Sigma_H} u)$ whenever $\zeta:(r_1,r_2)\to\R$ is radial. Therefore \eqref{going back to psi w plug in} gives
$|T_\psi(u,w)|\le Q_\psi(|w|,\Lambda\,r)+C(n)\,\s_0\,R_\psi(u)$, with $R_\psi(u)=Q_\psi(u)+Q_\psi(\nabla^{\Sigma_H} u)+Q_{r\,\psi}(\pa_ru)+Q_{r\,\psi'}(u)$. Combining this with \eqref{lemma 411(1)} we get \eqref{lemma 411(2)}.

\noindent {\bf Step four:} We prove \eqref{lambdaAA 7.14 proof}. Let now $\psi$ be a cut-off function between $(r_3,r_4)$ and $(r_1,r_2)$, so that with $Z_\psi(u)=Q_\psi(\nabla^{\S_H}u)+Q_\psi(u)+Q_{r\,\psi}(\pa_ru)$,
\begin{eqnarray*}
  \Big|\int_{\S_H\times(r_3,r_4)}r^{n-1}\big\{|\nabla^{\Sigma_H}u|^2-(n-1)\,u^2+(r\,\pa_ru)^2\big\}\Big|
  \le Z_\psi(u)\,.
\end{eqnarray*}
If $A(u)=\H^n(\S_H(u,r_3,r_4))-\H^n(\S_H(0,r_3,r_4))$, then by \eqref{second order area radial} with $(r_3,r_4)$ in place of $(r_1,r_2)$, we find
\begin{eqnarray*}
&&|A(u)|\le Z_\psi(u)+C(n)\,\s\,Z_\psi(u)\le Q_\psi(|u|,\Lambda\,r)+C(n)\,\big(Q_{\psi}(u)+Q_{r\,\psi'}(u)\big)
\\
&&+C(n)\,\s\,\big\{Q_\psi(u)+Q_\psi(|u|,\Lambda\,r)+C(n)\,\big(Q_{\psi}(u)+Q_{r\,\psi'}(u)\big)\big\}\,,
\end{eqnarray*}
where in the last inequality we have used  \eqref{lemma 411(1)}. We deduce
\begin{equation}\label{zephyr}
  |A(u)|\le C(n)\,\big(Q_\psi(|u|,\Lambda\,r)+Q_{\psi}(u)+Q_{r\,\psi'}(u)\big)\,,
\end{equation}
and \eqref{lambdaAA 7.14 proof} follows (with $C_0=C_0(n,\eta_0,\eta)$ by the properties of $\psi$).

\noindent {\bf Step five:} We finally prove that, if $E^0_{\Sigma_H}[u_{r_*}]=0$ for some $r_*\in(r_1,r_2)$, then \eqref{lambdaAA 7.15(5) proof} holds, that is
\begin{equation}
  \label{main estimate}
  \int_{\Sigma_H\times(r_3,r_4)}\!\!\!\!\!\!\!\!\!\!\!\!\!\!r^{n-1}\,u^2\le C(n)\,\Lambda\,r_2\,(r_2^n-r_1^n)+
 C(n,\eta_0,\eta)\,\int_{\S\times(r_1,r_2)}\!\!\!\!\!\!\!\!\!\!\!\!\!\!r^{n-1}\,(r\,\pa_ru)^2\,.
\end{equation}
Define $u^+,u^-,u^0:\Sigma_H\times(r_1,r_2)\to\R$ by setting, for $r\in(r_1,r_2)$,
$(u^+)_r=E^+_{\Sigma_H}[u_r]$, $(u^-)_r=E^-_{\Sigma_H}[u_r]$ and $(u^0)_r=E^0_{\Sigma_H}[u_r]$,
where $E^\pm_{\Sigma_H}$ denote the $L^2(\Sigma_H)$-orthogonal projections on the spaces of positive/negative eigenvectors of the Jacobi operator of $\Sigma_H$, and where $E^0_{\Sigma_H}$ is the $L^2(\Sigma_H)$-orthogonal projection onto the space of the Jacobi fields of $\Sigma_H$. Since $(u^0)_{r_*}=0$, we can directly apply \eqref{elementary 2} with $v=u^0$ and deduce that
\begin{equation}
  \label{deh 0}
  \int_{\Sigma_H\times(r_1,r_2)}r^{n-1}\,(u^0)^2\le C(n,\eta_0)\,\int_{\Sigma_H\times(r_1,r_2)}r^{n-1}\,(r\,\pa_ru^0)^2\,.
\end{equation}
By the orthogonality relations between $u^0_r$, $u^+_r$ and $u^-_r$ we have that
\begin{eqnarray}\label{deh 1}
&&\!\!\!\!\!\!\!\!\!\!\!\!\!\!\!\!\int_{\Sigma_H\times(r_3,r_4)}\!\!\!\!\!\!\!r^{n-1}\,u^2=\int_{\Sigma_H\times(a,b)}\!\!\!\!\!\!\!r^{n-1}\,\Big((u^0)^2+(u^+)^2+(u^-)^2\Big)
    \\\label{deh 2}
  &&\!\!\!\!\!\!\!\!\!\!\!\!\!\!\!\!\int_{\Sigma_H\times(r_1,r_2)}\!\!\!\!\!\!\!\!\!\!r^{n+1}\,(\pa_ru)^2=
  \int_{\Sigma_H\times(r_1,r_2)}\!\!\!\!\!\!\!\!\!\!r^{n+1}\,\big((\pa_ru^0)^2+(\pa_ru^+)^2+(\pa_ru^-)^2\big).
\end{eqnarray}
By the spectral theorem, for every $r\in(r_1,r_2)$ we have
$
C_1(n)^{-1}\,\int_{\Sigma_H}(u^-)_r^2\le\,\int_{\Sigma_H}\,(n-1)\,(u^-)_r^2-|\nabla^{\Sigma_H} (u^-)_r|^2\,,
$
which, multiplied by $r^{n-1}\,\psi^2$, gives
\begin{eqnarray*}
C_1(n)^{-1}\,Q_\psi(u^-)&\le& (n-1)\,Q_\psi(u^-)-Q_\psi(\nabla^{\Sigma_H}u^-)
\\
&=&(n-1)\,Q_\psi(u^-,u)-Q_\psi(\nabla^{\Sigma_H}u^-,\nabla^{\Sigma_H}u)
\\
&=&-T_\psi(u^-,u)+Q_{r}(\pa_ru,\pa_r(\psi^2\,u^-))\,,
\end{eqnarray*}
where in the second to last identity we have used that $w=u^-$ is slice-wise orthogonal to $w-u$; in particular, by \eqref{lemma 411(2)} with $w=u^-$, we find
\begin{eqnarray}\nonumber
&& C_1(n)^{-1}\,Q_\psi(u^-)\le
 Q_\psi(|u^-|,\Lambda\,r)+Q_{r}(\pa_ru,\pa_r(\psi^2\,u^-))\\\label{cookies}
 &&+C(n)\,\s_0\,\big(Q_\psi(u)+Q_{r\,\psi'}(u)+Q_\psi(|u|,\Lambda\,r)\big)
 \,.
\end{eqnarray}
Again by slice-wise orthogonality of $w=u^-$ to $w-u$, we have
\begin{eqnarray*}
&&Q_{r}(\pa_ru,\pa_r(\psi^2\,u^-))=Q_{r}(\pa_ru^-,\pa_r(\psi^2\,u^-))=
  Q_{r\,\psi}(\pa_ru^-)
  \\
  &&+2\,Q_r(\psi'\,\pa_ru^-,\psi\,u^-)\le Q_{r\,\psi}(\pa_ru^-)+\frac{Q_\psi(u^-)}{2\,C_1(n)}+C(n)\,Q_{r\,\psi'}(\pa_r u^-)\,,
\end{eqnarray*}
which combined into \eqref{cookies} gives
\begin{eqnarray}\nonumber
&&(2\,C_1(n))^{-1}\,Q_\psi(u^-)\le
 Q_\psi(|u^-|,\Lambda\,r)+Q_{r\,\psi}(\pa_ru^-)+C(n)\,Q_{r\,\psi'}(\pa_r u^-)
 \\\nonumber
 &&\hspace{3.5cm}+C(n)\,\s_0\,\big(Q_\psi(u)+Q_{r\,\psi'}(u)+Q_\psi(|u|,\Lambda,r)\big)\,.
\end{eqnarray}
Using H\"older inequality again we have
\begin{eqnarray*}
&&Q_\psi(|u^-|,\Lambda\,r)\le\frac{Q_\psi(u^-)}{4\,C_1(n)}+C(n)\,\Lambda\,r_2\,(r_2^n-r_1^n)\,,
\\
&&Q_\psi(|u|,\Lambda\,r)\le 2\,Q_\psi(u)+C(n)\,\Lambda\,r_2\,(r_2^n-r_1^n)\,,
\\\nonumber
\mbox{so that}\!\!\!&&\frac{Q_\psi(u^-)}{4\,C_1(n)}\le Q_{r\,\psi}(\pa_ru^-)+C(n)\big(Q_{r\,\psi'}(\pa_r u^-)+\Lambda\,r_2\,(r_2^n-r_1^n)\big)
 \\\nonumber
 &&\hspace{2cm}+C(n)\,\s_0\,\big(Q_\psi(u)+Q_{r\,\psi'}(u)+\Lambda\,r_2\,(r_2^n-r_1^n)\big)
\end{eqnarray*}
Taking $\psi$ to be a cut-off function between $(r_3,r_4)$ and $(r_1,r_2)$, we find
\begin{eqnarray}\label{deh}
&&\int_{\S\times(r_3,r_4)}\!\!\!\!\!\!r^{n-1}\,(u^-)^2\le C(n)\,\Lambda\,r_2\,(r_2^n-r_1^n)
\\\nonumber
&&+C(n,\eta_0,\eta)\,\Big\{
\int_{\S\times(r_1,r_2)}\!\!\!\!\!\!r^{n-1}\,(r\,\pa_ru^-)^2
+\s_0\,\int_{\S\times(r_1,r_2)}\!\!\!\!\!\!r^{n-1}\,u^2\Big\}\,.
\end{eqnarray}
By combining \eqref{deh 0}, \eqref{deh}, and the analogous estimate to \eqref{deh} for $u^+$ with \eqref{deh 1} and \eqref{deh 2} we find that \eqref{deh} holds with $u$ in place of $u^-$; this latter estimate,
thanks to \eqref{elementary 1},
finally gives \eqref{main estimate}.

\section{Spherical and cylindrical graphs}\label{appendix spherical cylindrical} We state here for the reader's convenience two technical lemmas concerning spherical and cylindrical graphs. They are both used in the last step of the proof of Theorem \ref{thm main psi}. The elementary proofs are omitted.

\begin{lemma}[Spherical graphs as cylindrical graphs]\label{lemma D1}
  There are dimension independent positive constants $C$ and $\eta_0$ with the following property. If $n\ge 1$, $H\in\H$ and $u\in\X_\eta(\S_H,r_1,r_2)$ with $\eta<\eta_0$, then we have
  \[
  \DD_{(1-C\,\eta^2)\,r_2}^{\nu_H}\setminus \DD_{r_1}^{\nu_H}\subset\pp_H\big(\S_H(u,r_1,r_2)\big)\subset
  \DD_{r_2}^{\nu_H}\setminus \DD_{(1-C\,\eta^2)\,r_1}^{\nu_H}\,,
  \]
  and there is $g\in C^1(H)$ such that $\sup\big\{|x|^{-1}\,|g(x)|+|\nabla g(x)|:x\in H\big\}\le C\,\eta$ and
  $\S_H(u,r_1,r_2)=\big\{x+g(x)\,\nu_H:x\in\pp_H\big(\S_H(u,r_1,r_2)\big)\big\}$.
  Moreover, if $(\rho_1,\rho_2)\subset ((1+C\,\eta)\,r_1,(1-C\,\eta^2)\,r_2)$, then
  $\S_H(u,\rho_1,\rho_2)=\big\{x+g(x)\,\nu_H:x\in H\big\}\cap A_{\rho_1}^{\rho_2}$.
\end{lemma}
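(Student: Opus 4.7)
The plan is to parametrize the projection of the spherical graph onto $H$ and then invert that parametrization via the implicit function theorem, realizing $\S_H(u,r_1,r_2)$ as a cylindrical graph over $H$. Starting from the formula for a generic point of the graph, a direct computation gives $|y| = r$, $\pp_H(y) = r\omega/\sqrt{1+u_r(\omega)^2}$, and $y \cdot \nu_H = r\,u_r(\omega)/\sqrt{1+u_r(\omega)^2}$. In particular, $|\pp_H(y)| = r/\sqrt{1+u_r(\omega)^2} \in [r/\sqrt{1+\eta^2}, r] \subset [(1-C\eta^2)\,r, r]$ for some dimensional $C$. Letting $r$ range over $(r_1,r_2)$ yields the two projection containments stated in the lemma.

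Next, I would introduce $\Psi:\S_H \times (r_1,r_2) \to H$ by $\Psi(\omega,r) = r\omega/\sqrt{1+u_r(\omega)^2}$ and show that $\Psi$ is a $C^1$-diffeomorphism onto its image. Given $x \in H \setminus \{0\}$, any preimage must have $\omega = x/|x|$, while the radial coordinate $r$ solves the scalar equation $F(r) := r - |x|\sqrt{1+u_r(x/|x|)^2} = 0$. Since $|r\,\pa_r u_r| \le \eta$ gives $|\pa_r u_r| \le \eta/r$, one checks $|F'(r) - 1| \le C\eta^2$, so $F$ is strictly increasing and a sign analysis at the endpoints provides a unique smooth $r = r(x)$ via the implicit function theorem. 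Setting $g(x) := |x|\,u_{r(x)}(x/|x|)$ then gives $y = x + g(x)\nu_H$ and the pointwise bound $|g(x)|/|x| \le \eta$ is immediate.

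The estimate $|\nabla g(x)| \le C\eta$ is the main technical step. Using the chain rule together with the implicit differentiation $\nabla r(x)$ extracted from $F(r(x)) = 0$, the gradient of $g$ expands into a combination of $\nabla^{\S_H} u_r$ and $r\,\pa_r u_r$, both of which are controlled by $\eta$ in $\X_\eta(\S_H,r_1,r_2)$; the term generated by differentiating $r(x)$ itself only contributes an error of order $\eta^2$. The delicate point here, and the main obstacle in the proof, is to track these terms so that $r\,\pa_r u_r$ (rather than the unbounded $\pa_r u_r$) appears in the final expression, which is what allows the estimate to close uniformly in $r \in (r_1,r_2)$.

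Finally, the sub-annulus identity follows from the radial relation $|x+g(x)\nu_H|^2 = |x|^2(1+u_{r(x)}^2) = r(x)^2$, so that $|y| = r(x)$. Consequently the cylindrical graph of $g$ meets $A_{\rho_1}^{\rho_2}$ precisely in the image of $\Psi(\cdot,r)$ with $r = |y| \in (\rho_1,\rho_2)$, which is $\S_H(u,\rho_1,\rho_2)$; and the inclusion $(\rho_1,\rho_2) \subset ((1+C\eta)r_1, (1-C\eta^2)r_2)$ guarantees that every corresponding $x = \pp_H(y)$ lies well inside $\pp_H(\S_H(u,r_1,r_2))$, so that no points on the cylindrical side are missed.
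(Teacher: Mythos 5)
Your argument is correct, and it is surely the intended one: the paper states this lemma in Appendix \ref{appendix spherical cylindrical} with the explicit remark that ``the elementary proofs are omitted,'' so there is no written proof to compare against. Two small points are worth tidying up. First, the lemma asserts $g\in C^1(H)$ with $\sup_H\{|x|^{-1}|g(x)|+|\nabla g(x)|\}\le C\,\eta$, whereas your construction only produces $g$ on the annular region $\pp_H(\S_H(u,r_1,r_2))$; to get the statement as written one must extend $g$ to all of $H$ while keeping both bounds (e.g.\ by multiplying by a radial cutoff, which is compatible with the constraint $|g(x)|\le C\,\eta\,|x|$ near the origin). Second, the estimate $|F'(r)-1|\le C\,\eta^2$ really reads $|F'(r)-1|\le (|x|/r)\,\eta^2$, since $|\pa_r u_r|\le\eta/r$, so it only holds where $|x|/r$ is bounded; this is harmless because $F(r)\le r-|x|<0$ whenever $r<|x|$, so every root of $F$ lies in $\{r\ge |x|\}$, where $F'\ge 1-\eta^2>0$, and existence plus uniqueness follow. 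With these in place the rest of your outline --- the computation of $\nabla g$, in which $\pa_r u$ only ever appears multiplied by $|x|\,|\nabla r(x)|\approx r$ so that the controlled quantity $r\,\pa_r u$ is what enters, and the sub-annulus identity via $|x+g(x)\,\nu_H|=r(x)$ together with the first projection containment --- is exactly right.
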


\begin{lemma}\label{lemma D2}
  There is $\eta\in(0,1)$ with the following property. If $H\in\H$, $R>1$, $f\in C^2(H)$, and $g\in C^1(H)$ are such that
  \begin{equation}
    \label{l growth f}
  \max\big\{|f(x)|\,,\,\,|x|\,|\nabla f(x)|\,,\,\,|x|\,|\nabla^2f(x)|:x\in H\,,|x|>R\big\}< \eta\,,
  \end{equation}
  \begin{equation}
    \label{l growth g}
      \max\big\{|x|^{-1}\,|g(x)|,|\nabla g(x)|:x\in H\big\}< \eta\,,
  \end{equation}
  then there is $h\in C^1(G_H(f))$ such that
  \begin{eqnarray}\label{hhh 1}
  &&G_H(g)\setminus B_{4\,R}=\big\{z+h(z)\,\nu_{f}(z):z\in G_H(f)\big\}\setminus B_{4\,R}\,,
  \end{eqnarray}
  where $G_H(f)=\{x+f(x)\,\nu_H:x\in H\}$ and, for $z=x+f(x)\,\nu_H$, we have set $\nu_f(z)=(1+|\nabla f(x)|^2)^{-1/2}\,(-\nabla f(x)+\nu_H)$ .
\end{lemma}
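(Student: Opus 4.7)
\textbf{Proof plan for Lemma \ref{lemma D2}.} The plan is to solve for $h$ pointwise via the implicit function theorem. A point $z = x + f(x)\nu_H$ on $G_H(f)$ has normal $\nu_f(z) = (-\nabla f(x)+\nu_H)/\sqrt{1+|\nabla f(x)|^2}$, so $z + h\,\nu_f(z)$ lies on $G_H(g)$ if and only if, setting $\tilde{x}(x,h) := x - h\,\nabla f(x)/\sqrt{1+|\nabla f(x)|^2}$,
\begin{equation*}
g\bigl(\tilde{x}(x,h)\bigr) = f(x) + h/\sqrt{1+|\nabla f(x)|^2}.
\end{equation*}
Equivalently, defining $\Psi(x,h) := g(\tilde{x}(x,h)) - f(x) - h/\sqrt{1+|\nabla f(x)|^2}$, we must solve $\Psi(x,h)=0$ in $h$ for each $x\in H$ with $|x|>R$.

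First I would verify solvability. For $|x|>R$ the bounds \eqref{l growth f}--\eqref{l growth g} give $|\nabla f(x)|<\eta/|x|<\eta$ and $|\nabla g|<\eta$, so
\[
\partial_h\Psi(x,h) = -\frac{\nabla g(\tilde x)\cdot\nabla f(x)}{\sqrt{1+|\nabla f|^2}} - \frac{1}{\sqrt{1+|\nabla f|^2}} \le -\tfrac12
\]
provided $\eta$ is small enough, while $|\Psi(x,0)| = |g(x)-f(x)| \le \eta(|x|+1) \le 2\eta\,|x|$ since $R>1$. Recasting the equation as $h = T(h) := \sqrt{1+|\nabla f|^2}\,\bigl(g(\tilde x(x,h))-f(x)\bigr)$, one has $|T'(h)|\le |\nabla g|\,|\nabla f|\le \eta^2/|x|<\tfrac12$, so the Banach fixed-point theorem yields a unique solution $h_\ast(x)$ with $|h_\ast(x)|\le C\eta\,|x|$; the implicit function theorem then upgrades $h_\ast$ to a $C^1$ function of $x$ on $\{|x|>R\}$.

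Next I would argue that the induced base-point map $\Phi(x):=\tilde{x}(x,h_\ast(x))$ is a $C^1$ diffeomorphism of $\{|x|>2R\}$ onto a subset of $H$ containing $\{|\tilde x|>3R\}$: indeed $|\Phi(x)-x|\le |h_\ast(x)||\nabla f(x)|\le C\eta^2$ and $|\nabla\Phi-\mathrm{Id}|\le C\eta$, so injectivity and surjectivity onto the claimed image follow from a perturbation-of-identity argument. Consequently, for each $y = \tilde x + g(\tilde x)\nu_H\in G_H(g)\setminus B_{4R}$ one has $|\tilde x|\ge |y|/\sqrt{1+\eta^2}\ge 3R$, hence there is a unique $x$ with $|x|>2R$ and $\Phi(x)=\tilde x$; the corresponding $z = x + f(x)\nu_H$ satisfies $|z|>R$ and $y = z + h_\ast(x)\nu_f(z)$ by construction. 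Finally, extending $h_\ast$ to a $C^1$ function $h$ on all of $G_H(f)$ subject to $\sup_{|z|\le R}|h(z)|<3R$ ensures that any $z$ with $|z|\le R$ is mapped by $z\mapsto z+h(z)\nu_f(z)$ into $B_{4R}$, which gives the opposite inclusion and hence the identity \eqref{hhh 1}.

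The main technical obstacle is the radial bookkeeping among $|x|$, $|\tilde x|$, $|z|$, $|y|$: the threshold $\eta_0$ must be chosen small enough, independently of $R$ (which is harmless because the hypothesis $R>1$ is quantitative), so that the perturbations $|h|\,|\nabla f|$, $|f|$, $|g|/|x|$ introduce only controlled multiplicative corrections of order $\eta$, and the sets $G_H(f)\setminus B_{4R}$ and $G_H(g)\setminus B_{4R}$ match bijectively under the correspondence $z\mapsto z+h(z)\nu_f(z)$. With this in hand, all remaining steps reduce to routine applications of the implicit function theorem and the smallness of $|\nabla f|$ and $|\nabla g|$.
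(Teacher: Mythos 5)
The paper declares this proof "elementary" and omits it, so there is nothing to compare against line by line; your strategy is certainly the intended one. Writing the condition "$z+h\,\nu_f(z)\in G_H(g)$" as $\Psi(x,h)=0$, solving it by the contraction $T$ (with $|T'(h)|=|\nabla g(\tilde x)\cdot\nabla f(x)|\le\eta^2/|x|$), upgrading $h_*$ to $C^1$ via the implicit function theorem (this is where $f\in C^2$ and the Hessian bound in \eqref{l growth f} are used), and then showing the base map $\Phi(x)=\tilde x(x,h_*(x))$ is a $C^1$-small perturbation of the identity so that every $\tilde x$ with $|\tilde x|\ge 3R$ is attained from some $|x|>2R$ — all of this is correct, and your quantitative estimates check out. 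Only a cosmetic caveat: $\{|x|>2R\}$ is not convex, so injectivity of $\Phi$ there should be argued either from the uniform bound $|\Phi(x)-x|\le C\eta^2$ when $|x_1-x_2|$ is large, combined with the segment argument for $|x_1-x_2|$ small (such segments stay in $\{|x|>R\}$, where $\Phi$ is still defined and $|\nabla\Phi-\mathrm{Id}|\le C\eta$), or by a degree argument; this is routine.

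The one genuine loose end is the closing extension step. The inclusion "$\supset$" in \eqref{hhh 1} requires that \emph{every} $z\in G_H(f)$ whose image $z+h(z)\,\nu_f(z)$ lies outside $B_{4R}$ actually lands on $G_H(g)$. For base points $|x|>R$ this is automatic if you set $h=h_*$ there ($h_*$ exists on all of $\{|x|>R\}$, not only $\{|x|>2R\}$). But your final sentence only controls $z$ with $|z|\le R$, whereas the uncovered set is $\{z=x+f(x)\nu_H:\ |x|\le R\}$, on which \eqref{l growth f} imposes no bound at all: if, say, $f(x_0)=100R$ and $|\nabla f(x_0)|$ is of order $1/\eta$ for some $|x_0|\le R$, the line $z_0+t\,\nu_f(z_0)$ can miss both $B_{4R}$ and $G_H(g)$ entirely, and then no choice of $h(z_0)$ saves \eqref{hhh 1}. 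This is really an imprecision latent in the statement of the lemma (which constrains $f$ only on $|x|>R$) rather than a flaw in your mechanism: in the intended application $|f|\le\eta$ and $|\nabla f|\le\eta$ also hold for $|x|\le R$, so every such $z$ has $|z|\le R+\eta$ and any $C^1$ extension with $|h|<2R$ sends it into $B_{4R}$, exactly as you propose. You should make this assumption explicit (or restrict the image set on the right of \eqref{hhh 1} to base points $|x|>R$) for the argument to close.
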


\section{Obstacles with zero isoperimetric residue}\label{appendix R zero}

\begin{proposition}\label{prop RW zero}
  If $W$ is compact and $\Rr(W)=0$, then $\psi_W(v)-P(B^{(v)})\to 0$ as $v\to\infty$ and $W$ is purely $\H^n$-unrectifiable, in the sense that $W$ cannot contain an $\H^n$-rectifiable set of $\H^n$-positive measure. In a partial converse, if $W$ is purely $\H^n$-unrectifiable and $\H^n(W)<\infty$, then $\Rr(W)=0$.
\end{proposition}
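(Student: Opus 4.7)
We prove the three assertions in the order that avoids circularity: the rectifiability implication (second assertion) first, then its converse (third assertion), and finally the energy expansion (first assertion).

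\textbf{Second assertion, by contrapositive.} Suppose $W$ contains an $\H^n$-rectifiable subset of positive $\H^n$-measure; we construct $(F,\nu)\in\F$ with ${\rm res}_W(F,\nu)>0$, contradicting $\Rr(W)=0$. By the classical structure theorem for rectifiable sets, $W$ contains (up to an $\H^n$-null set) the intersection of a $C^1$-graph with $W$: there exist $\nu\in\SS^n$, an open set $U\subset\nu^\perp$, and $g\in C^1(U)$ such that $\Pi_W:=\{x\in U: x+g(x)\nu\in W\}$ has $\H^n(\Pi_W)>0$. Pick a Lebesgue density point $x_0\in\Pi_W$ and, after rotating so that the tangent plane to $G_g=\{x+g(x)\nu:x\in U\}$ at $x_0+g(x_0)\nu$ is horizontal, assume $\nabla g(x_0)=0$. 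For $r_0>0$ small set $V=B_{r_0}(x_0)$, $V'=B_{(1+\e)r_0}(x_0)$ with $\e=\|\nabla g\|_{L^\infty(V)}^{1/2}$, and choose $\phi\in C^1(\nu^\perp)$ equal to $g$ on $V$, equal to the constant $g(x_0)$ outside $V'$, and with $\|\nabla\phi\|_{L^\infty(V'\setminus V)}=O(\|\nabla g\|_{L^\infty(V)}/\e)$. Setting $F=\{x\in\R^{n+1}: x\cdot\nu<\phi(\pp_{\nu^\perp}(x))\}$, one verifies $(F,\nu)\in\F$ and a direct computation gives
$$
{\rm res}_W(F,\nu)=\H^n(G_\phi\cap W)-\int_V\bigl(\sqrt{1+|\nabla g|^2}-1\bigr)-\int_{V'\setminus V}\bigl(\sqrt{1+|\nabla\phi|^2}-1\bigr)\,.
$$
Lebesgue density ensures $\H^n(G_\phi\cap W)\ge\H^n(\Pi_W\cap V)=(1-o(1))\H^n(V)$ as $r_0\to 0$; the first integral is $O(\|\nabla g\|_{L^\infty(V)}^2\H^n(V))$, and the collar contribution, with the chosen $\e$, is $O(\|\nabla g\|_{L^\infty(V)}^{3/2}\H^n(V))$. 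Since $\|\nabla g\|_{L^\infty(V)}\to 0$ as $r_0\to 0$, the residue is strictly positive for $r_0$ small.

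\textbf{Third assertion.} Assume $W$ purely $\H^n$-unrectifiable with $\H^n(W)<\infty$, and, toward a contradiction, $\Rr(W)>0$. By Theorem \ref{thm main of residue}-(ii) there is a maximizer $(F,\nu)\in{\rm Max}[\Rr(W)]$. Decomposing the perimeter,
$$
P(F;\CC_R^\nu\setminus W)=P(F;\CC_R^\nu)-\H^n(\pa^*F\cap W\cap \CC_R^\nu)\,.
$$
The divergence-theorem argument that establishes the non-negativity of the Radon measure $\mu$ in \eqref{is a radon measure} uses only the full-projection property $\pp_{\nu^\perp}(\pa F)=\nu^\perp$ and therefore yields the projection inequality $P(F;\CC_R^\nu)\ge \om_n R^n$ for every $R>0$. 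Thus, if $\H^n(\pa^*F\cap W)=0$, we would have $P(F;\CC_R^\nu\setminus W)\ge \om_n R^n$ for all large $R$, giving ${\rm res}_W(F,\nu)\le 0$ and contradicting the maximality of $(F,\nu)$. Hence $\H^n(\pa^*F\cap W)>0$. But $\pa^*F$ is $\H^n$-rectifiable by De Giorgi's structure theorem, so $\pa^*F\cap W$ is an $\H^n$-rectifiable subset of $W$ with positive $\H^n$-measure, contradicting pure $\H^n$-unrectifiability.

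\textbf{First assertion.} By the second assertion, $\Rr(W)=0$ implies $W$ is purely $\H^n$-unrectifiable. For any $E_v\in{\rm Min}[\psi_W(v)]$ (existence by Theorem \ref{thm existence and uniform min}), $\pa^*E_v$ is $\H^n$-rectifiable, hence $\H^n(\pa^*E_v\cap W)=0$. Therefore $P(E_v)=P(E_v;\Om)+\H^n(\pa^*E_v\cap W)=\psi_W(v)$, and by the classical Euclidean isoperimetric inequality $\psi_W(v)=P(E_v)\ge P(B^{(v)})$. The reverse inequality is immediate: a ball of volume $v$ placed at distance exceeding $\diam(W)+2(v/\om_{n+1})^{1/(n+1)}$ from the origin lies in $\Om$ and is an admissible competitor, so $\psi_W(v)\le P(B^{(v)})$. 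Hence $\psi_W(v)=P(B^{(v)})$ for every $v>0$, and in particular $\psi_W(v)-P(B^{(v)})\to 0$ as $v\to\infty$.

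\textbf{Principal obstacle.} The delicate step is the quantitative construction in the second assertion: the interpolating $\phi$ must simultaneously control the quadratic penalty $\int_V(\sqrt{1+|\nabla g|^2}-1)$ from the graph's tilt and the gradient penalty of the transition collar, which scales like $\|\nabla g\|_{L^\infty(V)}^2/\e$ times the collar volume $\e\,\H^n(V)$. The choice $\e=\|\nabla g\|_{L^\infty(V)}^{1/2}$ balances the two and makes both subleading with respect to the first-order gain $\H^n(\Pi_W\cap V)$ provided by the Lebesgue-density selection of $x_0$; a crude transition (e.g., fixed $\e$) would fail to produce a strict residue gain.
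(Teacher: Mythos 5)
Your construction for the second assertion (pure unrectifiability from $\Rr(W)=0$) is essentially the paper's own argument --- pick a density point of a $C^1$-graph piece of the rectifiable subset, interpolate to a constant by a Lipschitz function, and balance the quadratic tilt penalty against the first-order area gain --- and it is correct; your collar width $\e=\|\nabla g\|_{L^\infty(V)}^{1/2}$ is a fine alternative to the paper's choice of a uniform Lipschitz bound $\e$ on a doubled disk. For the other two assertions, however, there is a recurring genuine gap: you repeatedly use the decomposition $P(X;\CC_R^\nu)=P(X;\CC_R^\nu\setminus W)+\H^n(\pa^*X\cap W\cap\CC_R^\nu)$ (with $X=F$ in the converse and $X=E_v$ in the energy expansion) together with the projection inequality $P(X;\CC_R^\nu)\ge\om_n R^n$ over the \emph{full} cylinder. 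Both steps require $X$ to be a set of locally finite perimeter in all of $\R^{n+1}$, whereas competitors in $\F$ and minimizers of $\psi_W(v)$ are a priori only of locally finite perimeter in $\Om=\R^{n+1}\setminus W$: the reduced boundary $\pa^*X\cap W$ is not even defined otherwise, and the measure $\mu$ of \eqref{is a radon measure} lives only on $\nu^\perp\setminus\pp_{\nu^\perp}(W)$, so it does not by itself yield $P(F;\CC_R^\nu)\ge\om_n R^n$. The paper closes exactly this gap in the converse direction by showing that $\H^n\llcorner\pa F$ is a Radon measure --- this is where the hypothesis $\H^n(W)<\infty$, which your argument never invokes, is used --- and then applying Federer's finiteness criterion; your third assertion needs the same step.

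In the first assertion the gap is more serious, because there $\H^n(W)<\infty$ is not among the hypotheses and $\Rr(W)=0$ does not obviously imply it, so the identity $P(E_v)=P(E_v;\Om)$ cannot be obtained this way for a general compact $W$. This is presumably why the paper proves the energy expansion by a much heavier route: it assumes the liminf is negative and re-runs the lower-bound machinery of Theorem \ref{thm main psi} (compactness of minimizers, the blowdown and decay analysis of the limit $F$, and the gluing construction) to reach $\varliminf(\psi_W(v)-P(B^{(v)}))\ge-\Rr(W)=0$ for arbitrary compact $W$. Your idea --- that pure unrectifiability makes $W$ invisible to the Euclidean isoperimetric inequality, giving the stronger exact identity $\psi_W(v)=P(B^{(v)})$ for every $v$ --- is attractive and does work whenever one can guarantee that minimizers have finite perimeter across $W$ (for instance under the additional hypothesis $\H^n(W)<\infty$), but as written it does not establish the first assertion in the generality claimed. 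Finally, the ``principal obstacle'' is not the interpolation estimate you single out (which is routine once the density point is chosen); it is precisely the finite-perimeter-across-$W$ issue above.
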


\begin{proof} {\bf Step one:} Let $\Rr(W)=0$. Comparing with balls, $\varlimsup_{v\to \infty} \psi_W(v) - P(B^{(v)}) \leq 0 = \mathcal{R}(W)$. To prove the matching lower bound, we argue by contradiction and consider $E_j\in{\rm Min}[\psi_W(v_j)]$ with $v_j \to \infty$ such that
\begin{equation}\label{liminf assumption appendix}
    \varliminf_{v\to \infty} \psi_W(v) - P(B^{(v)})=\lim_{j\to \infty}P(E_j;\Omega) - P(B^{(v_j)})< 0\,.
\end{equation}
With \eqref{liminf assumption appendix} replacing $\Rr(W)>0$, one can repeat {\it verbatim} step two-(a) of the proof of Theorem \ref{thm main of residue}; we thus derive the asymptotic expansion for $F$ as in step two-(c), which is then the key fact used in step three to derive that
$\lim_{j\to \infty} P(E_j;\Omega) - P(B^{(v_j)}) \geq - \mathrm{res}_W(F\cup W,\nu)\geq-\mathcal{R}(W)$;
the latter inequality is of course in contradiction with \eqref{liminf assumption appendix} if $\Rr(W)=0$. Next, arguing again by contradiction, we assume the existence of an $\H^n$-rectifiable set $S$ with $\H^n(W\cap S)>0$. By \cite[Lemma 11.1]{SimonLN}, without loss of generality, $S$ is a $C^1$-embedded hypersurface in $\R^{n+1}$. Let $x$ be a point of tangential differentiability for $W\cap S$, so that $\H^n(W\cap S\cap B_\rho(x))=\om_n\,\rho^n+{\rm o}_x(\rho^n)$ as $\rho\to 0^+$. Since $S$ is a $C^1$-embedded hypersurface, there is $\nu\in\SS^n$ such that for every $\e>0$ there is $\rho_*=\rho_*(x,\e)>0$ with
$S\cap\CC_{\rho_*,\rho_*}^\nu(x)=\{y+g(y)\,\nu:y\in\DD_{\rho_*}^\nu(x)\}$, where $g\in C^1(x+\nu^\perp)$ with $g(x)=0$ and $\Lip(g)\le \e$. Denoting $G(g)=\{y+g(y)\,\nu:y\in (x+\nu^\perp)\}$, and up to decrease $\rho_*$, we can entail
\begin{eqnarray}\label{science}
\H^n\big(G(g)\cap W\cap \CC_{\rho_*}^\nu(x)\big)\ge\H^n(W\cap S\cap B_{\rho_*}(x))\ge (1-\e)\,\om_n\,\rho_*^n\,.
\end{eqnarray}
Since $|g|\le\e\,\rho_*$ on $\pa \DD_{\rho_*}^\nu(x)$, we can define $f:(x+\nu^\perp)\to\R$ so that $f=g$ on $\DD_{\rho_*}^\nu(x)$, $f=0$ on $(x+\nu^\perp)\setminus\DD_{2\,\rho_*}^\nu(x)$, and $\Lip(f)\le\e$. Denoting by $F$ the epigraph of $f$, we have that $(F,\nu)\in\F$ and we compute, for $R$ large enough to entail $\CC_{2\,\rho_*}^\nu(x)\cup W\cc\CC_R^\nu$,
\begin{eqnarray*}
\om_n\,R^n-P(F;\CC_R^\nu\setminus W)\!\!&\ge&\!\!\om_n\,(2{\rho_*})^n-P(F;\CC_{2\,{\rho_*}}^\nu(x)\setminus W)
  \\
  \!\!&=&\!\!\int_{\DD_{2\,{\rho_*}}^\nu(x)}\!\!\!1-\sqrt{1+|\nabla f|^2}+P(F;\CC_{2\,{\rho_*}}^\nu(x)\cap W)
  \\
  \!\!&\ge&\!\!-\om_n\,(2\,{\rho_*})^n\,\e^2+(1-\e)\,\om_n\,{\rho_*}^n
\end{eqnarray*}
where we have used $f=0$ on $\nu^\perp\setminus\DD_{2\,{\rho_*}}^\nu(x)$, \eqref{science} and $\sqrt{1+\e^2}\le 1+\e^2$. Up to taking $\e<\e(n)$, we thus find ${\rm res}_W(F,\nu)>0$, and thus deduce $\Rr(W)>0$.

\noindent {\bf Step two:} Let $W$ be purely $\H^n$-unrectifiable with $\H^n(W)<\infty$, and let $(F,\nu)\in{\rm Max}[\Rr(W)]$. Since $F$ is a local perimeter minimizer in $\Om$, $F$ is open in $\Omega$ with $\Om\cap\pa F=\cl(\pa^*F)$ ($\pa^*F=$ the reduced boundary of $F$ as a set of locally finite perimeter in $\Omega$). Now, $\om_n\,R^n-P(F;\CC_R^\nu\setminus W)$ is decreasing towards $\Rr(W)\ge\Ss(W)\ge0$, therefore $P(F;\CC_R^\nu\setminus W)<\infty$ for every $R$. In particular, $\H^n\llcorner(\Om\cap\pa F)$ is a Radon measure on $\R^{n+1}$. Now, $\pa F\subset (\Om\cap\pa F)\cup W$, so that $\H^n(W)<\infty$ implies that $\H^n\llcorner\pa F$ is a Radon measure on $\R^{n+1}$ and, since $F$ is open, that $F$ is a set of finite perimeter in $\R^{n+1}$ by \cite[Theorem 4.5.11]{FedererBOOK}. The pure $\H^n$-unrectifiability of $W$ gives $P(F;\CC_R^\nu\setminus W)=P(F;\CC_R^\nu)$, where $P(F;\CC_R^\nu)\ge\om_n\,R^n$  by \eqref{def Sigma nu 1} and \eqref{def Sigma nu 2}, and thus $\Rr(W)={\rm res}_W(F,\nu)\le 0$. This proves $\Rr(W)=0$.
\end{proof}

\medskip

{\small \noindent {\bf Data availibility:} Data sharing not applicable to this article as no datasets were
generated or analyzed during the current study.

\noindent {\bf Conflict of interest:} The authors have no conflict of interest to declare that are
relevant to the content of this article.}

\medskip

\bibliographystyle{alpha}
\bibliography{references}

\newcommand{\etalchar}[1]{$^{#1}$}
\begin{thebibliography}{DMMN18}

\bibitem[AA81]{allardalmgrenRADIAL}
W.~K. Allard and F.~J. Almgren, Jr.
\newblock On the radial behavior of minimal surfaces and the uniqueness of
  their tangent cones.
\newblock {\em Ann. of Math. (2)}, 113(2):215--265, 1981.

\bibitem[Ale62]{alexandrov}
A.~D. Alexandrov.
\newblock A characteristic property of spheres.
\newblock {\em Ann. Mat. Pura Appl. (4)}, 58:303--315, 1962.

\bibitem[All72]{Allard}
W.~K. Allard.
\newblock On the first variation of a varifold.
\newblock {\em Ann. Math.}, 95:417--491, 1972.

\bibitem[CGR07]{choeghomiritore}
J.~Choe, M.~Ghomi, and M.~Ritor\'{e}.
\newblock The relative isoperimetric inequality outside convex domains in
  {${\bf R}^n$}.
\newblock {\em Calc. Var. Partial Differential Equations}, 29(4):421--429,
  2007.

\bibitem[Cha01]{ChavelBOOK}
I.~Chavel.
\newblock {\em Isoperimetric inequalities}, volume 145 of {\em Cambridge Tracts
  in Mathematics}.
\newblock Cambridge University Press, Cambridge, 2001.
\newblock Differential geometric and analytic perspectives.

\bibitem[CL12]{CicaleseL.ardi}
M.~Cicalese and G.~P. L.ardi.
\newblock A selection principle for the sharp quantitative isoperimetric
  inequality.
\newblock {\em Arch. Rat. Mech. Anal.}, 206(2):617--643, 2012.

\bibitem[CLM16]{CiLeMaIC1}
M.~Cicalese, G.~P. L.ardi, and F.~Maggi.
\newblock Improved convergence theorems for bubble clusters {I}. {T}he planar
  case.
\newblock {\em Indiana Univ. Math. J.}, 65(6):1979--2050, 2016.

\bibitem[CM17]{ciraolomaggi2017}
G.~Ciraolo and F.~Maggi.
\newblock On the shape of compact hypersurfaces with almost-constant mean
  curvature.
\newblock {\em Comm. Pure Appl. Math.}, 70(4):665--716, 2017.

\bibitem[CPS20]{cagnettiperuginistoger}
F.~Cagnetti, M.~Perugini, and D.~St\"{o}ger.
\newblock Rigidity for perimeter inequality under spherical symmetrisation.
\newblock {\em Calc. Var. Partial Differential Equations}, 59(4):Paper No 139,
  53, 2020.

\bibitem[DM19]{delgadinomaggi}
M.~G. Delgadino and F.~Maggi.
\newblock Alexandrov's theorem revisited.
\newblock {\em Anal. PDE}, 12(6):1613--1642, 2019.

\bibitem[DMMN18]{delgadinomaggimihailaneumayer}
M.~G. Delgadino, F.~Maggi, C.~Mihaila, and R.~Neumayer.
\newblock Bubbling with {$L^2$}-almost constant mean curvature and an
  {A}lexandrov-type theorem for crystals.
\newblock {\em Arch. Ration. Mech. Anal.}, 230(3):1131--1177, 2018.

\bibitem[DPM15]{dephilippismaggiCAP-ARMA}
G.~De~Philippis and F.~Maggi.
\newblock Regularity of free boundaries in anisotropic capillarity problems and
  the validity of {Y}oung's law.
\newblock {\em Arch. Ration. Mech. Anal.}, 216(2):473--568, 2015.

\bibitem[DPM17]{dephilippismaggiCAP-CRELLE}
G.~De~Philippis and F.~Maggi.
\newblock Dimensional estimates for singular sets in geometric variational
  problems with free boundaries.
\newblock {\em J. Reine Angew. Math.}, 725:217--234, 2017.

\bibitem[EM13]{eichmair_metzgerINV}
M.~Eichmair and J.~Metzger.
\newblock Unique isoperimetric foliations of asymptotically flat manifolds in
  all dimensions.
\newblock {\em Invent. Math.}, 194(3):591--630, 2013.

\bibitem[ESV19]{EngelSpolaVelichGandT}
M.~Engelstein, L.~Spolaor, and B.~Velichkov.
\newblock ({L}og-)epiperimetric inequality and regularity over smooth cones for
  almost area-minimizing currents.
\newblock {\em Geom. Topol.}, 23(1):513--540, 2019.

\bibitem[Fed69]{FedererBOOK}
H.~Federer.
\newblock {\em Geometric measure theory}, volume 153 of {\em Die Grundlehren
  der mathematischen Wissenschaften}.
\newblock Springer-Verlag New York Inc., New York, 1969.

\bibitem[FFM{\etalchar{+}}15]{F2M3}
A.~Figalli, N.~Fusco, F.~Maggi, V.~Millot, and M.~Morini.
\newblock Isoperimetry and stability properties of balls with respect to
  nonlocal energies.
\newblock {\em Comm. Math. Phys.}, 336(1):441--507, 2015.

\bibitem[Fin86]{FinnBOOK}
R.~Finn.
\newblock {\em Equilibrium Capillary Surfaces}, volume 284 of {\em Die
  Grundlehren der mathematischen Wissenschaften}.
\newblock Springer-Verlag New York Inc., New York, 1986.

\bibitem[FM11]{FigalliMaggiARMA}
A.~Figalli and F.~Maggi.
\newblock On the shape of liquid drops and crystals in the small mass regime.
\newblock {\em Arch. Rat. Mech. Anal.}, 201:143--207, 2011.

\bibitem[FM21]{fuscomoriniCONVEX}
N.~Fusco and M.~Morini.
\newblock Total positive curvature and the equality case in the relative
  isoperimetric inequality outside convex domains.
\newblock 2021.

\bibitem[FMP08]{fuscomaggipratelli}
N.~Fusco, F.~Maggi, and A.~Pratelli.
\newblock The sharp quantitative isoperimetric inequality.
\newblock {\em Ann. Math.}, 168:941--980, 2008.

\bibitem[FMP10]{FigalliMaggiPratelliINVENTIONES}
A~Figalli, F.~Maggi, and A.~Pratelli.
\newblock A mass transportation approach to quantitative isoperimetric
  inequalities.
\newblock {\em Inv. Math.}, 182(1):167--211, 2010.

\bibitem[GJ86]{gruterjost}
M.~Gr{\"u}ter and J.~Jost.
\newblock Allard type regularity results for varifolds with free boundaries.
\newblock {\em Ann. Scuola Norm. Sup. Pisa Cl. Sci. (4)}, 13(1):129--169, 1986.

\bibitem[Gr{\"u}87]{gruter}
M.~Gr{\"u}ter.
\newblock Boundary regularity for solutions of a partitioning problem.
\newblock {\em Arch. Rat. Mech. Anal.}, 97:261--270, 1987.

\bibitem[LSW63]{LSWannSNS}
W.~Littman, G.~Stampacchia, and H.~F. Weinberger.
\newblock Regular points for elliptic equations with discontinuous
  coefficients.
\newblock {\em Ann. Scuola Norm. Sup. Pisa Cl. Sci. (3)}, 17:43--77, 1963.

\bibitem[Mag12]{maggiBOOK}
F.~Maggi.
\newblock {\em Sets of finite perimeter and geometric variational problems: an
  introduction to Geometric Measure Theory}, volume 135 of {\em Cambridge
  Studies in Advanced Mathematics}.
\newblock Cambridge University Press, 2012.

\bibitem[Maz11]{mazyaBOOKSobolevPDE}
V.~Maz'ya.
\newblock {\em Sobolev spaces with applications to elliptic partial
  differential equations}, volume 342 of {\em Grundlehren der mathematischen
  Wissenschaften}.
\newblock Springer, Heidelberg, 2011.

\bibitem[MPP14]{maggiponsiglionepratelli}
F.~Maggi, Marcello Ponsiglione, and Aldo Pratelli.
\newblock Quantitative stability in the isodiametric inequality via the
  isoperimetric inequality.
\newblock {\em Trans. Amer. Math. Soc.}, 366(3):1141--1160, 2014.

\bibitem[NV20]{nabervaltortaJEMS}
A.~Naber and D.~Valtorta.
\newblock The singular structure and regularity of stationary varifolds.
\newblock {\em J. Eur. Math. Soc. (JEMS)}, 22(10):3305--3382, 2020.

\bibitem[PR02]{PerezRos}
J.~P\'{e}rez and A.~Ros.
\newblock Properly embedded minimal surfaces with finite total curvature.
\newblock In {\em The global theory of minimal surfaces in flat spaces
  ({M}artina {F}ranca, 1999)}, volume 1775 of {\em Lecture Notes in Math.},
  pages 15--66. Springer, Berlin, 2002.

\bibitem[Sch83]{Scho83}
R.~M. Schoen.
\newblock Uniqueness, symmetry, and embeddedness of minimal surfaces.
\newblock {\em J. Differential Geom.}, 18(4):791--809 (1984), 1983.

\bibitem[Sim83a]{Simon83}
L.~Simon.
\newblock Asymptotics for a class of nonlinear evolution equations, with
  applications to geometric problems.
\newblock {\em Ann. of Math. (2)}, 118(3):525--571, 1983.

\bibitem[Sim83b]{SimonLN}
L.~Simon.
\newblock {\em Lectures on geometric measure theory}, volume~3 of {\em
  Proceedings of the Centre for Mathematical Analysis}.
\newblock Australian National University, Centre for Mathematical Analysis,
  Canberra, 1983.

\bibitem[Sim85]{SimonMontecatini}
L.~Simon.
\newblock Isolated singularities of extrema of geometric variational problems.
\newblock In {\em Harmonic mappings and minimal immersions ({M}ontecatini,
  1984)}, volume 1161 of {\em Lecture Notes in Math.}, pages 206--277.
  Springer, Berlin, 1985.

\bibitem[Sim87]{SimonAIHP}
L.~Simon.
\newblock Asymptotic behaviour of minimal graphs over exterior domains.
\newblock {\em Ann. Inst. H. Poincar\'e Anal. Non Lin\'eaire}, 4(no.
  3):231--242, 1987.

\bibitem[Sim96]{simonETH}
L.~Simon.
\newblock {\em Theorems on regularity and singularity of energy minimizing
  maps}.
\newblock Lectures in Mathematics ETH Z\"{u}rich. Birkh\"{a}user Verlag, Basel,
  1996.
\newblock Based on lecture notes by Norbert Hungerb\"{u}hler.

\end{thebibliography}
\end{document}